\NewDocumentCommand\DownArrow{O{2.0ex} O{black}}{%
   \mathrel{\tikz[baseline] \draw [<-, line width=0.5pt, #2] (0,0) -- ++(0,#1);}
}
\NewDocumentCommand\UpArrow{O{2.0ex} O{black}}{%
   \mathrel{\tikz[baseline] \draw [->, line width=0.5pt, #2] (0,0) -- ++(0,#1);}
}
\newtheorem{theorem}{Theorem}[section]
\newtheorem{lemma}[theorem]{Lemma}
\newtheorem{proposition}[theorem]{Proposition}
\newtheorem{corollary}[theorem]{Corollary}
\newtheorem{definition}[theorem]{Definition}
\newtheorem{remark}[theorem]{Remark}
\newtheorem{question}[theorem]{Question}
\newcommand\supp{\mathop{\rm supp}}
\newcommand\id{\mathop{\rm id}}
\newcommand\nph{\varphi}
\newcommand{\cl}[1]{\mathcal{#1}}
\newcommand{\bb}[1]{\mathbb{#1}}
\newcommand\Tr{\mathop{\rm Tr}}
\begin{document}

\title[Quantum hypergraph homomorphisms]
{Quantum hypergraph homomorphisms and non-local games}

%\author[I. G. T.]{I. G. T.}
%\address{Pure Mathematics Research Centre,
%  Queen's University Belfast, Belfast BT7 1NN, United Kingdom}
%\email{i.todorov@qub.ac.uk}

\author[G. Hoefer]{Gage Hoefer}
\address{School of Mathematical Sciences\\ University of Delaware\\ 501 Ewing Hall\\ Newark\\ DE 19716\\ USA}\email{ghoefer@udel.edu}

\author[I.G. Todorov]{Ivan G. Todorov}
\address{School of Mathematical Sciences\\ University of Delaware\\ 501 Ewing Hall\\ Newark\\ DE 19716\\ USA} 
\email{todorov@udel.edu}

%\makeatletter
%\@namedef{subjclassname@2020}{\textup{2020} Mathematics Subject Classification}
%\makeatother

\date{3 November 2022}

%\subjclass[2020]{47L25, 81P16, 47L07}

\maketitle

\begin{abstract}
Using the simulation paradigm in information theory, we define notions of quantum hypergraph homomorphisms and quantum hypergraph isomorphisms, 
and show that they constitute partial orders and equivalence relations, respectively. 
Specialising to the case where the underlying hypergraphs arise from non-local games, 
we define notions of quantum non-local game homomorphisms and quantum non-local game isomorphisms, 
and show that games, isomorphic with respect to a given correlation type, 
have equal values and asymptotic values relative to this type. 
We examine a new class of no-signalling correlations, which witness the existence of 
non-local game homomorphisms,
and characterise them in terms of states 
on tensor products of canonical operator systems. We define jointly synchronous correlations
and show that they correspond to traces on the tensor product of the canonical C*-algebras
associated with the game parties. 
\end{abstract}

\tableofcontents

%%%%%%%%%%%%%%%%%%%%%%%%%%%%%%%%%%%%%%%%%%%%%%%%%
%%%%%%%%%%%%%%%%%%%%%%%%%%%%%%%%%%%%%%%%%%%%%%%%%
%%%%%%%%%%%%%%%%%%%%%%%%%%%%%%%%%%%%%%%%%%%%%%%%%

\section{Introduction}\label{s_intro}

The connections between operator algebra theory and quantum information theory are  
undergoing at present a phase of intensive development. 
One of the chief catalysts for this trend was the equivalence between  
the Connes Embedding Problem in von Neumann algebra theory 
and the Tsirelson Problem in quantum physics, established in \cite{fritz, jnvwy, oz}. 
Its culminations are arguably the 
refutal of the weak Tsirelson Problem in \cite{slofstra_JAMS},  
the demonstration of the non-closedness of the set of quantum correlations 
presented in \cite{slofstra} (see also \cite{dpp})
and, more recently,
the announced in \cite{jnvwy} resolution of the aforementioned Connes Embedding Problem. 

Behind the developments in \cite{slofstra_JAMS}, \cite{dpp} and \cite{jnvwy} one discerns the role of 
non-local games and their optimal winning probabilities. These objects were first studied from the perspective of 
quantum information theory (see e.g. \cite{chtw, cleve-mittal, mermin}), where they can effectively
witness entanglement, leading to proofs of Bell's Theorem \cite{bell}. Important combinatorial 
ramifications were established in \cite{cmnsw} and later in \cite{mr, amrssv},  
where quantum graph homomorphisms and quantum graph isomorphisms were defined and studied. 
A wealth of mathematical developments in non-local game theory 
took place within the past decade \cite{dpp, lmprsstw, lmr, mptw, pr, pt, rs}, relying on operator system theory, 
%Changed "decate" to "decade"- GH
quantum group theory, quantum information theory and combinatorics, among others.

A non-local game is a cooperative game, played by two players, Alice and Bob, 
against a verifier. In each round of the game, 
the verifier selects an input pair $(x,y)$ from the cartesian product of two finite sets $X$ and $Y$, 
following a probability distribution $\pi$ on $X\times Y$, and 
sends input $x$ to Alice and input $y$ to Bob. 
Alice produces an output $a$ lying in a specified set $A$, and Bob -- an 
output $b$ lying in a specified set $B$; 
the combination $(x,y,a,b)$ yields a win for the players if it satisfies a previously fixed predicate 
$\lambda : X\times Y \times A\times B \to \{0,1\}$, representing the rules of the game. 

During the course of the game, the players are not allowed to communicate; mathematically this 
is expressed by saying that the
probabilistic strategies $p = \{(p(a,b|x,y))_{(a,b)\in A\times B} : (x,y)\in X\times Y\}$ that they are 
allowed to use are no-signalling correlations, that is, correlations $p(a,b|x,y)$ with well-defined 
conditional marginals $p(a|x)$ and $p(b|y)$. 
% Fixed the conditional marginal from p(b, y)- GH
Several types of strategies are usually used:
local (corresponding to classical resources), quantum 
(corresponding to finite dimensional entanglement), quantum approximate (corresponding to liminal 
entanglement) and quantum commuting (arising from the commuting model of quantum mechanics). 
Each correlation type gives rise to a corresponding game value and asymptotic game value: 
% Inserted an "a" after "to" and before "corresponding game value"- GH
these are, respectively, the optimal winning probability in one round, and the 
optimal winning probability in the limit when independent rounds, forming an infinite sequence, are 
conducted.

One of the main motivations behind the present work is to identify conditions, upon which 
seemingly different games may have the same value with respect to a given strategy class. 
We propose notions of game homomorphisms and game isomorphisms, associated with a fixed 
correlation type. The existence of a homomorphism from a game $G_1$ into a game $G_2$ of type ${\rm t}$
leads to an inequality between the two ${\rm t}$-values, while the existence of an isomorphism 
of type ${\rm t}$ -- to an equality of these values. 

In order to define game homomorphisms (resp. game isomorphisms) of a given type, we 
embed the two games into a larger game; 
we can think of the new game as a non-local \lq\lq super-game'', played by the 
verifiers of the original games and controlled by a \lq\lq super-verifier''. 
More specifically, suppose that game $G_i$ has inputs from $X_i\times Y_i$
and outputs from $A_i\times B_i$, $i = 1,2$. 
The super-verifier sends a pair $x_2y_2$ from $X_2\times Y_2$ to the verifier of $G_1$, and
a pair $a_1b_1$ from $A_1\times B_1$ to the verifier of $G_2$. The two verifiers return pairs 
$x_1y_1$ from $X_1\times Y_1$ and $a_2b_2$ from $A_2\times B_2$, respectively.
The rules of the super-game are appropriately determined by the rules of the individual games, requiring that 
$(x_2y_2,a_1b_1, x_1y_1,a_2b_2)$ yields a win for the super-game if 
$(x_1,y_1,a_1,b_1)$ and $(x_2,y_2,a_2,b_2)$ either simultaneously yield a win or simultaneously yield a lose 
for the games $G_1$ and $G_2$. 
Heuristically, in this setup, the goal of the two verifiers is to convince the super-verifier that 
the games $G_1$ and $G_2$ are equivalent. 

Non-local game homomorphisms of a given type ${\rm t}$ are defined using
no-signalling correlations of the same type, satisfying certain stronger conditions, which 
allow us to transport the perfect strategies of type ${\rm t}$ for the game $G_1$ 
to perfect strategies of the same type for the game 
$G_2$; the strategy transport thus achieved allows the comparison between the values of $G_1$ and $G_2$. 
In order to define non-local game isomorphisms of a given type, we employ a special kind of 
no-signalling correlations, defined in \cite{bhtt2} and called therein bicorrelations, that allow reversibility.
A game isomorphism of local type amounts to reshuffling the question-answer sets 
that transforms the rule predicates into each other. Much as in the case of 
quantum graph isomorphisms \cite{amrssv}, the quantum identification of 
non-local games is a weaker equivalence relation, designed to take into account the possible presence of 
entanglement between the participating verifiers.

The construction described in the previous paragraphs is obtained as a special case of a more 
general setting hosting hypergraph homomorphisms (resp. isomorphisms);  
this is the second main motivation behind the present paper. 
The latter is achieved by employing the simulation paradigm in information theory \cite{clmw},
%Changed "pagadigm" to "paradigm"- GH
according to which, starting with a classical information channel
from an alphabet $X_1$ to an alphabet $A_1$, 
using assistance with no-signalling resources over a quadruple $(X_2,A_1,X_1,A_2)$, 
one can simulate an information channel from alphabet $X_2$ to alphabet $A_2$.
Placing extra restrictions on the support of the input and the output channels, 
we define hypergraph homomorphism (resp. isomorphism) games. 
The perfect local strategies of the latter class of games correspond to 
a type of classical homomorphisms (resp. classical isomorphisms) of the underlying hypergraphs. 
Allowing non-classical resources, this leads to notions of quantum hypergraph homomorphisms (resp. 
isomorphisms). 
%We exhibit separation examples between the local and the quantum homomorphisms (resp. 
%isomorphisms). 

We now describe the content of the paper in more detail. 
After collecting some general notation at the end of the present section, 
in Section \ref{ss_gensetup} we recall
the definition of the main no-signalling correlation types and 
introduce the simulation setup, showing that simulators can be composed with preservation of their types. 
In Section \ref{s_hyper} we define an intermediate game, which we call the hypergraph quasi-homomorphism game, 
and show that quasi-homomorphism of a fixed type is a partial quasi-order on the set of all hypergraphs. 
We exhibit an example of hypergraphs that are quantum quasi-homomorphic but not locally quasi-homomorphic. 

The hypergraph quasi-homomorphism game provides
the base for defining, in Section \ref{s_hypis}, the hypergraph homomorphism game, which 
allows us to specify, for every
correlation type ${\rm t}$, a notion of ${\rm t}$-homomorphic hypergraphs. 
%as ones for which there exists 
%a perfect ${\rm t}$-strategy for the corresponding hypergraph homomorphism game. 
In order to define ${\rm t}$-isomorphic hypergraphs, we employ the notion of a classical bicorrelation of type ${\rm t}$.
The latter concept was introduced in \cite{bhtt2} by specialising the notion of a 
quantum bicorrelation studied therein and using the correlation types defined in \cite{tt}
based, in their own turn, on the setup of quantum no-signalling correlations 
of Duan and Winter \cite{dw}.
Using the examples of the separation between quantum isomorphic and locally isomorphic graphs 
\cite{amrssv}, we provide examples that separate quantum isomorphic hypergraphs from locally isomorphic ones. 
We further establish characterisations of hypergraph isomorphisms of
quantum approximate, quantum commuting and no-signalling type
in terms of states on operator system tensor products. The latter charaterisations are obtained as 
consequences of the characterisations of correlation types in \cite{lmprsstw} and \cite{bhtt2}.  

Section \ref{s_strongly} 
of the paper contains the definitions of the different types for a class of no-signalling correlations over 
a quadruple of the form $(X_2\times Y_2)\times (A_1\times B_1)\times (X_1\times Y_1)\times (A_2\times B_2)$, 
which we call strongly no-signalling, and collects some of their properties. 
In Section \ref{s_nonlocal}, we use the strongly no-signalling correlations as strategies for the 
homomorphism game between two given non-local games. This is achieved by applying the results of Section 4 in 
the case of hypergraphs that arise from non-local games. Theorem 6.1 and Theorem 6.4, in particular, establish the 
strategy transport for correlations and bicorrelations, respectively, allowing the comparisons
of the values and the asymptotic values established in Theorem 6.12. 
We also observe that homomorphism (resp. isomorphism) between non-local games, of a given type, 
is a partial quasi-order (resp. an equivalence relation). 

Section \ref{s_repSNS} is dedicated to the operator system representation of strongly no-signalling correlations. 
En route, we develop some basic multivariate tensor product theory in the operator system category, 
extending part of the work on bivariate operator systems tensor products in \cite{kptt}. 
Our results can be seen as a continuation of the characterisations of general correlation types 
in \cite{psstw, lmprsstw}.

In Section \ref{s_special}, we restrict our attention to synchronous games, a class of non-local games
first studied in \cite{psstw} and having gained prominence through a number of recent 
developments (see e.g. \cite{amrssv, dp, dpp1, dpp, hmps, jnvwy, kps, pr}). 
The usual synchronicity condition for a correlation \cite{psstw} needs to be adapted 
for the case of the super-game under consideration; this leads to the definition of 
jointly synchronous correlations.
The main result in this section is Theorem 8.3, which contains a tracial representation of 
jointly synchronous correlations, continuing the tracial characterisation thread from
\cite{psstw} and \cite{kps}. 
The strategy transport from Section \ref{s_nonlocal} specialises in the synchronous case to 
transport of traces, leading to a necessary condition on the tracial state spaces of the game 
algebras (see \cite{hmps}) whose games are quasi-homomorphic.

We point out that the work \cite{bkm} establishes a simulation scheme between 
contextuality scenarios in the sense of \cite{afls}, using the simulation paradigm in an identical way to the 
one utilised here. 
Noticing that contextuality scenarios have as their base ingredients some underlying hypergraphs, 
we see that \cite[Definition 17]{bkm} coincides with our definition of 
local hypergraph quasi-homomorphism. 
While the authors of \cite{bkm} adopt a general categorical perspective and are mainly interested in 
the consequences for generalised probabilistic theories, 
our emphasis is placed on the  
hierarchy of the different concepts one obtains by varying the available correlation resources, 
and their operator theoretic characterisation. 
As a result, we make use of, and augment as necessary, 
the existing operator algebraic techniques in the area.

%Our terminology is standard, and 
%we refer to \cite{lmprsstw} for the quantum information intuition behind it. 
We use various concepts and results from operator space theory, and
refer the reader to the monographs \cite{Pa} and \cite{Pi} for the general background. 

\smallskip

In the remainder of this section we set notation, to be used throughout the paper. 
For a finite set $X$, we let $\bb{C}^X = \oplus_{x\in X}\bb{C}$ and write $(e_x)_{x\in X}$ for the canonical 
orthonormal basis of $\bb{C}^X$. 
We denote by 
$M_X$ the algebra of all complex matrices over $X\times X$, 
and by $\cl D_X$ its subalgebra of all diagonal matrices. 
We write $\epsilon_{x,x'}$, $x,x'\in X$, for the canonical matrix units in $M_X$, denote by 
$\Tr$ the trace functional on $M_X$, and set $\langle S,T\rangle = \Tr(ST)$. 
We denote by $\cl V\otimes \cl W$ the algebraic tensor product of vector spaces $\cl V$ and $\cl W$, 
except when $\cl V$ and $\cl W$ are Hilbert spaces, in which case the notation is used for their Hilbertian 
tensor product. 
Given sets $X_i$, $i = 1,\dots,n$, we abbreviate 
$X_1\cdots X_n = X_1 \times \cdots \times X_n$, and 
write $M_{X_1\cdots X_n} = \otimes_{i=1}^n M_{X_i}$ and $\cl D_{X_1\cdots X_n} = \otimes_{i=1}^n \cl D_{X_i}$. 
We let $L_{\omega} : M_{X_1X_2}\to M_{X_1}$ be 
the slice map with respect to a given element $\omega\in M_{X_2}\equiv \cl L(\bb{C}^X)^*$; thus, 
$$L_{\omega}(T_1\otimes T_2) = \langle \omega,T_2\rangle T_1, \ \ T_i\in M_{X_i}, i = 1,2.$$
The partial trace ${\rm Tr}_{X_2} : M_{X_1X_2}\to M_{X_1}$ is the slice map with respect to the identity operator $I_{X_2}$
of $M_{X_2}$. 
Finally, for a Hilbert space $H$, we write $\cl B(H)$ for the 
C*-algebra of all bounded linear operators on $H$, and denote by 
$I_H$ the identity operator on $H$.

%%%%%%%%%%%%%%%%%%%%%%%%%%%%%%%%%%%%%%%%%%%%%%%%%
%%%%%%%%%%%%%%%%%%%%%%%%%%%%%%%%%%%%%%%%%%%%%%%%%
%%%%%%%%%%%%%%%%%%%%%%%%%%%%%%%%%%%%%%%%%%%%%%%%%

\section{General setup}\label{ss_gensetup}

A \emph{hypergraph} is a subset $E\subseteq V\times W$, where $V$ and $W$ are finite sets.
For $w\in W$, let $E(w) = \{v\in V : (v,w)\in E\}$. We refer to $V$ as the set of \emph{vertices} of $E$, 
and to $\{E(w) : w\in W\}$ as the set of its \emph{edges}. 
The hypergraph $E$ will be called {\it full} if for every $v\in V$
there exists $w\in W$ such that $(v,w)\in E$. 
The \emph{dual} of the hypergraph $E\subseteq V\times W$ is the hypergraph 
$$E^* := \{(w,v) : (v,w)\in E\}.$$

Let $V$ and $W$ be finite sets. 
A  \emph{(classical) information channel} from $V$ to $W$ is a positive trace preserving linear map 
$\cl E : \cl D_V \to \cl D_W$.
We write $V\stackrel{\cl E}{\mapsto} W$ and set 
$$\cl E(w|v) = \left\langle \cl E(\epsilon_{v,v}),\epsilon_{w,w} \right\rangle, \ \ \ v\in V, w\in W.$$
A channel $V\stackrel{\cl E}{\mapsto} W$ defines a hypergraph
$$E_{\cl E} = \{(v,w) \in V\times W : \cl E(w|v) > 0\}.$$
Given a hypergraph $E\subseteq V\times W$, we set 
$$\cl C(E) = \left\{V\stackrel{\cl E}{\mapsto} W \ : \ \mbox{ a channel  with } E_{\cl E}\subseteq E\right\};$$
if $\cl E\in \cl C(E)$, we say that $\cl E$ \emph{fits} $E$.  
We note that if $\cl C(E)\neq \emptyset$ then $E$ is full. 
The set $\cl C(V\times W)$ coincides with the set of all channels $V\stackrel{\cl E}{\mapsto} W$
and, if $E\subseteq V\times W$, then 
$\cl C(E)$ is a convex (with respect to the usual linear structure on the space of all linear maps 
from $\cl D_V$ to $\cl D_W$) subset of $\cl C(V\times W)$.

A channel $\cl E : \cl D_V\to \cl D_W$ is called \emph{unital} if $\cl E(I_V) = I_W$. If $\cl E$ is unital then  
$$|W| = {\rm Tr}(\cl E(I_V)) = {\rm Tr}(I_V) = |V|;$$
in this case it will be natural to assume that $V = W$. 
If $\cl E : \cl D_V\to \cl D_W$ is a unital channel then 
the map $\cl E^* : \cl D_W\to \cl D_V$, given by $\cl E^*(v|w) := \cl E(w|v)$ (or, equivalently, 
$$\langle \cl E(S),T\rangle = \langle S,\cl E^*(T)\rangle, \ \ \ S\in \cl D_V, T\in \cl D_W)$$
is also a channel. 

Let $V_i$ and $W_i$ be finite sets, $i = 1,2$. 
A \emph{no-signalling (NS) correlation} 
on the quadruple $(V_2,W_1,V_1,W_2)$ is an information channel 
$\Gamma : \cl D_{V_2W_1} \to \cl D_{V_1W_2}$
for which the \emph{marginal channels}
$$\Gamma_{V_2\to V_1} : \cl D_{V_2}\to \cl D_{V_1}, \ \ \ 
\Gamma_{V_2\to V_1}(v_1|v_2) := \sum_{w_2\in W_2} \Gamma(v_1,w_2 | v_2,w_1')$$
and 
$$\Gamma^{W_1\to W_2} : \cl D_{W_1}\to \cl D_{W_2}, \ \ \ 
\Gamma^{W_1\to W_2}(w_2|w_1) := \sum_{v_1\in V_1} \Gamma(v_1,w_2 | v_2',w_1)$$
are well-defined (independently of the choice of $w_1'$ and $v_2'$). 
In the sequel, when there is no risk of confusion, the indicating sub/superscripts in the notation 
for the marginal channels will be dropped. 
We denote by 
$\cl C_{\rm ns}$ the collection of all NS correlations (the quadruple $(V_2,W_1,V_1,W_2)$ 
will usually be understood from the context).

%Included specification that H_{V}, H_{W} must be finite dimensional
A \emph{positive operator-valued measure (POVM)} is a (finite) family $(E_i)_{i=1}^k$ of 
positive operators acting on a Hilbert space $H$ such that $\sum_{i=1}^k E_i = I$. 
An NS correlation $\Gamma : \cl D_{V_2W_1} \to \cl D_{V_1W_2}$ is called \emph{quantum commuting}
if there exists a Hilbert space $H$, a unit vector $\xi\in H$ and POVM's
$(E_{v_2,v_1})_{v_1\in V_1}$, $v_2\in V_2$, and $(F_{w_1,w_2})_{w_2\in W_2}$, $w_1\in W_1$, such that
$$\Gamma(v_1,w_2 | v_2,w_1) = \langle E_{v_2,v_1}F_{w_1,w_2}\xi,\xi\rangle, \ \ \ v_i\in V_i, w_i\in W_i, i = 1,2.$$
We call $\Gamma$ \emph{quantum} if it is quantum commuting and $H$ can be chosen of the form 
$H = H_V\otimes H_W$ in such a way that $H_{V}, H_{W}$ are finite dimensional, $E_{v_2,v_1} = E'_{v_2,v_1}\otimes I_{W}$ and 
$F_{w_1,w_2} = I_V\otimes F'_{w_1,w_2}$, $v_i\in V_i$, $w_i\in W_i$, $i = 1,2$. 
The correlation $\Gamma$ is \emph{approximately quantum} if it is the limit of quantum correlations, and 
\emph{local} if it is a convex combination of correlations of the form $\Gamma_V\otimes \Gamma_W$,
where $\Gamma_{V} : \cl D_{V_2}\to \cl D_{V_1}$ and $\Gamma_{W} : \cl D_{W_1}\to \cl D_{W_2}$
are information channels. 
We refer the reader to \cite{lmprsstw, psstw} for further details, and denote 
the subclasses of 
local, quantum, approximately quantum and quantum commuting NS correlations
by $\cl C_{\rm loc}$, $\cl C_{\rm q}$, $\cl C_{\rm qa}$ and $\cl C_{\rm qc}$, respectively.
We point out the inclusions
\begin{equation}\label{eq_chain}
\cl C_{\rm loc} \subseteq \cl C_{\rm q} \subseteq \cl C_{\rm qa} \subseteq \cl C_{\rm qc}
\subseteq \cl C_{\rm ns},
\end{equation}
all of them strict:
$\cl C_{\rm loc} \neq \cl C_{\rm q}$ is the Bell Theorem \cite{bell},  
$\cl C_{\rm q} \neq \cl C_{\rm qa}$ is a negative answer to the weak Tsirelson Problem
\cite{slofstra} (see also \cite{dpp, slofstra_JAMS}), 
and $\cl C_{\rm qa} \neq \cl C_{\rm qc}$ -- in view of \cite{fritz, jnpp, oz}, 
a negative answer to the announced solution of the
Connes Embedding Problem \cite{jnvwy}.

A \emph{non-local game}  on the quadruple $(V_2,W_1,V_1,W_2)$ is a hypergraph 
$\Lambda\subseteq V_2W_1 \times V_1W_2$.
For a non-local game $\Lambda$ and 
${\rm t}\in \{{\rm loc}, {\rm q}, {\rm qa}, {\rm qc}, {\rm ns}\}$,  
we write 
$\cl C_{\rm t}(\Lambda) = \cl C_{\rm t}\cap \cl C(\Lambda)$. 
The elements of $\cl C_{\rm t}(\Lambda)$ will be referred to as 
\emph{perfect ${\rm t}$-strategies} of the game $\Lambda$, and the set 
$V_2W_1$ (resp. $V_1W_2$) as the \emph{question} (resp. \emph{answer}) set for the two \emph{players}
of the game $\Lambda$. 
Note that non-local games are usually defined as a tuple $(V_2,W_1,V_1,W_2,\lambda)$, 
where $\lambda : V_2 \times W_1 \times V_1 \times W_2 \to \{0,1\}$ is a function 
(referred to as a \emph{rule function}); 
in the above definition, we have identified $\Lambda$ with the support of $\lambda$.

We recall the \emph{simulation paradigm} in information theory \cite{clmw}. 
Given an NS correlation $\Gamma$ on the quadruple 
$(V_2,W_1,V_1,W_2)$
and a channel $ V_1\stackrel{\cl E}{\mapsto} W_1$, let 
$\Gamma[\cl E] : \cl D_{V_2} \to \cl D_{W_2}$ be the linear map, given by 
\begin{equation}\label{eq_G[E]}
\Gamma[\cl E](w_2|v_2) = \sum_{v_1\in V_1} \sum_{w_1\in W_1} 
\Gamma(v_1,w_2|v_2,w_1) \cl E(w_1|v_1).
\end{equation}
It is straightforward to check that $\Gamma[\cl E]$ is a channel (see \cite{clmw}) and that 
the map $\cl E\to \Gamma[\cl E]$ is 
an affine map from $\cl C(V_1\times W_1)$ into $\cl C(V_2\times W_2)$.
We say that a channel $\cl F :  \cl D_{V_2} \to \cl D_{W_2}$ 
is \emph{simulated by $\cl E$ with the assistance of} $\Gamma$ \cite{clmw} if $\cl F = \Gamma[\cl E]$,
we call $\Gamma$ a \emph{simulator}, and we write 
$(V_1\mapsto W_1)\stackrel{\Gamma}{\to} (V_2\mapsto W_2)$.
The simulation procedure is illustrated by the diagram:

\begin{center}
\begin{tabular}{ccccccc} 
%\hline
%%%%% New row
& 
& 
& 
& 
$V_1$
& 
$\stackrel{\cl E}{\longrightarrow}$
& 
$W_1$
\\ 
%%%%% New row
& 
%$\Gamma[\cl E](w_2|v_2) = \sum_{v_1,w_1} \Gamma(v_1,w_2 | v_2,w_1)\cl E(w_1|v_1)$.
& 
& 
& 
\hspace{-0.2cm}
$\UpArrow[0.8cm][>=stealth,black, thick, dashed]$
& 
&
\hspace{-0.3cm}
$\DownArrow[0.8cm][>=stealth,black, thick, dashed]$
%$\Big\downarrow$ 
\\ 
%%%%% New row
& 
& 
& 
& 
$V_2$
& 
$\stackrel{\Gamma[\cl E]}{\longrightarrow}$
& 
$W_2$.
\\ 
\end{tabular}
\end{center}

Suppose that $(V_1\mapsto W_1)\stackrel{\Gamma_1}{\to} (V_2\mapsto W_2)$ and 
$(V_2\mapsto W_2)\stackrel{\Gamma_2}{\to} (V_3\mapsto W_3)$, and define the linear map
$\Gamma_2\ast \Gamma_1 : \cl D_{V_3 W_1}\to \cl D_{V_1 W_3}$ by letting
$$(\Gamma_2 \ast \Gamma_1)(v_1,w_3 | v_3,w_1) 
= \sum_{v_2\in V_2} \sum_{w_2\in W_2}
\Gamma_1(v_1,w_2 | v_2,w_1) \Gamma_2(v_2,w_3 | v_3,w_2).$$
% Changed the domain for the map above- should be from the Cartesian product of the sets if I am not mistaken, and not the set-wise product. 10/29

\begin{theorem}\label{th_comsimu}
If $(V_1\mapsto W_1)\stackrel{\Gamma_1}{\to} (V_2\mapsto W_2)$ and 
$(V_2\mapsto W_2)\stackrel{\Gamma_2}{\to} (V_3\mapsto W_3)$
then 
$$(V_1\mapsto W_1)\stackrel{\Gamma_2 \ast \Gamma_1}{\longrightarrow} (V_3\mapsto W_3).$$
Moreover, 
\begin{itemize}
\item[(i)] 
if ${\rm t}\in \{{\rm loc}, {\rm q}, {\rm qa}, {\rm qc}, {\rm ns}\}$
and $\Gamma_i\in \cl C_{\rm t}$, $i = 1,2$, then $\Gamma_2\ast \Gamma_1\in \cl C_{\rm t}$; 

\item[(ii)] 
if $\cl E \in \cl C(V_1\times W_1)$ then $(\Gamma_2 \ast \Gamma_1)[\cl E] = \Gamma_2 [\Gamma_1[\cl E]]$.
\end{itemize}
\end{theorem}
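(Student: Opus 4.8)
The plan is to check, in order, that $\Gamma_2\ast\Gamma_1$ is a well-defined channel, that it is no-signalling (these two points simultaneously yield the unlabelled conclusion and the case ${\rm t}={\rm ns}$ of (i)), then part (ii), and finally the remaining cases of part (i). That $\Gamma_2\ast\Gamma_1$ is a channel is almost formal: each entry $(\Gamma_2\ast\Gamma_1)(v_1,w_3\mid v_3,w_1)$ is a finite sum of products of non-negative numbers, so the induced map on $\cl D_{V_3W_1}$ is positive; and summing over the output pair $(v_1,w_3)$ and collapsing the two factors to their marginal channels (whose columns sum to $1$) gives trace preservation. The no-signalling conditions come from the same collapse performed only partially: summing $(\Gamma_2\ast\Gamma_1)(v_1,w_3\mid v_3,w_1)$ over $w_3$ replaces the $\Gamma_2$-factor by $(\Gamma_2)_{V_3\to V_2}(v_2\mid v_3)$ (independent of $w_2$), and the remaining inner sum over $w_2$ replaces the $\Gamma_1$-factor by $(\Gamma_1)_{V_2\to V_1}(v_1\mid v_2)$ (independent of $w_1$); hence $(\Gamma_2\ast\Gamma_1)_{V_3\to V_1}=(\Gamma_1)_{V_2\to V_1}\circ(\Gamma_2)_{V_3\to V_2}$ is well-defined, and symmetrically $(\Gamma_2\ast\Gamma_1)^{W_1\to W_3}=(\Gamma_2)^{W_2\to W_3}\circ(\Gamma_1)^{W_1\to W_2}$. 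The only hypothesis genuinely used here is the no-signalling property of $\Gamma_1$ and $\Gamma_2$: it is precisely what makes these intermediate sums independent of the other party's variable. Thus $\Gamma_2\ast\Gamma_1\in\cl C_{\rm ns}$ and $(V_1\mapsto W_1)\stackrel{\Gamma_2\ast\Gamma_1}{\longrightarrow}(V_3\mapsto W_3)$.

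Part (ii) is a reindexing of finite sums: substituting the definition of $\Gamma_2\ast\Gamma_1$ into the formula for $(\cdot)[\cl E]$, and separately the definition of $\Gamma_1[\cl E]$ into $\Gamma_2[\Gamma_1[\cl E]]$, one finds that both $(\Gamma_2\ast\Gamma_1)[\cl E](w_3\mid v_3)$ and $\Gamma_2[\Gamma_1[\cl E]](w_3\mid v_3)$ equal $\sum_{v_1,w_1,v_2,w_2}\Gamma_1(v_1,w_2\mid v_2,w_1)\,\Gamma_2(v_2,w_3\mid v_3,w_2)\,\cl E(w_1\mid v_1)$; the iterated expression $\Gamma_2[\Gamma_1[\cl E]]$ is legitimate because $\Gamma_1[\cl E]$ is a channel, by the remark preceding the theorem. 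For part (i) with ${\rm t}={\rm loc}$, writing $\Gamma_i$ as a convex combination of product channels $\Gamma_{i,V}\otimes\Gamma_{i,W}$, expanding $\ast$ and separating the $V$- and $W$-summations presents $\Gamma_2\ast\Gamma_1$ as a convex combination of channels $(\Gamma_{1,V}\circ\Gamma_{2,V})\otimes(\Gamma_{2,W}\circ\Gamma_{1,W})$, hence local.

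For ${\rm t}={\rm qc}$, represent $\Gamma_i$ on a Hilbert space $H_i$ with unit vector $\xi_i$ and commuting POVM families $(E^{(i)})$, $(F^{(i)})$, and work on $H_1\otimes H_2$ with the unit vector $\xi_1\otimes\xi_2$, setting
\[
G_{v_3,v_1}=\sum_{v_2}E^{(1)}_{v_2,v_1}\otimes E^{(2)}_{v_3,v_2},\qquad K_{w_1,w_3}=\sum_{w_2}F^{(1)}_{w_1,w_2}\otimes F^{(2)}_{w_2,w_3}.
\]
These operators are positive; the POVM identities $\sum_{v_1}G_{v_3,v_1}=I$ and $\sum_{w_3}K_{w_1,w_3}=I$ hold by telescoping the inner sums (using $\sum_{v_1}E^{(1)}_{v_2,v_1}=I$, $\sum_{v_2}E^{(2)}_{v_3,v_2}=I$, and the analogous identities for $F$); $(G_{v_3,v_1})$ and $(K_{w_1,w_3})$ commute because $(E^{(1)})$ commutes with $(F^{(1)})$ and $(E^{(2)})$ with $(F^{(2)})$; and expanding $\langle G_{v_3,v_1}K_{w_1,w_3}(\xi_1\otimes\xi_2),\xi_1\otimes\xi_2\rangle$ into a product of $H_1$- and $H_2$-inner products recovers exactly $(\Gamma_2\ast\Gamma_1)(v_1,w_3\mid v_3,w_1)$. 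For ${\rm t}={\rm q}$ the same formulas work once one conjugates by the canonical unitary regrouping $(H_{1,V}\otimes H_{1,W})\otimes(H_{2,V}\otimes H_{2,W})$ as $(H_{1,V}\otimes H_{2,V})\otimes(H_{1,W}\otimes H_{2,W})$, which keeps all dimensions finite and makes $G$ act on the first leg (tensored with the identity on the second) and $K$ on the second. Finally, ${\rm t}={\rm qa}$ follows by approximating each $\Gamma_i$ by quantum correlations and observing that the entries of $\Gamma_2\ast\Gamma_1$ depend bilinearly, hence continuously, on those of $\Gamma_1$ and $\Gamma_2$.

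I expect the only non-routine step to be the quantum-commuting (and hence also the plain quantum) case: verifying that the composite POVMs $G$ and $K$ commute relies on the elementwise commutation of the two POVM families in a $\cl C_{\rm qc}$-representation (part of the definition of $\cl C_{\rm qc}$), and the quantum case additionally needs care in tracking the tensor legs through the regrouping unitary. Everything else is bookkeeping with finite sums, where the main point of care is not to conflate the roles of the intermediate sets $V_2$ and $W_2$ as they appear in the two given simulations.
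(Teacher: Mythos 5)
Your proposal is correct and follows essentially the same route as the paper: the marginal computations for no-signalling and trace preservation, the reindexing for (ii), the convex-combination argument for the local case, the tensor-product POVMs $\sum_{v_2}E^{(1)}_{v_2,v_1}\otimes E^{(2)}_{v_3,v_2}$ and $\sum_{w_2}F^{(1)}_{w_1,w_2}\otimes F^{(2)}_{w_2,w_3}$ for the quantum commuting (and, after regrouping legs, the quantum) case, and continuity of $\ast$ for ${\rm qa}$ all match the paper's proof. The only cosmetic difference is that you record the marginal of $\Gamma_2\ast\Gamma_1$ explicitly as the composition of the two marginal channels, which the paper leaves implicit.
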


\begin{proof}
It is clear that the map $\Gamma_2 \ast \Gamma_1 : \cl D_{V_3W_1}\to \cl D_{V_1W_3}$ is positive. 
Fix $w_1\in W_1$. Then 
\begin{eqnarray*}
& & 
\sum_{w_3\in W_3} (\Gamma_2 \ast \Gamma_1)(v_1,w_3 | v_3,w_1)\\
& = & 
\sum_{w_3\in W_3} \sum_{v_2\in V_2} \sum_{w_2\in W_2}
\Gamma_1(v_1,w_2 | v_2,w_1) \Gamma_2(v_2,w_3 | v_3,w_2)\\
& = & 
\sum_{v_2\in V_2} \sum_{w_2\in W_2}
\Gamma_1(v_1,w_2 | v_2,w_1) \Gamma_2(v_2| v_3)
= 
\sum_{v_2\in V_2} 
\Gamma_1(v_1| v_2) \Gamma_2(v_2| v_3).
\end{eqnarray*}
Therefore, if $v_3\in V_3$ then 
\begin{eqnarray*}
\sum_{v_1\in V_1} \sum_{w_3\in W_3} (\Gamma_2 \ast \Gamma_1)(v_1,w_3 | v_3,w_1)
& = & 
\sum_{v_1\in V_1}
\sum_{v_2\in V_2}
\Gamma_1(v_1 | v_2) \Gamma_2(v_2| v_3)\\
& = & 
\sum_{v_2\in V_2} \Gamma_2(v_2| v_3) = 1.
\end{eqnarray*}
It follows that $\Gamma_2 \ast \Gamma_1$ is trace preserving and the marginal channel
$(\Gamma_2 \ast \Gamma_1)_{V_3\to V_1}$ is well-defined. By symmetry, so is 
the marginal channel $(\Gamma_2 \ast \Gamma_1)^{W_1\to W_3}$. 

\smallskip

(i) 
Assume that $\Gamma_i\in \cl C_{\rm qc}$, $i = 1,2$. 
Let $H$ (resp. $H'$) be a Hilbert space, 
$\xi \in H$ (resp. $\xi' \in H'$) a unit vector, 
and $(E_{v_{2}, v_{1}})_{v_{1} \in V_{1}}$ and $(F_{w_{1}, w_{2}})_{w_{2} \in W_{2}}$ 
(resp. $(E_{v_{3}, v_{2}}')_{v_{2} \in V_{2}}$ and $(F_{w_{2}, w_{3}}')_{w_{3} \in W_{3}}$) be 
families of POVM's such that $E_{v_{2}, v_{1}}F_{w_{1}, w_{2}} = F_{w_{1}, w_{2}}E_{v_{2}, v_{1}}$
(resp. $E_{v_{3}, v_{2}}'F_{w_{2}, w_{3}}' = F_{w_{2}, w_{3}}' E_{v_{3}, v_{2}}'$)
for all $v_i\in V_i$, $w_i\in W_i$, $i = 1,2,3$, and 
$$\Gamma_1\hspace{-0.02cm}(v_{1},\hspace{-0.03cm} w_{2}|v_{2},\hspace{-0.03cm} w_{1}\hspace{-0.02cm}) 
\hspace{-0.07cm}=\hspace{-0.07cm}\langle E_{v_{2}, v_{1}}F_{w_{1}, w_{2}}\xi, \xi\rangle, \ 
\Gamma_2\hspace{-0.02cm}(v_{2},\hspace{-0.03cm} w_{3}|v_{3},\hspace{-0.03cm} w_{2}\hspace{-0.02cm}) \hspace{-0.07cm}=\hspace{-0.07cm} 
\langle E_{v_{3}, v_{2}}'F_{w_{2}, w_{3}}'\xi', \xi'\rangle\hspace{-0.04cm},$$
for all $v_{i} \in V_{i}, w_{i} \in W_{i}$, $i = 1, 2, 3$. 
Let $H'' = H \otimes H', \xi'' = \xi\otimes \xi'$, and 
$$E_{v_{3}, v_{1}}'' = \sum\limits_{v_{2} \in V_{2}}E_{v_{2}, v_{1}}\otimes E_{v_{3}, v_{2}}' \ \mbox{ and } \ 
F_{w_{1}, w_{3}}'' = \sum\limits_{w_{2} \in W_{2}}F_{w_{1}, w_{2}}\otimes F_{w_{2}, w_{3}}'.$$
It is clear that 
$E_{v_{3}, v_{1}}''F_{w_{1}, w_{3}}'' = F_{w_{1}, w_{3}}''E_{v_{3}, v_{1}}''$ for all $v_i\in V_i$, $w_i\in W_i$, 
$i = 1,3$.
In addition, if $v_{3} \in V_{3}$ then 
$$	\sum\limits_{v_{1} \in V_{1}}E_{v_{3}, v_{1}}''
= 
	 \sum\limits_{v_{1} \in V_{1}}\sum\limits_{v_{2} \in V_{2}}E_{v_{2}, v_{1}}\otimes E_{v_{3}, v_{2}}'
= 
	\sum\limits_{v_{2} \in V_{2}}I \otimes E_{v_{3}, v_{2}}' = I.$$
Thus, $(E_{v_{3}, v_{1}}'')_{v_{1}}$ is a POVM for every $v_{3} \in V_{3}$. 
Similarly, $(F_{w_{1}, w_{3}}'')_{w_{3}}$ is a POVM for every $w_{1} \in W_{1}$. Finally, we see
\begin{eqnarray*}
& &
	\langle E_{v_{3}, v_{1}}''F_{w_{1}, w_{3}}''\xi'', \xi'' \rangle\\
& = &
	\sum\limits_{v_{2} \in V_{2}}\sum\limits_{w_{2} \in W_{2}}\langle (E_{v_{2}, v_{1}}\otimes E_{v_{3}, v_{2}}')(F_{w_{1}, w_{2}}\otimes F_{w_{2}, w_{3}}')(\xi\otimes \xi'), (\xi\otimes \xi')\rangle\\
& = &
	\sum\limits_{v_{2} \in V_{2}}\sum\limits_{w_{2} \in W_{2}}\langle E_{v_{2}, v_{1}}F_{w_{1}, w_{2}}\xi, \xi\rangle \langle E_{v_{3}, v_{2}}'F_{w_{2}, w_{3}}'\xi', \xi'\rangle \\
& = &
	\sum\limits_{v_{2} \in V_{2}}\sum\limits_{w_{2} \in W_{2}} \hspace{-0.15cm}
	\Gamma_1(v_{1},\hspace{-0.02cm} w_{2}|v_{2},\hspace{-0.02cm} w_{1}) 
	\Gamma_2(v_{2},\hspace{-0.02cm} w_{3}|v_{3},\hspace{-0.02cm} w_{2}) 
	\hspace{-0.08cm} =\hspace{-0.08cm} 
	(\Gamma_2 \hspace{-0.05cm}\ast \hspace{-0.05cm}\Gamma_1)(v_{1},\hspace{-0.02cm} w_{3}|v_{3},\hspace{-0.02cm} w_{1}).
\end{eqnarray*}
This implies that $\Gamma_2\hspace{-0.02cm} \ast\hspace{-0.02cm} \Gamma_1 \in \cl C_{\rm qc}$. 
The cases where $\rm t = \rm q$ and $\rm t = \rm loc$ are similar.
Finally, the case $\rm t = {\rm qa}$ follows from the case $\rm t = \rm q$ and the continuity of the operation 
$(\Gamma_1,\Gamma_2) \to \Gamma_2\hspace{-0.05cm} \ast\hspace{-0.05cm} \Gamma_1$. 

\smallskip

(ii) We have 
\begin{eqnarray*}
& & (\Gamma_2 \ast \Gamma_1)[\cl E](w_3 | v_3) \\
& = & 
\sum_{v_1\in V_1} \sum_{w_1\in W_1}(\Gamma_2 \ast \Gamma_1)(v_1,w_3 | v_3,w_1)\cl E(w_1|v_1)\\
& = & 
\sum_{v_1\in V_1} \sum_{w_1\in W_1}\sum_{v_2\in V_2} \sum_{w_2\in W_2}
\Gamma_1(v_1,w_2 | v_2,w_1) \Gamma_2(v_2,w_3 | v_3,w_2) \cl E(w_1|v_1)\\
& = & 
\sum_{v_2\in V_2} \sum_{w_2\in W_2} \Gamma_2(v_2,w_3 | v_3,w_2)\Gamma_1[\cl E](w_2 | v_2) 
= 
\Gamma_2 [\Gamma_1[\cl E]](w_3 | v_3).
\end{eqnarray*}
\end{proof}

%%%%%%%%%%%%%%%%%%%%%%%%%%%%%%%%%%%%%%%%%%%%%%%%%
%%%%%%%%%%%%%%%%%%%%%%%%%%%%%%%%%%%%%%%%%%%%%%%%%
%%%%%%%%%%%%%%%%%%%%%%%%%%%%%%%%%%%%%%%%%%%%%%%%%

\section{Quasi-homomorphism games}\label{s_hyper}

In this section, we study an auxiliary notion, 
which will be specialised to the main cases of interest in 
Section \ref{s_hypis}. 
Let $E_1\subseteq V_1\times W_1$ and $E_2\subseteq V_2\times W_2$.
We write 
$$E_1 \hspace{-0.1cm} \leadsto\hspace{-0.1cm} E_2 
\ = \ \left\{(v_2,w_1,v_1,w_2) : (v_1,w_1)\in E_1 \Rightarrow (v_2,w_2)\in E_2\right\};$$
thus, 
$$E_1 \hspace{-0.1cm} \leadsto\hspace{-0.1cm} E_2 = \left\{(v_2,w_1,v_1,w_2) : (v_1,w_1)\in E_1^c  \mbox{ or } 
(v_1,w_1,v_2,w_2)\in E_1\hspace{-0.1cm}\times \hspace{-0.07cm} E_2\right\}.$$
We consider $E_1\hspace{-0.1cm} \leadsto\hspace{-0.1cm} E_2$ as a hypergraph in 
$V_2 W_1\times V_1W_2$, and hence as a game with question and answer sets $V_2 \times W_1$ and 
$V_1 \times W_2$, respectively.

\begin{definition}\label{d_homom}
Let $E_i \subseteq V_i\times W_i$ be a hypergraph, $i = 1,2$, and 
${\rm t}\in \{{\rm loc}, {\rm q}, {\rm qa}, {\rm qc}, {\rm ns}\}$.
We say that 
$E_1$ is \emph{${\rm t}$-quasi-homomorphic} to $E_2$ (denoted $E_1 \leadsto_{\rm t} E_2$)
if $\cl C_{\rm t}(E_1 \hspace{-0.1cm} \leadsto \hspace{-0.1cm} E_2) \neq \emptyset$.
\end{definition}

If $\Gamma\in \cl C_{\rm t}(E_1 \hspace{-0.1cm} \leadsto \hspace{-0.1cm} E_2)$,
we say that $E_1\leadsto_{\rm t} E_2$ \emph{via} $\Gamma$.

\begin{proposition}\label{p_mapping}
Let $E_i \subseteq V_i\times W_i$ be a full hypergraph, $i = 1,2$, and $\Gamma$
be an NS correlation over $(V_2,W_1,V_1,W_2)$. 
Then $E_1 \leadsto_{\rm ns} E_2$ via $\Gamma$ if and only if 
$\cl E\to \Gamma[\cl E]$ restricts to a well-defined affine map from $\cl C(E_1)$ into $\cl C(E_2)$.
\end{proposition}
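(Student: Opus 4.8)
The plan is to reduce both sides of the equivalence to elementary conditions on supports and then argue each implication by support-chasing, with a single short construction carrying the only real content. First I would unwind the left-hand side: by Definition \ref{d_homom} (taking ${\rm t} = {\rm ns}$) together with the definition of $\cl C(\cdot)$, the statement "$E_1 \leadsto_{\rm ns} E_2$ via $\Gamma$" means $E_\Gamma \subseteq E_1 \leadsto E_2$, which unpacks to the implication
$$\Gamma(v_1,w_2|v_2,w_1) > 0 \ \text{ and } \ (v_1,w_1)\in E_1 \ \Longrightarrow \ (v_2,w_2)\in E_2$$
for all $v_i\in V_i$, $w_i\in W_i$, $i=1,2$. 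On the right-hand side, since $\cl E\mapsto\Gamma[\cl E]$ is already known to be an affine map from $\cl C(V_1\times W_1)$ into $\cl C(V_2\times W_2)$ and $\cl C(E_1)$ is a convex subset of $\cl C(V_1\times W_1)$, the assertion that this map restricts to a well-defined affine map from $\cl C(E_1)$ into $\cl C(E_2)$ is equivalent to the single statement that $\Gamma[\cl E]\in\cl C(E_2)$ for every $\cl E\in\cl C(E_1)$. Thus the proposition reduces to the equivalence of the two boxed/displayed conditions.

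For the forward direction I would assume the displayed implication and fix $\cl E\in\cl C(E_1)$; by the remarks in Section \ref{ss_gensetup}, $\Gamma[\cl E]$ is a channel, so it remains only to check $E_{\Gamma[\cl E]}\subseteq E_2$. If $\Gamma[\cl E](w_2|v_2) > 0$, then by \eqref{eq_G[E]} some summand is positive, producing a pair $(v_1,w_1)$ with $\Gamma(v_1,w_2|v_2,w_1) > 0$ and $\cl E(w_1|v_1) > 0$; the latter forces $(v_1,w_1)\in E_{\cl E}\subseteq E_1$, and then the implication gives $(v_2,w_2)\in E_2$. Hence $\Gamma[\cl E]\in\cl C(E_2)$, as required.

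For the converse I would argue by contraposition. If the implication fails, there is a quadruple $(v_2,w_1,v_1,w_2)$ with $\Gamma(v_1,w_2|v_2,w_1) > 0$, $(v_1,w_1)\in E_1$, and $(v_2,w_2)\notin E_2$. Using that $E_1$ is full, I would choose for each $v\in V_1$ some $w(v)\in W_1$ with $(v,w(v))\in E_1$, arranging that $w(v_1) = w_1$, and let $\cl E$ be the deterministic channel determined by $\cl E(\epsilon_{v,v}) = \epsilon_{w(v),w(v)}$; then $\cl E\in\cl C(E_1)$ and $\cl E(w_1|v_1) = 1$, so \eqref{eq_G[E]} gives $\Gamma[\cl E](w_2|v_2)\ge\Gamma(v_1,w_2|v_2,w_1)\,\cl E(w_1|v_1) > 0$ while $(v_2,w_2)\notin E_2$, i.e. $\Gamma[\cl E]\notin\cl C(E_2)$. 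The entire argument is bookkeeping about supports; the one step that needs even a minimal idea is the construction of this witness channel, and that is precisely where the hypothesis that $E_1$ is full is used — so the "main obstacle" is really just making sure that $\cl C(E_1)$ contains deterministic channels realizing any prescribed edge of $E_1$.
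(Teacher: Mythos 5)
Your proposal is correct and follows essentially the same route as the paper's proof: the forward direction is the same support-chasing (stated contrapositively), and for the converse you construct explicitly the deterministic channel whose existence the paper simply notes is guaranteed by the fullness of $E_1$, with the affineness being automatic in both treatments since $\cl E\mapsto\Gamma[\cl E]$ is already affine on all of $\cl C(V_1\times W_1)$.
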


\begin{proof}
Assume that $E_1 \leadsto_{\rm ns} E_2$ via $\Gamma$ and let $\cl E\in \cl C(E_1)$. 
Suppose that $(v_2,w_2)\not\in E_2$. 
If $\cl E(w_1|v_1) \neq 0$ then $(v_1,w_1)\in E_1$ and hence, since $\Gamma$ fits 
$E_1 \hspace{-0.1cm} \leadsto \hspace{-0.1cm} E_2$, we have that $\Gamma(v_1,w_2|v_2,w_1) = 0$.
It follows that $\Gamma[\cl E](w_2|v_2) = 0$; thus, $E_{\Gamma[\cl E]}\subseteq E_2$.

Conversely, assume that $\cl E\to \Gamma[\cl E]$ restricts to a well-defined map from 
$\cl C(E_1)$ into $\cl C(E_2)$. Suppose that $(v_1,w_1)\in E_1$ and $(v_2,w_2)\not\in E_2$.
Let $\cl E : \cl D_{V_1}\to \cl D_{W_1}$ be any channel that fits $E_1$ such that 
$\cl E(w_1|v_1) = 1$ (note that the existence of such a channel is guaranteed by the fact that $E_1$ is full). 
We have that 
\begin{equation}\label{eq_sumnog}
\Gamma[\cl E](w_2|v_2) = \Gamma(v_1,w_2|v_2,w_1) + \sum_{(v_1',w_1')\neq (v_1,w_1)}
\Gamma(v_1',w_2|v_2,w_1')\cl E(w_1'|v_1').
\end{equation}
Since $(v_2,w_2)\not\in E_2$, we have that $\Gamma[\cl E](w_2|v_2) = 0$. 
Now (\ref{eq_sumnog}) implies that $\Gamma(v_1,w_2|v_2,w_1) = 0$; 
thus, $\Gamma$ fits $E_1 \hspace{-0.1cm} \leadsto \hspace{-0.1cm} E_2$, that is, 
$E_1\leadsto_{\rm ns} E_2$ via $\Gamma$.
\end{proof}

\begin{theorem}\label{th_ordeqre}
Let ${\rm t}\in \{{\rm loc},{\rm q}, {\rm qa}, {\rm qc}, {\rm ns}\}$.
The relation $\leadsto_{\rm t}$ is a quasi-order on the set of all hypergraphs.
\end{theorem}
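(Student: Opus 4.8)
The plan is to verify the two defining properties of a quasi-order: reflexivity and transitivity of $\leadsto_{\rm t}$ on the collection of all hypergraphs.

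For reflexivity, given $E\subseteq V\times W$, I would exhibit an explicit element of $\cl C_{\rm t}(E\leadsto E)$. The natural candidate is the ``identity'' no-signalling correlation: the channel $\Gamma_{\id}:\cl D_{V_2W_1}\to\cl D_{V_1W_2}$ (here $V_1=V_2=V$ and $W_1=W_2=W$) given by $\Gamma_{\id}(v_1,w_2\mid v_2,w_1)=\delta_{v_1,v_2}\,\delta_{w_1,w_2}$. This is local (it is the product of the two identity channels $\cl D_V\to\cl D_V$ and $\cl D_W\to\cl D_W$), hence lies in $\cl C_{\rm t}$ for every ${\rm t}$ by the inclusion chain \eqref{eq_chain}. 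It remains to check that $\Gamma_{\id}$ fits $E\leadsto E$, i.e. that $\Gamma_{\id}(v_1,w_2\mid v_2,w_1)>0$ forces $(v_1,w_1)\in E\Rightarrow(v_2,w_2)\in E$; but positivity of the entry forces $v_1=v_2$ and $w_1=w_2$, so the implication is trivially true. Thus $E\leadsto_{\rm t}E$ via $\Gamma_{\id}$. (One should note $\cl C_{\rm t}(E\leadsto E)$ is automatically nonempty only when $E$ is full, which is part of the standing context; if one wants reflexivity literally on all hypergraphs the statement is understood with the convention that a channel fitting $E$ exists, and I would either restrict to full hypergraphs or simply remark that $E\leadsto E$ holds whenever $\cl C(E)\neq\emptyset$.)

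For transitivity, suppose $E_1\leadsto_{\rm t}E_2$ via $\Gamma_1$ and $E_2\leadsto_{\rm t}E_3$ via $\Gamma_2$, where $E_i\subseteq V_i\times W_i$. I would form the composite simulator $\Gamma_2\ast\Gamma_1$ of Theorem~\ref{th_comsimu}, which lies in $\cl C_{\rm t}$ by part~(i) of that theorem, and claim it witnesses $E_1\leadsto_{\rm t}E_3$. The cleanest route is to invoke Proposition~\ref{p_mapping}: since $\Gamma_1$ fits $E_1\leadsto E_2$, the map $\cl E\mapsto\Gamma_1[\cl E]$ sends $\cl C(E_1)$ into $\cl C(E_2)$, and similarly $\cl E\mapsto\Gamma_2[\cl E]$ sends $\cl C(E_2)$ into $\cl C(E_3)$; by Theorem~\ref{th_comsimu}(ii), $(\Gamma_2\ast\Gamma_1)[\cl E]=\Gamma_2[\Gamma_1[\cl E]]$, so the composite carries $\cl C(E_1)$ into $\cl C(E_3)$, and Proposition~\ref{p_mapping} (applied in the ${\rm ns}$ direction, which suffices since fitting the hypergraph is a type-independent condition) gives $E_1\leadsto_{\rm ns}E_3$ via $\Gamma_2\ast\Gamma_1$, hence $E_1\leadsto_{\rm t}E_3$ since $\Gamma_2\ast\Gamma_1\in\cl C_{\rm t}$. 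Alternatively one can argue directly from the support condition: if $(\Gamma_2\ast\Gamma_1)(v_1,w_3\mid v_3,w_1)>0$ and $(v_1,w_1)\in E_1$, then some term $\Gamma_1(v_1,w_2\mid v_2,w_1)\Gamma_2(v_2,w_3\mid v_3,w_2)>0$, whence $(v_2,w_2)\in E_2$ from the first fit and then $(v_3,w_3)\in E_3$ from the second.

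The only genuine subtlety — and the step to be careful about — is the interplay with fullness and the convention in Definition~\ref{d_homom}: $\cl C_{\rm t}(E_1\leadsto E_2)\neq\emptyset$ should be read together with the remark that a nonempty $\cl C(\,\cdot\,)$ forces fullness, and one should confirm that $E_1\leadsto E_2$ being full (equivalently, admitting a fitting channel) is exactly what makes the reflexivity and composition arguments go through without vacuity issues. Modulo that bookkeeping, both properties reduce immediately to Theorem~\ref{th_comsimu} and Proposition~\ref{p_mapping}, so there is no real obstacle; the proof is short.
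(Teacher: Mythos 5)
Your proposal is correct and follows essentially the same route as the paper: reflexivity via the identity channel (a local correlation), and transitivity via the composite simulator $\Gamma_2\ast\Gamma_1$, whose membership in $\cl C_{\rm t}$ comes from Theorem \ref{th_comsimu}(i) and whose fit with $E_1\leadsto E_3$ is verified exactly by your direct support argument. Two small remarks: the detour through Proposition \ref{p_mapping} is better avoided, since the direction you would need (from the well-defined map back to the fitting of the game hypergraph) carries fullness hypotheses that the theorem does not assume, whereas your direct argument needs nothing; likewise the fullness caveat in your reflexivity step is unnecessary, because the identity correlation fits $E\leadsto E$ for every hypergraph $E$ — its support consists of tuples $(v,w,v,w)$, for which the defining implication is a tautology.
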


\begin{proof}
Fix ${\rm t}\in \{{\rm loc},{\rm q}, {\rm qa}, {\rm qc}, {\rm ns}\}$.
Reflexivity follows from the fact that the identity channel is a local correlation. 
Suppose that $E_1$, $E_2$ and $E_3$ are hypergraphs such that 
$E_1\leadsto_{\rm t} E_2$ via $\Gamma_1$ and $E_2\leadsto_{\rm t} E_3$ via $\Gamma_2$. 
By Theorem \ref{th_comsimu}, $\Gamma_2 \ast \Gamma_1 \in \cl C_{\rm t}$. 
It suffices to show that $\Gamma_2 \ast \Gamma_1$ fits $E_1\to E_3$. 
Suppose that $(v_1,w_1)\in E_1$ and 
$(\Gamma_2 \ast \Gamma_1)(v_1, w_3|v_3,w_1)\neq 0$. 
Then there exists $(v_2,w_2)\in V_2\times W_2$ such that 
$$\Gamma_1(v_1,w_2 | v_2,w_1) \neq 0 \ \mbox{ and } \ \Gamma_2(v_2,w_3 | v_3,w_2) \neq 0.$$
Since $\Gamma_1$ fits $E_1 \hspace{-0.1cm} \leadsto \hspace{-0.1cm} E_2$, we have that $(v_2,w_2)\in E_2$, and since
$\Gamma_2$ fits $E_2 \hspace{-0.1cm} \leadsto \hspace{-0.1cm} E_3$, 
we conclude that $(v_3,w_3)\in E_3$. The proof is complete. 
\end{proof}

Let $E_1 \subseteq V_1 \times W_1$ and $E_2 \subseteq V_2\times W_2$ be hypergraphs. 
A map $f : V_2\to V_1$ will be called a
\emph{quasi-homomorphism} from $E_1$ into $E_2$ if 
$f^{-1}(\alpha)$ is contained in an edge of $E_2$ for every edge $\alpha$ of $E_1$. 
A quasi-homomorphism $f : V_2\to V_1$ gives rise to an accompanying map $g : W_1\to W_2$
such that 
\begin{equation}\label{eq_quasih}
f^{-1}(E_1(w_1))\subseteq E_2(g(w_1)) \ \ \mbox{ for every } w_1\in W_1;
\end{equation}
conversely, if the maps $f : V_2\to V_1$ and $g : W_1\to W_2$ satisfy 
(\ref{eq_quasih}), then $f$ is a quasi-homomorphism. 
If there exist a quasi-homomorphism from $E_1$ into $E_2$, we say that $E_1$ is
\emph{quasi-homomorphic} to $E_2$.

\begin{proposition}\label{p_lochy}
Let $E_1 \subseteq V_1\times W_1$ and $E_2 \subseteq V_2 \times W_2$ be hypergraphs. Then
$E_1\leadsto_{\rm loc} E_2$ if and only $E_1$ is quasi-homomorphic to $E_2$.
\end{proposition}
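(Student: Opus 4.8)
The plan is to route both implications through \emph{deterministic product correlations}: given functions $f:V_2\to V_1$ and $g:W_1\to W_2$, let $\delta_f:\cl D_{V_2}\to\cl D_{V_1}$ and $\delta_g:\cl D_{W_1}\to\cl D_{W_2}$ be the channels with $\delta_f(v_1|v_2)=1$ exactly when $v_1=f(v_2)$, and $\delta_g(w_2|w_1)=1$ exactly when $w_2=g(w_1)$. Then $\delta_f\otimes\delta_g$ is a product of two channels; in particular its marginal channels exist and it is a convex combination (of one term) of the form $\Gamma_V\otimes\Gamma_W$, so it is a legitimate element of $\cl C_{\rm loc}$ over $(V_2,W_1,V_1,W_2)$.

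For the implication ``quasi-homomorphic $\Rightarrow$ $E_1\leadsto_{\rm loc}E_2$'', I would take a quasi-homomorphism $f:V_2\to V_1$ from $E_1$ into $E_2$ together with an accompanying map $g:W_1\to W_2$ satisfying $f^{-1}(E_1(w_1))\subseteq E_2(g(w_1))$ for every $w_1\in W_1$, i.e.\ (\ref{eq_quasih}), and set $\Gamma=\delta_f\otimes\delta_g$. To see that $\Gamma$ fits $E_1\leadsto E_2$, suppose $\Gamma(v_1,w_2|v_2,w_1)>0$ and $(v_1,w_1)\in E_1$; then $v_1=f(v_2)$ and $w_2=g(w_1)$, so $f(v_2)=v_1\in E_1(w_1)$ yields $v_2\in f^{-1}(E_1(w_1))\subseteq E_2(g(w_1))=E_2(w_2)$, i.e.\ $(v_2,w_2)\in E_2$. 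Hence $\Gamma\in\cl C_{\rm loc}(E_1\leadsto E_2)$, so $E_1\leadsto_{\rm loc}E_2$.

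For the converse, suppose $E_1\leadsto_{\rm loc}E_2$ via a correlation $\Gamma=\sum_k\lambda_k\,\Gamma_V^{(k)}\otimes\Gamma_W^{(k)}$ with all $\lambda_k>0$, $\sum_k\lambda_k=1$, where $\Gamma_V^{(k)}:\cl D_{V_2}\to\cl D_{V_1}$ and $\Gamma_W^{(k)}:\cl D_{W_1}\to\cl D_{W_2}$ are channels. Because each summand is a positive map and therefore has nonnegative entries $\Gamma_V^{(k)}(v_1|v_2)\,\Gamma_W^{(k)}(w_2|w_1)$, and the coefficients $\lambda_k$ are strictly positive, the vanishing of $\Gamma(v_1,w_2|v_2,w_1)$ forces $(\Gamma_V^{(k)}\otimes\Gamma_W^{(k)})(v_1,w_2|v_2,w_1)=0$ for every $k$; consequently every summand already fits $E_1\leadsto E_2$. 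Fix one such summand $\Gamma_V\otimes\Gamma_W$ and, using that the relevant columns of these stochastic maps sum to $1$, choose $f(v_2)\in V_1$ with $\Gamma_V(f(v_2)|v_2)>0$ for each $v_2$, and $g(w_1)\in W_2$ with $\Gamma_W(g(w_1)|w_1)>0$ for each $w_1$. If $v_2\in f^{-1}(E_1(w_1))$, then $(f(v_2),w_1)\in E_1$ while $(\Gamma_V\otimes\Gamma_W)(f(v_2),g(w_1)|v_2,w_1)=\Gamma_V(f(v_2)|v_2)\,\Gamma_W(g(w_1)|w_1)>0$; since this summand fits $E_1\leadsto E_2$, we conclude $(v_2,g(w_1))\in E_2$, i.e.\ $v_2\in E_2(g(w_1))$. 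Thus $f^{-1}(E_1(w_1))\subseteq E_2(g(w_1))$ for all $w_1$, so $f$ is a quasi-homomorphism and $E_1$ is quasi-homomorphic to $E_2$.

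The argument is short and elementary; the only points needing a little care are the observation that in a strictly convex combination of channels fitting a hypergraph is inherited by each summand (which uses only that positive maps on diagonal algebras have nonnegative entries), and the bookkeeping that a bare deterministic product channel genuinely qualifies as a local no-signalling correlation. In particular, unlike in Proposition \ref{p_mapping}, no fullness hypothesis on $E_1$ or $E_2$ is required here.
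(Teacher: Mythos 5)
Your proof is correct, and the direction ``quasi-homomorphic $\Rightarrow$ $E_1\leadsto_{\rm loc}E_2$'' is exactly the paper's argument: build $\delta_f\otimes\delta_g$ and check its support lies in $E_1\leadsto E_2$ using (\ref{eq_quasih}). For the converse you take a slightly different route. The paper passes to an extreme point of $\cl C_{\rm loc}$, implicitly invoking the fact that extreme points of the local polytope are deterministic product correlations, and reads off $f$ and $g$ from that deterministic strategy. You instead keep an arbitrary convex decomposition $\Gamma=\sum_k\lambda_k\,\Gamma_V^{(k)}\otimes\Gamma_W^{(k)}$, observe (correctly, by positivity of all entries and strict positivity of the weights) that every summand fits $E_1\leadsto E_2$, and then extract $f(v_2)$ and $g(w_1)$ by choosing any outputs of strictly positive probability for one fixed summand; the support condition on that summand then yields $f^{-1}(E_1(w_1))\subseteq E_2(g(w_1))$, i.e.\ (\ref{eq_quasih}). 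What your version buys is self-containedness: you never need the structural claim about extreme points of $\cl C_{\rm loc}$, only that channels are column-stochastic, so some output has positive weight; what the paper's version buys is brevity, since once one accepts that an extremal local strategy is a pair of functions, the support condition is literally (\ref{eq_fvw}). Your closing remarks are also accurate: fitting passes to summands of a strictly convex combination, and no fullness hypothesis is needed here, in contrast with Proposition \ref{p_mapping}.
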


\begin{proof}
Assume that $\Gamma$ is a local correlation that fits $E_1 \hspace{-0.1cm} \leadsto\hspace{-0.1cm} E_2$. 
We may assume that $\Gamma$ is an extreme point in $\cl C_{\rm loc}$ and hence, by 
no-signalling, there exist functions $f : V_2\to V_1$ and $g : W_1\to W_2$ such that 
$$\{(v_2, w_1, f(v_2),g(w_1)) : w_1\in W_1, v_2 \in V_2\}
\hspace{0.1cm}\subseteq \hspace{0.1cm} E_1\hspace{-0.1cm} \leadsto\hspace{-0.1cm} E_2.$$
This implies
\begin{equation}\label{eq_fvw}
f(v_2)\in E_{1}(w_1) \ \Longrightarrow \  v_2\in E_{2}(g(w_1)) \;\;\;\; \text{for all } w_1 \in W_{1},
\end{equation}
which means that $(f,g)$ determines a quasi-homomorphism from $E_1$ to $E_2$. 

Conversely, suppose that (\ref{eq_fvw}) is satisfied.
Let $\Phi : \cl D_{V_2}\to \cl D_{V_1}$ 
(resp. $\Psi : \cl D_{W_1}\to \cl D_{W_2}$) be the channel given by 
$\Phi(v_1|v_2) = \delta_{v_1,f(v_2)}$ (resp. $\Psi(w_2|w_1) = \delta_{w_2,g(w_1)}$) and 
$\Gamma = \Phi\otimes \Psi$; then $\Gamma$ 
fits $E_1 \hspace{-0.1cm} \leadsto\hspace{-0.1cm} E_2$. 
Indeed, assume that $(v_1,w_1)\in E_1$ and $(v_2,w_2)\not\in E_2$, but 
$$\delta_{v_1,f(v_2)}\delta_{w_2,g(w_1)} = \Gamma(v_1,w_2|v_2,w_1) \neq 0.$$ 
This means that 
$f(v_2) = v_1$ and hence $f(v_2)\in E_{1}(w_1)$, implying $v_2\in E_{2}(g(w_1))$.
Since $g(w_1) = w_2$, we have $(v_2,w_2)\in E_2$, a contradiction. 
\end{proof}

It is clear that, if ${\rm t}$ and ${\rm t'}$ are correlation types such that 
$\cl C_{\rm t} \subseteq \cl C_{\rm t'}$, then 
$$E_1 \leadsto_{\rm t} E_2 \ \ \Longrightarrow \ \ E_1 \leadsto_{\rm t'} E_2.$$
We next show the irreversibility of the latter implication for some of the hypergraph quasi-homomorphism types. 
If $G$ is a simple graph with vertex set $X$, we write $x\sim_G x'$ if $\{x,x'\}$ is an edge of $G$, and 
$x\simeq_G x'$ if $x\sim x'$ or $x = x'$ (if $G$ is understood from the context, we 
write $x\sim x'$ and $x\simeq x'$, respectively). 
We let $\alpha(G)$ be the \emph{independence number} of $G$, defined as the maximum cardinality $|S|$ of an
independent set $S$ of vertices (that is, a subset $S\subseteq X$ such that $x,x'\in S \ \Rightarrow \ x\not\sim x'$). 
We fix a hypergraph $E\subseteq X\times Y$, and write 
$G_E$ for the corresponding \emph{confusability graph}: the vertex set of $G_E$ is $X$ and the adjacency is given by 
letting 
$$x\sim x' \ \mbox{ if } x\neq x' \mbox{ and } \exists \ y\in Y \mbox{ s.t. } (x,y)\in E \mbox{ and } (x',y)\in E.$$
If $X\stackrel{\cl E}{\mapsto} Y$, the confusability graph $G_{\cl E}$ of $\cl E$  \cite{shannon}
is defined by letting $G_{\cl E} = G_{E_{\cl E}}$. 
For a given set $Z$, let 
$$\Delta_Z = \{(z,z) : z\in Z\}$$ 
be the diagonal over $Z$, considered as a hypergraph in $Z\times Z$.

\begin{lemma}\label{l_localforin}
Let $E\subseteq X\times Y$ be a full hypergraph. 
We have that $E \leadsto_{\rm loc} \Delta_Z$ if and only if $\alpha(G_E)\geq |Z|$. 
\end{lemma}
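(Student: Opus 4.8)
The statement reduces, by Proposition \ref{p_lochy}, to showing that $E$ is quasi-homomorphic to $\Delta_Z$ if and only if $\alpha(G_E)\geq |Z|$, so the plan is to unwind what a quasi-homomorphism into the diagonal hypergraph $\Delta_Z\subseteq Z\times Z$ actually says. The edges of $\Delta_Z$ are the singletons $\{z\}$, $z\in Z$; and the edges of $E$ are the sets $E(y)$, $y\in Y$. A quasi-homomorphism from $E$ into $\Delta_Z$ is, by definition, a map $f:Z\to X$ such that $f^{-1}(\alpha)$ is contained in an edge of $\Delta_Z$ for every edge $\alpha$ of $E$; since edges of $\Delta_Z$ are singletons, this says that $f^{-1}(E(y))$ has at most one element for every $y\in Y$. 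Equivalently, for all $z\ne z'$ in $Z$ there is no $y$ with $f(z)\in E(y)$ and $f(z')\in E(y)$; and $f$ must be injective (take $y$ with $f(z)\in E(y)$, possible since $E$ is full, to see $z=z'$ whenever $f(z)=f(z')$). By the definition of the confusability graph $G_E$, the condition ``no $y$ with $f(z),f(z')\in E(y)$ and $f(z)\ne f(z')$'' together with injectivity of $f$ says exactly that $f(z)\not\sim_{G_E} f(z')$ for all $z\ne z'$; i.e. the image $f(Z)$ is an independent set in $G_E$ of cardinality $|Z|$.

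With this translation in hand both directions are immediate. If $E\leadsto_{\rm loc}\Delta_Z$, pick such an $f$; then $f(Z)$ is an independent set in $G_E$ of size $|Z|$, so $\alpha(G_E)\geq |Z|$. Conversely, if $\alpha(G_E)\geq|Z|$, choose an independent set $S\subseteq X$ with $|S|=|Z|$ and any bijection $h:Z\to S$; since $S$ is independent and $E$ is full, $h$ is a quasi-homomorphism from $E$ into $\Delta_Z$ by the computation above, and the accompanying map $g:Y\to Z$ can be produced concretely (for $y\in Y$, either $S\cap E(y)=\emptyset$, in which case $g(y)$ is arbitrary, or $S\cap E(y)$ is the single point $h(z)$, in which case set $g(y)=z$; one checks $h^{-1}(E(y))\subseteq\{g(y)\}=\Delta_Z(g(y))$). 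Hence $E$ is quasi-homomorphic to $\Delta_Z$, and by Proposition \ref{p_lochy} again, $E\leadsto_{\rm loc}\Delta_Z$.

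The only mild subtlety — and the one place that needs care rather than being purely formal — is the role of the hypotheses: one must use that $E$ is \emph{full} to deduce injectivity of $f$ (without fullness a vertex $x\in X$ lying in no edge could be hit by many $z$'s, and the ``independent set'' conclusion would fail), and one must correctly handle the definition of $\sim_{G_E}$, which only records an edge when the two vertices are distinct, so that injectivity of $f$ is genuinely part of what must be extracted. I would therefore present the argument as: (1) invoke Proposition \ref{p_lochy} to replace $\leadsto_{\rm loc}$ with quasi-homomorphism; (2) spell out that a quasi-homomorphism $f:Z\to X$ into $\Delta_Z$ amounts to an injection of $Z$ onto an independent set of $G_E$, using fullness; (3) read off both inequalities. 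I expect no real obstacle; the content is entirely in carefully matching the definition of quasi-homomorphism, the combinatorics of $\Delta_Z$, and the definition of $G_E$.
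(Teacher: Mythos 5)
Your proof is correct and follows essentially the same route as the paper's: both directions reduce via Proposition \ref{p_lochy} to quasi-homomorphisms, use fullness of $E$ exactly to get injectivity of $f$, observe that each $E(y)$ is a clique in $G_E$ so an independent set meets it in at most one point, and construct the accompanying map $g$ in the same way (single intersection point or a fixed default value). The only cosmetic difference is that in the forward direction you unwind the singleton-edge structure of $\Delta_Z$ directly, whereas the paper phrases the same condition through the pair $(f,g)$; the content is identical.
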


\begin{proof}
Suppose that $E \leadsto_{\rm loc} \Delta_Z$ and, using Proposition \ref{p_lochy}, let 
$f : Z\to X$ and $g : Y \to Z$ be maps realising a quasi-homomorphism from $E$ into $\Delta_Z$. 
This means
\begin{equation}\label{eq_fzygy}
(f(z),y)\in E \ \ \Longrightarrow \ \ z = g(y).
\end{equation}
Let $z,z' \in Z$ with $f(z)\simeq f(z')$ in $G_E$. Since $E$ is full, 
there exists $y\in Y$ such that 
$(f(z),y)\in E$ and $(f(z'),y)\in E$. By (\ref{eq_fzygy}), $g(y) = z = z'$. 
Thus, 
\begin{equation}\label{eq_zz'}
z\neq z' \ \Rightarrow \ f(z)\not\simeq_{G_E} f(z');
\end{equation} 
it follows that $f(Z)$ is an independent set in $G_E$ and hence, since $f$ is injective by (\ref{eq_zz'}), 
$\alpha(G_E)\geq |Z|$. 

Conversely, let $f : Z\to X$ be an injective map such that $f(Z)$ is an independent set 
in $G_E$. Fix $z_0\in Z$. 
Let $y\in Y$; then $E(y)$ is a clique in $G_E$; thus,
$|f(Z)\cap E(y)|\leq 1$. If $f(Z)\cap E(y) = \{f(z)\}$ for some $z\in Z$, define $g(y) = z$; 
if $f(Z)\cap E(y) = \emptyset$, define $g(y) = z_0$. 
It is straightforward that the pair $(f,g)$ of maps realises a quasi-homomorphism from $E$ into $\Delta_Z$. 
\end{proof}

\begin{proposition}\label{p_sep}
The implications 
$$E\leadsto_{\rm loc} F \ \Rightarrow \ E\leadsto_{\rm q} F \ \ \mbox{ and } \ \ 
E\leadsto_{\rm q} F \ \Rightarrow \ E\leadsto_{\rm ns} F$$
are not reversible. 
\end{proposition}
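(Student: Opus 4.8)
The plan is to deduce each non-reversibility claim from the corresponding known separations for quantum graph homomorphisms (or quantum colourings), by exhibiting hypergraphs $E$ and $F$ whose quasi-homomorphism games encode a graph-colouring or graph-homomorphism problem. The key observation powering the translation is Lemma \ref{l_localforin}: since $E \leadsto_{\rm loc} \Delta_Z$ holds precisely when $\alpha(G_E) \geq |Z|$, a separation between local and quantum quasi-homomorphism into a diagonal $\Delta_Z$ is exactly a separation between the independence number and its quantum analogue. Concretely, I would take $E$ to be a hypergraph whose confusability graph $G_E$ is a graph $G$ with $\alpha(G) < \vartheta(G)$-type behaviour — more precisely, a graph for which there is no classical independent set of size $|Z|$ but there is a quantum independent set of that size, e.g. one built from the quantum-chromatic-number separations or the orthogonality graphs used in \cite{cmnsw, mr}. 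One clean route: encode the quantum colouring problem. A $c$-colouring of a graph $G$ is equivalently a homomorphism $G \to K_c$, and there are graphs $G$ with no classical $c$-colouring but a quantum $c$-colouring (quantum chromatic number strictly below chromatic number). Realising $K_c$ and $G$ as confusability graphs of suitable full hypergraphs and using the fact (to be checked against Proposition \ref{p_lochy} and the definition of $\leadsto_{\rm t}$) that $\leadsto_{\rm loc}$ between these hypergraphs reduces to the classical graph homomorphism while $\leadsto_{\rm q}$ admits the quantum strategy gives the first separation.

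For the second implication, that $E \leadsto_{\rm q} F \Rightarrow E \leadsto_{\rm ns} F$ is not reversible, I would again pass through a diagonal target. By Lemma \ref{l_localforin}'s method one expects an analogous statement: $E \leadsto_{\rm q} \Delta_Z$ should correspond to the existence of a size-$|Z|$ quantum independent set in $G_E$ (a quantum independence number condition), while $E \leadsto_{\rm ns} \Delta_Z$ corresponds to a no-signalling, i.e. fractional/Lovász-type, relaxation. Since there exist graphs whose no-signalling (or $\vartheta$/fractional) independence number strictly exceeds the quantum independence number — for instance Hadamard graphs or the graphs separating $\vartheta$ from the quantum parameters, or simply a graph where the no-signalling value is witnessed by a non-quantum correlation — I would pick $E$ with $G_E$ equal to such a graph and $Z$ of the intermediate size. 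The separation of quantum from no-signalling correlations at the level of these graph parameters is documented in the graph-homomorphism literature (e.g. \cite{mr, amrssv} and the fractional vs. quantum independence number gap), so the hypergraph statement follows once the correspondence "$E \leadsto_{\rm t} \Delta_Z$ iff $G_E$ has a type-${\rm t}$ independent set of size $|Z|$" is established, which is the natural ${\rm t}$-analogue of Lemma \ref{l_localforin} and should be provable by the same extreme-point / POVM unpacking used in its proof together with the composition/structure results already available.

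The main obstacle I anticipate is not the existence of the separating graphs — those are standard — but verifying cleanly that the quasi-homomorphism game $E \leadsto \Delta_Z$ (and, more generally, $E \leadsto F$ for the graph-encoding hypergraphs) genuinely has its type-${\rm t}$ perfect strategies in bijection with the expected graph-theoretic objects for ${\rm t} \in \{{\rm q}, {\rm ns}\}$, not merely for ${\rm t} = {\rm loc}$ (which is Proposition \ref{p_lochy} / Lemma \ref{l_localforin}). In particular I need to rule out that no-signalling slack in the correlation $\Gamma$ lets one win $E \leadsto \Delta_Z$ without a bona fide (quantum or NS) independent set, and conversely that any quantum/NS independent set assembles into a valid correlation fitting $E \leadsto \Delta_Z$; the forward direction uses that $E$ is full exactly as in Lemma \ref{l_localforin}, and the backward direction builds $\Gamma$ from the measurement operators of the quantum/NS independent set. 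Once this dictionary is in place, the proof reduces to two sentences: pick $E$ with $G_E$ a graph realising $\alpha_{\rm loc}(G_E) < \alpha_{\rm q}(G_E)$ (resp. $\alpha_{\rm q}(G_E) < \alpha_{\rm ns}(G_E)$), pick $Z$ with $|Z|$ strictly between the two values, and invoke the dictionary.

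I would structure the write-up as: (1) state and prove the ${\rm t}$-version of Lemma \ref{l_localforin}, namely $E \leadsto_{\rm t} \Delta_Z$ iff $G_E$ carries a ${\rm t}$-independent set of size $|Z|$, for ${\rm t} \in \{{\rm q},{\rm ns}\}$, mirroring the existing proof; (2) recall from \cite{cmnsw, mr, amrssv} the existence of graphs $G$ with $\alpha_{\rm loc}(G) < \alpha_{\rm q}(G)$ and with $\alpha_{\rm q}(G) < \alpha_{\rm ns}(G)$ (equivalently, use the quantum-vs-classical and quantum-vs-no-signalling independence number separations, or transfer the isomorphism-game separations of \cite{amrssv}); (3) choose a full hypergraph $E$ with confusability graph $G$ — e.g. $E = \{(x,y) : y \in \{x\} \cup N_G(x)\} \subseteq X \times X$ with $Y = X$ — and set $|Z|$ to the intermediate cardinality; (4) conclude via the dictionary that $E \leadsto_{\rm q} \Delta_Z$ but $E \not\leadsto_{\rm loc} \Delta_Z$, respectively $E \leadsto_{\rm ns} \Delta_Z$ but $E \not\leadsto_{\rm q} \Delta_Z$, which gives both non-reversibility claims with $F = \Delta_Z$.
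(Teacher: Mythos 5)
Your treatment of the first implication is essentially the paper's: both arguments run the separation $\alpha(G)<\alpha_{\rm q}(G)$ from \cite[Theorem 13]{clmw} through the target $\Delta_Z$ and use Lemma \ref{l_localforin} on the local side. One remark, though: the paper does not need (and does not prove) your full ``dictionary'' $E\leadsto_{\rm t}\Delta_Z \Leftrightarrow \alpha_{\rm t}(G_E)\geq |Z|$. It sidesteps the backward direction by taking $E$ to be \emph{the hypergraph of the channel witnessing} $\alpha_{\rm q}(G)$, so that the witnessing quantum simulator automatically fits $E\leadsto\Delta_Z$ (the sum $\Gamma[\cl E](w_2|v_2)=0$ for $w_2\neq v_2$ forces each term with $(v_1,w_1)\in E_{\cl E}=E$ to vanish). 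With your proposed choice $E=\{(x,y):y\in\{x\}\cup N_G(x)\}$ the backward direction is not automatic, since the channel realising $\alpha_{\rm q}(G_E)$ need not fit that particular hypergraph; justifying it requires the channel-independence results of \cite{clmw}, which you do not invoke.

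The genuine gap is in the second implication. First, your ns-``dictionary'' is false as stated: by \cite[Theorem 7]{clmw}, $\max\{|Z|: E\leadsto_{\rm ns}\Delta_Z\}=\lfloor\alpha^*(E)\rfloor$, the \emph{fractional packing number of the hypergraph} $E$, which is not determined by $G_E$ (two channels with the same confusability graph can have very different $\alpha^*$), so there is no ``type-ns independent set of $G_E$'' to speak of. Second, and more importantly, the existence of the separating example is not a documented fact you can simply quote: \cite{mr} and \cite{amrssv} contain no quantum-versus-no-signalling independence-number gap, Hadamard graphs are the standard examples for $\alpha<\alpha_{\rm q}$ rather than for this separation, and the observation that ``the NS value is witnessed by a non-quantum correlation'' proves nothing, since a quantum correlation might achieve the same value. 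What is actually needed — and what the paper supplies — is a hypergraph $E$ with $\lfloor\alpha^*(E)\rfloor>\alpha_{\rm q}(G_E)$. The paper gets this from the chain $\alpha_{\rm q}(G_E)\leq\vartheta(G_E)\leq\bar\chi_{\rm f}(G_E)\leq\alpha^*(E)$ (via \cite{dsw} and \cite{clmw}) together with Lov\'asz's computations \cite{lo} for the Kneser graph $K(5,3)$, where $\vartheta=6$ while $\bar\chi_{\rm f}=10$. Without an inequality bounding the quantum side (such as $\alpha_{\rm q}\leq\vartheta$) and a concrete graph exhibiting the gap, your second separation remains an unproved assumption rather than a reduction to known results.
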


\begin{proof}
Recall \cite{clmw, dsw} that the quantum independence number $\alpha_{\rm q}(G)$ of a graph $G$ with vertex set $X$ is 
defined by letting
$$\alpha_{\rm q}(G) \hspace{-0.05cm} = \hspace{-0.05cm} \max\{k\in \bb{N} : 
\exists \ X \hspace{-0.05cm} \stackrel{\cl E}{\mapsto} \hspace{-0.05cm} Y \mbox{ with } 
G_{\cl E} \hspace{-0.05cm} = \hspace{-0.05cm} G \mbox{ and } 
\Gamma \hspace{-0.05cm} \in \hspace{-0.05cm} \cl C_{\rm q} \mbox{ s.t. } 
\Gamma[\cl E] \hspace{-0.05cm} = \hspace{-0.05cm} \id\mbox{}_{\cl D_k}\}.$$
By \cite[Theorem 13]{clmw}, there exists a graph $G$ such that $\alpha(G) < \alpha_{\rm q}(G)$. 
Let $Y$ be a (finite) set and $\cl E : \cl D_X\to \cl D_Y$ be a channel that achieves the maximum in the 
definition of $\alpha_{\rm q}(G)$. Let $E = E_{\cl E}$;  since $E$ is the hypergraph of a channel, $E$ is full. 
Letting $Z$ be a set with $|Z| = \alpha_{\rm q}(G)$, we have that $E\leadsto_{\rm q} \Delta_Z$.
On the other hand, by Lemma \ref{l_localforin}, $E\not\leadsto_{\rm loc} \Delta_Z$. 

To show that the second implication fails in general, use
\cite[Theorem 7]{clmw}, according to which, if $E\subseteq X\times Y$ is a hypergraph then 
$$\max\{{Z} : E\leadsto_{\rm ns} \Delta_Z \} = \lfloor \alpha^*(E)\rfloor,$$
where $\alpha^*(E)$ is the fractional packing number of $E$ 
(see \cite[Definition 5]{clmw}).
By the first paragraph, it suffices to exhibit an example of a hypergraph $E$ such that 
$\lfloor \alpha^*(E)\rfloor > \alpha_{\rm q}(G_E)$. 
Let $\vartheta(G)$ (resp. $\bar\chi_{\rm f}(G)$) be the the Lov\'asz number \cite{lo} 
of a graph $G$ (resp. the fractional chromatic number of the complement of $G$).
By \cite{dsw} and the discussion surrounding
\cite[Proposition 6]{clmw}, we have the inequalities 
$$\alpha_{\rm q}(G_E) \leq \vartheta(G_E)\leq \bar \chi_{\rm f}(G_E) \leq \alpha^*(E).$$

It hence suffices to exhibit an example of a graph $G$ with 
$[\vartheta(G)] <  \lfloor\bar\chi_{\rm f}(G)\rfloor$.
Let $n,r\in \bb{N}$ with $r \leq n$ and let $K(n,r)$ be the graph whose vertices are the 
subsets of $[n]$ of cardinality $r$, with two such subsets $S$ and $T$ being adjacent if
$S\cap T = \emptyset$. 
By (the proof of) \cite[Theorem 13]{lo}, $\vartheta(K(n,r)) = {n-1\choose r-1}$ 
while, as stated after \cite[Corollary 7]{lo}, $\bar\chi_{\rm f}(K(n,r)) = {n \choose r}/\lfloor\frac{n}{r}\rfloor$.  
%\marginpar{\tiny IT. Check that the meaning of $[\cdot]$ is again the floor of a real number.}
Thus, an example is furnished by letting, e.g., $n = 5$ and $r = 3$. 
\end{proof}

\noindent {\bf Remark. } 
We do not have counterexamples that show the irreversibility of the implications 
$E_1 \leadsto_{\rm q} E_2 \ \Rightarrow \ E_1 \leadsto_{\rm qa} E_2$
or $E_1 \leadsto_{\rm qa} E_2 \ \Rightarrow \ E_1 \leadsto_{\rm qc} E_2$. 
Such counterexamples would provide an alternative way to observe the inequalities
$\cl C_{\rm q}\neq \cl C_{\rm qa}$ and $\cl C_{\rm qa}\neq \cl C_{\rm qc}$, respectively, and 
would thus be of substantial interest. 

\medskip

In the rest of the section, we link hypergraph quasi-homomorphisms to tensor products of operator systems.
Recall that an \emph{operator system} is a selfadjoint subspace of $\cl B(H)$, 
for some Hilbert space $H$, containing $I_H$. 
If $\cl S$ is an 
operator system, we write $M_n(\cl S)^+$ for the cone of positive elements in the space $M_n(\cl S)$ of all 
$n$ by $n$ matrices with entries in $\cl S$. If $\cl S$ and $\cl T$ are operator systems and $\phi : \cl S\to \cl T$
is a linear map, we let $\phi^{(n)} : M_n(\cl S)\to M_n(\cl T)$ be the map, given by 
$\phi^{(n)}((x_{i,j})_{i,j}) = (\phi(x_{i,j}))_{i,j}$. The map $\phi$ is called \emph{positive} if $\phi(\cl S^+)\subseteq \cl T^+$, \emph{completely positive} if $\phi^{(n)}$ is positive for every $n\in \bb{N}$, and \emph{unital} if $\phi(1) = 1$. 
We call $\cl S$ and $\cl T$ \emph{completely order isomorphic}, and write $\cl S\cong_{\rm c.o.i.}\cl T$, if 
there exists a unital completely positive bijection $\phi : \cl S\to \cl T$ with completely positive inverse.
We write $\cl S\subseteq_{\rm c.o.i.}\cl T$ if $\cl S\subseteq \cl T$ and the inclusion map $\cl S\to \cl T$ is 
a complete order isomorphism onto its range. 
We note that if $\cl S$ is a finite dimensional operator system,
the Banach space dual $\cl S^{\rm d}$ can be viewed, via 
Choi-Effros Theorem \cite[Theorem 13.1]{Pa}, as an operator system \cite[Corollary 4.5]{CE2}.

We refer to \cite{Pa} for further details about operator systems, and recall here 
the three types operator system tensor products \cite{kptt} of operator systems $\cl S$ and $\cl T$ 
that will be used in the sequel:

\begin{itemize}
\item[(i)] 
the \emph{minimal} operator system tensor product $\cl S\otimes_{\min}\cl T$
arises from viewing $\cl S\otimes\cl T$ as a subspace of 
$\cl B(H\otimes K)$, where $\cl S$
and $\cl T$ are realised as operator systems in $\cl B(H)$ and $\cl B(K)$, 
respectively (and $H$ and $K$ are Hilbert spaces); 

\item[(ii)] 
the \emph{commuting} tensor product $\cl S\otimes_{\rm c}\cl T$ has the smallest family of matricial cones that makes
the maps $\phi\cdot\psi$, where $\phi : \cl S\to \cl B(H)$ and $\psi : \cl T\to \cl B(H)$ and completely positive maps 
with commuting ranges, completely positive; 
here, $\phi\cdot\psi$ is the linear map, given by 
$(\phi\cdot\psi)(x\otimes y) = \phi(x)\psi(y)$, $x\in \cl S$, $y\in \cl T$;

\item[(iii)] 
the \emph{maximal} tensor product $\cl S\otimes_{\max}\cl T$ has matricial cones generated by the elementary 
tensors of the form $S\otimes T$, where $S\in M_n(\cl S)^+$ and $T\in M_m(\cl T)^+$, $n,m\in \bb{N}$. 
\end{itemize}

For finite sets $X$ and $A$, 
let $\cl A_{X,A} = \underbrace{\cl D_A\ast_1\cdots\ast_1\cl D_A}_{|X| \ {\rm times}}$, 
a C*-algebra free product, amalgamated over the units. 
Let $(\tilde{e}_{x,a})_{a\in A}$ be the standard basis of 
the $x$-th copy of $\cl D_A$ in $\cl A_{X,A}$, and 
$$\cl S_{X,A} = {\rm span}\{\tilde{e}_{x,a} : x\in X, a\in A\},$$
viewed as an operator subsystem of $\cl A_{X,A}$. 
As is readily seen, the operator system $\cl S_{X,A}$ satisfies the following universal property: 
for every family $\{(E_{x,a})_{a\in A} : x\in X\}$ of POVM's, acting on a Hilbert space $H$, 
there exists a unital completely positive map $\phi : \cl S_{X,A}\to \cl B(H)$ such that 
$\phi(\tilde{e}_{x,a}) = E_{x,a}$, $x\in X$, $a\in A$; conversely, if 
$\phi : \cl S_{X,A}\to \cl B(H)$ is a unital completely positive map for some Hilbert space $H$,
then $(\phi(\tilde{e}_{x,a}))_{a\in A}$ is a POVM, $x\in X$. 

Set 
$$\cl R_{X,A} = \left\{(\lambda_{x,a})_{x\in X, a\in A} : \lambda_{x,a}\in \bb{C}, \ 
\sum_{a\in A} \lambda_{x,a} = \sum_{a\in A} \lambda_{x',a}, \ x,x'\in X\right\},$$
viewed as an operator subsystem of $\cl D_{XA}$. 
By \cite[Theorem 5.9]{fkpt_NYJ}, 
\begin{equation}\label{eq_SdR}
\cl S_{X,A}^{\rm d}\cong_{\rm c.o.i.} \cl R_{X,A}.
\end{equation}

%Fixed the \to command to \leadsto
\begin{remark}\label{r_aff}
\rm 
By \cite[Theorem 3.1]{lmprsstw}, $\Gamma$ is a no-signalling correlation over the quadruple
$(V_2,W_1,V_1,W_2)$ if and only if there exists a state 
$s : \cl S_{V_2,V_1}\otimes_{\max}\cl S_{W_1,W_2}\to \bb{C}$ such that  
$$\Gamma(v_1,w_2|v_2,w_1) = s\left(\tilde{e}_{v_2,v_1}\otimes \tilde{e}_{w_1,w_2}\right), \ \ \ 
v_i\in V_i, w_i\in W_i, \ i = 1,2.$$
By (\ref{eq_SdR}) and \cite[Proposition 1.9]{fp}, 
$$\left(\cl S_{V_2,V_1}\otimes_{\max}\cl S_{W_1,W_2}\right)^{\rm d} \cong_{\rm c.o.i.} 
\cl R_{V_2,V_1}\otimes_{\min}\cl R_{W_1,W_2};$$
thus, the simulators 
$(V_1\mapsto W_1)\stackrel{\Gamma}{\to} (V_2\mapsto W_2)$
correspond canonically to the elements of the subset
$$\left\{\Lambda \in \left(\cl R_{V_2,V_1}\otimes_{\min}\cl R_{W_1,W_2}\right)^+ : 
({\rm Tr}_{V_1}\otimes {\rm Tr}_{W_2})(\Lambda) = 1\right\}.$$
Hence we have the following are equivalent for hypergraphs
$E_1 \subseteq V_1\times W_1$ and $E_2 \subseteq V_2 \times W_2$:
\begin{itemize}
\item[(i)] 
the relation $E_1\leadsto_{\rm ns} E_2$ holds true; 

\item[(ii)] 
there exists a matrix 
$\Lambda\in (\cl R_{V_2,V_1}\otimes_{\min} \cl R_{W_1,W_2})^+$
supported on the set $E_1 \hspace{-0.1cm} \leadsto \hspace{-0.1cm} E_2$.
\end{itemize}
\end{remark}

\begin{remark}\label{r_dGamma}
\rm
In Proposition \ref{p_mapping}, we saw that a simulator $\Gamma$ that fits 
$E_1 \hspace{-0.1cm} \leadsto \hspace{-0.1cm} E_2$ induces an affine map from $\cl C(E_1)$ to $\cl C(E_2)$. 
We point out that not all such affine maps arise via simulation. 
We identify the set $\cl C(V\times W)$ of all information channels $V\mapsto W$ with 
the subset 
\begin{equation}\label{eq_TrW1}
\{\Lambda \in \cl R_{V,W} : {\rm Tr}_{W}(\Lambda) = 1\}
\end{equation}
of the operator system $\cl R_{V,W}$.
Let $\Phi : \cl C(V_1\times W_1) \to \cl C(V_2\times W_2)$ be an affine map, and extend it linearly to 
a map (denoted in the same way) $\Phi : \cl R_{V_1,W_1}\to \cl R_{V_2,W_2}$. 
By \cite[Theorem 3.9]{Pa}, $\Phi$ is completely positive. 
By \cite[Lemma 4.6]{CE2}, $\Phi$ corresponds in a canonical fashion to an element 
$\varphi \in \cl R_{V_1,W_1}\otimes_{\min} \cl R_{V_2,W_2}^{\rm d}$;
by (\ref{eq_SdR}), 
we can view $\nph$ as an element of $\cl R_{V_1,W_1}\otimes_{\min} \cl S_{V_2,W_2}$. 
Reversing these steps, we see that every 
element $\nph\in \cl R_{V_1,W_1}\otimes_{\min} \cl S_{V_2,W_2}$
gives rise in a canonical fashion to an affine map $\Phi : \cl C(V_1\times W_1) \to \cl C(V_2\times W_2)$.

On the other hand, suppose that the map $\Phi : \cl C(V_1\times W_1) \to \cl C(V_2\times W_2)$
has the form $\Phi(\cdot) = \Gamma[\cdot]$ for some simulator $\Gamma$. 
According to Remark \ref{r_aff}, $\Gamma$ can be canonically identified with an element
$\gamma$ of $\cl R_{V_2,V_1}\otimes_{\min}\cl R_{W_1,W_2}$. 
Matrix multiplication 
$$\frak{m} : \cl R_{V_2,V_1}\otimes_{\min}\cl R_{V_1,W_1} \otimes_{\min} \cl R_{W_1,W_2}\to \cl R_{V_2,W_2}$$
is completely positive and can hence be viewed, via \cite[Lemma 4.6]{CE2}, as a positive element
$$\tilde{\frak{m}} \in 
\cl R_{V_2,V_1}\otimes_{\min}\cl R_{V_1,W_1} \otimes_{\min} \cl R_{W_1,W_2}
\otimes_{\min} \cl R_{V_2,W_2}^{\rm d}$$
which, taking into account (\ref{eq_SdR}), induces a completely positive map
$$\hat{\frak{m}} : \cl R_{V_2,V_1}\otimes_{\min}\cl R_{W_1,W_2}\to 
\cl S_{V_1,W_1}\otimes_{\max} \cl R_{V_2,W_2}.$$
The simulator $\Gamma$ can thus be identified with the element $\hat{\frak{m}}(\gamma)$. 
The difference between all affine maps $\cl C(V_1\times W_1) \to \cl C(V_2\times W_2)$ and 
the simulators can now be visualised as the difference between the operator systems
$\cl R_{V_1,W_1}\otimes_{\min} \cl S_{V_2,W_2}$ and
$\cl S_{V_1,W_1}\otimes_{\max} \cl R_{V_2,W_2}$. 

To be more specific, fix $w_1^0\in W_1$ and let $\cl E\in \cl C(V_1\times W_1)$ be the channel, given by 
$$\cl E(w_1|v_1) = \delta_{w_1,w_1^0}, \ \ \ v_1\in V_1, w_1\in W_1.$$
If $(V_1\mapsto W_1)\stackrel{\Gamma}{\to} (V_2\mapsto W_2)$ then 
$$\Gamma[\cl E](w_2|v_2) = \sum_{v_1\in V_1} \Gamma(v_1,w_2|v_2,w_1^{0}) = \Gamma(w_2|w_1^{0}).$$
We see that the probability distribution $\Gamma[E](\cdot|v_2)$ is independent of the variable
$v_2$ and of the choice of $w_1^{0}$, a property not enjoyed by arbitrary affine maps 
from $\cl C(V_1\times W_1)$ to $\cl C(V_2\times W_2)$. 
\end{remark}

%%%%%%%%%%%%%%%%%%%%%%%%%%%%%%%%%%%%%%%%%%%%%%%%%
%%%%%%%%%%%%%%%%%%%%%%%%%%%%%%%%%%%%%%%%%%%%%%%%%
%%%%%%%%%%%%%%%%%%%%%%%%%%%%%%%%%%%%%%%%%%%%%%%%%

\section{Homomorphism games}\label{s_hypis}

In this section, we adapt the set-up from Section \ref{s_hyper} to define quantum versions of 
hypergraph homomorphisms and hypergraph isomorphisms.

%%%%%%%%%%%%%%%%%%%%%%%%%%%%%%%%%%%%%%%%%%%%%%%%%

\subsection{Bicorrelations}\label{ss_bicore}

We recall the definitions of a bicorrelation and of the various 
bicorrelation types introduced in \cite{bhtt2}, which will be needed in the sequel. 
Suppose that $V_1 = V_2 =: V$ and $W_1 = W_2 =: W$.
We call an element $\Gamma\in \cl C_{\rm ns}$ a \emph{no-signalling (NS) bicorrelation} \cite{bhtt2} 
if $\Gamma$ is a unital channel and its dual $\Gamma^*$ is a no-signalling correlation. 

Recall \cite{ddn} that, if $H$ is a Hilbert space, a \emph{quantum magic square} over $V$ on $H$ is a 
block operator matrix $(E_{v_1,v_2})_{v_1,v_2\in V}$ whose entries are positive operators, and 
$$\sum_{v_2'\in V} E_{v_1,v_2'} = \sum_{v_1'\in V} E_{v_1',v_2} = I, \ \ \ v_1,v_2\in V.$$
An NS bicorrelation $\Gamma$ is called \emph{quantum commuting} if 
there exists a Hilbert space $H$, a unit vector $\xi\in H$ and 
quantum magic squares $(E_{v_2,v_1})_{v_2,v_1\in V}$ and $(F_{w_1,w_2})_{w_1,w_2\in W}$ with 
commuting entries, such that 
\begin{equation}\label{eq_bicore}
\Gamma(v_1,w_2|v_2,w_1) = \langle E_{v_2,v_1} F_{w_1,w_2}\xi,\xi\rangle, \ \ 
v_i\in V, w_i\in W, i = 1,2.
\end{equation}
The bicorrelation $\Gamma$ is called \emph{quantum} if
the expression (\ref{eq_bicore}) is achieved for $H = H_A\otimes H_B$, where $H_A$ and $H_B$ are 
finite dimensional Hilbert spaces, $E_{v_2,v_1} = E_{v_2,v_1}'\otimes I_{H_B}$ and 
$F_{w_1,w_2} = I_{H_A}\otimes F_{w_1,w_2}'$, $v_i\in V$, $w_i\in W$, $i = 1,2$. 
The \emph{quantum approximate} bicorrelations are the limits of quantum bicorrelations. 
Finally, the \emph{local bicorrelations} are 
the convex combinations of correlations of the form 
$p^{(1)}(v_1|v_2) p^{(2)}(w_2|w_1)$, where $(p^{(1)}(v_1|v_2))_{v_1,v_2}$ and 
$(p^{(2)}(w_2|w_1))_{w_2,w_1}$ are (scalar) bistochastic matrices. 

We use the notation
$\cl C_{\rm t}^{\rm bi}$ for the (convex) set of all bicorrelations of type ${\rm t}$.
For a subset $\Lambda\subseteq V_2W_1\times V_1W_2$, we set
$\cl C_{\rm t}^{\rm bi}(\Lambda) = \cl C_{\rm t}^{\rm bi}\cap \cl C_{\rm ns}(\Lambda)$. 

\begin{remark}\label{r_adjo}
\rm 
Let ${\rm t}$ be a correlation type. 
If $\Gamma\in \cl C_{\rm t}^{\rm bi}$ then $\Gamma^*\in \cl C_{\rm t}^{\rm bi}$. 
For ${\rm t}\neq {\rm ns}$, this is a consequence of the fact that the transpose of a quantum magic square 
is again a quantum magic square, while for ${\rm t} = {\rm ns}$ this is part of the definition. 
\end{remark}

%%%%%%%%%%%%%%%%%%%%%%%%%%%%%%%%%%%%%%%%%%%%%%%%%

\subsection{Game definitions and properties}\label{ss_defpro2}

We fix finite sets $V_i$ and $W_i$, and let $E_i\subseteq V_i\times W_i$ be a hypergraph, $i = 1,2$. 
Set 
$$E_1 \hspace{-0.1cm} \leftrightarrow\hspace{-0.1cm} E_2 
\ = \ \left\{(v_2,w_1,v_1,w_2) : (v_1,w_1)\in E_1 \Leftrightarrow (v_2,w_2)\in E_2\right\};$$
thus, 
$E_1 \hspace{-0.1cm} \leftrightarrow\hspace{-0.1cm} E_2$ consist of the quadruples 
$(v_2,w_1,v_1,w_2)$ for which 
$$(v_1,w_1,v_2,w_2)\in E_1\times E_2 \ \mbox{ or } \ 
(v_1,w_1,v_2,w_2)\in E_1^c \times E_2^c.$$
We consider
$E_1 \hspace{-0.1cm} \leftrightarrow\hspace{-0.1cm} E_2$ as a non-local game 
with question and answer sets $V_2W_1$ and $V_1W_2$, respectively.

\begin{definition}\label{d_isomorhy}
Let $E_i \subseteq V_i\times W_i$ be a hypergraph, $i = 1,2$, and 
${\rm t}\in \{{\rm loc}, {\rm q}, {\rm qa}, {\rm qc}, {\rm ns}\}$.
We say that
\begin{itemize}
\item[(i)] 
$E_1$ is \emph{${\rm t}$-homomorphic} to $E_2$ (denoted $E_1 \hspace{-0.05cm} \to_{\rm t} \hspace{-0.05cm} E_2$) if 
$\cl C_{\rm t}(E_1 \hspace{-0.1cm} \leftrightarrow\hspace{-0.1cm} E_2)\neq \emptyset$; 

\item[(ii)]
$E_1$ is \emph{${\rm t}$-isomorphic} to $E_2$ (denoted $E_1\simeq_{\rm t} E_2$) if $V_1 = V_2$, $W_1 = W_2$ and 
$\cl C_{\rm t}^{\rm bi}(E_1 \hspace{-0.1cm} \leftrightarrow\hspace{-0.1cm} E_2)\neq \emptyset$.
\end{itemize}
\end{definition}

An element $\Gamma$ of $\cl C_{\rm t}(E_1 \hspace{-0.1cm} \leftrightarrow\hspace{-0.1cm} E_2)$
(resp. $\cl C_{\rm t}^{\rm bi}(E_1 \hspace{-0.1cm} \leftrightarrow\hspace{-0.1cm} E_2)$) will be referred to as 
a ${\rm t}$-homomorphism (resp. ${\rm t}$-isomorphism) from $E_1$ to $E_2$. 
It is clear that 
\begin{equation}\label{eq_impli}
E_1 \simeq_{\rm t} E_2 \ \ \Longrightarrow \ \ E_1 \to_{\rm t} E_2  \ \ \Longrightarrow \ \ E_1 \leadsto_{\rm t} E_2.
\end{equation}
%If $\Gamma\in \cl C_{\rm t}^{\rm bi}(E_1 \hspace{-0.1cm} \leftrightarrow \hspace{-0.1cm} E_2)$, 
%we say that $E_1\simeq_{\rm t} E_2$ \emph{via} $\Gamma$. 

\begin{remark}\label{r_Gamma*}
\rm 
Let $\Gamma\in \cl C_{\rm ns}^{\rm bi}$. The following are equivalent:
\begin{itemize}
\item[(i)] 
$E_1 \simeq_{\rm ns} E_2$ via $\Gamma$;

\item[(ii)] the map $\cl E\to \Gamma[\cl E]$ is well-defined from $\cl C(E_1)$ into $\cl C(E_2)$, and 
the map $\cl F\to \Gamma^*[\cl F]$ is well-defined from $\cl C(E_2)$ into $\cl C(E_1)$.
\end{itemize}
Indeed, taking into account (\ref{eq_impli}), condition (i) implies that 
$E_1 \leadsto_{\rm t} E_2$ via $\Gamma$ and $E_2 \leadsto_{\rm t} E_1$ via $\Gamma^*$, and (ii) follows from
Proposition \ref{p_mapping}.
Conversely, assuming (ii), Proposition \ref{p_mapping} implies that 
$\Gamma$ fits $E_1 \to E_2$ while $\Gamma^*$ fits $E_2 \to E_1$. This means that $\Gamma$ fits 
$E_1 \hspace{-0.1cm} \leftrightarrow\hspace{-0.1cm} E_2$. 
\end{remark}

\begin{theorem}\label{th_ordeqre2}
For ${\rm t}\in \{{\rm loc},{\rm q}, {\rm qa}, {\rm qc}, {\rm ns}\}$, the relation 
$\to_{\rm t}$ (resp. $\simeq_{\rm t}$) is a quasi-order (resp. an equivalence relation). 
\end{theorem}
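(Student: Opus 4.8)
The plan is to treat the two assertions in parallel, since both reduce to the corresponding results about quasi-homomorphisms (Theorem \ref{th_ordeqre}) via the composition machinery of Theorem \ref{th_comsimu}, together with the symmetry afforded by the operation $\Gamma\mapsto\Gamma^*$ on bicorrelations. For $\to_{\rm t}$: reflexivity is immediate, since the identity channel $\cl D_{V_1W_1}\to\cl D_{V_1W_1}$ is a local correlation, it is plainly unital, and it fits $E_1\leftrightarrow E_1$ (the quadruples $(v_1,w_1,v_1,w_1)$ with $(v_1,w_1)\in E_1\Leftrightarrow (v_1,w_1)\in E_1$ are all quadruples on the diagonal, which is exactly the support). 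For transitivity, suppose $E_1\to_{\rm t} E_2$ via $\Gamma_1$ and $E_2\to_{\rm t} E_3$ via $\Gamma_2$; by Theorem \ref{th_comsimu}(i), $\Gamma_2\ast\Gamma_1\in\cl C_{\rm t}$, and it remains to check that $\Gamma_2\ast\Gamma_1$ fits $E_1\leftrightarrow E_3$. Here the key point is that $E_i\to_{\rm t} E_j$ via $\Gamma$ is equivalent to \emph{both} $\Gamma$ fitting $E_i\leadsto E_j$ \emph{and} $\Gamma$ fitting $E_j\leadsto E_i$ (unravelling the biconditional $(v_1,w_1)\in E_1\Leftrightarrow(v_2,w_2)\in E_2$ into the two implications, as already recorded just before Remark \ref{r_Gamma*}); then I apply the transitivity argument from the proof of Theorem \ref{th_ordeqre} twice, once to conclude that $\Gamma_2\ast\Gamma_1$ fits $E_1\leadsto E_3$ and once (reversing roles) to conclude it fits $E_3\leadsto E_1$, whence it fits $E_1\leftrightarrow E_3$.

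For $\simeq_{\rm t}$: reflexivity again uses the identity channel, which is a local \emph{bi}correlation (it is unital and its dual — itself — is no-signalling), and it lies in $\cl C^{\rm bi}_{\rm t}(E_1\leftrightarrow E_1)$. Symmetry is where the hypothesis $V_1=V_2$, $W_1=W_2$ is essential: if $E_1\simeq_{\rm t} E_2$ via a bicorrelation $\Gamma$, then $\Gamma^*\in\cl C^{\rm bi}_{\rm t}$ by Remark \ref{r_adjo}, and one checks directly from the definition of $\leftrightarrow$ that $\Gamma$ fits $E_1\leftrightarrow E_2$ if and only if $\Gamma^*$ fits $E_2\leftrightarrow E_1$ — indeed the defining conditions $\Gamma(v_1,w_2|v_2,w_1)\neq 0 \Rightarrow \big((v_1,w_1)\in E_1\Leftrightarrow(v_2,w_2)\in E_2\big)$ and the analogous condition for $\Gamma^*(v_2,w_1|v_1,w_2)=\Gamma(v_1,w_2|v_2,w_1)$ relative to $E_2\leftrightarrow E_1$ are literally the same statement once one notes that $(v_1,w_1)\in E_1\Leftrightarrow(v_2,w_2)\in E_2$ is symmetric in the two hypergraphs. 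For transitivity, given $\Gamma_1$ realising $E_1\simeq_{\rm t} E_2$ and $\Gamma_2$ realising $E_2\simeq_{\rm t} E_3$, I take $\Gamma_2\ast\Gamma_1$: it is in $\cl C_{\rm t}$ by Theorem \ref{th_comsimu}(i), it fits $E_1\leftrightarrow E_3$ by the $\to_{\rm t}$ argument above, and the remaining obligation is to verify it is a \emph{bicorrelation}, i.e.\ that $\Gamma_2\ast\Gamma_1$ is a unital channel and $(\Gamma_2\ast\Gamma_1)^*\in\cl C_{\rm ns}$.

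The main obstacle is precisely this last verification: that $\cl C^{\rm bi}_{\rm t}$ is closed under the operation $\ast$. One shows $\Gamma_2\ast\Gamma_1$ is unital by a direct computation, $\big((\Gamma_2\ast\Gamma_1)(I)\big)(v_1,w_3) = \sum_{v_3,w_1}(\Gamma_2\ast\Gamma_1)(v_1,w_3|v_3,w_1) = \sum_{v_2,w_2}\big(\sum_{v_3}\Gamma_2(v_2,w_3|v_3,w_2)\big)\big(\sum_{w_1}\Gamma_1(v_1,w_2|v_2,w_1)\big)$, and then invoking unitality of $\Gamma_1$ and $\Gamma_2$ to collapse the inner sums; alternatively, for the representation-based types one uses that a sum $\sum_{v_2}E_{v_2,v_1}\otimes E'_{v_3,v_2}$ of tensored quantum magic squares is again a quantum magic square (the row/column sums behave as in the POVM computation already carried out in the proof of Theorem \ref{th_comsimu}(i)). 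That the dual is again no-signalling should follow because $(\Gamma_2\ast\Gamma_1)^* = \Gamma_1^*\ast\Gamma_2^*$ — this identity, verified by a reindexing of the defining sum — reduces the claim to the fact, already available, that $\ast$ preserves the relevant correlation class applied to $\Gamma_1^*,\Gamma_2^*\in\cl C^{\rm bi}_{\rm t}$. With $\cl C^{\rm bi}_{\rm t}$ shown stable under $\ast$, transitivity of $\simeq_{\rm t}$ follows, completing the proof; the \textsc{qa} case, as before, is handled by continuity of $(\Gamma_1,\Gamma_2)\mapsto\Gamma_2\ast\Gamma_1$ together with the fact that $\cl C^{\rm bi}_{\rm qa}$ is by definition the closure of $\cl C^{\rm bi}_{\rm q}$.
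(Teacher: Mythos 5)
Your proposal is correct and follows essentially the same route as the paper: transitivity is reduced, via the argument of Theorem \ref{th_ordeqre}, to checking that $\Gamma_2\ast\Gamma_1$ fits $E_1\hspace{-0.1cm}\leftrightarrow\hspace{-0.1cm}E_3$, and to the stability of $\cl C_{\rm t}^{\rm bi}$ under $\ast$, established through Theorem \ref{th_comsimu}, Remark \ref{r_adjo}, the identity $(\Gamma_2\ast\Gamma_1)^*=\Gamma_1^*\ast\Gamma_2^*$ and the fact that sums of tensored quantum magic squares are again quantum magic squares. Your explicit treatment of reflexivity, symmetry and unitality only spells out steps the paper leaves implicit.
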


\begin{proof}
Similarly to the proof of Theorem \ref{th_ordeqre}, 
one can verify that if $\Gamma_1$ and $\Gamma_2$ are correlations such that 
$\Gamma_1$ fits $E_1 \hspace{-0.1cm} \leftrightarrow\hspace{-0.1cm} E_2$, while
$\Gamma_2$ fits $E_2 \hspace{-0.1cm} \leftrightarrow\hspace{-0.1cm} E_3$, 
then the correlation $\Gamma_2 \ast \Gamma_1$ fits $E_1 \hspace{-0.1cm} \leftrightarrow\hspace{-0.1cm} E_3$. 
The claim about $\to_{\rm t}$ now follows from Theorem \ref{th_comsimu} (i).

To see the claim about the relation $\simeq_{\rm t}$, it suffices to establish its transitivity. 
It is therefore enough to show that whenever $\Gamma_1, \Gamma_2\in \cl C_{\rm t}^{\rm bi}$, we also have 
$\Gamma_2 \ast \Gamma_1\in \cl C_{\rm t}^{\rm bi}$. 
In the case ${\rm t} = {\rm ns}$, the claim is a consequence of Remark \ref{r_adjo}, 
Theorem \ref{th_comsimu} and the fact that 
$(\Gamma_2 \ast \Gamma_1)^* = \Gamma_1^* \ast \Gamma_2^*,$
which we verify:
\begin{eqnarray*}
(\Gamma_2 \ast \Gamma_1)^*(v_3,w_1 | v_1,w_3)
& = & 
(\Gamma_2 \ast \Gamma_1)(v_1,w_3 | v_3,w_1)\\
& = & 
\sum_{v_2\in V_2} \sum_{w_2\in W_2}
\Gamma_1(v_1,w_2 | v_2,w_1) \Gamma_2(v_2,w_3 | v_3,w_2)\\
& = & 
\sum_{v_2\in V_2} \sum_{w_2\in W_2}
\Gamma_2^*(v_3,w_2 | v_2,w_3) \Gamma_1^*(v_2,w_1 | v_1,w_2)\\
& = & 
(\Gamma_1^* \ast \Gamma_2^*)(v_3,w_1 | v_1,w_3).
\end{eqnarray*}
In the case ${\rm t} = {\rm qc}$, the claim follows from the proof of Theorem \ref{th_comsimu} and the 
fact that the transpose $(E_{v_2,v_1})_{v_1,v_2}$ of a quantum magic square 
%\marginpar{\tiny IT. Last sentence added. With the new definitions, the proved identity was not sufficient.}
$(E_{v_1,v_2})_{v_1,v_2}$ is also a quantum magic square. The claim in the case of the 
remaining types, 
${\rm t} = {\rm qa}, {\rm q}, {\rm loc}$, follow similarly. 
\end{proof}

Let $E_i\subseteq V_i\times W_i$ be a hypergraph, $i = 1,2$. 
A map $f : V_2\to V_1$ is called a \emph{homomorphism} from $E_1$ to $E_2$ if 
$f^{-1}(\alpha)$ is an edge of $E_2$ whenever $\alpha$ is an edge of $E_1$; 
equivalently, $f : V_2\to V_1$ is a homomorphism precisely when there exists a map 
$g : W_1\to W_2$ such that 
\begin{equation}\label{eq_quasih2}
f^{-1}(E_1(w_1)) = E_2(g(w_1)) \ \ \mbox{ for every } w_1\in W_1.
\end{equation}
If $V_1 = V_2$ and $W_1 = W_2$, an \emph{isomorphism} from $E_1$ to $E_2$ is a bijective homomorphism 
$f$, for which the map $g$ in (\ref{eq_quasih2}) can be chosen to be a bijection.

\begin{proposition}\label{p_lochy2}
Let $E_1 \subseteq V_1\times W_1$ and $E_2 \subseteq V_2 \times W_2$ be hypergraphs. Then
\begin{itemize}
\item[(i)]
$E_1 \to_{\rm loc} E_2$
if and only if there exists a homomorphism from $E_1$ to $E_2$;

\item[(ii)]
if $V_1 = V_2$ and $W_1 = W_2$, then
$E_1 \simeq_{\rm loc} E_2$ if and only if the hypergraphs $E_1$ and $E_2$ are isomorphic.
\end{itemize}
\end{proposition}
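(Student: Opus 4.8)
The plan is to prove (i) and (ii) by combining the analogue for the quasi-homomorphism game, Proposition \ref{p_lochy}, with the extra \emph{equivalence} (as opposed to \emph{implication}) built into the definition of $E_1 \leftrightarrow E_2$. For (i), I would argue exactly as in the proof of Proposition \ref{p_lochy}, only tracking the stronger consequence. If $\Gamma \in \cl C_{\rm loc}(E_1 \leftrightarrow E_2)$, I may assume $\Gamma$ is an extreme point of $\cl C_{\rm loc}$, so by no-signalling it is of the form $\Gamma = \Phi \otimes \Psi$ with $\Phi(v_1|v_2) = \delta_{v_1,f(v_2)}$ and $\Psi(w_2|w_1) = \delta_{w_2,g(w_1)}$ for functions $f : V_2 \to V_1$, $g : W_1 \to W_2$. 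The condition that $\Gamma$ fits $E_1 \leftrightarrow E_2$ now says: for all $v_2,w_1$, $(f(v_2),w_1) \in E_1 \Leftrightarrow (v_2,g(w_1)) \in E_2$. Rewriting this with the edge notation, for every $w_1 \in W_1$ we get $f(v_2) \in E_1(w_1) \Leftrightarrow v_2 \in E_2(g(w_1))$, i.e. $f^{-1}(E_1(w_1)) = E_2(g(w_1))$, which is precisely (\ref{eq_quasih2}); hence $f$ is a homomorphism from $E_1$ to $E_2$. Conversely, given a homomorphism realised by $f$ and $g$ satisfying (\ref{eq_quasih2}), I set $\Gamma = \Phi \otimes \Psi$ with the same deterministic $\Phi,\Psi$ as above; this is a local correlation, and the equality $f^{-1}(E_1(w_1)) = E_2(g(w_1))$ yields directly that $\Gamma$ fits $E_1 \leftrightarrow E_2$ (unwinding: $\Gamma(v_1,w_2|v_2,w_1) \neq 0$ forces $v_1 = f(v_2)$, $w_2 = g(w_1)$, and then $(v_1,w_1) \in E_1 \Leftrightarrow f(v_2) \in E_1(w_1) \Leftrightarrow v_2 \in E_2(g(w_1)) \Leftrightarrow (v_2,w_2) \in E_2$).

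For (ii), I would use the characterisation of NS bicorrelations and the observation in Remark \ref{r_Gamma*}. If $\Gamma \in \cl C_{\rm loc}^{\rm bi}(E_1 \leftrightarrow E_2)$, then again I reduce to an extreme point of $\cl C_{\rm loc}^{\rm bi}$: since local bicorrelations are convex combinations of products of bistochastic matrices, and the extreme points of the Birkhoff polytope are permutation matrices, an extreme local bicorrelation has the form $\Gamma = \Phi \otimes \Psi$ with $\Phi(v_1|v_2) = \delta_{v_1,f(v_2)}$ and $\Psi(w_2|w_1) = \delta_{w_2,g(w_1)}$ for \emph{bijections} $f : V = V_2 \to V_1 = V$ and $g : W = W_1 \to W_2 = W$. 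By part (i) (or its proof), $f$ and $g$ satisfy (\ref{eq_quasih2}), and since they are bijections this exhibits an isomorphism from $E_1$ to $E_2$ in the sense defined just before the proposition. Conversely, if $f$ and $g$ are bijections realising an isomorphism, then $\Gamma = \Phi \otimes \Psi$ with the associated permutation matrices is a local bicorrelation (each factor is a permutation matrix, hence bistochastic, and $\Gamma$ is unital since $\Phi,\Psi$ are), and by (i) it fits $E_1 \leftrightarrow E_2$; thus $E_1 \simeq_{\rm loc} E_2$.

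The one point that requires a little care — and the likely main obstacle — is the reduction to extreme points in the bicorrelation case: I must confirm that the extreme points of $\cl C_{\rm loc}^{\rm bi}$ (defined as convex combinations of products $p^{(1)} \otimes p^{(2)}$ of bistochastic matrices) are exactly the products of permutation matrices, so that $f$ and $g$ come out bijective rather than merely functional. This follows because $\cl C_{\rm loc}^{\rm bi}$ is the convex hull of the set of such products, the Birkhoff--von Neumann theorem identifies the extreme bistochastic matrices as permutation matrices, and a product of extreme points of two polytopes is an extreme point of the image of the product polytope under the (bilinear, hence affine on the relevant slices) tensor map; alternatively one can invoke directly that an extreme NS bicorrelation which is local must split as a tensor of deterministic channels with bijective underlying maps, appealing to no-signalling exactly as in Proposition \ref{p_lochy}. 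Everything else is a routine unwinding of the edge/support notation, entirely parallel to the already-proved Propositions \ref{p_lochy} and \ref{p_mapping} and Remark \ref{r_Gamma*}.
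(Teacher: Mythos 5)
Your proposal is correct and follows essentially the same route as the paper: reduce, via no-signalling and the Birkhoff--von Neumann decomposition (with the positivity/support argument), to a product of deterministic channels — permutation channels in the bicorrelation case — and then translate the support condition for $E_1 \hspace{-0.1cm}\leftrightarrow\hspace{-0.1cm} E_2$ into the equality $f^{-1}(E_1(w_1)) = E_2(g(w_1))$. The point you flag about extreme points is handled in the paper exactly by the decomposition-plus-positivity argument you sketch as your alternative, so no gap remains.
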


\begin{proof}
(i) 
As in the proof of Proposition \ref{p_lochy}, 
the existence of a perfect local strategy for the homomorphism game 
$E_1 \hspace{-0.1cm} \to\hspace{-0.1cm} E_2$ implies 
the existence of maps
$f : V_2\to V_1$ and $g : W_1\to W_2$ such that 
$$(f(v_2),w_1)\in E_1 \ \Longleftrightarrow \ (v_2,g(w_1))\in E_2,$$
which is equivalent to (\ref{eq_quasih2}). 

Conversely, assuming (\ref{eq_quasih2}) and adopting the notation from the proof of Proposition \ref{p_lochy}, 
we have that 
$$(\Phi\otimes\Psi)(v_1,w_2|v_2,w_1) = 
\begin{cases}
1 & \text{if } v_1 = f(v_2) \mbox{ and } w_2 = g(w_1)\\
0 & \text{otherwise.}
\end{cases}
$$
Thus, assuming that $(\Phi\otimes\Psi)(v_1,w_2|v_2,w_1) = 1$, we have that 
$$(v_1,w_1)\in E_1 \ \Leftrightarrow \ (f(v_2),w_1)\in E_1
\ \Leftrightarrow \ (v_2,g(w_1))\in E_2 
\ \Leftrightarrow \ (v_2,w_2)\in E_2,$$
which shows that $\Phi\otimes\Psi$ fits $E_1 \hspace{-0.1cm} \leftrightarrow\hspace{-0.1cm} E_2$. 
%Fixed "fulful" to "fulfill"

(ii)
Assume that the bicorrelation $\Gamma$ is a perfect local strategy for the 
hypergraph isomorphism game 
$E_{1}\hspace{-0.1cm} \leftrightarrow\hspace{-0.1cm} E_{2}$. 
By definition, 
$\Gamma = \sum_{i=1}^k \lambda_i \Phi_i\otimes \Psi_i$ as a convex combination,
where $\Phi_i : \cl D_V\to \cl D_V$ and $\Psi_i : \cl D_W\to \cl D_W$ arise from (scalar) bistochastic matrices. 
Using Birkhhoff's Theorem, we decompose these bistochastic matrices as convex combinations of permutation 
matrices; this allows us to assume that $\Phi_i$ and $\Psi_i$ each arise from permutation matrices.
By positivity, $\Phi_i\otimes \Psi_i$ is a perfect strategy for the game
$E_{1}\hspace{-0.1cm} \leftrightarrow\hspace{-0.1cm} E_{2}$. 
We may thus assume that $\Gamma = \Phi \otimes \Psi$, where 
$\Phi(v_1|v_2) = \delta_{f(v_2),v_1}$ and $\Psi(w_2|w_1) = \delta_{g(w_1),w_2}$ for some 
bijections $f : V_2\to V_1$ and $g : W_1\to W_2$. 
Identifying $\Phi$ and $\Psi$ with the corresponding conditional probability distributions, we have that 
%\marginpar{\tiny IT. There was an error in the displayed identity. Check.}
$${\rm supp}(\Phi\otimes\Psi) = \left\{(v_2,w_1,f(v_2),g(w_1)) : v_2,w_1\in V\right\}.$$
It follows that $E_1$ and $E_2$ are isomorphic via the pair $(f,g)$. 

Conversely, assuming that $f$ and $g$ are bijections that fulfill (\ref{eq_quasih2}),
the channel $\Gamma = \Phi\otimes\Psi$, defined in the previous 
paragraph, is a 
bicorrelation that is a perfect strategy for 
the game $E_1 \hspace{-0.1cm} \leftrightarrow\hspace{-0.1cm} E_2$.
\end{proof}

%%%%%%%%%%%%%%%%%%%%%%%%%%%%%%%%%%%%%%%%%%%%%%%%%%%%%

\subsection{Values of probabilistic hypergraphs}\label{ss_values}

A \emph{probabilistic hypergraph} is a hypergraph $E\subseteq V\times W$, equipped with a probability 
distribution $\pi : V\to [0,1]$ on its vertex set. 
Given a convex subset $\frak{E}\subseteq \cl C(V\times W)$ of channels from $V$ to $W$, we let 
\begin{equation}\label{eq_frakEpi}
\omega_{\frak{E}}(E,\pi) = \sup_{\cl E\in \frak{E}} \sum_{(v,w)\in E} \pi(v) \cl E(w|v)
\end{equation}
be the $\frak{E}$-\emph{value} of $(E,\pi)$. 
Suppose that $\Gamma$ is a perfect no-signalling 
strategy for the homomorphism game $E_1 \hspace{-0.1cm} \to\hspace{-0.1cm} E_2$.
Given a probability distribution $\pi_2$ on $V_2$, let $\pi_1 = \Gamma_{V_2\to V_1}(\pi_2)$; 
thus, $\pi_1$ is the probability distribution on $V_1$ given by 
$$\pi_1(v_1) = \sum_{v_2\in V_2} \Gamma_{V_2\to V_1}(v_1|v_2)\pi_2(v_2), \ \ \ v_1\in V_1.$$
Similarly, for a probability distribution $\pi_1$ on $V_1$, let $\pi_2 = \Gamma^*_{V_1\to V_2}(\pi_1)$. 
If $V_1 = V_2 =: V$, the probability distribution $\pi$ on $V$ will be called 
\emph{$\Gamma$-stationary} if $V_1 = V_2$ and $\pi = \pi_1 = \pi_2$.

\begin{proposition}\label{p_inval}
Let $E_i\subseteq V_i\times W_i$ be a hypergraph,
$\frak{E}_i\subseteq \cl C(V_i\times W_i)$, $i = 1,2$, $\pi_2$ be a probability distribution on $V_2$, 
and $\pi_1 = \Gamma_{V_2\to V_1}(\pi_2)$. 
\begin{itemize}
\item[(i)] 
Suppose that 
$E_1\to_{\rm ns} E_2$ via a correlation $\Gamma$ such that 
$\Gamma[\frak{E}_1]\subseteq \frak{E}_2$.
Then 
$$\omega_{\frak{E}_1}(E_1,\pi_1)\leq \omega_{\frak{E}_2}(E_2,\pi_2).$$

\item[(ii)] 
Suppose that $V_1 = V_2 = :V$, that $\pi$ is a $\Gamma$-stationary probability distribution on $V$ and 
that $E_1\simeq_{\rm ns} E_2$ via a bicorrelation $\Gamma$ 
such that $\Gamma[\frak{E}_1]\subseteq \frak{E}_2$ and $\Gamma^*[\frak{E}_2]\subseteq \frak{E}_1$. 
Then 
$\omega_{\frak{E}_1}(E_1,\pi) = \omega_{\frak{E}_2}(E_2,\pi)$.
\end{itemize}
\end{proposition}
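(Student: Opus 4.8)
The plan is to prove part (i) directly from Proposition \ref{p_mapping} and the definition of the $\frak{E}$-value, and then deduce part (ii) by applying part (i) twice, once with $\Gamma$ and once with $\Gamma^*$, using that $\pi$ is $\Gamma$-stationary.

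First I would establish (i). Since $E_1 \to_{\rm ns} E_2$ via $\Gamma$, in particular $E_1 \leadsto_{\rm ns} E_2$ via $\Gamma$ by (\ref{eq_impli}), so Proposition \ref{p_mapping} tells us that $\cl E \mapsto \Gamma[\cl E]$ maps $\cl C(E_1)$ into $\cl C(E_2)$. The hypothesis $\Gamma[\frak{E}_1] \subseteq \frak{E}_2$ then makes $\cl E \mapsto \Gamma[\cl E]$ a well-defined map $\frak{E}_1 \cap \cl C(E_1) \to \frak{E}_2 \cap \cl C(E_2)$; in fact for any $\cl E \in \frak{E}_1$ we have $\Gamma[\cl E] \in \frak{E}_2$, and if moreover $\cl E$ fits $E_1$ then $\Gamma[\cl E]$ fits $E_2$. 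The key computation is that for $\cl E \in \cl C(E_1)$,
\begin{align*}
\sum_{(v_2,w_2)\in E_2} \pi_2(v_2)\,\Gamma[\cl E](w_2|v_2)
&= \sum_{(v_2,w_2)\in E_2} \pi_2(v_2) \sum_{v_1\in V_1}\sum_{w_1\in W_1} \Gamma(v_1,w_2|v_2,w_1)\cl E(w_1|v_1)\\
&\geq \sum_{(v_1,w_1)\in E_1} \cl E(w_1|v_1) \sum_{(v_2,w_2)\in E_2} \pi_2(v_2)\,\Gamma(v_1,w_2|v_2,w_1).
\end{align*}
Here I would argue that, because $\Gamma$ fits $E_1 \leftrightarrow E_2$, for $(v_1,w_1)\in E_1$ the inner sum $\sum_{(v_2,w_2)\in E_2} \pi_2(v_2)\Gamma(v_1,w_2|v_2,w_1)$ actually equals $\sum_{v_2\in V_2}\sum_{w_2\in W_2}\pi_2(v_2)\Gamma(v_1,w_2|v_2,w_1)$: the quadruples $(v_2,w_1,v_1,w_2)$ with $(v_1,w_1)\in E_1$ but $(v_2,w_2)\notin E_2$ lie outside $E_1 \leftrightarrow E_2$, hence contribute $0$. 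That full sum telescopes using the trace-preservation of $\Gamma$ followed by the definition of $\pi_1$: $\sum_{w_2}\Gamma(v_1,w_2|v_2,w_1) = \Gamma_{V_2\to V_1}(v_1|v_2)$ (independent of $w_1$), and then $\sum_{v_2}\pi_2(v_2)\Gamma_{V_2\to V_1}(v_1|v_2) = \pi_1(v_1)$. Thus the right-hand side equals $\sum_{(v_1,w_1)\in E_1}\pi_1(v_1)\cl E(w_1|v_1)$, giving
$$\sum_{(v_1,w_1)\in E_1}\pi_1(v_1)\,\cl E(w_1|v_1) \;\leq\; \sum_{(v_2,w_2)\in E_2}\pi_2(v_2)\,\Gamma[\cl E](w_2|v_2) \;\leq\; \omega_{\frak{E}_2}(E_2,\pi_2),$$
the last step because $\Gamma[\cl E]\in \frak{E}_2$. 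Taking the supremum over $\cl E\in \frak{E}_1 \cap \cl C(E_1)$ — and noting that restricting the sup in (\ref{eq_frakEpi}) to channels fitting $E_1$ does not change $\omega_{\frak{E}_1}(E_1,\pi_1)$, since the summand $\sum_{(v,w)\in E_1}\pi_1(v)\cl E(w|v)$ only ever sees the part of $\cl E$ supported on $E_1$, and one can replace any $\cl E$ by one fitting $E_1$ without decreasing this sum — yields $\omega_{\frak{E}_1}(E_1,\pi_1) \leq \omega_{\frak{E}_2}(E_2,\pi_2)$.

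For part (ii), $\Gamma$ is a bicorrelation with $E_1 \simeq_{\rm ns} E_2$ via $\Gamma$, so by Remark \ref{r_adjo} (and Remark \ref{r_Gamma*}), $\Gamma^* \in \cl C_{\rm ns}^{\rm bi}$ and $E_2 \simeq_{\rm ns} E_1$ via $\Gamma^*$; in particular $E_1 \to_{\rm ns} E_2$ via $\Gamma$ and $E_2 \to_{\rm ns} E_1$ via $\Gamma^*$. Applying (i) to $\Gamma$ with the distribution $\pi_2 = \pi$ (and $\pi_1 = \Gamma_{V_2\to V_1}(\pi) = \pi$ by stationarity) gives $\omega_{\frak{E}_1}(E_1,\pi) \leq \omega_{\frak{E}_2}(E_2,\pi)$; applying (i) to $\Gamma^*$ with $\pi$ in the role of the target distribution (and $\Gamma^*_{V_1\to V_2}(\pi) = \pi$, again by stationarity) gives the reverse inequality $\omega_{\frak{E}_2}(E_2,\pi) \leq \omega_{\frak{E}_1}(E_1,\pi)$. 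Hence equality.

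\textbf{Main obstacle.} The substantive point, and the one I would be most careful about, is the interchange that replaces the sum $\sum_{(v_2,w_2)\in E_2}$ by $\sum_{v_2,w_2}$ — i.e., verifying that the $\leftrightarrow$-fitting condition kills exactly the terms with $(v_1,w_1)\in E_1$, $(v_2,w_2)\notin E_2$ so that the remaining marginal-and-stationarity telescoping is clean. One must also be slightly attentive to whether $\omega_{\frak{E}_i}$ should be computed over all of $\frak{E}_i$ or only over channels fitting $E_i$; the two agree, but this should be stated. Everything else is routine manipulation of no-signalling marginals.
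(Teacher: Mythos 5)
Your core computation is exactly the paper's: expand $\Gamma[\cl E]$, drop the terms with $(v_1,w_1)\notin E_1$ by positivity, use the fitting of $\Gamma$ to replace the sum over $E_2$ by the full sum over $V_2\times W_2$, and telescope via the marginal of $\Gamma$ and the definition of $\pi_1$; part (ii) by applying (i) to $\Gamma$ and to $\Gamma^*$ with stationarity is also the paper's "by symmetry" argument. However, the final step of your part (i) rests on a claim that is false in general: that restricting the supremum in $\omega_{\frak{E}_1}(E_1,\pi_1)$ to $\frak{E}_1\cap\cl C(E_1)$ does not change its value. Your justification ("replace any $\cl E$ by one fitting $E_1$ without decreasing the sum") ignores that the repaired channel need not belong to $\frak{E}_1$, which is an arbitrary convex set of channels. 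For instance, if $\frak{E}_1=\{\cl E_0\}$ with $\cl E_0$ not fitting $E_1$ (say the uniform channel, with $E_1$ a proper subset of $V_1\times W_1$), then $\frak{E}_1\cap\cl C(E_1)=\emptyset$ while $\omega_{\frak{E}_1}(E_1,\pi_1)>0$, so the restricted supremum is strictly smaller (indeed over the empty set).

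Fortunately the detour is unnecessary: nowhere in your chain of (in)equalities is the hypothesis $\cl E\in\cl C(E_1)$ actually used. The first inequality needs only non-negativity of the terms, the interchange of sums needs only that $\Gamma$ fits $E_1\hspace{-0.1cm}\leftrightarrow\hspace{-0.1cm}E_2$ (so that $\Gamma(v_1,w_2|v_2,w_1)=0$ when $(v_1,w_1)\in E_1$ and $(v_2,w_2)\notin E_2$), and the telescoping needs only the marginal $\Gamma_{V_2\to V_1}$ and $\pi_1=\Gamma_{V_2\to V_1}(\pi_2)$; likewise $\Gamma[\cl E]\in\frak{E}_2$ needs only $\cl E\in\frak{E}_1$. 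So run the same computation for an arbitrary $\cl E\in\frak{E}_1$ and take the supremum over all of $\frak{E}_1$ — this is exactly what the paper does — and delete the reduction to channels fitting $E_1$. With that correction your argument coincides with the paper's proof.
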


\begin{proof}
(i)
Let $\Gamma$ be a perfect no-signalling strategy for the homomorphism game 
$E_1 \hspace{-0.1cm} \to\hspace{-0.1cm} E_2$. 
Let $\cl E\in \frak{E}_1$ and $\cl F = \Gamma[\cl E]$. 
We have that 
\begin{eqnarray*}
& & 
\sum_{(v_2,w_2)\in E_2} \cl F(w_2|v_2) \pi_2(v_2)\\
& = & 
\sum_{(v_2,w_2)\in E_2} \sum_{v_1\in V_1}\sum_{w_1\in W_1} \Gamma(v_1,w_2|v_2,w_1) \cl E(w_1|v_1)\pi_2(v_2)\\
& \geq & 
\sum_{(v_2,w_2)\in E_2} \sum_{(v_1,w_1)\in E_1}\Gamma(v_1,w_2|v_2,w_1) \cl E(w_1|v_1)\pi_2(v_2)\\
& = & 
\sum_{(v_1,w_1)\in E_1} \sum_{v_2\in V_2}\sum_{w_2\in W_2}\Gamma(v_1,w_2|v_2,w_1) \cl E(w_1|v_1)\pi_2(v_2)\\
& = & 
\sum_{(v_1,w_1)\in E_1} \sum_{v_2\in V_2} \Gamma(v_1|v_2) \cl E(w_1|v_1)\pi_2(v_2)
= 
\sum_{(v_1,w_1)\in E_1} \cl E(w_1|v_1) \pi_1(v_1).
\end{eqnarray*}
Taking the supremum over all $\cl E\in \frak{E}_1$ yields the desired inequality.

(ii) follows by symmetry from (i). 
\end{proof}

If $E\subseteq V\times W$ and $F\subseteq X\times Y$ are hypergraphs, their 
\emph{product} is the hypergraph 
$E\otimes F \subseteq (VX)\times (WY)$, given by 
$$E\otimes F = \{(v,x,w,y) : (v,w)\in E, (x,y)\in F\}.$$
We write $E^{\otimes n} = \otimes_{i=1}^n E$. Given a probability distribution $\pi$ on $V$, let 
$\pi^n = \otimes_{i=1}^n \pi$ be the $n$-fold product distribution of $\pi$ on the vertex set $V^n$ of $E^{\otimes n}$. 
We fix subsets $\frak{E}_n\subseteq \cl C(V^n\times W^n)$ with the property that 
\begin{equation}\label{eq_incnm}
\cl E\in \frak{E}_n, \ \cl F\in \frak{E}_m \ \Longrightarrow \ \cl E\otimes \cl F \in \frak{E}_{n+m}, \ \ \ n,m\in \bb{N},
\end{equation}
and write $\bar{\frak{E}} = (\frak{E}_n)_{n\in \bb{N}}$. 
The \emph{asymptotic $\bar{\frak{E}}$-value} of $(E,\pi)$ is the quantity 
\begin{equation}\label{eq_bar}
\bar{\omega}_{\bar{\frak{E}}}(E,\pi) = \limsup_{n\in \bb{N}} \omega_{\frak{E}_n}(E^{\otimes n},\pi^n)^{\frac{1}{n}}.
\end{equation}

\begin{remark}\label{r_lim}
\rm 
Let $\bar{\frak{E}} = (\frak{E}_n)_{n\in \bb{N}}$ be a sequence of families of channels, where 
$\frak{E}_n\subseteq \cl C(V^n\times W^n)$, $n\in \bb{N}$, satisfying (\ref{eq_incnm}). 
By the definition of the value (\ref{eq_frakEpi}), we then have 
$$\omega_{\frak{E}_n}(E^{\otimes n},\pi^n)\omega_{\frak{E}_m}(E^{\otimes m},\pi^m)
\leq \omega_{\frak{E}_{n+m}}(E^{\otimes (n+m)},\pi^{n+m})$$
for all $n,m\in \bb{N}$. Thus, by Fekete's Lemma (see \cite{fekete}), 
the limsup in the definition (\ref{eq_bar}) can be replaced by a limit. 
\end{remark}

\begin{corollary}\label{p_invalcor}
Let $E_i\subseteq V_i\times W_i$ be a hypergraph and 
$\bar{\frak{E}}_i  = (\frak{E}_n^{(i)})_{n\in \bb{N}}$ be sequences of families of channels, 
where $\frak{E}_n^{(i)}\subseteq \cl C(V_i^n\times W_i^n)$, satisfying (\ref{eq_incnm}),
$i = 1,2$.

\begin{itemize}
\item[(i)] 
Suppose that 
$E_1\to_{\rm ns} E_2$ via a correlation $\Gamma$ such that 
$\Gamma^{\otimes n}[\frak{E}_n^{(1)}]\subseteq \frak{E}_n^{(2)}$, $n\in \bb{N}$, and let
$\pi_2$ be a probability distribution on $V_2$. Set $\pi_1 = \Gamma_{V_2\to V_1}(\pi_2)$.
Then

$$\bar{\omega}_{\bar{\frak{E}}_1}(E_1,\pi_1)\leq \bar{\omega}_{\bar{\frak{E}}_2}(E_2,\pi_2).$$

\item[(ii)] 
Suppose that $V_1 = V_2 = :V$, that $\pi$ is a $\Gamma$-stationary probability distribution on $V$ and 
that $E_1\simeq_{\rm ns} E_2$ via a bicorrelation $\Gamma$ 
such that $\Gamma^{\otimes n}[\frak{E}_n^{(1)}]\subseteq \frak{E}_n^{(2)}$ and 
$\Gamma^{* \otimes n}[\frak{E}_n^{(2)}]\subseteq \frak{E}_n^{(1)}$. 
Then 
$\bar{\omega}_{\bar{\frak{E}}_1}(E_1,\pi) = \bar{\omega}_{\bar{\frak{E}}_2}(E_2,\pi)$.
\end{itemize}
\end{corollary}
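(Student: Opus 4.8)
The plan is to reduce Corollary~\ref{p_invalcor} to Proposition~\ref{p_inval} applied to the tensor powers. For part~(i), I would first observe that if $\Gamma$ fits $E_1 \hspace{-0.1cm}\to\hspace{-0.1cm} E_2$ then, for each $n$, the tensor power $\Gamma^{\otimes n}$ (viewed as a no-signalling correlation over the quadruple $(V_2^n,W_1^n,V_1^n,W_2^n)$) fits $E_1^{\otimes n} \hspace{-0.1cm}\to\hspace{-0.1cm} E_2^{\otimes n}$: indeed a quadruple $(v_2,w_1,v_1,w_2)$ of $n$-tuples lies in $E_1^{\otimes n}\leadsto E_2^{\otimes n}$ precisely when each coordinate quadruple lies in $E_1\leadsto E_2$, and $\Gamma^{\otimes n}(v_1,w_2|v_2,w_1)=\prod_k \Gamma(v_1^{(k)},w_2^{(k)}|v_2^{(k)},w_1^{(k)})$ is nonzero only when each factor is nonzero, forcing each $(v_1^{(k)},w_1^{(k)})\in E_1 \Rightarrow (v_2^{(k)},w_2^{(k)})\in E_2$. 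Hence $E_1^{\otimes n}\to_{\rm ns}E_2^{\otimes n}$ via $\Gamma^{\otimes n}$. Combined with the hypothesis $\Gamma^{\otimes n}[\frak{E}_n^{(1)}]\subseteq \frak{E}_n^{(2)}$, Proposition~\ref{p_inval}(i) applied to $E_1^{\otimes n}$, $E_2^{\otimes n}$, $\frak{E}_n^{(1)}$, $\frak{E}_n^{(2)}$, $\pi_2^n$ and the induced distribution $(\Gamma^{\otimes n})_{V_2^n\to V_1^n}(\pi_2^n)$ gives
\begin{equation}\label{eq_powervalineq}
\omega_{\frak{E}_n^{(1)}}\bigl(E_1^{\otimes n},(\Gamma^{\otimes n})_{V_2^n\to V_1^n}(\pi_2^n)\bigr)\leq \omega_{\frak{E}_n^{(2)}}\bigl(E_2^{\otimes n},\pi_2^n\bigr).
\end{equation}

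The next step is to identify the marginal channel of $\Gamma^{\otimes n}$ with the tensor power of the marginal channel of $\Gamma$, i.e. $(\Gamma^{\otimes n})_{V_2^n\to V_1^n} = (\Gamma_{V_2\to V_1})^{\otimes n}$; this is a direct computation from the definition of the marginal channel, since summing out all $W_2$-coordinates of a product factorises across coordinates. Consequently $(\Gamma^{\otimes n})_{V_2^n\to V_1^n}(\pi_2^n) = \bigl(\Gamma_{V_2\to V_1}(\pi_2)\bigr)^{\otimes n} = \pi_1^n$. Substituting into \eqref{eq_powervalineq}, raising both sides to the power $1/n$, and taking $\limsup$ (which is a genuine limit by Remark~\ref{r_lim}, once one checks $\frak{E}_n^{(i)}$ satisfies \eqref{eq_incnm}) yields $\bar{\omega}_{\bar{\frak{E}}_1}(E_1,\pi_1)\leq \bar{\omega}_{\bar{\frak{E}}_2}(E_2,\pi_2)$, which is part~(i).

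For part~(ii), the same argument applies with $\Gamma$ a bicorrelation: one checks that $\Gamma^{\otimes n}$ is again a bicorrelation (it is a unital channel, being a tensor product of unital channels, and its dual is $(\Gamma^*)^{\otimes n}$, which is no-signalling), that it fits $E_1^{\otimes n}\hspace{-0.1cm}\leftrightarrow\hspace{-0.1cm}E_2^{\otimes n}$ by the coordinatewise argument above applied to the biconditional, and that $\pi^n$ is $\Gamma^{\otimes n}$-stationary because $(\Gamma^{\otimes n})_{V^n\to V^n}(\pi^n)=\pi^n$ and likewise for the adjoint. Then Proposition~\ref{p_inval}(ii) gives equality of the level-$n$ values, and passing to the limit as before gives $\bar{\omega}_{\bar{\frak{E}}_1}(E_1,\pi)=\bar{\omega}_{\bar{\frak{E}}_2}(E_2,\pi)$.

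The routine bookkeeping is checking that $\Gamma^{\otimes n}$ genuinely is a no-signalling correlation over the permuted quadruple and that its marginals and (in part~(ii)) its dual behave multiplicatively under tensoring; none of this is deep but it must be stated carefully because the tensor factors have to be regrouped from $(V_1^nW_1^n)\times(\cdots)$ into the shape $(V_2^n,W_1^n,V_1^n,W_2^n)$ expected by Proposition~\ref{p_inval}. The one genuine hypothesis-dependent point, which I would flag explicitly, is that we are \emph{given} $\Gamma^{\otimes n}[\frak{E}_n^{(1)}]\subseteq\frak{E}_n^{(2)}$ as an assumption, so no compatibility between the families $\frak{E}_n^{(i)}$ and the tensor structure of $\Gamma$ needs to be derived — it is built into the statement. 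The main (mild) obstacle is therefore purely notational: making the coordinate regrouping precise enough that the appeal to Proposition~\ref{p_inval} at level $n$ is unambiguous.
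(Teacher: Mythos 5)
Your proposal is correct and follows essentially the paper's own route: the paper likewise applies Proposition \ref{p_inval} at each level $n$ to the tensor-power data (yielding $\omega_{\frak{E}_n^{(1)}}(E_1^{\otimes n},\pi_1^n)\leq\omega_{\frak{E}_n^{(2)}}(E_2^{\otimes n},\pi_2^n)$) and then passes to the limit, with part (ii) by symmetry; your write-up merely makes explicit the bookkeeping the paper leaves tacit (that $\Gamma^{\otimes n}$ fits the power game and that $(\Gamma^{\otimes n})_{V_2^n\to V_1^n}=(\Gamma_{V_2\to V_1})^{\otimes n}$). One cosmetic point: since the hypothesis gives the biconditional fit $E_1\leftrightarrow E_2$, your coordinatewise argument should be stated for both implications to conclude $E_1^{\otimes n}\to_{\rm ns}E_2^{\otimes n}$ (though only the forward direction is actually used in the proof of Proposition \ref{p_inval}(i)).
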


\begin{proof}
Proposition \ref{p_inval} (i) implies that 
$$\bar{\omega}_{\bar{\frak{E}}_1}(E_1^{\otimes n},\pi_1^n)
\leq \bar{\omega}_{\bar{\frak{E}}_2}(E_2^{\otimes n},\pi_2^n), \ \ \ n\in \bb{N},$$
implying part (i). Part (ii) follows by symmetry, applying Proposition \ref{p_inval} (ii). 
\end{proof}

%%%%%%%%%%%%%%%%%%%%%%%%%%%%%%%%%%%%%%%%%%%%%%%%%

\subsection{An operator system approach}\label{ss_opsysa}

We recall the universal operator system for bicorrelations, introduced in \cite{bhtt2}.
A \emph{ternary ring of operators (TRO)}  
is a subspace $\cl V\subseteq \cl B(H,K)$, for 
some Hilbert spaces $H$ and $K$, such that $ST^*R\in \cl V$ whenever $S,T,R\in \cl V$ 
(see e.g. \cite{blecher, kt}). 
Let $\cl V_V$ be the universal TRO
generated by the entries $u_{v,v'}$ of a \emph{bi-isometry}, that is, a block operator matrix 
$U = (u_{v,v'})_{v,v'\in V}$ such that both $U$ and its transpose $U^{\rm t} := (u_{v',v})_{v,v'\in V}$ are isometries.
Thus, $\cl V_V$ is the universal TRO with generators $u_{v,v'}$, $v,v'\in V$, and 
relations 
$$\sum_{a\in V} [u_{a'',x''},\hspace{-0.05cm}u_{a,x},\hspace{-0.05cm} u_{a,x'}] \hspace{-0.07cm}=\hspace{-0.07cm} \delta_{x,x'}u_{a'',x''} 
\mbox{ and }
\sum_{x\in V} [u_{a'',x''},\hspace{-0.07cm}u_{a,x},\hspace{-0.07cm}u_{a',x}] 
\hspace{-0.05cm}=\hspace{-0.05cm} \delta_{a,a'}u_{a'',x''},$$
for all $x, x',x'',a, a', a''\in V$.
Let $\cl C_V$ be the right C*-algebra of $\cl V_V$, when the latter is viewed as an 
imprimitivity bimodule \cite{rieffel}; thus, up to a *-isomorphism, we have that 
$\cl C_V\simeq \overline{{\rm span}}(\theta(\cl V_V)^*\theta(\cl V_V))$, for any faithful ternary representation
$\theta : \cl V_V\to \cl B(H,K)$ ($H$ and $K$ being Hilbert spaces).
We write 
$$e_{v_1,v_1',v_2,v_2'} := u_{v_2,v_1}^*u_{v_2',v_1'}, \ \ \ v_1,v_2,v_1',v_2'\in V;$$
note that $\cl C_V$ is generated, as a C*-algebra, 
by the elements $e_{v_1,v_1',v_2,v_2'}$, $v_1$, $v_1'$, $v_2$, $v_2'\in V$. 
Set $e_{v_1,v_2} := e_{v_1,v_1,v_2,v_2}$, $v_1,v_2\in V$, and let 
$$\cl S_V \hspace{-0.05cm}=\hspace{-0.05cm} {\rm span}\{e_{v_1,v_2} : v_1,v_2\in V\} \mbox{ and } 
\cl T_V \hspace{-0.05cm}=\hspace{-0.05cm} {\rm span}\{e_{v_1,v_1',v_2,v_2'} : v_1,v_1',v_2, v_2'\in V\},$$
viewed as operator subsystems of $\cl C_V$. 

The following was shown in \cite{bhtt2}:

\begin{theorem}[\cite{bhtt2}]\label{th_magicsq}
Let $H$ be a Hilbert space. 
If $\phi : \cl S_V\to \cl B(H)$ is a unital completely positive map
then $(\phi(e_{v_1,v_2}))_{v_1,v_2\in V}$ is a quantum magic square. 
Conversely, if $(E_{v_1,v_2})_{v_1,v_2\in V}$ is a quantum magic square on $H$
then there exists a (unique) unital completely positive map 
$\phi : \cl S_V\to \cl B(H)$ such that $E_{v_1,v_2} = \phi(e_{v_1,v_2})$, $v_1,v_2\in V$.
\end{theorem}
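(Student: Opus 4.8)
The plan is to carry out everything inside the C*-algebra $\cl C_V$, using the defining relations of the universal TRO $\cl V_V$ for one implication and its universal property for the other.

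For the first assertion, I would begin by unwinding the two relations defining $\cl V_V$: specialising them to $x = x'$ and to $a = a'$, respectively (equivalently, writing out the two isometry conditions for the bi-isometry $(u_{v,v'})$), one obtains the identities
$$\sum_{v_2\in V} e_{v_1,v_2} \;=\; \sum_{v_2\in V} u_{v_2,v_1}^* u_{v_2,v_1} \;=\; 1_{\cl C_V}
\qquad\text{and}\qquad
\sum_{v_1\in V} e_{v_1,v_2} \;=\; \sum_{v_1\in V} u_{v_2,v_1}^* u_{v_2,v_1} \;=\; 1_{\cl C_V}$$
for all $v_1,v_2\in V$; in particular $1_{\cl C_V}\in\cl S_V$, so the unitality of $\phi$ is meaningful. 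Moreover each $e_{v_1,v_2} = u_{v_2,v_1}^*u_{v_2,v_1}$ is a positive element of $\cl C_V$, hence of the operator system $\cl S_V$. Applying the unital positive map $\phi$ then gives $\phi(e_{v_1,v_2})\ge 0$ for all $v_1,v_2$, together with $\sum_{v_2}\phi(e_{v_1,v_2}) = \sum_{v_1}\phi(e_{v_1,v_2}) = \phi(1_{\cl C_V}) = I_H$; that is, $(\phi(e_{v_1,v_2}))_{v_1,v_2\in V}$ is a quantum magic square. (Only positivity and unitality of $\phi$ are used here.)

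For the converse, the key step is an explicit bi-isometry dilation of the given quantum magic square. Put $K := H\otimes\bb{C}^V\otimes\bb{C}^V$, with $(e_v)_{v\in V}$ the canonical basis of $\bb{C}^V$, and define $w_{v_2,v_1}\in\cl B(H,K)$ by $w_{v_2,v_1}\xi := E_{v_1,v_2}^{1/2}\xi\otimes e_{v_1}\otimes e_{v_2}$ for $\xi\in H$. A one-line computation gives $w_{v_2,v_1}^* w_{v_2',v_1'} = \delta_{v_1,v_1'}\delta_{v_2,v_2'}E_{v_1,v_2}$; summing over $v_2$ (resp.\ over $v_1$) and using $\sum_{v_2}E_{v_1,v_2} = I_H$ (resp.\ $\sum_{v_1}E_{v_1,v_2} = I_H$) shows that both the block matrix $W := (w_{v,v'})_{v,v'\in V}$ and its transpose are isometries, i.e.\ $W$ is a bi-isometry. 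By the universal property of $\cl V_V$ there is a ternary homomorphism $\theta:\cl V_V\to\cl B(H,K)$ with $\theta(u_{v,v'}) = w_{v,v'}$; passing to the right C*-algebras of these imprimitivity bimodules (as in \cite{rieffel, kt, blecher}) yields a $*$-homomorphism $\rho:\cl C_V\to\cl B(H)$ with $\rho(u_{v_2,v_1}^* u_{v_2',v_1'}) = w_{v_2,v_1}^* w_{v_2',v_1'}$. Since $\rho(1_{\cl C_V}) = \rho\bigl(\sum_{v_2}e_{v_1,v_2}\bigr) = \sum_{v_2}E_{v_1,v_2} = I_H$, the map $\rho$ is unital, so $\phi := \rho|_{\cl S_V}$ is unital and completely positive, and $\phi(e_{v_1,v_2}) = \rho(u_{v_2,v_1}^* u_{v_2,v_1}) = w_{v_2,v_1}^* w_{v_2,v_1} = E_{v_1,v_2}$. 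Uniqueness is then immediate: $\cl S_V = \mathrm{span}\{e_{v_1,v_2} : v_1,v_2\in V\}$, so any linear map on $\cl S_V$ is determined by its values on the $e_{v_1,v_2}$.

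I expect the computations in the first paragraph (specialising the TRO relations, tracking matrix-unit indices) to be routine, as well as the inner-product computation for $w_{v_2,v_1}^*w_{v_2',v_1'}$. The one step that needs care — and where I would lean on the cited bimodule theory rather than improvise — is the assertion that $\theta$ induces a genuine unital $*$-homomorphism $\rho$ of the associated C*-algebras, intertwining the formulas $u^*w$ and $\theta(u)^*\theta(w)$. The single genuinely constructive point is the dilation formula $w_{v_2,v_1}\xi = E_{v_1,v_2}^{1/2}\xi\otimes e_{v_1}\otimes e_{v_2}$, whose two extra $\bb{C}^V$-legs are exactly what makes both families of marginals of the quantum magic square collapse to the identity.
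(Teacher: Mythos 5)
Your proposal is correct. Note that the paper does not prove this statement at all --- it is imported verbatim from \cite{bhtt2} --- so there is no in-paper argument to compare against; judged on its own, your two-sided argument (positivity plus unitality for the forward direction, an explicit bi-isometry dilation plus the universal property of $\cl V_V$ for the converse) is sound, and the computation $w_{v_2,v_1}^*w_{v_2',v_1'}=\delta_{v_1,v_1'}\delta_{v_2,v_2'}E_{v_1,v_2}$ does exactly what is needed to make both marginal conditions of the magic square produce the two isometry conditions on $W$ and $W^{\rm t}$.

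Two points deserve to be made explicit rather than glossed. First, the defining relations of $\cl V_V$ are stated in the ``weak'' ternary form $\sum_{a} u_{a'',x''}u_{a,x}^*u_{a,x'} = \delta_{x,x'}u_{a'',x''}$ (and its transpose analogue), not as $\sum_a u_{a,x}^*u_{a,x}=1$; to get the identities you use in the first paragraph you should add the one-line argument that the element $q_x:=\sum_a u_{a,x}^*u_{a,x}$ satisfies $vq_x=v$ for every $v$ in the dense ternary subalgebra generated by the $u_{a,x}$, hence acts as a two-sided unit on $\overline{\rm span}(\cl V_V^*\cl V_V)=\cl C_V$, so that $q_x=1_{\cl C_V}$ for every $x$ (and likewise for the other sum); this is also what justifies $1_{\cl C_V}\in\cl S_V$, i.e.\ that $\cl S_V$ is an operator system at all. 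Second, the step you flag --- that the ternary representation $\theta$ with $\theta(u_{v,v'})=w_{v,v'}$ induces a $*$-homomorphism $\rho:\cl C_V\to\cl B(H)$ with $\rho(u^*u')=\theta(u)^*\theta(u')$ --- is indeed the standard fact that TRO morphisms induce morphisms of the associated right (linking) C*-algebras, and citing \cite{blecher, kt, rieffel} for it is appropriate; with $\rho$ unital (via the magic-square row sum) the restriction to $\cl S_V$ is u.c.p.\ and hits the prescribed values, and uniqueness is, as you say, immediate from $\cl S_V$ being spanned by the $e_{v_1,v_2}$.
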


\begin{lemma}\label{l_SVflip}
The flip map $\frak{f} : e_{v_1,v_2}\to e_{v_2,v_1}$ extends to a unital complete order automorphism 
of $\cl S_V$.
\end{lemma}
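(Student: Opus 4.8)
The plan is to exhibit the flip directly at the level of the defining data: a bi-isometry $U = (u_{v,v'})_{v,v'\in V}$ has the property that its transpose $U^{\rm t} = (u_{v',v})_{v,v'\in V}$ is again a bi-isometry, since the transpose of a bi-isometry is a bi-isometry (both $U^{\rm t}$ and $(U^{\rm t})^{\rm t} = U$ are isometries by definition). By the universal property of $\cl V_V$, there is therefore a (unique) ternary isomorphism $\Theta : \cl V_V \to \cl V_V$ with $\Theta(u_{v,v'}) = u_{v',v}$ for all $v,v'\in V$. Being a ternary homomorphism, $\Theta$ is compatible with the imprimitivity bimodule structure and hence induces a $*$-automorphism $\tilde{\frak{f}}$ of the right C*-algebra $\cl C_V$ determined on generators by
\[
\tilde{\frak{f}}(e_{v_1,v_1',v_2,v_2'}) = \tilde{\frak{f}}(u_{v_2,v_1}^* u_{v_2',v_1'}) = u_{v_1,v_2}^* u_{v_1',v_2'} = e_{v_2,v_2',v_1,v_1'}.
\]

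Next I would restrict $\tilde{\frak{f}}$ to $\cl S_V$. Taking $v_1 = v_1'$ and $v_2 = v_2'$ in the displayed formula gives $\tilde{\frak{f}}(e_{v_1,v_2}) = \tilde{\frak{f}}(e_{v_1,v_1,v_2,v_2}) = e_{v_2,v_2,v_1,v_1} = e_{v_2,v_1}$, so $\tilde{\frak{f}}$ maps $\cl S_V = {\rm span}\{e_{v_1,v_2}\}$ onto itself and agrees with the proposed flip $\frak{f}$ on the spanning set. Since $\tilde{\frak{f}}$ is a unital $*$-automorphism of the C*-algebra $\cl C_V$, it is in particular a unital complete order isomorphism of $\cl C_V$, and its restriction to the operator subsystem $\cl S_V$ (which it preserves) is a unital complete order automorphism of $\cl S_V$. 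This is exactly the assertion.

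The only point requiring a little care — and the step I would flag as the main obstacle — is verifying that the transpose of a bi-isometry really is a bi-isometry in the precise sense needed to invoke the universal property of $\cl V_V$, i.e.\ that the transposed generators $u_{v',v}$ satisfy the same TRO relations listed in the definition of $\cl V_V$. One checks this by substituting $u_{a,x} \mapsto u_{x,a}$ into the two relation families and observing that the two relations are interchanged by the substitution: the first relation $\sum_a [u_{a'',x''}, u_{a,x}, u_{a,x'}] = \delta_{x,x'} u_{a'',x''}$ becomes, after relabelling, the second relation, and vice versa, because the bracket $[s,t,r] = st^*r$ is unaffected by the simultaneous relabelling of all index pairs while the roles of the ``row'' and ``column'' indices swap. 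Granting this symmetry of the relation set, the universal property yields $\Theta$ immediately, and the rest is automatic. An alternative, slightly softer route would be to avoid the TRO picture entirely and argue from Theorem \ref{th_magicsq}: the transpose of a quantum magic square is a quantum magic square (already noted in Remark \ref{r_adjo}), so precomposing the canonical unital completely positive maps with the transpose shows that the matricial order structure of $\cl S_V$ is invariant under $\frak{f}$; but the TRO argument is cleaner since it produces the inverse map simultaneously and identifies $\frak{f}$ as the restriction of a genuine $*$-automorphism.
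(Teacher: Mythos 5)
Your argument is correct, but it takes a genuinely different route from the paper. The paper's proof stays entirely at the operator system level and uses only Theorem \ref{th_magicsq}: fix a unital complete order embedding $\phi:\cl S_V\to\cl B(H)$, observe that $(\phi(e_{v_1,v_2}))_{v_1,v_2}$ is a quantum magic square, hence so is its transpose, and apply Theorem \ref{th_magicsq} again to get a u.c.p.\ map $\psi$ with $\psi=\phi\circ\frak{f}$; this shows $\frak{f}$ is completely positive, and since $\frak{f}^2=\id$ the same argument applied once more gives the inverse. That is precisely the ``softer route'' you mention at the end and then set aside. Your main argument instead works upstream, in the universal TRO $\cl V_V$ and its right C*-algebra $\cl C_V$: you check (correctly) that substituting $u_{v,v'}\mapsto u_{v',v}$ interchanges the two defining relation families, invoke the universal property to obtain a ternary automorphism $\Theta$ with $\Theta^2=\id$, pass to the induced $*$-automorphism of $\cl C_V$ (a standard fact about ternary morphisms and linking/right C*-algebras, which the paper states but does not develop), compute its action on the generators $e_{v_1,v_1',v_2,v_2'}$, and restrict to $\cl S_V$. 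This buys a stronger conclusion — the flip is the restriction of a genuine $*$-automorphism of $\cl C_V$, which in particular also flips $\cl T_V$ — at the cost of relying on the TRO presentation and the (uncited in your write-up) functoriality of the right C*-algebra construction, none of which the paper needs for this lemma. One small note: your opening observation that $U^{\rm t}$ is ``a bi-isometry by definition'' is not by itself what the universal property requires; the real content is the relation-level check you flag and carry out afterwards, so it is good that you did not stop at that first remark. Also, the advantage you claim for the TRO route of ``producing the inverse simultaneously'' is marginal, since in the paper's argument the inverse is again $\frak{f}$ itself and comes for free from the same computation.
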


\begin{proof}
Let $\phi : \cl S_V\to \cl B(H)$ be a unital complete order embedding and set 
$E_{v_1,v_2} = \phi(e_{v_1,v_2})$, $v_1,v_2\in V$.
By Theorem \ref{th_magicsq}, $(E_{v_1,v_2})_{v_1,v_2\in V}$ is a quantum magic square. 
Therefore, $(E_{v_2,v_1})_{v_1,v_2\in V}$ is a quantum magic square and
Theorem \ref{th_magicsq} gives rise to a unital completely positive map 
$\psi : \cl S_V\to \cl B(H)$ with the property 
$$\psi(e_{v_1,v_2}) = E_{v_2,v_1}, \ \ \ v_1,v_2\in V.$$
Note that $\psi = \phi\circ\frak{f}$; hence $\frak{f}$ is completely positive. 
By symmetry, $\frak{f}$ is a complete order isomorphism.
\end{proof}

We now assume that $V_1 = V_2 =: V$ and $W_1 = W_2 =: W$.
Let 
$$E_1 \hspace{-0.1cm} \Leftrightarrow\hspace{-0.1cm} E_2 \ := \ (E_1\times E_2) \cup (E_1^c\times E_2^c);$$
consider $E_1 \hspace{-0.1cm} \Leftrightarrow\hspace{-0.1cm} E_2$
as a non-local game with question and answer sets $V_1W_1$ and $V_2W_2$, respectively, 
and refer to it as an \emph{equivalence game}.
Note that, if
$$\frak{r} : V_2 \times W_1 \times V_1 \times W_2 \to V_1 \times W_1 \times V_2 \times W_2$$
is the shuffle map, given by $\frak{r}(v_2,w_1,v_1,w_2) = (v_1,w_1,v_2,w_2)$, then
$E_1 \hspace{-0.1cm} \Leftrightarrow\hspace{-0.1cm} E_2 
= \frak{r}(E_1 \hspace{-0.1cm} \leftrightarrow\hspace{-0.1cm} E_2).$

For clarity, we denote the canonical generators of the operator system $\cl S_W$ by $f_{w_1,w_2}$, 
$w_1,w_2\in W$.
Given a linear functional $s : \cl S_V\otimes \cl S_W\to \bb{C}$, we let 
$\Gamma_s : \cl D_{VW}\to \cl D_{VW}$ be the linear map given by 
$$\Gamma_s(v_1,w_2 |v_2,w_1) = s(e_{v_1,v_2}\otimes f_{w_1,w_2}).$$

For an NS correlation $\Gamma$ over $(V_2,W_1,V_1,W_2)$, let 
$\frak{F}(\Gamma) : \cl D_{V_1W_1}\to \cl D_{V_2W_2}$ be the linear map, defined by letting
$$\frak{F}(\Gamma)(v_2,w_2|v_1,w_1) := \Gamma(v_1,w_2|v_2,w_1).$$

\begin{proposition}\label{p_flip}
Let ${\rm t}\in \{{\rm loc}, {\rm q}, {\rm qa}, {\rm qc}, {\rm ns}\}$. 
The map $\Gamma\to \frak{F}(\Gamma)$ is an affine isomorphism between 
$\cl C_{\rm t}^{\rm bi}(E_1 \hspace{-0.1cm} \leftrightarrow\hspace{-0.1cm} E_2)$ and 
$\cl C_{\rm t}^{\rm bi}(E_1 \hspace{-0.1cm} \Leftrightarrow\hspace{-0.1cm} E_2)$.
\end{proposition}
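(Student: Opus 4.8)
The plan is to show that $\frak{F}$ is well-defined, affine, bijective, respects each correlation type, and carries the rule set $E_1 \hspace{-0.1cm} \leftrightarrow\hspace{-0.1cm} E_2$ onto the rule set $E_1 \hspace{-0.1cm} \Leftrightarrow\hspace{-0.1cm} E_2$. The key observation is that $\frak{F}$ is nothing but precomposition with the shuffle map $\frak{r}$ at the level of indices: $\frak{F}(\Gamma)(v_2,w_2|v_1,w_1) = \Gamma(v_1,w_2|v_2,w_1)$ simply swaps which pair is the ``question'' and which is the ``answer,'' so a quadruple $(v_2,w_1,v_1,w_2)$ lies in $\supp(\Gamma)$ if and only if $(v_1,w_1,v_2,w_2) = \frak{r}(v_2,w_1,v_1,w_2)$ lies in $\supp(\frak{F}(\Gamma))$. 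Since $E_1 \hspace{-0.1cm} \Leftrightarrow\hspace{-0.1cm} E_2 = \frak{r}(E_1 \hspace{-0.1cm} \leftrightarrow\hspace{-0.1cm} E_2)$ as noted in the excerpt, $\Gamma$ fits $E_1 \hspace{-0.1cm} \leftrightarrow\hspace{-0.1cm} E_2$ exactly when $\frak{F}(\Gamma)$ fits $E_1 \hspace{-0.1cm} \Leftrightarrow\hspace{-0.1cm} E_2$. Affineness and injectivity are immediate from the formula, and surjectivity follows because $\frak{F}$ is visibly its own inverse up to relabelling the roles of $V_1,V_2$ and $W_1,W_2$.

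The substance of the proof is checking that $\frak{F}$ carries bicorrelations of type ${\rm t}$ to bicorrelations of type ${\rm t}$. First I would verify that $\frak{F}(\Gamma)$ is an NS bicorrelation whenever $\Gamma$ is: unitality of $\frak{F}(\Gamma)$ and the NS property of its dual should fall out from the corresponding properties of $\Gamma$ together with the fact (Remark \ref{r_adjo}) that $\Gamma^*$ is again an NS bicorrelation — indeed one expects a clean identity relating $\frak{F}(\Gamma)^*$ and $\frak{F}(\Gamma^*)$, or more precisely relating $\frak{F}(\Gamma)$ to $\Gamma^*$ directly, since the shuffle swaps inputs with outputs. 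For the finer types, I would use the magic-square representations. If $\Gamma(v_1,w_2|v_2,w_1) = \langle E_{v_2,v_1} F_{w_1,w_2}\xi,\xi\rangle$ for quantum magic squares $(E_{v_2,v_1})$ and $(F_{w_1,w_2})$ with commuting entries, then $\frak{F}(\Gamma)(v_2,w_2|v_1,w_1) = \langle E_{v_2,v_1} F_{w_1,w_2}\xi,\xi\rangle$ is represented by the \emph{transposed} magic squares $(E_{v_1,v_2})$ and $(F_{w_2,w_1})$ (relabelling which index is row and which is column), and transposes of quantum magic squares are quantum magic squares — exactly the fact already invoked in Remark \ref{r_adjo} and in the proof of Theorem \ref{th_ordeqre2}. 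This handles ${\rm qc}$; the ${\rm q}$ case is identical with the tensor-factorised structure preserved; ${\rm qa}$ follows by continuity of $\frak{F}$; and ${\rm loc}$ follows because $\frak{F}$ sends $p^{(1)}(v_1|v_2)p^{(2)}(w_2|w_1)$ (bistochastic) to $p^{(1)}(v_1|v_2)p^{(2)}(w_2|w_1)$ read as a function of the swapped arguments, and the transpose of a bistochastic matrix is bistochastic, so convex combinations are preserved.

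Putting the pieces together: $\frak{F}$ restricts to a well-defined affine bijection from $\cl C_{\rm t}^{\rm bi}$ to $\cl C_{\rm t}^{\rm bi}$ for each ${\rm t}$, and it intertwines the constraint of fitting $E_1 \hspace{-0.1cm} \leftrightarrow\hspace{-0.1cm} E_2$ with that of fitting $E_1 \hspace{-0.1cm} \Leftrightarrow\hspace{-0.1cm} E_2$; hence it restricts to the claimed affine isomorphism between $\cl C_{\rm t}^{\rm bi}(E_1 \hspace{-0.1cm} \leftrightarrow\hspace{-0.1cm} E_2)$ and $\cl C_{\rm t}^{\rm bi}(E_1 \hspace{-0.1cm} \Leftrightarrow\hspace{-0.1cm} E_2)$.

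\textbf{The main obstacle} I anticipate is purely bookkeeping: being careful about \emph{which} variable plays the role of row versus column in the magic squares after the shuffle, and confirming that ``fitting'' is really preserved — i.e. that $E_1 \hspace{-0.1cm} \Leftrightarrow\hspace{-0.1cm} E_2$ as a game on questions $V_1W_1$ and answers $V_2W_2$ has precisely the support condition that $\frak{F}(\Gamma)(v_2,w_2|v_1,w_1) = 0$ whenever $(v_1,w_1,v_2,w_2) \notin E_1 \hspace{-0.1cm} \Leftrightarrow\hspace{-0.1cm} E_2$. Once the roles of the four index sets are pinned down correctly, each verification is a one-line consequence of a fact already recorded in the excerpt (transpose of a magic square is a magic square; transpose of a bistochastic matrix is bistochastic; continuity of affine relabelling maps; Remark \ref{r_adjo}), so no genuinely new ideas are needed.
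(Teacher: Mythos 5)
Your proposal is correct, but for the types ${\rm qc}$, ${\rm qa}$ and ${\rm ns}$ it takes a genuinely different route from the paper. The paper treats those three cases uniformly through the operator-system picture: by \cite{bhtt2}, a bicorrelation of type ${\rm qc}$ (resp. ${\rm qa}$, ${\rm ns}$) is of the form $\Gamma_s$ for a state $s$ on $\cl S_V\otimes_{\rm c}\cl S_W$ (resp. $\otimes_{\min}$, $\otimes_{\max}$), and $\frak{F}(\Gamma_s)=\Gamma_{\tilde s}$ with $\tilde s=s\circ(\frak{f}\otimes\id)$, where $\frak{f}$ is the flip $e_{v_1,v_2}\mapsto e_{v_2,v_1}$, shown in Lemma \ref{l_SVflip} to be a complete order automorphism of $\cl S_V$; functoriality of the three tensor products then finishes the argument, and $\frak{F}^2=\id$ gives bijectivity. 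You instead work directly with the representations: transpose the $V$-side magic square in the ${\rm qc}$ (and ${\rm q}$) representation, pass to limits for ${\rm qa}$, and verify the marginal identities by hand for ${\rm ns}$. Your ${\rm q}$ and ${\rm loc}$ arguments coincide with the paper's; your treatment of ${\rm qc}$, ${\rm qa}$, ${\rm ns}$ is more elementary and avoids the tensor-product machinery, whereas the paper's uniform state-space formulation feeds directly into Corollary \ref{c_kernel}. Two small points to tidy: only the $V$-side square is transposed, since $\frak{F}$ leaves the roles of $w_1$ (question) and $w_2$ (answer) untouched, so $F_{w_1,w_2}$ is used as is and your parenthetical transposition of $F$ is a relabelling that should be dropped; and in the ${\rm ns}$ case the anticipated ``clean identity'' is not $\frak{F}(\Gamma)=\Gamma^*$ (only the $V$-variables swap roles), so unitality and no-signalling of $\frak{F}(\Gamma)$ and of its dual must be checked marginal by marginal, using unitality of $\Gamma$, no-signalling of $\Gamma$ and no-signalling of $\Gamma^*$ --- routine computations, as you predict, but they do need to be written out.
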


\begin{proof}
Let ${\rm t} = {\rm qc}$. 
and $\Gamma\in \cl C_{\rm t}^{\rm bi}(E_1 \hspace{-0.1cm} \leftrightarrow\hspace{-0.1cm} E_2)$.
By \cite{bhtt2}, there exists a state $s : \cl S_V\otimes_{\rm c}\cl S_W\to \bb{C}$ such that 
$\Gamma = \Gamma_s$. 
Let $\tilde{s} := s\circ (\frak{f}\otimes \id)$; by Lemma \ref{l_SVflip} and 
the functoriality of the commuting tensor product, $\tilde{s}$ is a state on the operator system
$\cl S_V\otimes_{\rm c}\cl S_W$.
Since $\frak{F}(\Gamma) = \Gamma_{\tilde{s}}$, we have that $\frak{F}(\Gamma)\in \cl C_{\rm qc}^{\rm bi}$. 
The fact that 
$\supp(\frak{F}(\Gamma)) = \frak{r}(\supp(\Gamma))$ is straightforward. 
We finally note that $\frak{F}^2 = \id$, showing that $\frak{F}$ is an isomorphism. 

The cases ${\rm t} = {\rm qa}$ and ${\rm t} = {\rm ns}$ are analogous, using the minimal (resp. maximal)
tensor product instead of the commuting one. 

For the case ${\rm t} = {\rm q}$, assuming that $\Gamma \in \cl C_{\rm t}^{\rm bi}(E_{1} \hspace{-0.1cm} \leftrightarrow\hspace{-0.1cm} E_{2})$, 
there exist quantum magic squares $(E_{v_{1}, v_{2}})_{v_{1}, v_{2} \in V}$ (resp. $(F_{w_{1}, w_{2}})_{w_{1}, w_{2} \in W}$) acting on Hilbert spaces $H_{V}$ (resp. $H_{W}$) and unit vectors $\xi \in H_{V}, \eta \in H_{W}$ so that
$$\Gamma(v_{1}, w_{2}|v_{2}, w_{1}) = \langle (E_{v_{2}, v_{1}}\otimes F_{w_{1}, w_{2}})(\xi\otimes \eta), \xi\otimes \eta\rangle.$$
Let $\tilde{E} = (\tilde{E}_{v_{2}, v_{1}})_{v_{1}, v_{2} \in V}$, where 
$\tilde{E}_{v_{2}, v_{1}} := E_{v_{1}, v_{2}}$; 
it is clear that $\tilde{E}$ is a quantum magic square and that  
$$ \frak{F}(\Gamma)(v_{2}, w_{2}|v_{1}, w_{1}) = \langle (\tilde{E}_{v_{1}, v_{2}}\otimes F_{w_{1}, w_{2}})(\xi \otimes \eta), \xi\otimes \eta\rangle.$$
Thus, $\frak{F}$ induces an isomorphism from $\cl C_{\rm q}^{\rm bi}(E_{1} \hspace{-0.1cm}\leftrightarrow \hspace{-0.1cm}E_{2})$ onto $\cl C_{\rm q}^{\rm bi}(E_{1} \hspace{-0.1cm}\Leftrightarrow \hspace{-0.1cm}E_{2})$.

Assume that $\Gamma\in \cl C_{\rm loc}^{\rm bi}(E_1 \hspace{-0.1cm} \leftrightarrow\hspace{-0.1cm} E_2)$; 
thus, $\Gamma = \sum_{i=1}^k \lambda_i p_i^{(1)}\otimes p_i^{(2)}$, where 
$p_i^{(1)} = \{(p_i^{(1)}(v_1|v_2))_{v_1} : v_2\in V\}$  
(resp. $p_i^{(2)} = \{(p_i^{(2)}(w_2|w_1))_{w_2} : w_1\in W\}$) is a 
channel in $\cl C(V_2\times V_1)$ (resp. $\cl C(W_1\times W_2)$), with the property that 
the matrices $(p_i^{(1)}(v_1|v_2))_{v_2,v_1}$ and $(p_i^{(2)}(w_2|w_1))_{w_1,w_2}$ are bistochastic. 
It follows that the matrix $\tilde{p}_i^{(1)} := (p_i^{(1)}(v_2|v_1))_{v_1,v_2}$, $i = 1,\dots,k$, is bistochastic.
In addition, 
$$\frak{F}(\Gamma) = \sum_{i=1}^k \lambda_i \tilde{p}_i^{(1)}\otimes p_i^{(2)}.$$
As in the first paragraph, $\frak{F}$ induces an isomorphism from 
$\cl C_{\rm loc}^{\rm bi}(E_1 \hspace{-0.1cm} \leftrightarrow\hspace{-0.1cm} E_2)$ onto 
$\cl C_{\rm loc}^{\rm bi}(E_1 \hspace{-0.1cm} \Leftrightarrow\hspace{-0.1cm} E_2)$. 
\end{proof}

Let $E_1\subseteq V_1\times W_1$ and $E_2\subseteq V_2\times W_2$ be hypergraphs, and let 
$$\cl J = {\rm span} \{e_{v_2,v_1}\otimes f_{w_1,w_2} : 
(v_2,w_1,v_1,w_2) \not\in E_1 \hspace{-0.1cm} \leftrightarrow\hspace{-0.1cm} E_2\}.$$

\begin{corollary}\label{c_kernel}
The map $s\to \Gamma_s$ is an affine surjective correspondence between 
\begin{itemize}
\item[(i)] 
the states of $\cl S_V\otimes_{\max}\cl S_W$ that annihilate $\cl J$ and the 
perfect ${\rm ns}$-strategies of $E_1 \hspace{-0.1cm} \leftrightarrow\hspace{-0.1cm} E_2$; 

\item[(ii)]
the states of $\cl S_V\otimes_{\rm c}\cl S_W$ that annihilate $\cl J$ and the 
perfect ${\rm qc}$-strategies of $E_1 \hspace{-0.1cm} \leftrightarrow\hspace{-0.1cm} E_2$;

\item[(iii)]
the states of $\cl S_V\otimes_{\min}\cl S_W$ that annihilate $\cl J$ and the 
perfect ${\rm qa}$-strategies of $E_1 \hspace{-0.1cm} \leftrightarrow\hspace{-0.1cm} E_2$.
\end{itemize}
\end{corollary}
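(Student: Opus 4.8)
The plan is to combine Proposition \ref{p_flip} with the operator-system characterisations of the three correlation types for bicorrelations that are already recorded in the excerpt (and in \cite{bhtt2}). First I would recall the correspondence $\Gamma \leftrightarrow \Gamma_s$: by Theorem \ref{th_magicsq} a state $s$ on $\cl S_V \otimes_{\tau} \cl S_W$ (for $\tau \in \{\max, \mathrm{c}, \min\}$) produces, via the respective tensor-product characterisation of $\cl C_{\mathrm{ns}}^{\mathrm{bi}}$, $\cl C_{\mathrm{qc}}^{\mathrm{bi}}$ and $\cl C_{\mathrm{qa}}^{\mathrm{bi}}$ (the last because $\cl C_{\mathrm{qa}}^{\mathrm{bi}}$ is the closure of $\cl C_{\mathrm{q}}^{\mathrm{bi}}$ and the minimal tensor product detects exactly the states that are limits of the ``quantum'' ones), an element $\Gamma_s$ lying in the corresponding class; and conversely every bicorrelation of that type arises this way. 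This is precisely the content of the three parts of \cite{bhtt2} that Proposition \ref{p_flip} already invokes, so I would cite them rather than reprove them.

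Next I would translate the perfect-strategy condition into the annihilation condition. A bicorrelation $\Gamma$ fits the game $E_1 \leftrightarrow E_2$ — i.e. $\supp(\Gamma) \subseteq E_1 \leftrightarrow E_2$ — exactly when $\Gamma(v_1,w_2 \mid v_2,w_1) = 0$ for every quadruple $(v_2,w_1,v_1,w_2) \notin E_1 \leftrightarrow E_2$. Since $\Gamma = \Gamma_s$ means $\Gamma(v_1,w_2\mid v_2,w_1) = s(e_{v_2,v_1} \otimes f_{w_1,w_2})$, and since all these values are nonnegative (the relevant matrix units are positive in $\cl S_V \otimes_\tau \cl S_W$ and $s$ is a state), this vanishing is equivalent to $s$ annihilating each spanning vector $e_{v_2,v_1} \otimes f_{w_1,w_2}$ with $(v_2,w_1,v_1,w_2) \notin E_1 \leftrightarrow E_2$, hence to $s|_{\cl J} = 0$. (I should be slightly careful about notation: the excerpt writes the generators of $\cl S_V$ as $e_{v_1,v_2}$, and $\cl J$ is spanned by $e_{v_2,v_1}\otimes f_{w_1,w_2}$, so the indexing matches the formula $\Gamma_s(v_1,w_2\mid v_2,w_1)=s(e_{v_1,v_2}\otimes f_{w_1,w_2})$ once one keeps the subscript order straight.)

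Assembling these two observations, the map $s \mapsto \Gamma_s$ restricts to a bijection from the states of $\cl S_V \otimes_\tau \cl S_W$ annihilating $\cl J$ onto the perfect $\mathrm{t}$-strategies of $E_1 \leftrightarrow E_2$, for $(\tau,\mathrm{t}) \in \{(\max,\mathrm{ns}),(\mathrm{c},\mathrm{qc}),(\min,\mathrm{qa})\}$; affinity and surjectivity are immediate since $s\mapsto \Gamma_s$ is linear and the type characterisations are onto. Injectivity follows because $\{e_{v_1,v_2}\otimes f_{w_1,w_2}\}$ spans $\cl S_V\otimes\cl S_W$, so $s$ is determined by the values $\Gamma_s(v_1,w_2\mid v_2,w_1)$ together with $s(1)=1$. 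I expect the only genuine subtlety to be the $\mathrm{qa}$ case: one must know that the minimal operator system tensor product is exactly what characterises $\cl C_{\mathrm{qa}}^{\mathrm{bi}}$ (not merely $\cl C_{\mathrm{q}}^{\mathrm{bi}}$), and that passing to the closure is compatible with the constraint $s|_{\cl J}=0$ — but this is a closed condition on the state space, so it is preserved under the weak* limits used to define $\mathrm{qa}$, and the result follows from the corresponding statement in \cite{bhtt2}. Everything else is bookkeeping already carried out in Corollary \ref{c_kernel}'s graph-isomorphism analogues.
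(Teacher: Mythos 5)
Your proposal is correct and follows essentially the same route the paper intends: the corollary is left without proof precisely because it is the combination of the \cite{bhtt2} state--bicorrelation correspondences for the maximal, commuting and minimal tensor products (already invoked in the proof of Proposition \ref{p_flip}) with the observation that $\Gamma_s$ being supported in $E_1\hspace{-0.1cm}\leftrightarrow\hspace{-0.1cm} E_2$ is literally the statement that $s$ vanishes on the spanning generators of $\cl J$. Your side remarks (positivity of the generators, weak* closedness of the annihilation condition) are harmless but not needed, and the subscript-order issue you flag is only a typo-level discrepancy between the definitions of $\Gamma_s$ and $\cl J$ in the paper, which your reading resolves correctly.
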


%%%%%%%%%%%%%%%%%%%%%%%%%%%%%%%%%%%%%%%%%%%%%%%%%

\subsection{Faithful isomorphisms}\label{ss_faithful}

In this subsection, we assume that $V_1 = W_1 = V_2 = W_2 =: V$.

\begin{definition}
Let $E_1\subseteq V_1\times W_1$ and $E_2\subseteq V_2\times W_2$. 
A bicorrelation $\Gamma\in \cl C_{\rm t}^{\rm bi}$ over $(V_2,W_1,V_1,W_2)$ 
is called \emph{faithful} if 
$$\Gamma(v_1,w_2 | v_2,w_1)\hspace{-0.07cm} =\hspace{-0.05cm} 0 
\mbox{ whenever } (v_1\hspace{-0.05cm} =\hspace{-0.05cm} w_1 \ \& \ 
v_2\hspace{-0.05cm} \neq\hspace{-0.05cm} w_2) 
\mbox{ or } (v_1\hspace{-0.05cm} \neq\hspace{-0.05cm} w_1 \ \& \ v_2\hspace{-0.05cm} =\hspace{-0.05cm} w_2).$$
A \emph{faithful ${\rm t}$-isomorphism} between $E_1$ and $E_2$
is a faithful bicorrelation $\Gamma\in \cl C_{\rm t}^{\rm bi}(E_1 \hspace{-0.1cm} \leftrightarrow\hspace{-0.1cm} E_2)$.
\end{definition}

A faithful isomorphism $\Gamma$ between the hypergraphs $E_1$ and $E_2$
can be thought of as a means of mutually simulating the noiseless channels $\id : V_2\to W_2$
and $\id : V_1\to W_1$ by each other: every time the original channel $\cl E : V_1\to W_1$
transmits faithfully a certain symbol $v\in V_1$, the simulated channel $\Gamma[\cl E]$ does so too, 
and vice versa. 

We note that a correlation $\Gamma$ over $(V_2,W_1,V_1,W_2)$ is faithful if and only if the 
correlation $\frak{F}(\Gamma)$ over $(V_1,W_1,V_2,W_2)$ is bisynchronous in the sense of \cite[Definition 1.2]{pr}. 
This enables us to use the works \cite{lmr} and \cite{pr} in the sequel. 
Recall that a \emph{quantum permutation} acting on Hilbert space $H$ is 
a unitary matrix $(P_{v,v'})_{v,v'\in V}$, whose entries $P_{v,v'}$ are projections in $\cl B(H)$.
(We note that every quantum permutation is automatically a quantum magic square.)
The quantum permutation group over $V$ is 
the universal C*-algebra $\frak{A}_V$ generated by the entries of a quantum permutation \cite[Section 2.2]{lmr}.
%we note that $\frak{A}_V$ possesses a trace (see e.g. \cite{lmr}). %\marginpar{\tiny Reference correct?}
We write $p_{v,w}$, $v,w\in V$, for a fixed family of generators of $\frak{A}_V$ (so that 
$(p_{v,w})_{v,w\in V}$ is a universal quantum permutation). 
%Bisynchronous bicorrelations are described via 
%\cite{pr}
We call a quantum permutation $(P_{v,v'})_{v,v'\in V}$ a \emph{quantum q-permutation} (resp. a 
\emph{quantum qc-permutation}) if its entries act on a finite dimensional Hilbert space
(resp. there exists a C*-algebra $\cl A$ with a trace containing its entries).

Given a hypergraph $E\subseteq V\times V$, let 
$$\mathsf{A}_E = \sum_{(v,v')\in E} \epsilon_{v,v'}$$ 
be the incidence matrix of $E$.

\begin{theorem}\label{th_intertwine}
Let $t \in \{{\rm loc}, {\rm q}, {\rm qc}\}$. The following are equivalent:
\begin{itemize}
\item[(i)] $E_1$ is faithfully ${\rm t}$-isomorphic to $E_2$;

\item[(ii)] there exists a quantum ${\rm t}$-permutation $P = (P_{v,v'})_{v,v'\in V}$ such that 
\begin{equation}\label{eq_commre}
P(\mathsf{A}_{E_1}\otimes I_H) = (\mathsf{A}_{E_2}\otimes I_H)P.
\end{equation}
\end{itemize}
\end{theorem}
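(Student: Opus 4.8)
The plan is to unwind both directions through the operator-algebraic characterisations already in place, treating the three correlation types uniformly via the universal objects $\cl S_V$, $\cl C_V$ and $\frak A_V$, and using faithfulness (equivalently, bisynchronicity of $\frak F(\Gamma)$) to upgrade quantum magic squares to quantum permutations.

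\textbf{From (i) to (ii).} Suppose $\Gamma$ is a faithful ${\rm t}$-isomorphism from $E_1$ to $E_2$. For ${\rm t}={\rm qc}$, invoke the ${\rm bi}$-version of the operator-system description (Theorem \ref{th_magicsq} together with \cite{bhtt2}, as used in the proof of Proposition \ref{p_flip}): there is a Hilbert space $H$, a unit vector $\xi\in H$, and quantum magic squares $(E_{v,v'})_{v,v'\in V}$, $(F_{w,w'})_{w,w'\in V}$ with commuting entries so that $\Gamma(v_1,w_2|v_2,w_1)=\langle E_{v_2,v_1}F_{w_1,w_2}\xi,\xi\rangle$. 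The point is that faithfulness of $\Gamma$ means exactly that $\frak F(\Gamma)$ is bisynchronous, and by \cite{pr} (and \cite{lmr}) a bisynchronous correlation forces the underlying POVM elements to be projections that moreover satisfy the quantum-permutation relations after compressing to the GNS space of $\xi$ — i.e. one can replace $(E_{v,v'})$ (and, if one wishes, $(F_{w,w'})$) by a genuine quantum permutation $(P_{v,v'})$ on a subspace $H$, with $\xi$ cyclic and tracial in the sense required. Then the membership $\Gamma\in\cl C_{\rm ns}(E_1\leftrightarrow E_2)$, written out, says $\langle P_{v_2,v_1}F_{w_1,w_2}\xi,\xi\rangle=0$ whenever $(v_2,w_1,v_1,w_2)\notin E_1\leftrightarrow E_2$, i.e. whenever $(v_1,w_1)\in E_1$ but $(v_2,w_2)\notin E_2$, or vice versa. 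Using that $(F_{w_1,w_2})_{w_2}$ is a POVM summing to $I$ and that the entries commute with the $P$'s, summing the vanishing condition over the appropriate indices yields, for each fixed $(v_1,w_1)\in E_1$, that $\sum_{(v_2,w_2)\in E_2}P_{v_2,v_1}F_{w_1,w_2}\xi = P_{v_2,v_1}$-weighted mass concentrated on $E_2$; running this through the cyclicity/traciality of $\xi$ upgrades the scalar identities to the operator identity $P\mathsf A_{E_1}=\mathsf A_{E_2}P$ (tensored with $I_H$), which is (\ref{eq_commre}). For ${\rm t}={\rm q}$ one runs the same argument with $H$ finite dimensional, so $P$ is a quantum q-permutation; for ${\rm t}={\rm loc}$, Proposition \ref{p_lochy2}(ii) already gives an honest isomorphism $(f,g)$ of hypergraphs with $f=g$ forced by faithfulness, and the associated $0$–$1$ permutation matrix $P=(\delta_{v,f(v')})$ satisfies the intertwining relation classically.

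\textbf{From (ii) to (i).} Conversely, given a quantum ${\rm t}$-permutation $P=(P_{v,v'})$ on $H$ with $P(\mathsf A_{E_1}\otimes I)=(\mathsf A_{E_2}\otimes I)P$, pick the state/vector appropriate to ${\rm t}$: for ${\rm qc}$, a tracial vector state $\xi$ on a C*-algebra containing the entries (which exists by definition of quantum qc-permutation); for ${\rm q}$, the normalised trace vector in $M_d$; and set $F_{w_1,w_2}:=P_{w_2,w_1}$, i.e. use the same quantum permutation (transposed) on the second party. Since a quantum permutation is a quantum magic square and its transpose is too, formula $\Gamma(v_1,w_2|v_2,w_1):=\langle P_{v_2,v_1}P_{w_2,w_1}^{(\mathrm{t})}\xi,\xi\rangle$ — arranged so the $V$- and $W$-entries commute, which forces working in a commuting-tensor or spatial-tensor model exactly as in Proposition \ref{p_flip} — defines a bicorrelation of type ${\rm t}$. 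Faithfulness is immediate because $P$ is a permutation: $\langle P_{v_2,v_1}P_{w_2,w_1}\xi,\xi\rangle$ vanishes when $v_1=w_1$ but $v_2\neq w_2$ (use $\sum_{v_2}P_{v_2,v_1}=I$ and the trace to reduce to $\langle P_{w_2,v_1}\xi,\xi\rangle$ versus $\langle P_{v_2,v_1}\xi,\xi\rangle$, which disagree on the support). That $\Gamma$ fits $E_1\leftrightarrow E_2$ is precisely a re读-off of the intertwining relation (\ref{eq_commre}): expanding $P\mathsf A_{E_1}=\mathsf A_{E_2}P$ entrywise and pairing with $\xi$ gives $\sum_{(v_1,w_1)\in E_1}\langle P_{v_2,v_1}\cdots\rangle = \sum_{(v_2,w_2)\in E_2}\langle\cdots\rangle$ in the right places, which, combined with the POVM partition-of-unity identities, forces $\Gamma(v_1,w_2|v_2,w_1)=0$ off $E_1\leftrightarrow E_2$.

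\textbf{Main obstacle.} The delicate point is the upgrade, in the (i)$\Rightarrow$(ii) direction, from the ostensibly two independent quantum magic squares $(E_{v,v'})$, $(F_{w,w'})$ producing $\Gamma$ to a \emph{single} quantum permutation $P$ intertwining the incidence matrices. This is where faithfulness (bisynchronicity of $\frak F(\Gamma)$) is essential and where one must lean on the structural results of \cite{pr, lmr}: bisynchronicity forces the magic-square entries to be projections, collapses the two families onto a common GNS space with a cyclic tracial vector, and makes the $E$- and $F$-families coincide up to transpose. Getting the bookkeeping right so that the scalar vanishing conditions assemble into the clean operator equation (\ref{eq_commre}) — rather than a weaker compressed version — is the part that needs care; everything else is a routine translation through Theorem \ref{th_magicsq}, Proposition \ref{p_flip} and the tensor-product functoriality already established.
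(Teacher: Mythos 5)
Your high-level route is the same as the paper's: pass to $\frak F(\Gamma)$, identify faithfulness with bisynchronicity, invoke \cite[Theorem 2.2]{pr} together with \cite[Lemma 5.13]{amrssv} to get a representation through a single quantum permutation and a faithful trace, and in the converse direction build $\Gamma$ from the trace of products of entries of $P$. But in both directions the step that actually proves the theorem is missing. In (i)$\Rightarrow$(ii), you stop at "running this through the cyclicity/traciality of $\xi$ upgrades the scalar identities to the operator identity", and you concede in your closing paragraph that assembling (\ref{eq_commre}) "needs care" -- that assembly \emph{is} the proof. The mechanism is: from $\Gamma(v_1,w_2|v_2,w_1)=\tau(P_{v_1,v_2}P_{w_1,w_2})$ and the support condition, faithfulness of the \emph{trace} (applied to $\tau(PQP)\geq 0$ for projections $P,Q$) yields the operator-level relation $P_{v_1,v_2}P_{w_1,w_2}=0$ whenever $(v_1,w_1)\in E_1\not\Leftrightarrow(v_2,w_2)\in E_2$; then one inserts the partitions of unity $\sum_{w'}P_{v,w'}=I$ and $\sum_{v'}P_{v',w}=I$ into the $(v,w)$ entries of $(\mathsf{A}_{E_2}\otimes I)P$ and $P(\mathsf{A}_{E_1}\otimes I)$ and uses the vanishing products to cut each double sum down to the supported indices, which gives (\ref{eq_commre}) entrywise. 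Your displayed intermediate claim ("$\sum_{(v_2,w_2)\in E_2}P_{v_2,v_1}F_{w_1,w_2}\xi = P_{v_2,v_1}$-weighted mass...") is not even well-formed -- $v_2$ is simultaneously a summation index and a free index -- and without the faithful-trace orthogonality step you only obtain a vector-state (compressed) version of the identity, exactly the weaker statement you warn against. Your "compression to the GNS space turns the magic squares into a quantum permutation" is likewise not an argument: compressions of projections need not be projections; what \cite{pr} supplies is a tracial representation through $\frak A_V$, not a compression.

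The same gap reappears in (ii)$\Rightarrow$(i). Pairing (\ref{eq_commre}) with $\xi$ only gives equalities between \emph{sums} of probabilities; to conclude that the individual off-support values of $\Gamma$ vanish you need the operator relation $P_{v',w'}P_{v,w}=0$ whenever $(v,v')\in E_2\not\Leftrightarrow(w,w')\in E_1$, which the paper extracts by observing that each entry of $(\mathsf{A}_{E_2}\otimes I)P$ is a sum over part of a PVM column, hence a projection; squaring it, comparing with (\ref{eq_commre}), and using faithfulness of the trace gives the orthogonality (\ref{eq_relP}). Your faithfulness verification is also garbled -- the correct one-line argument is simply that distinct entries of a common row or a common column of a quantum permutation are mutually orthogonal projections, so $\tau(P_{v_1,v_2}P_{w_1,w_2})=0$ when $v_1=w_1,\ v_2\neq w_2$ or $v_1\neq w_1,\ v_2=w_2$; there is no "disagreement on supports" to invoke. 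Finally, a small but genuine error: in the local case faithfulness of a deterministic bicorrelation $(f,g)$ forces $g=f^{-1}$ (so that the single permutation matrix of $f$ conjugates $\mathsf{A}_{E_1}$ into $\mathsf{A}_{E_2}$), not $f=g$ as you assert.
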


%\marginpar{\tiny Possibly minor stuff to fix in the proof.}

\begin{proof}
The proof relies on the ideas from the proof of \cite[Lemma 5.8]{amrssv}. 
We only consider the case ${\rm t} = {\rm qc}$.

(ii)$\Rightarrow$(i)
Suppose that $\cl A$ is a unital C*-algebra, equipped with a trace $\tau$ (which can be assumed to be faithful), 
and that $(P_{v, v'})_{v, v' \in V}$ is a quantum permutation with entries in $\cl A$, satisfying (\ref{eq_commre}). 
If $v, w' \in V$ then, denoting by $(\mathsf{A})_{v,w'}$ the $(v,w')$-entry of a matrix $\mathsf{A}$ over $V\times V$, 
we have 
\begin{equation}\label{eq_commre2}
\sum\limits_{\substack{v' \in E_{2}^*(v)}} \hspace{-0.2cm} P_{v',w'} 
= ((\mathsf{A}_{E_2}\otimes I_{\cl{H}})P)_{v, w'}
= (P(\mathsf{A}_{E_{1}}\otimes I_{\cl{H}}))_{v, w'}
=  \hspace{-0.2cm} \sum\limits_{\substack{w \in E_{1}(w')}}  \hspace{-0.2cm} P_{v,w}.
\end{equation}
Since the columns of $P$ are PVM's, 
\begin{gather*}
	\bigg(\sum\limits_{\substack{v' \in E_{2}^*(v)}}P_{v',w'}\bigg)^{2} 
	= \sum\limits_{\substack{v' \in E_{2}^*(v)}}P_{v',w'}.
\end{gather*}
Pairing this with (\ref{eq_commre2}), we see
\begin{eqnarray*}
\sum\limits_{\substack{v' \in E_{2}^*(v)}}P_{v',w'}\sum\limits_{\substack{w \in E_{1}(w')}}P_{v,w}
& = & 
	 \bigg(\sum\limits_{\substack{v' \in E_{2}^*(v)}}P_{v',w'}\bigg)^{2} \\
& = &
	\sum\limits_{\substack{v' \in E_{2}^*(v)}}P_{v',w'}
= 
	\sum\limits_{\substack{v' \in E_{2}^*(v)}}P_{v',w'}\sum\limits_{w \in V}P_{v,w}.
\end{eqnarray*}
\noindent This implies
$$\sum\limits_{\substack{v' \in E_{2}^*(v)}}P_{v',w'}\sum\limits_{\substack{w \not\in E_{1}(w')}}P_{v,w} = 0,$$
hence
$$\sum\limits_{\substack{v' \in E_{2}^*(v)}}\sum\limits_{\substack{w \not\in E_{1}(w')}}\tau(P_{v',w'}P_{v,w}) = 0,$$
forcing $\tau(P_{v',w'}P_{v,w}) = 0$ whenever $(v, v') \in E_{2}$ while $(w, w') \not \in E_{1}$. 
By symmetry, 
$\tau(P_{v',w'}P_{v,w}) = 0$ whenever $(w, w') \in E_{1}$ while $(v, v') \not \in E_{2}$. As $\tau$ is faithful, this implies
\begin{equation}\label{eq_relP}
P_{v',w'}P_{v,w} = P_{v,w}P_{v',w'} = 0 \ \ \mbox{ whenever } 
(v, v') \in E_{2} \not\Leftrightarrow (w, w') \not \in E_{1}.
\end{equation}
Define the linear map $\Gamma: \cl{D}_{V_{2}W_{1}}\rightarrow \cl{D}_{V_{1}W_{2}}$ by letting
\begin{gather*}
	\Gamma(v_{1},w_{2}|v_{2},w_{1}) := \tau(P_{v_{2},v_{1}}P_{w_{1},w_{2}}).
\end{gather*}
\noindent We claim $\Gamma \in \cl{C}_{\rm qc}^{\rm bi}(E_1 \hspace{-0.1cm} \leftrightarrow\hspace{-0.1cm} E_2)$, 
and that it is faithful. 
It is clear that $\Gamma$ is a quantum commuting correlation. The unitality of $\Gamma$
is straightforward, while the fact that $\Gamma^*$ is quantum commuting follows from Remark \ref{r_adjo}. 
The faithfulness of $\Gamma$ is an immediate consequence of the fact that the rows and columns of a quantum 
permutation are PVM's.
Finally, since $\frak{F}(\Gamma)$ is a perfect strategy for the equivalence game 
$E_1 \hspace{-0.1cm} \Leftrightarrow\hspace{-0.1cm} E_2$, Proposition \ref{p_flip} implies that 
$\Gamma$ is a perfect (quantum commuting) strategy for the hypergraph isomorphism game 
$E_1 \hspace{-0.1cm} \leftrightarrow\hspace{-0.1cm} E_2$. 

%We show that $\Gamma$ fits 
%$E_1 \hspace{-0.1cm} \leftrightarrow\hspace{-0.1cm} E_2$.
%Suppose $(v_{2}, w_{1}, v_{1}, w_{2}) \not \in E_1 \hspace{-0.1cm} \leftrightarrow\hspace{-0.1cm} E_2$.
%By (\ref{eq_relP}), $P_{v_{2},v_{1}}P_{w_{1},w_{2}} = 0$, and hence 
%$$\Gamma(v_{1},w_{2}|v_{2},w_{1}) = \tau(P_{v_{2},v_{1}}P_{w_{1},w_{2}}) \neq 0.$$

(i)$\Rightarrow$(ii) 
Assume that $E_{1}$ is faithfully ${\rm qc}$-isomorphic to $E_{2}$ via 
$\Gamma\hspace{-0.06cm} \in\hspace{-0.03cm} \cl{C}_{\rm qc}^{{\rm bi}}(E_{1}\hspace{-0.1cm} \leftrightarrow \hspace{-0.1cm} E_{2})$. 
By Proposition \ref{p_flip}, $\frak{F}(\Gamma)$ is a perfect quantum commuting and bisynchronous 
strategy of the equivalence game 
$E_{1}\hspace{-0.1cm} \Leftrightarrow \hspace{-0.1cm} E_{2}$. 
By \cite[Theorem 2.2]{pr} and \cite[Lemma 5.13]{amrssv}, there exists a faithful 
tracial state $\tau$ on a C*-algebra $\cl A$, and a *-representation $\pi : \frak{A}_V\to \cl A$
such that, if $P_{v,w} = \pi(p_{v,w})$ for $v, w \in V$, then 
$$\Gamma(v_{1}, w_{2}|v_{2}, w_{1}) = \tau(P_{v_{1},v_{2}} P_{w_{1},w_{2}}).$$
Since $\Gamma$ fits $E_{1}\hspace{-0.1cm} \leftrightarrow \hspace{-0.1cm} E_{2}$, we have that 
if $(v_{1}, w_{1}) \in E_{1}$ and $(v_{2}, w_{2}) \not \in E_{2}$, then 
$\tau(P_{v_{1},v_{2}}P_{w_{1},w_{2}}) = 0$,
whenever 
$(v_{1}, w_{1}) \in E_{1} \not\Leftrightarrow (v_{2}, w_{2}) \in E_{2}$. 
It follows that 
$$P_{v_{1},v_{2}}P_{w_{1},w_{2}} = 0 \ \mbox{ whenever } 
(v_{1}, w_{1}) \in E_{1} \not\Leftrightarrow (v_{2}, w_{2}) \in E_{2}.$$
%Similarly, $P_{v_{2},v_{1}}P_{w_{1},w_{2}} = 0$ whenever $(v_{2}, w_{2}) \in E_{2}$ while $(v_{1}, w_{1}) \not \in E_{1}$.
%The fact that $\Gamma$ is faithful implies that $P_{v_{2},v_{1}}P_{w_{1},v_{2}} = 0$ if $v_{1} \neq w_{1}$ and 
%that $P_{v_{2},v_{1}}P_{v_{1},w_{2}} = 0$ if $v_{1} \neq w_{1}$. 
%In addition, note that if $v,w\in V$ then, using faithfulness, we have 
%$$P_{v,w} = P_{v,w}\sum\limits_{v' \in V}P_{w,v'} = P_{v,w}P_{w,v} = \sum\limits_{w' \in V}P_{v,w'}P_{w,v} = P_{w,v}.$$
Let $P = (P_{v,w})_{v, w \in V}$; it is clear that $P$ is a quantum permutation. For any $v, w \in V$, we have
\begin{eqnarray*}
((\mathsf{A}_{E_{2}}\otimes I_{\cl{H}})P)_{v, w} 
& = &
	\sum_{v' \in E_{2}^*(v)} \hspace{-0.2cm} P_{v',w} 
= 
	\sum\limits_{w' \in V} \hspace{-0.1cm} P_{v,w'}
	\sum\limits_{\substack{v' \in E_{2}^*(v)}} \hspace{-0.2cm} P_{v',w} \\
& = &
	\sum\limits_{\substack{v' \in E_{2}^*(v)}} 
	\sum\limits_{w' \in V} \hspace{-0.1cm} P_{v,w'} P_{v',w} 
= 
\sum\limits_{\substack{v' \in E_{2}^*(v)}} 
	\sum\limits_{w' \in E_1(w)} \hspace{-0.1cm} P_{v,w'} P_{v',w}\\
& = &
\sum\limits_{\substack{v' \in V}} 
	\sum\limits_{w' \in E_1(w)} \hspace{-0.1cm} P_{v,w'} P_{v',w}
= 
\sum\limits_{w' \in E_1(w)} \hspace{-0.1cm} P_{v,w'} \sum\limits_{\substack{v' \in V}} P_{v',w}\\
& = & 
	\sum\limits_{\substack{w' \in E_{1}(w)}}\hspace{-0.2cm} P_{v,w'}
= (P(\mathsf{A}_{E_{1}}\otimes I_{\cl{H}}))_{v, w}.
\end{eqnarray*}
This shows the validity of (\ref{eq_commre}). 
\end{proof}

\noindent {\bf Remark. } 
Let $G_1$ and $G_2$ be graphs on a vertex set $X$. The graph isomorphism game $G_1\cong G_2$
is defined in \cite{amrssv} and, according to \cite[Theorems 5.9 and 5.14]{amrssv}, 
$G_1$ and $G_2$ are quantum (resp. quantum commuting) isomorphic 
(denoted $G_1\cong_{\rm q} G_2$ (resp.  $G_1\cong_{\rm qc} G_2$))
if and only if there exists a quantum q-permutation (resp. quantum qc-permutation) $P$ acting on a Hilbert space $H$, 
such that 
$P(\mathsf{A}_{G_1}\otimes I_H) = (\mathsf{A}_{G_2}\otimes I_H)P$, where $\mathsf{A}_{G_1}$ and $\mathsf{A}_{G_2}$
are the adjacency matrices of the graphs $G_1$ and $G_2$, respectively. 
Thus, Theorem \ref{th_intertwine} generalises \cite[Theorems 5.9 and 5.14]{amrssv}.

\begin{lemma}\label{induce_gr_iso}
Let $E_{i} \subseteq V_{i}\times W_{i}$ be a hypergraph, $i = 1, 2$. If the pair $(f, g)$ of functions determines an isomorphism from $E_{1}$ to $E_{2}$, then $g$ is an isomorphism from $G_{E_{1}^*}$ to $G_{E_{2}^*}$. 
\end{lemma}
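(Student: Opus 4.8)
The plan is to unwind the definitions of the dual hypergraph and of the confusability graph, and then to transport the adjacency relation across the pair $(f,g)$ using that both maps are bijections.

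First I would record a concrete description of $G_{E^*}$ for a hypergraph $E\subseteq V\times W$. Since $E^*\subseteq W\times V$, the vertex set of $G_{E^*}$ is $W$, and for $w,w'\in W$ one has $w\sim_{G_{E^*}} w'$ exactly when $w\neq w'$ and there is some $v\in V$ with $(w,v),(w',v)\in E^*$, i.e. with $v\in E(w)\cap E(w')$. So the rule is: $w\sim_{G_{E^*}} w'$ iff $w\neq w'$ and $E(w)\cap E(w')\neq\emptyset$. Applying this to $E_1$ and $E_2$ yields explicit adjacency rules for $G_{E_1^*}$ (vertex set $W_1$) and $G_{E_2^*}$ (vertex set $W_2$).

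Next, using that $(f,g)$ is an isomorphism, I would invoke (\ref{eq_quasih2}): $f:V_2\to V_1$ and $g:W_1\to W_2$ are bijections with $f^{-1}(E_1(w_1))=E_2(g(w_1))$ for all $w_1\in W_1$. For fixed $w,w'\in W_1$, injectivity of $g$ gives $w\neq w'\Leftrightarrow g(w)\neq g(w')$, and
\[
E_2(g(w))\cap E_2(g(w'))=f^{-1}(E_1(w))\cap f^{-1}(E_1(w'))=f^{-1}\big(E_1(w)\cap E_1(w')\big),
\]
so surjectivity of $f$ yields $E_2(g(w))\cap E_2(g(w'))\neq\emptyset\Leftrightarrow E_1(w)\cap E_1(w')\neq\emptyset$. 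Combining these two equivalences with the adjacency rules from the previous step shows $w\sim_{G_{E_1^*}} w'\Leftrightarrow g(w)\sim_{G_{E_2^*}} g(w')$; since $g$ is a bijection between the vertex sets of $G_{E_1^*}$ and $G_{E_2^*}$, it is a graph isomorphism, which completes the argument.

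I do not expect a genuine obstacle. The one point requiring care is correctly identifying $G_{E^*}$: passing to the dual interchanges the roles of ``vertices'' and ``edge labels'', so the vertices of $G_{E^*}$ are the elements of $W$, and adjacency there records two edges of $E$ sharing a common original vertex. Once this bookkeeping is in place, preservation of adjacency follows immediately from $E_2(g(w_1))=f^{-1}(E_1(w_1))$ together with the bijectivity of $f$ and $g$.
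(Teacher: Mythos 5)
Your proposal is correct and follows essentially the same route as the paper: both hinge on the identity $E_2(g(w_1)) = f^{-1}(E_1(w_1))$ together with the bijectivity of $f$ and $g$, the only difference being that you phrase the two directions at once via $f^{-1}(E_1(w))\cap f^{-1}(E_1(w')) = f^{-1}\bigl(E_1(w)\cap E_1(w')\bigr)$ while the paper chases a common vertex elementwise in each direction. Your version even makes explicit the small bookkeeping point (injectivity of $g$ giving $w\neq w' \Leftrightarrow g(w)\neq g(w')$) that the paper leaves implicit.
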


\begin{proof}
We set $G_i = G_{E_{i}^*}$, $i = 1,2$. 
Suppose that $x \sim x'$ in $G_1$, and let $v \in V$ be such that 
$v \in E_{1}(x)\cap E_{1}(x')$. Let $v' \in V$ be the unique element such that $f(v') = v$.
We have that $v' \in f^{-1}(E_{1}(x)) \cap f^{-1}(E_{1}(x'))$. 
As $f^{-1}(E_{1}(x)) = E_{2}(g(x))$ and $f^{-1}(E_{1}(x')) = E_{2}(g(x'))$, we have 
$v' \in E_{2}(g(x)) \cap E_{2}(g(x'))$, that is, $g(x) \sim g(x')$ in $G_2$. 

Now suppose $g(x) \sim g(x')$ in $G_2$, and let $v' \in V$ be such that 
$v' \in E_{2}(g(x)) \cap E_{2}(g(x'))$; then $f(v') \in f(E_{2}(g(x))) \cap f(E_{2}(g(x')))$. 
As $f^{-1}(E_{1}(x))$ $=$ $E_{2}(g(x))$, this implies $E_{1}(x) = f(E_{2}(g(x)))$. 
Thus, $f(v') \in E_{1}(x) \cap E_{1}(x')$, meaning that $x \sim x'$ in $G_1$. 
This shows that $g$ is an isomorphism. 
\end{proof}

Let $G$ be a graph with vertex set $X$. 
Recall \cite[Section 1.7]{GR} 
that the \emph{line graph} $L(G)$ of $G$ has as a vertex set
the set $L$ of all edges of $G$ and its adjacency relation is given by 
$$l \sim_{L(G)} l' \ \mbox{ if there exist } x,y,z\in X \mbox{ s.t. } x\neq z, l = \{x,y\} \mbox{ and } l' = \{y,z\}$$
(in other words, $l\sim_{L(G)} l'$ precisely when $l$ and $l'$ are distinct edges that share a common vertex). 
Let 
$$E_G = \{(x,x')\in X\times X : x\sim_G x'\},$$
considered as a hypergraph in $X\times X$, and 
$$F_G = \{((x,y),y) : x\sim_G y\},$$ 
considered as a hypergraph in $XX\times X$.

\begin{theorem}\label{th_examp}
Let $G_1$ and $G_2$ be graphs with vertex set $X$ such that $G_1\cong_{\rm q} G_2$ but $G_1\not\cong G_2$. 
Then 
\begin{itemize}
\item[(i)] 
$E_{G_1}\simeq_{\rm q} E_{G_2}$ but $E_{G_1}\not\simeq_{\rm loc} E_{G_2}$;

\item[(ii)] 
$F_{G_1}\simeq_{\rm q} F_{G_2}$ but $F_{G_1}\not\simeq_{\rm loc} F_{G_2}$. 
\end{itemize}
\end{theorem}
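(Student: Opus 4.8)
The plan is to use Theorem~\ref{th_intertwine} as the bridge between the quantum-permutation picture for graph isomorphisms and the bicorrelation picture for hypergraph isomorphisms, and to handle the two hypergraph families by reducing each to a statement about the underlying graphs. I would first observe that $E_{G_i} \subseteq X \times X$ is (in the notation of the excerpt) essentially built so that $G_{E_{G_i}^*}$, or a naturally related graph, recovers $G_i$ itself; in fact for a simple graph $G$ one has $\mathsf{A}_{E_G} = \mathsf{A}_G$, the adjacency matrix. Hence for part~(i): if $G_1 \cong_{\rm q} G_2$, then by \cite[Theorem 5.9]{amrssv} (as recalled in the Remark following Theorem~\ref{th_intertwine}) there is a quantum q-permutation $P$ with $P(\mathsf{A}_{G_1}\otimes I_H) = (\mathsf{A}_{G_2}\otimes I_H)P$, i.e.\ $P(\mathsf{A}_{E_{G_1}}\otimes I_H) = (\mathsf{A}_{E_{G_2}}\otimes I_H)P$; applying Theorem~\ref{th_intertwine} (${\rm t} = {\rm q}$) gives a faithful ${\rm q}$-isomorphism, hence $E_{G_1}\simeq_{\rm q} E_{G_2}$. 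For the negative direction, I would argue that $E_{G_1}\simeq_{\rm loc} E_{G_2}$ would, via Proposition~\ref{p_lochy2}(ii), yield an honest hypergraph isomorphism $(f,g)$ of $E_{G_1}$ onto $E_{G_2}$, and then by Lemma~\ref{induce_gr_iso} the map $g$ would be a graph isomorphism from $G_{E_{G_1}^*}$ onto $G_{E_{G_2}^*}$; the remaining point is to identify $G_{E_G^*}$ with $G$ (or otherwise show a graph isomorphism at the hypergraph level forces $G_1 \cong G_2$), contradicting $G_1 \not\cong G_2$.

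For part~(ii) the same two-step strategy applies, but now with $F_G = \{((x,y),y) : x\sim_G y\} \subseteq XX \times X$, whose confusability-type graph on the vertex set $X\times X$ is, by construction, the line graph $L(G)$ (two "vertices" $(x,y)$ and $(x',y')$ of $F_G$ being adjacent in $G_{F_G}$ exactly when they share the same $X$-coordinate $y=y'$, which is precisely the line-graph adjacency once one passes to $F_G^*$ and uses $x \sim_G y$). So $\mathsf{A}_{F_G}$ intertwining-wise behaves like $\mathsf{A}_{L(G)}$, and the positive direction follows from $G_1 \cong_{\rm q} G_2 \Rightarrow L(G_1) \cong_{\rm q} L(G_2)$ — which holds because the line-graph construction is functorial for quantum permutations (a quantum permutation of $X$ induces one of the edge set in the obvious way, preserving the intertwining relation) — combined once more with Theorem~\ref{th_intertwine}. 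The negative direction again runs through Proposition~\ref{p_lochy2}(ii) and Lemma~\ref{induce_gr_iso}: a local isomorphism of $F_{G_1}$ and $F_{G_2}$ produces a graph isomorphism $L(G_1)\cong L(G_2)$, and then by Whitney's theorem (line graphs determine the graph, the only exception being $K_3$ versus $K_{1,3}$, which can be excluded or handled separately for the specific $G_1,G_2$ arising from \cite{amrssv}) one concludes $G_1\cong G_2$, a contradiction.

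The main obstacle I anticipate is the bookkeeping identifying the various auxiliary graphs ($G_{E_G^*}$, $G_{F_G^*}$, $L(G)$) precisely with the intended objects, and making the functoriality of the line-graph construction under quantum permutations rigorous: one must check that the induced candidate block matrix on $X\times X$ (supported on pairs that are edges) really is a quantum permutation — i.e.\ that its entries are projections summing correctly along rows and columns — and that it intertwines $\mathsf{A}_{F_{G_1}}$ with $\mathsf{A}_{F_{G_2}}$ when $P$ intertwines the adjacency matrices of $G_1$ and $G_2$. The other delicate point is the classical converse: ruling out the Whitney exception and confirming that an isomorphism of hypergraphs at the level of $(f,g)$ is strong enough (via Lemma~\ref{induce_gr_iso}) to force a genuine isomorphism of $G_1$ and $G_2$; since the separating examples in \cite{amrssv} are specific graphs, one can simply verify the exceptional cases do not occur there. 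Everything else — extracting the quantum permutation from $G_1\cong_{\rm q}G_2$, feeding it into Theorem~\ref{th_intertwine}, and invoking Proposition~\ref{p_lochy2}(ii) — is a direct citation of results already established in the excerpt.
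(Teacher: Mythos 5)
Your positive direction in (i) is the paper's: since $\mathsf{A}_{E_{G_i}}=\mathsf{A}_{G_i}$, the quantum permutation furnished by \cite[Theorem 5.8]{amrssv} plugs into Theorem \ref{th_intertwine} and gives $E_{G_1}\simeq_{\rm q}E_{G_2}$. The negative direction of (i), however, has a genuine gap. The identification you propose is false: by the definition of the confusability graph, $x\sim x'$ in $G_{E_{G}^*}$ if and only if $x\neq x'$ and $N_G(x)\cap N_G(x')\neq\emptyset$, so $G_{E_G^*}$ is the common-neighbour graph of $G$, not $G$ itself. Your fallback (``show that a hypergraph-level isomorphism forces $G_1\cong G_2$'') is precisely the nontrivial point, not a routine check: an isomorphism $(f,g)$ of $E_{G_1}$ onto $E_{G_2}$ consists of two \emph{independent} bijections and only says $\mathsf{A}_{G_1}=P\mathsf{A}_{G_2}Q$ for permutation matrices $P,Q$; for general graphs this does not imply isomorphism (it amounts to a part-preserving isomorphism of bipartite double covers; e.g.\ $C_6$ and the disjoint union of two triangles are so related but are not isomorphic). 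The paper closes this step by passing from $(f,g)$ to an isomorphism of auxiliary graphs via Lemma \ref{induce_gr_iso}, reducing to $L(G_1)\cong L(G_2)$, and then invoking Whitney's isomorphism theorem, using that graphs separating quantum from classical isomorphism have more than four vertices so the $K_3$/$K_{1,3}$ exception cannot occur; some argument of this kind is indispensable and is absent from your part (i).

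For (ii), the positive direction cannot run through Theorem \ref{th_intertwine}: that theorem (and the whole faithful-isomorphism subsection) assumes $V_1=W_1=V_2=W_2$, whereas $F_G\subseteq XX\times X$ is rectangular, so there is no square incidence matrix and no single quantum permutation to intertwine. Even granting $L(G_1)\cong_{\rm q}L(G_2)$ (itself nontrivial --- the ``obvious'' induced block matrix on pairs is not obviously a quantum permutation), Theorem \ref{th_intertwine} would at best yield $E_{L(G_1)}\simeq_{\rm q}E_{L(G_2)}$, a statement about different hypergraphs; it does not produce a perfect quantum bicorrelation for $F_{G_1}\leftrightarrow F_{G_2}$, which lives over $(XX,X,XX,X)$. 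The paper builds that strategy directly from the quantum permutation $P$ witnessing $G_1\cong_{\rm q}G_2$: it sets $Q_{xy,ab}=P_{y,b}P_{x,a}P_{y,b}$, checks these form POVMs with the right support properties, pairs them with $P_{y,c}^{\rm t}$ against a maximally entangled vector, and finishes with Proposition \ref{p_flip}; this construction is the substance of (ii) and is missing from your outline. Your negative direction for (ii) is the paper's in outline (extract $L(G_1)\cong L(G_2)$, then Whitney), but note that $G_{F_G^*}$ is edgeless, so Lemma \ref{induce_gr_iso} gives nothing here; the paper instead verifies directly that the bijection $f$ on $XX$ supplied by Proposition \ref{p_lochy2}(ii) restricts to an isomorphism of the line graphs.
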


%\marginpar{\tiny Proof to be checked and optimised.}
%Added the reference to Roberson's paper showing that we can assume L(G_{i}) is not isomorphic to K_3 or K_{1, 3}; also added defining for relation function. 4/13

\begin{proof}
(i) 
Set $E_i = E_{G_i}$, $i = 1,2$. 
By \cite[Theorem 5.8]{amrssv}, 
there exist $d \in \bb{N}$ and a quantum permutation $P \in M_X\otimes M_d$ such that
\begin{gather*}
	(\mathsf{A}_{E_{1}}\otimes I_{d})P = P(\mathsf{A}_{E_{2}}\otimes I_{d}).
\end{gather*}
Theorem \ref{th_intertwine} now implies that $E_{1} \simeq_{\rm q} E_{2}$. 

By Proposition \ref{p_lochy2}, it now suffices to show that the hypergraphs $E_1$ and $E_2$ are not 
isomorphic. 
Assume, towards a contradiction, that there exists a pair $(f, g)$ of bijections, 
where $f: X\rightarrow X$ and $g: X\rightarrow X$, such that
$$f^{-1}(E_{1}(x)) = E_{2}(g(x)), \;\;\;\; x \in X.$$
By Lemma \ref{induce_gr_iso}, $g$ is an isomorphism from $G_{E_{1}^{*}}$ to $G_{E_{2}^{*}}$. 
Note that $G_{E_{i}^{*}}$ 
is either isomorphic to $L(G_{i})$ or 
contains $L(G_{i})$ as a connected component, potentially with additional isolated vertices.
It follows that $L(G_{1}) \cong L(G_{2})$. 
On the other hand, since quantum and classical isomorphism differ for the graphs $G_1$ and $G_2$, 
we have that the cardinality of the vertex sets of $G_1$ and $G_2$ exceeds four \cite[Section 3]{rs}. 
%\marginpar{\tiny Reference needed.}
Thus, $L(G_{i}) \not \cong K_{3}, K_{1, 3}$ for $i = 1,2$ (where $K_{1, 3}$ stands for the bipartite graph on four vertices with three vertices in one disjoint set, and the remaining in the other).
Whitney's Isomorphism Theorem \cite{whitney} now implies that $G_{1} \cong G_{2}$, a contradiction.  

(ii) 
As in \cite[Section 2]{amrssv}, 
let ${\rm rel}(x,y)$ denote the relation between vertices of the graph $G$, of either being adjacent 
(${\rm rel}(x,y) = 1$), equal (${\rm rel}(x,y) = 0$) or non-adjacent (${\rm rel}(x,y) = -1$). 
By \cite[Theorem 5.9]{amrssv}, there exists a quantum permutation $P = (P_{x,y})_{x,y}$ over $X\times X$, 
acting on a (finite dimensional) Hilbert space, such that %\marginpar{\tiny ${\rm rel}(x,y)$ undefined.}
\begin{equation}\label{eq_xx'yy'}
P_{x,x'}P_{y,y'} = 0 \ \mbox{ if } {\rm rel}(x,y) \neq {\rm rel}(x',y').
\end{equation}
Abbreviating the notation $(x,y)$ to $xy$, given pairs $xy, ab\in XX$, let 
$Q_{xy,ab} = P_{y,b}P_{x,a}P_{y,b}$. 
Note that 
$$\sum_{ab\in XX} Q_{xy,ab} = \sum_{b\in X} P_{y,b} \left(\sum_{a\in X} P_{x,a}\right) P_{y,b} = 
\sum_{b\in X} P_{y,b} = I;$$
thus, the family $(Q_{xy,ab})_{ab\in XX}$ is a POVM, for every $xy\in XX$. 

Suppose that $(xy,y)\in F_{G_1}$ but $(ab,c)\not\in F_{G_2}$. 
Let $a\not\sim_{G_2} b$. Since $x\sim_{G_1} y$, by (\ref{eq_xx'yy'}) we have 
$$Q_{xy,ab}P_{y,c} = P_{y,b}(P_{x,a}P_{y,b})P_{y,c} = 0.$$
On the other hand, if $a\sim_{G_2} b$ then $c\neq b$ and hence, again, 
$$Q_{xy,ab}P_{y,c} = P_{y,b}P_{x,a}(P_{y,b}P_{y,c}) = 0.$$
Similarly, if $(xy,y)\not\in F_{G_1}$ but $(ab,c)\in F_{G_2}$, we obtain $Q_{xy,ab}P_{y,c} = 0$. 
Let $\xi$ be a maximally entangled vector in $H\otimes H$; thus, 
$$\langle (S\otimes T)\xi,\xi\rangle = {\rm Tr}(ST^{\rm t}), \ \ \ S,T\in \cl B(H).$$
A perfect quantum strategy $p$ for the isomorphism game $F_{G_1}\cong F_{G_2}$
is then given by letting 
$$p(ab,c | xy,z) = \left\langle (Q_{xy,ab}\otimes P_{y,c}^{\rm t})\xi,\xi\right\rangle, \ \ \ x,y,z,a,b,c\in X.$$
By Proposition \ref{p_flip}, $p$ gives rise to a perfect quantum strategy for the 
hypergraph isomorphism game $F_{G_1}\simeq F_{G_2}$.

Suppose that $F_{G_1}\simeq_{\rm loc} F_{G_2}$. 
By Proposition \ref{p_lochy2}, there exist bijections 
$f : XX\to XX$ and $g : X\to X$ such that 
$$(xy,z)\in F_{G_1} \ \Longleftrightarrow \ (f(xy),g(z))\in F_{G_2}.$$
We check that $f$ is an isomorphism from $L(G_1)$ onto $L(G_2)$, 
thus arriving at a contradiction as in (i). 
Suppose that $xy\sim_{L(G_1)} zy$, where $x,y,z\in X$, $x\neq z$, $x\sim_{G_1} y \sim_{G_1} z$. 
Write $f(xy) = ab$ and $f(zy) = cd$. Then $g(y) = b$ and hence $d = b$. 
Since $a\sim_{G_2} c\sim_{G_2} b$, we have that 
$ab\sim_{L(G_2)} cd$, that is, $f(xy)\sim_{L(G_2)} f(zy)$. By symmetry, 
$$f(xy)\sim_{L(G_2)} f(uv) \ \Longleftrightarrow \ xy\sim_{L(G_1)} uv.$$
\end{proof}

\begin{corollary}\label{c_exist}
There exist hypergraphs $E_1$ and $E_2$ such that $E_1\simeq_{\rm q} E_2$ but $E_1\not\simeq_{\rm loc} E_2$.
\end{corollary}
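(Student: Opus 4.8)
The plan is to deduce the statement directly from Theorem \ref{th_examp}, so the only genuine input is to supply a pair of graphs that are quantum isomorphic but not classically isomorphic. First I would invoke the existence, established in \cite{amrssv}, of finite simple graphs $G_1$ and $G_2$ on a common vertex set $X$ with $G_1\cong_{\rm q} G_2$ but $G_1\not\cong G_2$; such graphs are produced there from a suitable linear binary constraint system game admitting a perfect quantum but no perfect classical strategy.

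With such a pair fixed, I would simply apply Theorem \ref{th_examp}(i) to $G_1$ and $G_2$, obtaining $E_{G_1}\simeq_{\rm q} E_{G_2}$ together with $E_{G_1}\not\simeq_{\rm loc} E_{G_2}$, and then set $E_1 = E_{G_1}$ and $E_2 = E_{G_2}$. Alternatively, Theorem \ref{th_examp}(ii) yields the hypergraphs $F_{G_1}$ and $F_{G_2}$ exhibiting the same separation, so either construction serves equally well.

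There is no new obstacle here: the corollary is an immediate consequence of the preceding theorem, and the proof is a two-line citation. The substantive work has already been absorbed into Theorem \ref{th_examp}, whose proof in turn rests on the operator-theoretic characterisation of faithful ${\rm q}$-isomorphisms in Theorem \ref{th_intertwine} and on Whitney's Isomorphism Theorem; the only external ingredient beyond the results of the present excerpt is the quantum-but-not-classical graph isomorphism separation of \cite{amrssv}.
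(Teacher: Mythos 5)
Your proposal matches the paper's proof: the paper likewise cites the existence (from \cite[Theorem 6.4]{amrssv}) of graphs that are quantum isomorphic but not isomorphic and then applies Theorem \ref{th_examp}. Your argument is correct and takes essentially the same route.
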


\begin{proof}
By \cite[Theorem 6.4]{amrssv}, there exist graphs $G_1$ and $G_2$ that are 
quantum isomorphic, but not isomorphic. The statement now is a consequence of Theorem \ref{th_examp}. 
\end{proof}

%%%%%%%%%%%%%%%%%%%%%%%%%%%%%%%%%%%%%%%%%%%%%%%%%
%%%%%%%%%%%%%%%%%%%%%%%%%%%%%%%%%%%%%%%%%%%%%%%%%
%%%%%%%%%%%%%%%%%%%%%%%%%%%%%%%%%%%%%%%%%%%%%%%%%

\section{Strongly no-signalling correlations}\label{s_strongly}

In the rest of the paper, we restrict the setup of Sections \ref{ss_gensetup}-\ref{s_hypis} to the special 
case where the underlying hypergraphs are non-local games. 
We start by introducing, in this section, the types of correlations that will serve as suitable strategies.

%%%%%%%%%%%%%%%%%%%%%%%%%%%%%%%%%%%%%%%%%%%%%%%%%
%%%%%%%%%%%%%%%%%%%%%%%%%%%%%%%%%%%%%%%%%%%%%%%%%

%\subsection{No-signalling operator matrices}\label{ss_nsmat}

Let $X$, $Y$, $A$ and $B$ be finite sets, and $H$ be a Hilbert space. 
In the sequel, to simplify notation, if there is no risk of confusion, 
we will abbreviate an ordered pair $(x,y)$ in $X\times Y$ to $xy$. 
A positive operator $P = (P_{xy,ab})_{xy,ab}\in \cl D_{XYAB}\otimes \cl B(H)$ will be called 
a \emph{no-signalling (NS) operator matrix} if the \emph{marginal operators} 
$$P_{x,a} := \sum_{b\in B} P_{xy,ab} \ \mbox{ and } \ P^{y,b} :=\sum_{a\in A} P_{xy,ab}$$ 
are well-defined, and 
$(P_{x,a})_{a\in A}$ (and hence $(P^{y,b})_{b\in B}$) is a POVM for every $x\in X$ (and every $y\in Y$). 
This notion formed the base for the concept of a \emph{nonsignalling operator system} in \cite[Definition 5.2]{art}, 
although it was not defined there explicitly. 
An NS operator matrix $P = (P_{xy,ab})_{xy,ab}$ is called \emph{dilatable} if 
there exist a Hilbert space $K$, an isometry $V : H\to K$ and POVM's
$(E_{x,a})_{a\in A}$ and $(F_{y,b})_{b\in B}$ on $K$, $x\in X$, $y\in Y$, such that $E_{x,a}F_{y,b} = F_{y,b}E_{x,a}$
and 
\begin{equation}\label{eq_ExaFyb}
P_{xy,ab} = V^* E_{x,a}F_{y,b} V, \ \ \ x\in X, y\in Y, a\in A, b\in B.
\end{equation}

\medskip

\noindent {\bf Remark. } 
If the entries $P_{xy,ab}$ of an NS operator matrix 
are projections then
$$P_{x,a}P^{y,b} = \sum_{a'\in A}\sum_{b'\in B} P_{xy,ab'}P_{xy,a'b} = P_{xy,ab}$$
for all $x,y,a,b$. 

\medskip

We recall the operator system $\cl S_{X,A}$ and the C*-algebra $\cl A_{X,A}$, introduced 
before Remark \ref{r_aff}, whose canonical generators are the the elements 
$\tilde{e}_{x,a}$ of universal PVM's $\{\tilde{e}_{x,a}\}_{a\in A}$, $x\in X$. 
For clarity, we will denote the canonical generators of the operator system 
$\cl S_{Y,B}$ by $\tilde{f}_{y,b}$, $y\in Y$, $b\in B$.

\begin{proposition}\label{p_dedil}
If $P = (P_{xy,ab})_{xy,ab}$ is a dilatable NS operator matrix acting on the Hilbert 
space $H$ then there exists a unital completely positive map
$\gamma : \cl S_{X,A}\otimes_{\rm c} \cl S_{Y,B} \to \cl B(H)$, such that 
$\gamma(\tilde{e}_{x,a}\otimes \tilde{f}_{y,b}) = P_{xy,ab}$.
Conversely, if 
$\gamma : \cl S_{X,A}\otimes_{\rm c} \cl S_{Y,B} \to \cl B(H)$ is a unital completely positive map then 
$\left(\gamma(\tilde{e}_{x,a}\otimes \tilde{f}_{y,b})\right)_{xy,ab}$ is a dilatable NS operator matrix. 
\end{proposition}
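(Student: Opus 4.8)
The statement is a standard two-sided correspondence between dilatable NS operator matrices and unital completely positive maps out of the commuting tensor product $\cl S_{X,A}\otimes_{\rm c}\cl S_{Y,B}$, and the plan is to exploit the universal property of $\cl S_{X,A}$ (POVM's correspond to u.c.p.\ maps) together with the defining property of $\otimes_{\rm c}$ (a pair of u.c.p.\ maps with commuting ranges induces a u.c.p.\ map on the commuting tensor product). For the forward direction, suppose $P=(P_{xy,ab})_{xy,ab}$ is dilatable, with Hilbert space $K$, isometry $V:H\to K$, and commuting POVM's $(E_{x,a})_{a\in A}$, $(F_{y,b})_{b\in B}$ on $K$ satisfying \eqref{eq_ExaFyb}. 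By the universal property of $\cl S_{X,A}$ recalled before Remark \ref{r_aff}, there is a u.c.p.\ map $\phi:\cl S_{X,A}\to\cl B(K)$ with $\phi(\tilde e_{x,a})=E_{x,a}$, and likewise a u.c.p.\ map $\psi:\cl S_{Y,B}\to\cl B(K)$ with $\psi(\tilde f_{y,b})=F_{y,b}$. Since the POVM's commute and $\cl S_{X,A}$ (resp.\ $\cl S_{Y,B}$) is spanned by the $\tilde e_{x,a}$ (resp.\ $\tilde f_{y,b}$) together with the unit, the ranges of $\phi$ and $\psi$ commute, so by definition of the commuting tensor product the map $\phi\cdot\psi:\cl S_{X,A}\otimes_{\rm c}\cl S_{Y,B}\to\cl B(K)$, $(\phi\cdot\psi)(s\otimes t)=\phi(s)\psi(t)$, is u.c.p. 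Then $\gamma:=V^*(\phi\cdot\psi)(\cdot)V:\cl S_{X,A}\otimes_{\rm c}\cl S_{Y,B}\to\cl B(H)$ is completely positive (compression by the isometry $V$), is unital because $V$ is an isometry, and satisfies $\gamma(\tilde e_{x,a}\otimes\tilde f_{y,b})=V^*E_{x,a}F_{y,b}V=P_{xy,ab}$ as required.

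For the converse, let $\gamma:\cl S_{X,A}\otimes_{\rm c}\cl S_{Y,B}\to\cl B(H)$ be u.c.p.\ and set $P_{xy,ab}:=\gamma(\tilde e_{x,a}\otimes\tilde f_{y,b})$. I would first check the NS operator matrix axioms directly: positivity of $P=(P_{xy,ab})_{xy,ab}$ as an element of $\cl D_{XYAB}\otimes\cl B(H)$ follows from complete positivity of $\gamma$ (the matrix of images of a commuting family of ``matrix units'' $\tilde e_{x,a}\otimes\tilde f_{y,b}$ is positive since those elements, suitably arranged, form a positive element of a matrix amplification of the operator system); the marginal operator $P_{x,a}=\sum_{b\in B}P_{xy,ab}=\gamma\big(\tilde e_{x,a}\otimes\sum_{b}\tilde f_{y,b}\big)=\gamma(\tilde e_{x,a}\otimes 1)$ is well-defined (independent of $y$) because $\sum_{b\in B}\tilde f_{y,b}=1$ in $\cl S_{Y,B}$, and $(P_{x,a})_{a\in A}$ is a POVM since $\sum_{a\in A}\gamma(\tilde e_{x,a}\otimes 1)=\gamma(1\otimes 1)=I_H$ and each summand is positive; symmetrically for $(P^{y,b})_{b\in B}$. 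To show $P$ is moreover \emph{dilatable}, I would invoke the C*-algebraic structure underlying the commuting tensor product: $\cl S_{X,A}\otimes_{\rm c}\cl S_{Y,B}$ sits completely order isomorphically inside (a C*-algebra generated by) $\cl A_{X,A}\otimes_{\max}\cl A_{Y,B}$, where the images of $\tilde e_{x,a}$ and $\tilde f_{y,b}$ are commuting PVM's/POVM's; then apply Arveson's extension theorem to extend $\gamma$ to a u.c.p.\ map on that C*-algebra and Stinespring dilation to obtain a Hilbert space $K$, a $*$-representation $\rho$, and an isometry $V:H\to K$ with $\gamma(\cdot)=V^*\rho(\cdot)V$ on the operator system; setting $E_{x,a}=\rho(\tilde e_{x,a}\otimes 1)$ and $F_{y,b}=\rho(1\otimes\tilde f_{y,b})$ gives commuting POVM's on $K$ with $P_{xy,ab}=V^*E_{x,a}F_{y,b}V$, which is exactly \eqref{eq_ExaFyb}.

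The main obstacle is the converse's passage from ``u.c.p.\ map on $\cl S_{X,A}\otimes_{\rm c}\cl S_{Y,B}$'' to an honest commuting pair of POVM's on a common dilation space, i.e.\ making rigorous that the commuting tensor product is the ``right'' operator system so that a Stinespring dilation produces genuinely commuting representations of the two factors. This is precisely the content for which $\otimes_{\rm c}$ was designed (cf.\ \cite{kptt}), and the cleanest route is to first extend $\gamma$ to the enveloping C*-algebra $\cl A_{X,A}\otimes_{\max}\cl A_{Y,B}$ (using that $\cl S_{X,A}\otimes_{\rm c}\cl S_{Y,B}\subseteq_{\rm c.o.i.}\cl A_{X,A}\otimes_{\max}\cl A_{Y,B}$, which is implicit in the universal property of $\cl S_{X,A}$ inside $\cl A_{X,A}$ and the definition of the commuting tensor product) and then dilate; the commutation of $E_{x,a}$ with $F_{y,b}$ is then automatic from the commutation of the two canonical copies inside the maximal C*-tensor product. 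Everything else — unitality, positivity, the marginal computations — is routine bookkeeping with the relations $\sum_a\tilde e_{x,a}=1$ and $\sum_b\tilde f_{y,b}=1$.
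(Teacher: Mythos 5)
Your proposal is correct and follows essentially the same route as the paper's proof: the forward direction via the universal property of $\cl S_{X,A}$, the map $\phi\cdot\psi$ and compression by $V$, and the converse via the inclusion $\cl S_{X,A}\otimes_{\rm c}\cl S_{Y,B}\subseteq_{\rm c.o.i.}\cl A_{X,A}\otimes_{\max}\cl A_{Y,B}$ followed by Arveson extension and Stinespring dilation. The only small point is that this inclusion is not merely ``implicit'' in the definitions but is a cited result (the paper invokes \cite[Lemma 2.8]{pt}); since you correctly identify it as the crux and use it exactly as the paper does, this is a matter of attribution rather than a gap.
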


\begin{proof}
Let $K$ be a Hilbert space, $V : H\to K$ be an isometry, 
and $(E_{x,a})_{a\in A}$ and $(F_{y,b})_{b\in B}$ be mutually commuting 
POVM's on $K$ satisfying (\ref{eq_ExaFyb}).
The linear map $\phi : \cl S_{X,A}\to \cl B(K)$ (resp. 
$\psi : \cl S_{Y,B}\to \cl B(K)$), given by $\phi(\tilde{e}_{x,a}) = E_{x,a}$ (resp. $\psi(\tilde{f}_{x,a}) = F_{x,a}$) 
is (unital and) completely positive. By the definition of the commuting tensor product, 
the map $\phi\cdot \psi : \cl S_{X,A}\otimes_{\rm c} \cl S_{Y,B} \to \cl B(K)$, given by 
$(\phi\cdot \psi)(u\otimes v) = \phi(u)\psi(v)$, is (unital and) completely positive. 
Set
$$\gamma(w) = V^*(\phi\cdot \psi)(w)V, \ \ \ w\in \cl S_{X,A}\otimes_{\rm c} \cl S_{Y,B};$$
we have that $\gamma$ is unital and completely positive, and 
$\gamma(\tilde{e}_{x,a}\otimes \tilde{f}_{y,b}) = P_{xy,ab}$, $x\in X$, $y\in Y$, $a\in A$, $b\in B$.

Conversely, suppose that 
$\gamma : \cl S_{X,A}\otimes_{\rm c} \cl S_{Y,B} \to \cl B(H)$ is a unital completely positive map. 
By \cite[Lemma 2.8]{pt}, 
$\cl S_{X,A}\otimes_{\rm c} \cl S_{Y,B} \subseteq_{\rm c.o.i} \cl A_{X,A}\otimes_{\max} \cl A_{Y,B}$. 
Using Arveson Extension Theorem, let 
$\tilde{\gamma} : \cl A_{X,A}\otimes_{\max} \cl A_{Y,B} \to \cl B(H)$ be a completely positive extension of $\gamma$. 
Applying Stinespring's Theorem, write 
$$\tilde{\gamma}(w) = V^*\pi(w)V, \ \ \ w\in \cl A_{X,A}\otimes_{\max} \cl A_{Y,B},$$
for some *-representation $\pi$ of $\cl A_{X,A}\otimes_{\max} \cl A_{Y,B}$ on a Hilbert space $K$ and an isometry 
$V : H\to K$. Letting $E_{x,a} = \pi(\tilde{e}_{x,a}\otimes 1)$ and $F_{y,b} = \pi(1\otimes \tilde{f}_{y,b})$, we 
obtain a representation (\ref{eq_ExaFyb}) for the matrix
$\left(\gamma(\tilde{e}_{x,a}\otimes \tilde{f}_{y,b})\right)_{xy,ab}$. 
\end{proof}

The following fact is implicit in the proof of Proposition \ref{p_dedil}:

\begin{corollary}\label{c_dedil2}
If $P = (P_{xy,ab})_{xy,ab}$ is a dilatable NS operator matrix then $(E_{x,a})_{a\in A}$, $x\in X$, 
and $(F_{y,b})_{b\in B}$, $y\in Y$, in (\ref{eq_ExaFyb}) can be chosen to be PVM's.
\end{corollary}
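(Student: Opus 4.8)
The plan is to trace carefully through the proof of Proposition \ref{p_dedil} and observe that the representation it produces already has the stronger property claimed. Concretely, in the ``conversely'' direction of that proof, one starts with a unital completely positive map $\gamma : \cl S_{X,A}\otimes_{\rm c}\cl S_{Y,B}\to \cl B(H)$, invokes \cite[Lemma 2.8]{pt} to embed $\cl S_{X,A}\otimes_{\rm c}\cl S_{Y,B}\subseteq_{\rm c.o.i.}\cl A_{X,A}\otimes_{\max}\cl A_{Y,B}$, extends via Arveson, and applies Stinespring to get $\tilde\gamma(w)=V^*\pi(w)V$ for a $*$-representation $\pi$ of $\cl A_{X,A}\otimes_{\max}\cl A_{Y,B}$ on a Hilbert space $K$ and an isometry $V:H\to K$. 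The key point is that $E_{x,a}=\pi(\tilde e_{x,a}\otimes 1)$ and $F_{y,b}=\pi(1\otimes\tilde f_{y,b})$ are then images under a $*$-homomorphism of \emph{projections} in $\cl A_{X,A}\otimes_{\max}\cl A_{Y,B}$: indeed, by construction of $\cl A_{X,A}$ as the free product amalgamated over the unit of $|X|$ copies of $\cl D_A$, each $\tilde e_{x,a}$ is a projection, and $\{\tilde e_{x,a}\}_{a\in A}$ a PVM, for every $x$; the same holds for the $\tilde f_{y,b}$. A $*$-representation preserves the relations $p=p^*=p^2$ and $\sum_a p_a = 1$, so $(E_{x,a})_{a\in A}$ and $(F_{y,b})_{b\in B}$ are PVM's on $K$, and they commute because $\tilde e_{x,a}\otimes 1$ and $1\otimes\tilde f_{y,b}$ commute in the tensor product. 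Hence the representation \eqref{eq_ExaFyb} obtained for $P$ uses PVM's.

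First I would recall the definition of a dilatable NS operator matrix and observe that, by Proposition \ref{p_dedil}, any such $P$ arises as $P_{xy,ab}=\gamma(\tilde e_{x,a}\otimes\tilde f_{y,b})$ for some unital completely positive $\gamma$ on $\cl S_{X,A}\otimes_{\rm c}\cl S_{Y,B}$ — so it suffices to produce, for an arbitrary such $\gamma$, a representation \eqref{eq_ExaFyb} with PVM's. Then I would reproduce (briefly) the three steps of the converse half of the proof of Proposition \ref{p_dedil} — embed, extend via Arveson, dilate via Stinespring — and then state the observation of the previous paragraph: that the resulting $E_{x,a}$ and $F_{y,b}$ are automatically projections summing to the identity, because they are $*$-homomorphic images of the canonical generators of $\cl A_{X,A}$ and $\cl A_{Y,B}$, which are projections forming PVM's by the amalgamated-free-product construction.

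I do not anticipate a genuine obstacle here; the content of the corollary is essentially a bookkeeping remark about which pieces of the proof of Proposition \ref{p_dedil} were already ``sharp.'' The only mild subtlety worth flagging explicitly is that the PVM property is a property of the dilation $(K,V,\pi)$ and not of $P$ itself — the original operators $P_{xy,ab}=V^*E_{x,a}F_{y,b}V$ compressed back to $H$ need not be projections — so the statement should be read, as it is phrased, as asserting that the POVM's appearing in \eqref{eq_ExaFyb} \emph{can be chosen} to be PVM's, i.e.\ that some dilatable representation of $P$ uses PVM's. Given that phrasing, the argument is complete once one notes $\pi(\tilde e_{x,a}\otimes 1)$ is a projection for each $x,a$, these sum over $a\in A$ to $\pi(1\otimes 1)=I_K$ for each fixed $x$, and symmetrically for $F_{y,b}$, and that $E_{x,a}F_{y,b}=F_{y,b}E_{x,a}$ since the corresponding elements commute in $\cl A_{X,A}\otimes_{\max}\cl A_{Y,B}$.
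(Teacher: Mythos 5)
Your proposal is correct and follows essentially the same route as the paper, which states the corollary precisely as being implicit in the proof of Proposition \ref{p_dedil}: the Arveson--Stinespring dilation there represents the canonical generators $\tilde e_{x,a}\otimes 1$ and $1\otimes\tilde f_{y,b}$, which are universal PVM's, via a $*$-representation, so the resulting $E_{x,a}$ and $F_{y,b}$ are automatically commuting PVM's. Your remark that the PVM property concerns the dilation $(K,V,\pi)$ rather than the compressed operators $P_{xy,ab}$ is exactly the right reading of the statement.
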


\begin{remark}\label{r_nondi}
\rm 
There exist non-dilatable NS operator matrices whenever the cardinalities of $X$, $Y$, $A$
are at least $2$. 
Indeed, let $\phi : \cl S_{X,A}\otimes_{\max}\cl S_{Y,B}\to \cl B(H)$ be a unital complete order embedding, 
and set $P_{xy,ab} = \phi(\tilde{e}_{x,a}\otimes \tilde{f}_{y,b})$, $x\in X$, $y\in Y$, $a\in A$, $b\in B$. 
It is clear that $(P_{xy,ab})_{xy,ab}$ is an NS operator matrix. Suppose that it is dilatable; 
by Proposition \ref{p_dedil}, there exists a unital completely positive map 
$\psi : \cl S_{X,A}\otimes_{\rm c}\cl S_{Y,B}\to \cl B(H)$ such that 
$\psi(\tilde{e}_{x,a}\otimes \tilde{f}_{y,b}) = P_{xy,ab}$, $x\in X$, $y\in Y$, $a\in A$, $b\in B$.
The map $\phi^{-1}\circ \psi : \cl S_{X,A}\otimes_{\rm c}\cl S_{Y,B} \to \cl S_{X,A}\otimes_{\max}\cl S_{Y,B}$ 
is completely positive, and by the extremal property of the maximal 
operator system tensor product (see \cite[Theorem 5.5]{kptt}), it is a (unital) complete order isomorphism. 
By virtue of \cite[Theorem 3.1]{lmprsstw}, 
this contradicts the fact that $\cl C_{\rm ns}\neq \cl C_{\rm qc}$ (see e.g. \cite[Corollary 7.12]{fkpt_NYJ}). 
\end{remark}

In the next proposition, we identify the NS operator matrices that give rise to 
local NS correlations. 
Call a NS operator matrix $(P_{xy,ab})_{x,y,a,b}$ \emph{locally dilatable}
if it admits a dilation of the form (\ref{eq_ExaFyb}), where the family
$\{E_{x,a}, F_{y,b} : x\in X, y\in Y, a\in A, b\in B\}$ is commutative.

\begin{proposition}\label{p_locdedi}
An NS correlation $p = \{(p(a,b|x,y))_{a,b} : (x,y)\in X\times Y\}$ over $(X,Y,A,B)$ is 
local if and only if there exists a Hilbert space $H$, a locally dilatable
NS operator matrix $(P_{xy,ab})_{xy,ab}$ and a unit vector $\xi\in H$ such that 
\begin{equation}\label{eq_loccdi}
p(a,b|x,y) = \langle P_{xy,ab}\xi,\xi\rangle, \ \ \ x\in X, y\in Y, a\in A, b\in B.
\end{equation}
\end{proposition}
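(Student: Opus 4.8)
The plan is to prove the two implications separately, in each case translating between the representing operators and an explicit decomposition of $p$ into product correlations.

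For the ``only if'' direction I would start from a local decomposition $p(a,b|x,y)=\sum_{i=1}^{n}\lambda_{i}\,p^{(1)}_{i}(a|x)\,p^{(2)}_{i}(b|y)$, with $\lambda_{i}\ge 0$, $\sum_{i}\lambda_{i}=1$, and $(p^{(1)}_{i}(a|x))_{a\in A}$, $(p^{(2)}_{i}(b|y))_{b\in B}$ probability distributions, and build an essentially tautological model. Take $H=\bb{C}^{n}$ with the canonical basis $(\delta_i)_{i=1}^n$, set $V=I_{H}$, and let $E_{x,a}=\sum_{i}p^{(1)}_{i}(a|x)\epsilon_{i,i}$ and $F_{y,b}=\sum_{i}p^{(2)}_{i}(b|y)\epsilon_{i,i}$; these are diagonal, hence a commuting family, and $(E_{x,a})_{a\in A}$, $(F_{y,b})_{b\in B}$ are POVM's for each $x$, $y$ respectively. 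Then $P_{xy,ab}:=V^{*}E_{x,a}F_{y,b}V=E_{x,a}F_{y,b}$ is a diagonal positive operator matrix; checking that its marginals are well-defined shows it is an NS operator matrix, and it is locally dilatable by construction. Finally, the unit vector $\xi=\sum_{i}\sqrt{\lambda_{i}}\,\delta_{i}$ satisfies $\langle P_{xy,ab}\xi,\xi\rangle=\sum_{i}\lambda_{i}p^{(1)}_{i}(a|x)p^{(2)}_{i}(b|y)=p(a,b|x,y)$, giving (\ref{eq_loccdi}). This step is routine bookkeeping.

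For the ``if'' direction I would fix a local dilation $P_{xy,ab}=V^{*}E_{x,a}F_{y,b}V$ with $V\colon H\to K$ an isometry and $\{E_{x,a},F_{y,b}\}$ a commuting family, and pass to the commutative unital C*-algebra $\cl C=C^{*}(\{E_{x,a},F_{y,b}\}\cup\{I_{K}\})\subseteq\cl B(K)$. By Gelfand duality $\cl C\cong C(\Omega)$ for a compact Hausdorff $\Omega$; under this isomorphism $E_{x,a}$ and $F_{y,b}$ become continuous functions $e_{x,a},f_{y,b}\colon\Omega\to[0,1]$ (each operator is positive and dominated by $I_{K}$) with $\sum_{a}e_{x,a}\equiv 1$ and $\sum_{b}f_{y,b}\equiv 1$, and $E_{x,a}F_{y,b}$ corresponds to $e_{x,a}f_{y,b}$ since the Gelfand transform is multiplicative. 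Since $V\xi$ is a unit vector of $K$, the vector state $w\mapsto\langle wV\xi,V\xi\rangle$ restricts to a state of $\cl C$, i.e. to a Radon probability measure $\mu$ on $\Omega$, and (\ref{eq_loccdi}) rewrites as $p(a,b|x,y)=\int_{\Omega}e_{x,a}(\omega)f_{y,b}(\omega)\,\dd\mu(\omega)$. For each $\omega$ the family $\big(e_{x,a}(\omega)f_{y,b}(\omega)\big)_{a,b}$ is a product correlation, hence lies in $\cl C_{\rm loc}$, and $\omega\mapsto\big(e_{x,a}(\omega)f_{y,b}(\omega)\big)$ is continuous into $\cl C_{\rm loc}$, so $p$ is the barycenter of a probability measure supported on $\cl C_{\rm loc}$.

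The only genuinely non-trivial point is this last step --- that an ``integral of local correlations'' is again local. I would justify it by the standard barycenter argument: $\cl C_{\rm loc}$ is a compact convex subset (indeed a polytope) of the finite-dimensional space $\bb{R}^{X\times Y\times A\times B}$, and any affine functional bounded above on $\cl C_{\rm loc}$ stays bounded above, by the same bound, at $\int_{\Omega}(\cdots)\,\dd\mu(\omega)$; a separation argument then places the barycenter in $\cl C_{\rm loc}$. Alternatively, one can exploit that $\cl C_{\rm loc}$ is a polytope to reduce $\Omega$ to a finite set via a Carath\'eodory-type argument and obtain an honest finite convex combination of product correlations. Everything else reduces to routine manipulations with POVM marginals and the compression by $V$.
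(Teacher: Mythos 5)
Your proof is correct. The forward direction is exactly the paper's construction: diagonal matrices indexed by the terms of the convex decomposition, with $\xi=(\sqrt{\lambda_i})_i$, so there is nothing to compare there. In the converse direction you take a genuinely different, and in fact leaner, route. The paper splits the commuting family into two abelian C*-algebras $\cl A=C^*(\{E_{x,a}\})$ and $\cl B=C^*(\{F_{y,b}\})$, forms the state $S\otimes T\mapsto\langle ST\xi,\xi\rangle$ on $\cl A\otimes_{\max}\cl B$, invokes nuclearity of abelian C*-algebras to view it on $\cl A\otimes_{\min}\cl B\cong C(\Omega_1\times\Omega_2)$, and then has to approximate the resulting measure on $\Omega_1\times\Omega_2$ by convex combinations of product measures, finishing with a Carath\'eodory bound on the number of terms and a compactness argument to turn the approximation into an exact convex combination. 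You instead work with the single abelian algebra generated by all the $E_{x,a}$, $F_{y,b}$, so that multiplicativity of the Gelfand transform makes the integrand $\omega\mapsto\bigl(e_{x,a}(\omega)f_{y,b}(\omega)\bigr)$ a product correlation pointwise, and the only remaining issue is that the barycenter of a probability measure on the closed convex set $\cl C_{\rm loc}\subseteq\bb{R}^{X\times Y\times A\times B}$ lies in $\cl C_{\rm loc}$; your separation-functional (or Carath\'eodory) justification of that step is standard and sound. What your version buys is the elimination of both the nuclearity step and the approximation-by-product-measures step, replacing a limiting argument by a one-line barycenter fact valid in finite dimensions; what the paper's version buys is an explicit cardinality bound on the decomposition and a formulation in terms of states on tensor products that matches the machinery it reuses elsewhere (e.g. in Remark 5.7). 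Either argument adapts to the later use, so your proposal is a legitimate alternative proof.
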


\begin{proof}
Assume that $p\in \cl C_{\rm loc}$, namely, 
\begin{equation}\label{eq_cocom}
p = \sum_{i=1}^k \lambda_i p_i^{(1)}\otimes p_i^{(2)} 
\end{equation}
as a convex combination, where $p_i^{(1)} = (p_i^{(1)}(a|x))$ (resp. $p_i^{(2)} = (p_i^{(2)}(b|y))$) are 
conditional probability distributions.
Set $P_{xy,ab} = (p_i^{(1)}(a|x)p_i^{(2)}(b|y))_{i=1}^k$, 
considered as a matrix in $\cl D_k$. 
The representation (\ref{eq_loccdi}) is obtained by 
letting $\xi = (\sqrt{\lambda_i})_{i=1}^k\in \bb{C}^k$ and
$E_{x,a}$ (resp. $F_{y,b}$) be the diagonal matrix with diagonal $(p_i^{(1)}(a|x))_{i=1}^k$
(resp. $(p_i^{(2)}(b|y))_{i=1}^k$). 

Conversely, suppose that $(P_{xy,ab})_{xy,ab}$ is a locally dilatable NS operator matrix  
satisfying (\ref{eq_loccdi}). By replacing the Hilbert space $H$ with the Hilbert space $K$
arising from the dilation (\ref{eq_ExaFyb}) of $(P_{xy,ab})_{xy,ab}$, and the vector $\xi$ with the vector 
$V\xi$,  we may assume that 
$P_{xy,ab} = E_{x,a}F_{y,b}$, $x\in X$, $y\in Y$, $a\in A$, $b\in B$, 
where the family $\{E_{x,a}, F_{y,b} : x\in X, y\in Y, a\in A, b\in B\}$ is commutative.

Let $\cl A$ (resp. $\cl B)$ be the C*-algebra, generated by 
$\{E_{x,a} : x\in X, a\in A\}$ (resp. $\{F_{y,b} : y\in Y, b\in B\}$), 
and let $s : \cl A\otimes_{\max} \cl B\to \bb{C}$ be the state, given by 
$s(S\otimes T) = \langle ST\xi,\xi\rangle$. 
Using the nuclearity of abelian C*-algebras, we view
$s$ as a state on $\cl A\otimes_{\min}\cl B$. 
Identify $\cl A = C(\Omega_1)$ and $\cl B = C(\Omega_2)$, for some compact Hausdorff spaces $\Omega_1$ and 
$\Omega_2$, 
and the state $s$ with a Borel probability measure $\mu$ on the product 
topological space $\Omega_1\times\Omega_2$. 
We thus have 
$$p(a,b|x,y) = \int_{\Omega_1\times\Omega_2} E_{x,a}(\omega_1)F_{y,b}(\omega_2) d\mu(\omega_1,\omega_2),$$
$x\in X$, $y\in Y$, $a\in A$, $b\in B$. 
Approximating $\mu$ with convex combinations of product measures $\mu_1\times\mu_2$, 
we see that $p$ can be approximated by convex combinations of the form (\ref{eq_cocom}). 
By the Carath\'eodory Theorem, the number of 
terms in the sum in each of the approximants of the form (\ref{eq_cocom})
can be chosen to be at most $|X||Y||A||B| + 1$. Using a standard compactness argument, 
we conclude that $p$ is itself of the form (\ref{eq_cocom}). 
\end{proof}

\begin{remark}\label{r_qos}
\rm 
Call an NS operator matrix $P = (P_{xy,ab})_{xy,ab}$, acting on a Hilbert space $H$, 
\emph{quantum dilatable} if 
there exist families
$(E_{x,a})_{a\in A}$ and $(F_{y,b})_{b\in B}$ of POVM's, acting on
finite dimensional Hilbert spaces $H_A$ and $H_B$, respectively, and an isometry $V : H\to H_A\otimes H_B$, 
such that 
$$P_{xy,ab} = V^* (E_{x,a}\otimes F_{y,b}) V, \ \ \ x\in X, y\in Y, a\in A, b\in B.$$
It is clear that a NS correlation $p$ over $(X,Y,A,B)$ is quantum if and only if 
there exists a quantum dilatable 
NS operator matrix $P = (P_{xy,ab})_{xy,ab}$ acting on a Hilbert space $H$ 
and a unit vector $\xi\in H$ such that 
$$p(a,b|x,y) = \langle P_{xy,ab}\xi,\xi\rangle, \ \ \ x\in X, y\in Y, a\in A, b\in B.$$
It follows from \cite[Theorem 7.4]{art2} that an NS operator matrix is quantum dilatable 
precisely when its entries 
generate a quantum $k$-AOU space for some $k\in \bb{N}$ in the sense of \cite{art2}
(we refer the reader to \cite[Section 7]{art2}
for the definition and the properties of the latter type of Archimedean ordered spaces).
\end{remark}

We now introduce the suitable correlation types for hypergraph homomorphism games, 
provided the hypergraphs are non-local games in their own right. 
Thus, in the notation of Section \ref{s_hyper}, we assume that 
$V_i = X_i Y_i$ and $W_i = A_i B_i$ for some finite sets $X_i$, $Y_i$, $A_i$ and $B_i$, 
and let $E_i\subseteq X_i Y_i\times A_i B_i$, $i = 1,2$. 
We note that $X_i Y_i$ (resp. $A_i B_i$) is interpreted as 
the question (resp. answer) set for the game $E_i$, $i = 1,2$. 
A channel 
$$\Gamma : \cl D_{X_2Y_2 \times A_1B_1}\to \cl D_{X_1Y_1 \times A_2B_2}$$
will be called a \emph{strongly no-signalling (SNS) correlation} if 
$$\sum_{b_2\in B_2} \hspace{-0.1cm}
\Gamma(x_1y_1, a_2b_2 | x_2y_2, a_1b_1) \hspace{-0.05cm} = \hspace{-0.2cm}
\sum_{b_2\in B_2} \hspace{-0.1cm}
\Gamma(x_1y_1, a_2b_2 | x_2y_2, a_1b_1'), \ \ b_1,b_1'\in B_1,$$
$$\sum_{a_2\in A_2} \hspace{-0.1cm}
\Gamma(x_1y_1, a_2b_2 | x_2y_2, a_1b_1) \hspace{-0.05cm} = \hspace{-0.2cm}
\sum_{a_2\in A_2} \hspace{-0.1cm}
\Gamma(x_1y_1, a_2b_2 | x_2y_2, a_1'b_1), \ \ a_1,a_1'\in A_1,$$
$$\sum_{y_1\in Y_1} \hspace{-0.1cm}
\Gamma(x_1y_1, a_2b_2 | x_2y_2, a_1b_1) \hspace{-0.05cm} = \hspace{-0.2cm}
\sum_{y_1\in Y_1} \hspace{-0.1cm}
\Gamma(x_1y_1, a_2b_2 | x_2y_2', a_1b_1), \ \ y_2,y_2'\in Y_2,$$
and
$$\sum_{x_1\in X_1} \hspace{-0.1cm}
\Gamma(x_1y_1, a_2b_2 | x_2y_2, a_1b_1) \hspace{-0.05cm} = \hspace{-0.2cm}
\sum_{x_1\in X_1} \hspace{-0.1cm}
\Gamma(x_1y_1, a_2b_2 | x_2'y_2, a_1b_1), \ \ x_2,x_2'\in X_2.$$

\noindent 
We denote by $\cl C_{\rm sns}$ the (convex) set of all SNS correlations 
(the specific question-answer sets will be understood from the context), and note that 
$\cl C_{\rm sns}\subseteq \cl C_{\rm ns}$. 
For a subset $\Lambda\subseteq V_2\times W_1\times V_1\times W_2$, we write
$\cl C_{\rm sns}(\Lambda)$ for the set of all SNS correlations with support contained in $\Lambda$. 
If $\Gamma\in \cl C_{\rm sns}$, we write 
$$
\Gamma(x_1y_1, a_2 | x_2y_2, a_1), \ \ 
\Gamma(x_1y_1, b_2 | x_2y_2, b_1),$$
$$\Gamma(x_1, a_2b_2 | x_2, a_1b_1) \ \mbox{ and } \ 
\Gamma(y_1, a_2b_2 | y_2, a_1b_1)$$
for the corresponding marginal conditional probability distributions, which are well-defined
by the definition of strong no-signalling. 
The SNS conditions imply that the further conditional probability distributions 
%\marginpar{\tiny IT. There were repetitions of the same identities, twice.}
$$\Gamma(x_1, a_2 | x_2, a_1) = \sum_{y_1\in Y_1} \Gamma(x_1y_1, a_2 | x_2y_2, a_1),$$
$$\Gamma(y_1, b_2 | y_2, b_1) = \sum_{x_1\in X_1} \Gamma(x_1y_1, b_2 | x_2y_2, b_1)$$
$$\Gamma(y_1, a_2 | y_2, a_1) = \sum_{x_1\in X_1} \Gamma(x_1y_1, a_2 | x_2y_2, a_1)$$
and 
$$\Gamma(x_1, b_2 | x_2, b_1) = \sum_{y_1\in Y_1} \Gamma(x_1y_1, b_2 | x_2y_2, b_1)$$
are well-defined and are no-signalling correlations in their own right; 
for example, for $x_1\in X_1$, $x_2\in X_2$, $a_1,a_1'\in A_1$ and $a_2\in A_2$ we have
\begin{eqnarray*}
\sum_{a_2\in A_2} \Gamma(x_1, a_2 | x_2, a_1)
& = & 
\sum_{a_2\in A_2} \sum_{y_1\in Y_1} \Gamma(x_1y_1, a_2 | x_2y_2, a_1)\\
& = & 
\sum_{a_2\in A_2} \sum_{y_1\in Y_1} \sum_{b_2\in B_2}\Gamma(x_1y_1, a_2b_2 | x_2y_2, a_1b_1')\\
& = & 
\sum_{y_1\in Y_1} \sum_{b_2\in B_2}\sum_{a_2\in A_2}  \Gamma(x_1y_1, a_2b_2 | x_2y_2, a_1b_1')\\
& = & 
\sum_{y_1\in Y_1} \sum_{b_2\in B_2}\sum_{a_2\in A_2}  \Gamma(x_1y_1, a_2b_2 | x_2y_2, a_1'b_1')\\
& = & 
\sum_{a_2\in A_2} \Gamma(x_1, a_2 | x_2, a_1').
\end{eqnarray*}

\begin{definition}\label{d_snsc}
An SNS correlation $\Gamma$ over the quadruple $(X_2Y_2, A_1B_1,$ $X_1Y_1,$ $A_2B_2)$
is called 
\begin{itemize}
\item[(i)] \emph{quantum commuting} if there exist a Hilbert space $H$, 
dilatable NS operator matrices 
$P = (P_{x_2y_2,x_1y_1})_{x_2y_2,x_1y_1}$ 
and $Q = (Q_{a_1b_1,a_2b_2})_{a_1b_1,a_2b_2}$ on $H$ with mutually commuting entries, 
and a unit vector $\xi\in H$, such that 
\begin{equation}\label{eq_qcsns}
\Gamma(x_1y_1,a_2b_2 | x_2y_2,a_1b_1) = 
\left\langle 
P_{x_2y_2,x_1y_1} Q_{a_1b_1,a_2b_2}\xi,\xi\right\rangle
\end{equation}
for all $x_i\in X_i$, $y_i\in Y_i$, $a_i\in A_i$ and $b_i\in B_i$, $i = 1,2$;

\item[(ii)] \emph{quantum} 
if there exist finite dimensional Hilbert spaces $H$ and $K$,
quantum dilatable NS operator matrices 
$$M = (M_{x_2a_1,x_1a_2})_{x_2,a_1,x_1,a_2} \mbox{ on } H, \mbox{ and } 
N = (N_{y_2b_1,y_1b_2})_{y_2,b_1,y_1,b_2} \mbox{ on } K,$$
and a unit vector $\xi\in H\otimes K$, such that 
\begin{equation}\label{eq_qsns}
\Gamma(x_1y_1,a_2b_2 | x_2y_2,a_1b_1) = 
\left\langle \left(M_{x_2a_1,x_1a_2}\otimes N_{y_2b_1,y_1b_2}\right)\xi,\xi\right\rangle
\end{equation}
for all $x_i\in X_i$, $y_i\in Y_i$, $a_i\in A_i$ and $b_i\in B_i$, $i = 1,2$;

\item[(iii)] \emph{approximately quantum} if it is a limit of quantum SNS correlations; 

\item[(iv)] \emph{local} if it is quantum, and the matrices $M$ and $N$ from (ii) can be chosen to
be locally dilatable.  
\end{itemize}
\end{definition}

We denote by $\cl C_{\rm sqc}$ (resp. $\cl C_{\rm sqa}$, $\cl C_{\rm sq}$ and $\cl C_{\rm sloc}$) 
the classes of quantum commuting (resp. approximately quantum, quantum and local) SNS correlations.

%\marginpar{\tiny IT. Changes in Remark \ref{r_desloc}.}

\begin{remark}\label{r_desloc}
\rm 
Let $\Gamma$ be a local SNS correlation $\Gamma$ over the quadruple $(X_2Y_2,$ $A_1B_1,$ $X_1Y_1,A_2B_2)$.
By choosing dilations of the matrices $M$ and $N$ in (\ref{eq_qsns}) with mutually commuting 
entries, we can write $\Gamma$ in the form 
$$\Gamma(x_1y_1,a_2b_2 | x_2y_2,a_1b_1) = 
\left\langle E_{x_2,x_1}E^{a_1,a_2} F_{y_2,y_1}F^{b_1,b_2}\xi,\xi\right\rangle,$$
where the POVM's $(E_{x_2,x_1})_{x_1}$, $(E^{a_1,a_2})_{a_2}$, 
$(F_{y_2,y_1})_{y_1}$ and $(F^{b_1,b_2})_{b_2}$ have mutually commuting entries. 
An argument, similar to the one in the proof of
Proposition \ref{p_locdedi} now shows that 
$\Gamma = \sum_{i=1}^k \lambda_i \Phi_i\otimes \Psi_i$ as a convex combination
for some local NS correlations $\Phi_i : \cl D_{X_2Y_2}\to \cl D_{X_1Y_1}$ and 
$\Psi_i : \cl D_{A_1B_1}\to \cl D_{A_2B_2}$, $i = 1,\dots,k$. 
\end{remark}

The following lemma will be used in Sections \ref{s_nonlocal} and \ref{s_repSNS}. 

\begin{lemma}\label{l_comlift}
The SNS correlation $\Gamma$ belongs to $\cl C_{\rm sqc}$ if and only if 
there exists a Hilbert space $K$, PVM's $(P_{x_2,x_1})_{x_1\in X_1}$, 
$(P^{y_2,y_1})_{y_1\in Y_1}$, $(Q_{a_1,a_2})_{a_2\in A_2}$ and $(Q^{b_1,b_2})_{b_2\in B_2}$ on $K$
with mutually commuting entries, and a unit vector $\eta\in K$, such that 
\begin{equation}\label{eq_qcsns2}
\Gamma(x_1y_1,a_2b_2 | x_2y_2,a_1b_1) = 
\left\langle 
P_{x_2,x_1}P^{y_2,y_1} Q_{a_1,a_2}Q^{b_1,b_2}\eta,\eta\right\rangle
\end{equation}
for all $x_i\in X_i$, $y_i\in Y_i$, $a_i\in A_i$ and $b_i\in B_i$, $i = 1,2$.
\end{lemma}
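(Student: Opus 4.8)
The statement says a strongly no-signalling correlation $\Gamma$ is quantum commuting (in the sense of Definition \ref{d_snsc}(i)) precisely when it has a representation (\ref{eq_qcsns2}) in terms of four families of mutually commuting PVM's. The ``if'' direction is essentially a packaging statement: given such PVM's $(P_{x_2,x_1})_{x_1}$, $(P^{y_2,y_1})_{y_1}$, $(Q_{a_1,a_2})_{a_2}$, $(Q^{b_1,b_2})_{b_2}$ with mutually commuting entries, set
$$P_{x_2y_2,x_1y_1} = P_{x_2,x_1}P^{y_2,y_1}, \qquad Q_{a_1b_1,a_2b_2} = Q_{a_1,a_2}Q^{b_1,b_2}.$$
Since within each family the entries of $(P_{x_2,x_1})_{x_1}$ and of $(P^{y_2,y_1})_{y_1}$ commute, each $P_{x_2y_2,x_1y_1}$ is a product of two commuting projections, hence a positive operator; summing over $y_1$ gives $\sum_{y_1}P^{y_2,y_1}=I$ so $\sum_{y_1}P_{x_2y_2,x_1y_1}=P_{x_2,x_1}$, showing the marginal over $y_1$ is independent of $y_2$ and that $(P_{x_2y_2,x_1y_1})_{x_1,y_1}$ has the NS-operator-matrix structure (the marginal $\sum_{x_1}$ is handled symmetrically). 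Thus $P=(P_{x_2y_2,x_1y_1})$ is an NS operator matrix, and it is dilatable because its entries arise from commuting PVM's — one either invokes Proposition \ref{p_dedil} (after checking the relevant commutation, using that the $P$-family commutes with the $Q$-family to see $P$ and $Q$ have mutually commuting entries) or observes directly that (\ref{eq_ExaFyb}) holds with $V=I$ and the given PVM's. Then (\ref{eq_qcsns2}) is exactly (\ref{eq_qcsns}), so $\Gamma\in\cl C_{\rm sqc}$.

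**The ``only if'' direction** is the substantive one. Start with $\Gamma\in\cl C_{\rm sqc}$: by definition there are dilatable NS operator matrices $P=(P_{x_2y_2,x_1y_1})$ and $Q=(Q_{a_1b_1,a_2b_2})$ on a Hilbert space $H$ with mutually commuting entries, and a unit vector $\xi$, satisfying (\ref{eq_qcsns}). The plan is to dilate $P$ and $Q$ \emph{simultaneously}. First apply Proposition \ref{p_dedil} / Corollary \ref{c_dedil2} to $P$: there is an isometry $V_P:H\to K_P$ and commuting PVM's $(E_{x_2,x_1})_{x_1}$, $(E'^{y_2,y_1})_{y_1}$ on $K_P$ with $P_{x_2y_2,x_1y_1}=V_P^*E_{x_2,x_1}E'^{y_2,y_1}V_P$. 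One must arrange that the dilation of $P$ is compatible with $Q$: the clean way is to dilate the whole unital completely positive map on $\cl S_{X_2,X_1}\otimes_{\rm c}\cl S_{Y_2,Y_1}\otimes_{\rm c}\cl S_{A_1,A_2}\otimes_{\rm c}\cl S_{B_1,B_2}$ at once. Concretely: the data $(P,Q)$ with mutually commuting entries correspond, via the universal property of $\cl S_{X,A}$ and the definition of the commuting tensor product (as in the proof of Proposition \ref{p_dedil}), to a unital completely positive map
$$\gamma:\ \cl S_{X_2,X_1}\otimes_{\rm c}\cl S_{Y_2,Y_1}\otimes_{\rm c}\cl S_{A_1,A_2}\otimes_{\rm c}\cl S_{B_1,B_2}\ \longrightarrow\ \cl B(H)$$
sending $\tilde e_{x_2,x_1}\otimes\tilde e'_{y_2,y_1}\otimes\tilde e''_{a_1,a_2}\otimes\tilde e'''_{b_1,b_2}$ to $P_{x_2,x_1}P^{y_2,y_1}Q_{a_1,a_2}Q^{b_1,b_2}$ — here one uses that $P$ and $Q$ are themselves dilatable, so their entries decompose into products of commuting PVM entries $P_{x_2,x_1},P^{y_2,y_1}$ and $Q_{a_1,a_2},Q^{b_1,b_2}$, all of which mutually commute. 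By the analogue of \cite[Lemma 2.8]{pt} for the (associative, functorial) commuting tensor product of the $\cl S$'s sitting inside the maximal tensor product of the free-product C*-algebras $\cl A_{X_2,X_1}\otimes_{\max}\cl A_{Y_2,Y_1}\otimes_{\max}\cl A_{A_1,A_2}\otimes_{\max}\cl A_{B_1,B_2}$, apply Arveson extension and then Stinespring: write $\gamma(w)=V^*\pi(w)V$ for a $*$-representation $\pi$ on a Hilbert space $K$ and an isometry $V:H\to K$. Putting $\eta=V\xi$ and letting $P_{x_2,x_1},P^{y_2,y_1},Q_{a_1,a_2},Q^{b_1,b_2}$ now denote the images under $\pi$ of the corresponding generators $\tilde e_{x_2,x_1}\otimes 1\otimes\cdots$, etc., these are PVM's (images of PVM's under a $*$-homomorphism), with mutually commuting entries (commutation is a polynomial identity preserved by $\pi$, and the three different kinds of generators commute inside the maximal tensor product), and
$$\Gamma(x_1y_1,a_2b_2\,|\,x_2y_2,a_1b_1)=\langle P_{x_2,x_1}P^{y_2,y_1}Q_{a_1,a_2}Q^{b_1,b_2}\eta,\eta\rangle,$$
which is (\ref{eq_qcsns2}).

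**Main obstacle.** The delicate point is the simultaneous dilation: one needs that the four kinds of operators $(E_{x_2,x_1})$, $(E^{y_2,y_1})$ coming from dilating $P$ and $(F_{a_1,a_2})$, $(F^{b_1,b_2})$ coming from dilating $Q$ can be chosen to \emph{pairwise commute across the two dilations}, not merely within each. This is exactly why one should not dilate $P$ and $Q$ separately (which would lose the $P$–$Q$ commutation on the larger space) but instead repackage the whole problem as a single u.c.p.\ map out of a four-fold commuting tensor product of the canonical operator systems and dilate once — relying on the associativity and functoriality of $\otimes_{\rm c}$ and on the inclusion $\cl S_{X,A}\otimes_{\rm c}\cdots\subseteq_{\rm c.o.i.}\cl A_{X,A}\otimes_{\max}\cdots$ (the multivariate analogue of \cite[Lemma 2.8]{pt}). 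Once that repackaging is in place, the rest is Arveson–Stinespring and bookkeeping; the ``if'' direction is then routine via Proposition \ref{p_dedil}.
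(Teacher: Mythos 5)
Your ``if'' direction is fine and agrees with the paper's. The ``only if'' direction, however, has a genuine gap, and it sits exactly at the point you yourself flag as the main obstacle. You build the u.c.p.\ map $\gamma$ on $\cl S_{X_2,X_1}\otimes_{\rm c}\cl S_{Y_2,Y_1}\otimes_{\rm c}\cl S_{A_1,A_2}\otimes_{\rm c}\cl S_{B_1,B_2}$ by asserting that, since $P$ and $Q$ are dilatable, ``their entries decompose into products of commuting PVM entries $P_{x_2,x_1},P^{y_2,y_1}$ and $Q_{a_1,a_2},Q^{b_1,b_2}$, all of which mutually commute.'' This is false on $H$: dilatability of $P$ only gives $P_{x_2y_2,x_1y_1}=V^*E_{x_2,x_1}E^{y_2,y_1}V$ for commuting families on a \emph{larger} space, as in (\ref{eq_ExaFyb}); on $H$ itself the marginals $P_{x_2,x_1}=\sum_{y_1}P_{x_2y_2,x_1y_1}$ need not be projections, need not commute with the $P^{y_2,y_1}$, and their products need not equal the entries $P_{x_2y_2,x_1y_1}$ (the paper's remark after the definition of an NS operator matrix notes this only when the entries are projections). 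Hence the prescribed values of $\gamma$ are not determined by the data, and pairing them with $\xi$ need not reproduce $\Gamma$. Even if you amend the prescription to $\gamma(\tilde e_{x_2,x_1}\otimes\tilde e_{y_2,y_1}\otimes\tilde e_{a_1,a_2}\otimes\tilde e_{b_1,b_2})=P_{x_2y_2,x_1y_1}Q_{a_1b_1,a_2b_2}$, which does reproduce $\Gamma$, you have no argument that this map is completely positive on the four-fold commuting tensor product: by the definition of that tensor product you would need a commuting family of \emph{four} completely positive maps on $H$, one per factor, whereas the hypothesis of Definition \ref{d_snsc}(i), via Proposition \ref{p_dedil}, only yields \emph{two} u.c.p.\ maps with commuting ranges, defined on the bivariate products $\cl S_{X_2,X_1}\otimes_{\rm c}\cl S_{Y_2,Y_1}$ and $\cl S_{A_1,A_2}\otimes_{\rm c}\cl S_{B_1,B_2}$. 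Bridging from that pair to a single map on the four-fold product requires an associativity-type comparison of the multivariate commuting tensor product with the iterated bivariate one (equivalently, the existence of a joint dilation of $P$ and $Q$) which you do not supply; the existence of four mutually commuting families realising $\Gamma$ on one space is precisely what the lemma asserts, so as written the argument is circular.

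The paper closes exactly this gap by a different mechanism: it dilates only $P$ (Proposition \ref{p_dedil}, \cite[Lemma 2.8]{pt} and Stinespring give a representation $\pi$ of $\cl A_{X_2,X_1}\otimes_{\max}\cl A_{Y_2,Y_1}$ on $K$ with an isometry $V:H\to K$), and then invokes Arveson's Commutant Lifting Theorem to lift the C*-algebra generated by the entries of $Q$ to a representation $\rho$ into $\pi(\cdot)'$ with $Vu=\rho(u)V$ — this is what carries the $P$--$Q$ commutation onto the dilated space; a second dilation and a second application of commutant lifting then splits the $X$/$Y$ and $A$/$B$ variables into genuine mutually commuting PVM's. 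If you want to salvage your tensor-product route, you would first have to prove directly that every $\Gamma\in\cl C_{\rm sqc}$ arises from a state (or u.c.p.\ map) on $\cl A_{X_2,X_1}\otimes_{\max}\cl A_{Y_2,Y_1}\otimes_{\max}\cl A_{A_1,A_2}\otimes_{\max}\cl A_{B_1,B_2}$, e.g.\ by identifying the universal C*-algebra of $\cl S_{X_2,X_1}\otimes_{\rm c}\cl S_{Y_2,Y_1}$ with $\cl A_{X_2,X_1}\otimes_{\max}\cl A_{Y_2,Y_1}$ and then applying the bivariate commuting tensor product to the pair of maps above; none of this is in your proposal, and in the paper the corresponding statement (Theorem \ref{th_snsqc}(ii)) is deduced \emph{from} the present lemma together with Theorem \ref{l_kas}, not the other way around.
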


\begin{proof}
Let $\Gamma\in \cl C_{\rm sqc}$. Suppose that $H$ is a Hilbert space, 
$$P = (P_{x_2y_2,x_1y_1})_{x_2y_2,x_1y_1} \ \mbox{ and } \ Q = (Q_{a_1b_1,a_2b_2})_{a_1b_1,a_2b_2}$$ 
are dilatable NS operator matrices 
acting on $H$ with mutually commuting entries, 
and $\xi\in H$ is a unit vector, for which (\ref{eq_qcsns}) holds. 
By Proposition \ref{p_dedil}, \cite[Lemma 2.8]{pt} and the Stinespring Theorem, 
there exist a Hilbert space $K$, a unital *-representation
$\pi : \cl A_{X_2,X_1}\otimes_{\max}\cl A_{Y_2,Y_1}\to \cl B(K)$, and 
an isometry $V : H\to K$, such that 
$$P_{x_2y_2,x_1y_1}  = V^*\pi(\tilde{e}_{x_2,x_1}\otimes \tilde{e}_{y_2,y_1})V, \ \ \ 
x_i\in X_i, y_i\in Y_i, i = 1,2.$$
%Let $R$ be the projection onto the closed span of 
After replacing $K$ with the closure of the span of $\pi(\cl A_{X_2,X_1}\otimes_{\max}\cl A_{Y_2,Y_1})VH$, 
we may assume that the latter span is dense in $K$. 
%\marginpar{\tiny IT. Explanation is needed that we can assume dense range of the representation $\pi$, included.}
%Clearly, $P_{x_2y_2,x_1,y_1}\leq R$ for all $x_i,y_i$, $i = 1,2$. 
%Let 
%$Q'_{a_1b_1,a_2b_2} := RQ_{a_1b_1,a_2b_2}R$, $a_i\in A_i$, $b_i\in B_i$, $i = 1,2$. 
Let $\cl N$ be the C*-algebra, generated by the family $\{Q_{a_1b_1,a_2b_2} : a_i\in A_i, b_i\in B_i, i = 1,2\}$. 
By Arveson's Commutant Lifting Theorem \cite[Theorem 12.7]{Pa}, 
there exists a *-representation
$$\rho : \cl N \to \pi(\cl A_{X_2,X_1}\otimes_{\max}\cl A_{Y_2,Y_1})',$$
which is unital by the uniqueness clause of the theorem, such that 
$$Vu = \rho(u) V, \ \ \ u\in \cl N.$$
Set $\tilde{Q}_{a_1b_1,a_2b_2} = \rho(Q_{a_1b_1,a_2b_2})$, $a_i\in A_i$, $b_i\in B_i$, $i = 1,2$.
Since the map $\rho : \cl S_{A_1,A_2}\otimes_{\rm c}\cl S_{B_1,B_2}\to \cl B(K)$ is unital and completely positive, 
Proposition \ref{p_dedil} implies that the NS operator matrix 
$(\tilde{Q}_{a_1b_1,a_2b_2})_{a_1,a_2,b_1,b_2}$ is dilatable. 
Let $\tilde{H}$ be a Hilbert space, 
$\tilde{\rho} : \cl A_{A_1,A_2}\otimes_{\max}\cl A_{B_1,B_2}\to \cl B(\tilde{H})$ be a unital *-representation, and 
$W : K\to \tilde{H}$ be an isometry, such that 
$$\tilde{Q}_{a_1b_1,a_2b_2}  = W^*\tilde{\rho}(\tilde{e}_{a_1,a_2}\otimes \tilde{e}_{b_1,b_2})W, \ \ \ 
a_i\in A_i, b_i\in B_i, i = 1,2;$$
assume, without loss of generality, that $\tilde{\rho}(\cl A_{A_1,A_2}\otimes_{\max}\cl A_{B_1,B_2})WK$ 
has dense span in $\tilde{H}$.
Applying Arveson's Commutant Lifting Theorem again, we obtain a 
*-homomorphism 
$$\tilde{\pi} : \cl A_{X_2,X_1}\otimes_{\max}\cl A_{Y_2,Y_1}\to \tilde{\rho}(\cl A_{A_1,A_2}\otimes_{\max}\cl A_{B_1,B_2})'$$
such that 
$$W\pi(u) = \tilde{\pi}(u)W,  \ \ \  u\in \cl A_{X_2,X_1}\otimes_{\max}\cl A_{Y_2,Y_1}.$$
Set 
$$P_{x_2,x_1} = \tilde{\pi}(\tilde{e}_{x_2,x_1}\otimes 1), \ \ \ 
P^{y_2,y_1}  = \tilde{\pi}(1\otimes \tilde{e}_{y_2,y_1}),$$
$$Q_{a_1,a_2} = \tilde{\rho}(\tilde{e}_{a_1,a_2}\otimes 1), \ \ \ 
Q^{b_1,b_2}  = \tilde{\rho}(1\otimes \tilde{e}_{b_1,b_2}),$$
and $\eta = WV\xi$ to obtain the representation (\ref{eq_qcsns2}).

Conversely, assuming (\ref{eq_qcsns2}), we have that the 
NS operator matrices with entries 
$P_{x_2y_2,x_1y_1} := P_{x_2,x_1}P^{y_2,y_1}$ and 
$Q_{a_1b_1,a_2b_2} := Q_{a_1,a_2}Q^{b_1,b_2}$ are (trivially dilatable and) commuting, 
showing that $\Gamma$ is a quantum commuting SNS correlation. 
\end{proof}

%%%%%%%%%%%%%%%%%%%%%%%%%%%%%%%%%%%%%%%%%%%%%%%%%
%%%%%%%%%%%%%%%%%%%%%%%%%%%%%%%%%%%%%%%%%%%%%%%%%
%%%%%%%%%%%%%%%%%%%%%%%%%%%%%%%%%%%%%%%%%%%%%%%%%

\section{Homomorphisms between non-local games}\label{s_nonlocal}

In this section, we demonstrate how hypergraph homomorphisms, defined in 
Sections \ref{s_hyper} and \ref{s_hypis}, give rise to homomorphisms between non-local games. 
In the next subsection we restrict the simulation paradigm to the case of NS correlations.

%To this end, we continue with the general setup of Section \ref{s_strongly}, and assume that 
%in the case where 
%the hypergraphs $E_1\subseteq V_1\times W_1$ and $E_2\subseteq V_2\times W_2$ are themselves non-local games. 

%%%%%%%%%%%%%%%%%%%%%%%%%%%%%%%%%%%%%%%%%%%%%%%%%
%%%%%%%%%%%%%%%%%%%%%%%%%%%%%%%%%%%%%%%%%%%%%%%%%

\subsection{Strategy transport}\label{ss_transport}

Let $X_i$, $Y_i$, $A_i$ and $B_i$ be finite sets, $i = 1,2$.

\begin{theorem}\label{th_pres}
Let $\Gamma$ be an SNS correlation over the quadruple
$(X_2Y_2, A_1B_1,$ $X_1Y_1,A_2B_2)$ and $\cl E$ be an NS correlation 
over the quadruple $(X_1,Y_1,A_1,B_1)$. The following hold:
\begin{itemize}
\item[(i)] $\Gamma[\cl E] \in \cl C_{\rm ns}$;

\item[(ii)] if $\Gamma\in \cl C_{\rm sqc}$ and $\cl E\in \cl C_{\rm qc}$ then $\Gamma[\cl E] \in \cl C_{\rm qc}$;

\item[(iii)] if $\Gamma\in \cl C_{\rm sqa}$ and $\cl E\in \cl C_{\rm qa}$ then $\Gamma[\cl E] \in \cl C_{\rm qa}$;

\item[(iv)] if $\Gamma\in \cl C_{\rm sq}$ and $\cl E\in \cl C_{\rm q}$ then $\Gamma[\cl E] \in \cl C_{\rm q}$;

\item[(v)] if $\Gamma\in \cl C_{\rm sloc}$ and $\cl E\in \cl C_{\rm loc}$ then $\Gamma[\cl E] \in \cl C_{\rm loc}$.
\end{itemize}
\end{theorem}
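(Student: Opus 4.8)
The plan is to treat all five parts uniformly by exploiting the representations available for the two ingredients $\Gamma$ and $\cl E$, and combining them the way Theorem \ref{th_comsimu} combines simulators. First I would recall the defining formula
$$\Gamma[\cl E](a_2b_2 | x_2y_2) = \sum_{x_1y_1}\sum_{a_1b_1} \Gamma(x_1y_1,a_2b_2 | x_2y_2,a_1b_1)\,\cl E(a_1b_1 | x_1y_1),$$
and verify directly, using the SNS conditions on $\Gamma$ together with the no-signalling conditions on $\cl E$, that the four marginal sums defining the no-signalling property of $\Gamma[\cl E]$ are well-defined; this gives (i) and is a routine chain of interchanges of summation, exactly parallel to the opening computation in the proof of Theorem \ref{th_comsimu}. (That $\Gamma[\cl E]$ is a channel at all is already noted after \eqref{eq_G[E]}.)

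For (ii), I would represent $\cl E\in \cl C_{\rm qc}$ by POVMs $(E_{x_1,a_1})_{a_1}$, $(F_{y_1,b_1})_{b_1}$ with commuting entries on a Hilbert space $H_0$ and a unit vector $\xi_0$, so that $\cl E(a_1b_1|x_1y_1) = \langle E_{x_1,a_1}F_{y_1,b_1}\xi_0,\xi_0\rangle$; and I would represent $\Gamma\in \cl C_{\rm sqc}$ via Lemma \ref{l_comlift}, obtaining a Hilbert space $K$, a unit vector $\eta$, and mutually commuting PVMs $(P_{x_2,x_1})_{x_1}$, $(P^{y_2,y_1})_{y_1}$, $(Q_{a_1,a_2})_{a_2}$, $(Q^{b_1,b_2})_{b_2}$ with \eqref{eq_qcsns2}. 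The key algebraic move is then to build, on $K\otimes H_0$, the operators
$$\widetilde{E}_{x_2,a_2} = \sum_{x_1\in X_1}\sum_{a_1\in A_1} P_{x_2,x_1}Q_{a_1,a_2}\otimes E_{x_1,a_1},\qquad
\widetilde{F}_{y_2,b_2} = \sum_{y_1\in Y_1}\sum_{b_1\in B_1} P^{y_2,y_1}Q^{b_1,b_2}\otimes F_{y_1,b_1},$$
and check that $(\widetilde{E}_{x_2,a_2})_{a_2}$ and $(\widetilde{F}_{y_2,b_2})_{b_2}$ are POVMs (positivity is clear since the $K$-factors are products of commuting projections and the $H_0$-factors are positive and commute appropriately; the summation-to-identity property uses that each family is a PVM or POVM and, crucially, the no-signalling marginals of $\cl E$ to collapse the double sum over $x_1,a_1$), that $\widetilde{E}$ and $\widetilde{F}$ commute (from the mutual commutation on the $K$-side and the commutation of the $\cl E$-POVMs on the $H_0$-side), and finally that $\langle \widetilde{E}_{x_2,a_2}\widetilde{F}_{y_2,b_2}(\eta\otimes\xi_0),\eta\otimes\xi_0\rangle = \Gamma[\cl E](a_2b_2|x_2y_2)$ by multiplying out and matching with the defining sum. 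This is the heart of the argument and mirrors the tensor construction in Theorem \ref{th_comsimu}(i).

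Part (iv) is the same construction with finite-dimensional spaces and the extra tensor splitting: write $\cl E\in\cl C_{\rm q}$ on $H_A\otimes H_B$ with $E_{x_1,a_1}$ acting on the first leg and $F_{y_1,b_1}$ on the second, and use $\Gamma\in\cl C_{\rm sq}$ with quantum dilatable $M$ on $H$ and $N$ on $K$; regroup the four Hilbert spaces as $(H\otimes H_A)\otimes(K\otimes H_B)$ so that the new $M$-type and $N$-type operator matrices live on the two legs separately, and check they are quantum dilatable NS operator matrices in the sense of Definition \ref{d_snsc}(ii)/Remark \ref{r_qos}. Part (iii) then follows immediately from (iv) by passing to limits, since $\Gamma[\cl E]$ depends continuously (jointly) on $(\Gamma,\cl E)$. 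Part (v) follows from (iv) as well, using Remark \ref{r_desloc} to write $\Gamma = \sum_i \lambda_i \Phi_i\otimes\Psi_i$ with $\Phi_i,\Psi_i$ local NS correlations and $\cl E = \sum_j \mu_j \cl E_j^{(1)}\otimes\cl E_j^{(2)}$; substituting into \eqref{eq_G[E]} and distributing shows $\Gamma[\cl E]$ is a convex combination of products $(\Phi_i[\cl E_j^{(1)}\;\text{part}])\otimes(\Psi_i[\cdots])$ of classical channels, hence local — alternatively one just observes the local dilations of $M,N$ combine with the commutative dilation of $\cl E$ to keep the whole family commutative and invokes the classical computation.

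The main obstacle I anticipate is the bookkeeping in showing $(\widetilde{E}_{x_2,a_2})_{a_2}$ sums to the identity: one must sum over $a_2\in A_2$, use that $(Q_{a_1,a_2})_{a_2}$ is a PVM to eliminate $a_2$, then sum over $a_1\in A_1$ using the no-signalling marginal $\sum_{a_1}E_{x_1,a_1}(\cdot)$ — but these $E$'s are not POVMs indexed by $a_1$ for fixed nothing; rather one needs the completeness $\sum_{a_1}E_{x_1,a_1} = I$ on the $H_0$-side, which does hold, and then $\sum_{x_1}P_{x_2,x_1} = I$ on the $K$-side. Making sure the index collapses happen in the right order, and that no implicit use of a marginal of $\cl E$ is needed beyond what no-signalling provides, is the delicate point; everything else is a transcription of the now-familiar composition-of-simulators calculation.
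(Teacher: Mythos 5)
Your proposal is correct and follows essentially the same route as the paper: part (i) by direct marginal computations from the SNS conditions, part (ii) via Lemma \ref{l_comlift} and exactly the operators $\tilde{E}_{x_2,a_2}=\sum_{x_1,a_1}P_{x_2,x_1}Q_{a_1,a_2}\otimes E_{x_1,a_1}$, $\tilde{F}_{y_2,b_2}=\sum_{y_1,b_1}P^{y_2,y_1}Q^{b_1,b_2}\otimes F_{y_1,b_1}$, part (iv) by the finite-dimensional analogue, (iii) by taking limits, and (v) via Remark \ref{r_desloc} and the product factorisation of channels. The index-collapse you flag as delicate is handled exactly as you describe ($\sum_{a_2}Q_{a_1,a_2}=I$, then $\sum_{a_1}E_{x_1,a_1}=I$, then $\sum_{x_1}P_{x_2,x_1}=I$), so there is no gap.
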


\begin{proof}
(i) 
Set $\cl F = \Gamma[\cl E]$, and fix $x_2\in X_2$, $y_2\in Y_2$ and 
$a_2\in A_2$. 
We have 
\begin{eqnarray*}
& & 
\sum_{b_2\in B_2} \cl F(a_2,b_2|x_2,y_2)\\
& = & 
\sum_{b_2\in B_2}
\sum_{x_1y_1\in X_1Y_1} \sum_{a_1b_1\in A_1B_1}
\Gamma(x_1y_1, a_2b_2 | x_2y_2, a_1b_1)\cl E(a_1,b_1|x_1,y_1)\\
& = & 
\sum_{x_1y_1\in X_1Y_1} \sum_{a_1b_1\in A_1B_1}
\Gamma(x_1y_1, a_2| x_2y_2, a_1)\cl E(a_1,b_1|x_1,y_1)\\
& = & 
\sum_{x_1y_1\in X_1Y_1} \sum_{a_1\in A_1}
\Gamma(x_1y_1, a_2| x_2y_2, a_1)\cl E(a_1|x_1)\\
& = & 
\sum_{x_1\in X_1} \sum_{a_1\in A_1}
\Gamma(x_1, a_2| x_2, a_1)\cl E(a_1|x_1),
\end{eqnarray*}
and hence the marginal $\cl F(a_2|x_2)$ is well-defined. 
Similarly, the marginal $\cl F(b_2|y_2)$ is well-defined. 

\smallskip

(ii) 
Appealing to Lemma \ref{l_comlift}, let 
$(P_{x_2,x_1})_{x_1\in X_1}$, $(P^{y_2,y_1})_{y_1\in Y_1}$, 
$(Q_{a_1,a_2})_{a_2\in A_2}$ and $(Q^{b_1,b_2})_{b_2\in B_2}$ be mutually commuting POVM's
on a Hilbert space $H$ and $\xi\in H$ be a unit vector, such that 
$$\Gamma(x_1y_1,a_2b_2 | x_2y_2,a_1b_1) = 
\left\langle P_{x_2,x_1} P^{y_2,y_1} Q_{a_1,a_2}Q^{b_1,b_2}\xi,\xi\right\rangle$$
for all $x_i$, $y_i$, $a_i$, $b_i$, $i = 1,2$. 
Let $(E_{x_1,a_1})_{a_1\in A_1}$ and $(F_{y_1,b_1})_{b_1\in B_1}$ be mutually commuting families of 
POVM's on a Hilbert space $K$ and $\eta\in H$ be a unit vector 
such that 
%\begin{equation}\label{eq_E}
$$\cl E(a_1,b_1|x_1,y_1) 
= \langle E_{x_1,a_1}F_{y_1,b_1}\eta,\eta\rangle, \ \ x_1\in X_1, y_1\in Y_1, a_1\in A_1, b_1\in B_1.$$
%\end{equation}
Set 
\begin{equation}\label{eq_tildeExa}
\tilde{E}_{x_2,a_2} = \sum_{x_1\in X_1}\sum_{a_1\in A_1} P_{x_2,x_1}Q_{a_1,a_2} \otimes E_{x_1,a_1}
\end{equation}
and
\begin{equation}\label{eq_tildeFyb}
\tilde{F}_{y_2,b_2} = \sum_{y_1\in Y_1}\sum_{b_1\in B_1} P^{y_2,y_1}Q^{b_1,b_2} \otimes F_{y_1,b_1}.
\end{equation}
We have that $(\tilde{E}_{x_2,a_2})_{a_2\in A_2}$ (resp. $(\tilde{F}_{y_2,b_2})_{b_2\in B_2}$) 
is a POVM, $x_2\in X_2$ (resp. $y_2\in Y_2$), acting on the Hilbert space $H\otimes K$. 
In addition, 
$$\tilde{E}_{x_2,a_2}\tilde{F}_{y_2,b_2} = \tilde{F}_{y_2,b_2}\tilde{E}_{x_2,a_2}, \ \ \ x_2\in X_2, y_2\in Y_2,
a_2\in A_2, b_2\in B_2,$$ 
and 
$$\Gamma[\cl E](a_2,b_2|x_2,y_2) = \left\langle \tilde{E}_{x_2,a_2}\tilde{F}_{y_2,b_2}
(\xi\otimes \eta),\xi\otimes \eta\right\rangle$$
for all $x_2,y_2,a_2,b_2$, showing that $\Gamma[\cl E]$ is quantum commuting.

(iv) Let 
$M = (M_{x_2a_1,x_1a_2})_{x_2,a_1,x_1,a_2}$ and 
$N = (N_{y_2b_1,y_1b_2})_{y_2,b_1,y_1,b_2}$ be quantum dilatable 
NS operator matrices, acting on finite dimensional 
Hilbert spaces $H$ and $K$, respectively, and 
$\xi\in H\otimes K$ be a unit vector, for which $\Gamma$ admits a representation of the form (\ref{eq_qsns}). 
Write 
$$\cl E(a_1,b_1|x_1,y_1) 
\hspace{-0.05cm} = \hspace{-0.05cm} \langle (E_{x_1,a_1}\otimes F_{y_1,b_1})\eta,\eta\rangle, 
\ x_1\in X_1, y_1\in Y_1, a_1\in A_1, b_1\in B_1,$$
where $(E_{x_1,a_1})_{a_1\in A_1}$ and $(F_{y_1,b_1})_{b_1\in B_1}$ are finite dimensionally acting. 
%We may assume, without loss of generality, that 
%$M_{x_2a_1,x_1a_2} = P_{x_2,x_1} \otimes Q_{a_1,a_2}$ and 
%$N_{y_2b_1,y_1b_2} = P^{y_2,y_1} \otimes  Q^{b_1,b_2}$, where $(P_{x_2,x_1})_{x_1\in X_1}$, 
%$(Q_{a_1,a_2})_{a_2\in A_2}$, $(P^{y_2,y_1})_{y_1\in Y_1}$ and $(Q^{b_1,b_2})_{b_2\in A_2}$
%are POVM's acting on finite dimensional Hilbert spaces $H_X$, $H_A$, $H_Y$ and $H_B$, respectively, so 
%that $H = H_X\otimes  H_A$ and $K = H_Y\otimes H_B$.
Define
$$\tilde{E}_{x_2,a_2} = \sum_{x_1\in X_1}\sum_{a_1\in A_1} M_{x_2a_1,x_1a_2} \otimes E_{x_1,a_1}$$
and
$$\tilde{F}_{y_2,b_2} = \sum_{y_1\in Y_1}\sum_{b_1\in B_1} N_{y_2b_1,y_1b_2} \otimes F_{y_1,b_1};$$
it is straightforward to see that $(\tilde{E}_{x_2,a_2})_{a_2\in A_2}$ (resp. 
$(\tilde{F}_{y_2,b_2})_{b_2\in B_2}$) is a finite dimensionally acting POVM.
The proof in (ii) can now continue without further modification. 

(iii) is a direct consequence of (iv).

%\marginpar{\tiny IT. Explanation added for (v). {\bf Check!}}

(v) follows from Remark \ref{r_desloc} and the fact that, if 
$\Phi_X : \cl D_{X_2}\to \cl D_{X_1}$,
$\Phi_Y : \cl D_{Y_2}\to \cl D_{Y_1}$,
$\Psi_A : \cl D_{A_1}\to \cl D_{A_2}$ and 
$\Psi_B : \cl D_{B_1}\to \cl D_{B_2}$ are channels then 
$$(\Phi_X\otimes \Phi_Y \otimes\Psi_A\otimes \Psi_B)[\cl E\otimes \cl F] 
= (\Phi_X\otimes \Psi_A)[\cl E]\otimes (\Phi_Y\otimes \Psi_B)[\cl F]$$ 
for all channels 
$\cl E : \cl D_{X_1}\to \cl D_{A_1}$ and $\cl F : \cl D_{Y_1}\to \cl D_{B_1}$.
\end{proof}

\begin{remark}\label{r_simu}
\rm The proof of Theorem \ref{th_pres} (i) shows that, in its notation, 
letting $\Gamma_{X_2A_1\to X_1A_2}$ be the NS correlation determined by the 
conditional probability distributions $\Gamma(x_1, a_2 | x_2,a_1)$, we have that 
$$\Gamma_{X_2A_1\to X_1A_2}[\cl E_{X_1\to A_1}] = 
\Gamma[\cl E]_{X_2\to A_2}.$$
\end{remark}

We next define the suitable version of the notion of a 
bicorrelation, defined in \cite{bhtt2}, in the strongly no-signalling context. 
Assume that $X_1 = X_2 =: X$, $Y_1 = Y_2 =: Y$, $A_1 = A_2 =: A$ and $B_1 = B_2 =: B$. 
We further assume that $X = A$ and $Y = B$.
A positive operator $P = (P_{xy,ab})_{xy,ab}\in \cl D_{XYAB}\otimes \cl B(H)$ will be called 
a \emph{magic bisquare} if it is an NS operator matrix, and 
%\marginpar{\tiny IT. Term \lq SNS operator matrix' replaced by \lq magic bisquare'.
%\bf Make sure all occurences are corrected.}} 
the matrices $(P_{x,a})_{x,a}$ and $(P^{y,b})_{y,b}$ are quantum magic squares. 
A magic bisquare $P = (P_{x,y,a,b})_{xy,ab}$ is called \emph{dilatable} if 
there exist a Hilbert space $K$, an isometry $V : H\to K$ and quantum magic squares
$(E_{x,a})_{x,a\in X}$ and $(F_{y,b})_{y,b\in Y}$ on $K$, $x\in X$, $y\in Y$, such that 
$E_{x,a}F_{y,b} = F_{y,b}E_{x,a}$ and relations (\ref{eq_ExaFyb}) hold for all 
$x,a\in X$ and all $y,b\in Y$.
%\begin{equation}\label{eq_ExaFyb2}
%$$P_{xy,ab} = V^* E_{x,a}F_{y,b} V, \ \ \ x\in X, y\in Y, a\in A, b\in B.$$
%\end{equation}
\emph{Quantum dilatable} and \emph{locally dilatable} magic bisquares are described similarly to 
quantum and local NS operator matrices, using quantum magic squares in the place of 
families of POVM's. 

An SNS correlation $\Gamma$ will be called an \emph{SNS bicorrelation} if $\Gamma$ is unital 
and $\Gamma^*$ is also an SNS correlation. 
An SNS bicorrelation $\Gamma$ over the quadruple $(XY, XY,XY,XY)$ is called 
\emph{quantum commuting} if there exist a Hilbert space $H$, 
dilatable magic bisquares
$P = (P_{x_2y_2,x_1y_1})_{x_2y_2,x_1y_1}$ 
and $Q = (Q_{a_1b_1,a_2b_2})_{a_1b_1,a_2b_2}$ on $H$ with mutually commuting entries, 
and a unit vector $\xi\in H$, such that equation (\ref{eq_qcsns}) holds. 
The classes of quantum SNS bicorrelations (denoted $\cl C_{\rm sq}^{\rm bi}$),
approximately quantum SNS bicorrelations (denoted $\cl C_{\rm sqa}^{\rm bi}$), and 
local SNS bicorrelations (denoted $\cl C_{\rm sloc}^{\rm bi}$) are described similarly to 
the their correlation counterparts, using magic bisquares of the appropriate type in the place of 
NS operator matrices of that type.
The following remark is straightforward from the definitions:

\begin{remark}\label{r_dualasw}
For a correlation type ${\rm t}\in \{{\rm loc}, {\rm q}, {\rm qa}, {\rm qc}, {\rm ns}\}$, if 
$\Gamma\in \cl C_{\rm st}^{\rm bi}$ then $\Gamma^*\in \cl C_{\rm st}^{\rm bi}$. 
\end{remark}

%\marginpar{\tiny IT. Explanations added for the other parts of Theorem \ref{c_pres}. Only (i) was 
%covered in the previous arguments. {\bf Check.}}

\begin{theorem}\label{c_pres}
Let $\Gamma$ be an SNS bicorrelation over the quadruple
$(XY, AB,$ $XY,AB)$ and $\cl E$ be an NS correlation 
over the quadruple $(X,Y,A,B)$. The following hold:
\begin{itemize}
\item[(i)] $\Gamma[\cl E] \in \cl C_{\rm ns}^{\rm bi}$;

\item[(ii)] if $\Gamma\in \cl C_{\rm sqc}^{\rm bi}$ and $\cl E\in \cl C_{\rm qc}^{\rm bi}$ then 
$\Gamma[\cl E] \in \cl C_{\rm qc}^{\rm bi}$;

\item[(iii)] if $\Gamma\in \cl C_{\rm sqa}^{\rm bi}$ and $\cl E\in \cl C_{\rm qa}^{\rm bi}$ then 
$\Gamma[\cl E] \in \cl C_{\rm qa}^{\rm bi}$;

\item[(iv)] if $\Gamma\in \cl C_{\rm sq}^{\rm bi}$ and 
$\cl E\in \cl C_{\rm q}^{\rm bi}$ then $\Gamma[\cl E] \in \cl C_{\rm q}^{\rm bi}$;

\item[(v)] if $\Gamma\in \cl C_{\rm sloc}^{\rm bi}$ and $\cl E\in \cl C_{\rm loc}^{\rm bi}$ then 
$\Gamma[\cl E] \in \cl C_{\rm loc}^{\rm bi}$.
\end{itemize}
\end{theorem}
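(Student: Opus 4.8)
The plan is to combine the strategy-transport results already established in Theorem \ref{th_pres} with the extra "bi" structure. The key observation is that $\Gamma[\cl E]$ being a bicorrelation of type ${\rm t}$ amounts to two separate assertions: first, that $\Gamma[\cl E]$ itself is an NS correlation of type ${\rm t}$ (which is exactly what Theorem \ref{th_pres} gives, once we notice that a ${\rm t}$-bicorrelation is in particular a ${\rm t}$-correlation and an SNS ${\rm t}$-bicorrelation is in particular an SNS ${\rm t}$-correlation); and second, that $\Gamma[\cl E]$ is \emph{unital} and its dual $\Gamma[\cl E]^*$ is again an NS correlation of type ${\rm t}$. So the real content beyond Theorem \ref{th_pres} is handling the dual. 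For this I would first verify the identity
$$\Gamma[\cl E]^* = \Gamma^*[\cl E^*],$$
which is a direct computation from the definitions of $\Gamma[\cdot]$ and of the dual channel: expanding $\Gamma[\cl E]^*(x_2y_2 \mid a_2b_2) = \Gamma[\cl E](a_2b_2\mid x_2y_2)$ and substituting the sum defining $\Gamma[\cl E]$, then rewriting each factor $\Gamma(\cdots)$ as $\Gamma^*(\cdots)$ and each $\cl E(\cdots)$ as $\cl E^*(\cdots)$, one recovers the sum defining $\Gamma^*[\cl E^*]$. (Here $\cl E^*$ makes sense because $\cl E$, being a ${\rm t}$-bicorrelation, is unital; likewise $\Gamma^*$ is an SNS correlation by the definition of SNS bicorrelation, and is of the appropriate type by Remark \ref{r_dualasw}, while $\cl E^*$ is of type ${\rm t}$ by Remark \ref{r_adjo}.)

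Granting this identity, the argument runs as follows. Unitality of $\Gamma[\cl E]$: compute $\sum_{a_2b_2}\Gamma[\cl E](a_2b_2\mid x_2y_2)$ — wait, that is trace preservation; for unitality one checks $\sum_{x_2y_2}\Gamma[\cl E](a_2b_2\mid x_2y_2)=1$, using that $\Gamma$ is unital (so $\sum_{x_2y_2}\Gamma(x_1y_1,a_2b_2\mid x_2y_2,a_1b_1)$ is independent of $x_1y_1$ and sums appropriately) together with $\sum_{x_1y_1}\cl E(a_1b_1\mid x_1y_1)$ behaving well because $\cl E$ is unital; this is a short direct computation. Then: $\Gamma[\cl E]$ is an NS correlation of type ${\rm t}$ by Theorem \ref{th_pres}, applied to $\Gamma\in\cl C_{\rm st}\supseteq\cl C_{\rm st}^{\rm bi}$ and $\cl E\in\cl C_{\rm t}\supseteq\cl C_{\rm t}^{\rm bi}$; and $\Gamma[\cl E]^* = \Gamma^*[\cl E^*]$ is an NS correlation of type ${\rm t}$ by Theorem \ref{th_pres} applied to $\Gamma^*$ and $\cl E^*$. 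Hence $\Gamma[\cl E]$ is a ${\rm t}$-bicorrelation, proving all of (i)–(v) simultaneously. For (i) one uses the part of Theorem \ref{th_pres}(i) that only needs SNS/NS; for (ii)–(v) one uses the corresponding typed parts, noting that $\Gamma^*\in\cl C_{\rm st}^{\rm bi}\subseteq\cl C_{\rm st}$ and $\cl E^*\in\cl C_{\rm t}^{\rm bi}\subseteq\cl C_{\rm t}$.

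The only mild subtlety — and the step I would be most careful about — is making sure the typed inclusion $\cl C_{\rm st}^{\rm bi}\subseteq\cl C_{\rm st}$ and $\cl C_{\rm t}^{\rm bi}\subseteq\cl C_{\rm t}$ genuinely hold at the level of the definitions in Section \ref{s_strongly}: a dilatable magic bisquare is in particular a dilatable NS operator matrix (a quantum magic square is a POVM-family when read row-wise or column-wise, and the bisquare condition on the marginals is stronger than the POVM condition), so a quantum commuting SNS bicorrelation is a quantum commuting SNS correlation, and similarly for the quantum, approximately quantum and local classes; I would state this explicitly as a one-line remark before invoking Theorem \ref{th_pres}. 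With that in hand the proof is essentially a bookkeeping exercise built on the identity $\Gamma[\cl E]^* = \Gamma^*[\cl E^*]$, Theorem \ref{th_pres}, Remark \ref{r_dualasw} and Remark \ref{r_adjo}, so no new hard estimate is required; the continuity argument needed for part (iii) is already subsumed in Theorem \ref{th_pres}(iii) applied to both $\Gamma[\cl E]$ and its dual.
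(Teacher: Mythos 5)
Your treatment of (i) matches the paper's: the identity $\Gamma[\cl E]^* = \Gamma^*[\cl E^*]$, verified by the direct computation you describe, together with the argument of Theorem \ref{th_pres} (i) and unitality, is exactly how the paper proves that part.

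For (ii)--(v), however, there is a genuine gap. You reduce ``$\Gamma[\cl E]$ is a ${\rm t}$-bicorrelation'' to the three assertions that $\Gamma[\cl E]$ is unital, is a ${\rm t}$-correlation, and has a dual which is a ${\rm t}$-correlation. That reduction is only valid for ${\rm t}={\rm ns}$, where it is the definition. For ${\rm t}\in\{{\rm qc},{\rm q},{\rm qa},{\rm loc}\}$ the classes $\cl C_{\rm t}^{\rm bi}$ are defined in Subsection \ref{ss_bicore} by a \emph{structural} requirement: a representation of $\Gamma[\cl E]$ via commuting quantum magic squares (qc), via quantum magic squares in tensor-product form on finite-dimensional spaces (q), as a limit of quantum bicorrelations (qa), or as a convex combination of tensor products of bistochastic matrices (loc). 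The implication ``unital ${\rm t}$-correlation with ${\rm t}$-correlation dual $\Rightarrow$ ${\rm t}$-bicorrelation'' is nowhere established in the paper (Remark \ref{r_adjo} goes in the opposite, easy, direction), and you cannot invoke it. The same issue is visible in your remark on (iii): knowing that $\Gamma[\cl E]$ and its dual are both ${\rm qa}$ correlations does not by itself exhibit $\Gamma[\cl E]$ as a limit of quantum \emph{bi}correlations. What is actually needed -- and what the paper does -- is to rerun the construction in the proof of Theorem \ref{th_pres} and check that the operators $\tilde{E}_{x_2,a_2}$ and $\tilde{F}_{y_2,b_2}$ of (\ref{eq_tildeExa}) and (\ref{eq_tildeFyb}) are quantum magic squares when $P,Q$ are magic bisquares and the POVM's coming from $\cl E\in\cl C_{\rm t}^{\rm bi}$ are quantum magic squares; e.g. $\sum_{x_2}\tilde{E}_{x_2,a_2}=I$ uses both the column condition $\sum_{x_2}P_{x_2,x_1}=I$ and the column condition $\sum_{x_1}E_{x_1,a_1}=I$ for the representation of $\cl E$. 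This verification is short, but it is the actual content of (ii)--(v) and is missing from your proposal.
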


\begin{proof}
(i)
The claim follows as in Theorem \ref{th_pres}, using the fact that in our case we have, in addition, that 
$\Gamma[\cl E]^* = \Gamma^*[\cl E^*]$. We verify that latter identity:
\begin{eqnarray*}
& & \Gamma[\cl E]^*(x_2,y_2|a_2,b_2)\\
& = &
\sum_{x_1y_1\in X_1Y_1} \sum_{a_1b_1\in A_1B_1}
\Gamma(x_1y_1, a_2b_2 | x_2y_2, a_1b_1)\cl E(a_1,b_1|x_1,y_1)\\
& = & 
\sum_{a_1b_1\in A_1B_1}\sum_{x_1y_1\in X_1Y_1} 
\Gamma^*(x_2y_2, a_1b_1 | x_1y_1, a_2b_2)\cl E^*(x_1,y_1 | a_1,b_1)\\
& = & 
\Gamma^*[\cl E^*](x_2,y_2|a_2,b_2), 
\end{eqnarray*}
for all $x_2\in X, y_2\in Y, a_2\in A, b_2\in B$. 

(ii) follows as Theorem \ref{th_pres} (ii), after noting that the POVM's defined in 
(\ref{eq_tildeExa}) and (\ref{eq_tildeFyb}) are quantum magic squares. 
The proofs of (iii), (iv) and (v) is now similar to those of Theorem \ref{th_pres} (iii), (iv) and (v). 
\end{proof}

\begin{definition}\label{d_gameh}
Let $E_i\subseteq X_iY_i\times A_iB_i$ be a non-local game, $i = 1,2$,
and 
${\rm t}\in \{{\rm loc}, {\rm q}, {\rm qa}, {\rm qc}, {\rm ns}\}$. 
We say that 
\begin{itemize}
\item[(i)] 
\emph{$E_1$ is ${\rm t}$-quasi-homomorphic to $E_2$} 
(and write $E_1\leadsto_{\rm st} E_2$) if the hypergraph quasi-homomorphism game 
$E_1 \hspace{-0.1cm} \leadsto\hspace{-0.1cm} E_2$ has a perfect strategy $\Gamma\in \cl C_{\rm st}$;
\item[(ii)] 
\emph{$E_1$ is ${\rm t}$-homomorphic to $E_2$} 
(and write $E_1\to_{\rm st} E_2$) if the hypergraph homomorphism game 
$E_1 \hspace{-0.1cm} \to\hspace{-0.1cm} E_2$ has a perfect strategy $\Gamma\in \cl C_{\rm st}^{\rm bi}$;
\item[(iii)] 
\emph{$E_1$ is ${\rm t}$-isomorphic to $E_2$} 
(and write $E_1\simeq_{\rm st} E_2$) if the hypergraph isomorphism game 
$E_1 \hspace{-0.1cm} \leftrightarrow\hspace{-0.1cm} E_2$ has a perfect strategy 
$\Gamma\in \cl C_{\rm st}^{\rm bi}$.
\end{itemize}
\end{definition}

\begin{corollary}\label{c_perfnlg}
Let $E_i\subseteq X_iY_i\times A_iB_i$ be a non-local game, $i = 1,2$,
and ${\rm t}\in \{{\rm loc}, {\rm q}, {\rm qa}, {\rm qc}, {\rm ns}\}$. 
If $E_1\to_{\rm st} E_2$ and $E_1$ has a perfect ${\rm t}$-strategy then $E_2$ has a perfect ${\rm t}$-strategy. 
\end{corollary}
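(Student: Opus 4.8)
The plan is to transport the given perfect ${\rm t}$-strategy of $E_1$ across the homomorphism and to keep track of the correlation type by means of Theorem \ref{th_pres}. Fix a perfect ${\rm t}$-strategy $\cl E\in \cl C_{\rm t}(E_1)$ for $E_1$ and a perfect strategy $\Gamma\in \cl C_{\rm st}^{\rm bi}$ witnessing $E_1\to_{\rm st} E_2$ (Definition \ref{d_gameh}), and consider the simulated channel $\Gamma[\cl E]$. The goal is to show that $\Gamma[\cl E]$ is a perfect ${\rm t}$-strategy for $E_2$; this splits into verifying $\Gamma[\cl E]\in \cl C(E_2)$ (correct support) and $\Gamma[\cl E]\in \cl C_{\rm t}$ (correct type), after which $\Gamma[\cl E]\in \cl C_{\rm t}\cap \cl C(E_2)=\cl C_{\rm t}(E_2)$, as required.

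Two preliminary remarks set this up. First, $\cl C_{\rm st}^{\rm bi}\subseteq \cl C_{\rm st}$ for every ${\rm t}\in\{{\rm loc},{\rm q},{\rm qa},{\rm qc},{\rm ns}\}$: a magic bisquare is in particular an NS operator matrix, and a dilatable (resp.\ quantum dilatable, locally dilatable) magic bisquare is in particular a dilatable (resp.\ quantum dilatable, locally dilatable) NS operator matrix, while for ${\rm t}={\rm ns}$ the containment holds by definition; in particular $\Gamma\in \cl C_{\rm st}$. Second, $\Gamma$ fits the quasi-homomorphism game $E_1\leadsto E_2$, since the support of the homomorphism game is contained in $E_1\leadsto E_2$ (the biconditional relation refines the implication); this is the strongly no-signalling analogue of the implication $E_1\to_{\rm t} E_2\Rightarrow E_1\leadsto_{\rm t} E_2$ recorded in (\ref{eq_impli}). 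Now for the support: $E_1$ is full because $\cl C(E_1)\supseteq\cl C_{\rm t}(E_1)\neq\emptyset$, and running the forward implication in the proof of Proposition \ref{p_mapping} with the correlation $\Gamma$ (which fits $E_1\leadsto E_2$) and the channel $\cl E\in\cl C(E_1)$ yields $E_{\Gamma[\cl E]}\subseteq E_2$, i.e.\ $\Gamma[\cl E]\in\cl C(E_2)$ (so, incidentally, $E_2$ is full as well). For the type: since $\Gamma\in\cl C_{\rm st}$ and $\cl E\in\cl C_{\rm t}$, Theorem \ref{th_pres} gives $\Gamma[\cl E]\in\cl C_{\rm t}$ — part (i) when ${\rm t}={\rm ns}$, and parts (ii), (iii), (iv), (v) when ${\rm t}={\rm qc},{\rm qa},{\rm q},{\rm loc}$, respectively. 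Combining the two halves completes the argument.

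I do not anticipate a substantive obstacle: Corollary \ref{c_perfnlg} is essentially a packaging of the support transport of Proposition \ref{p_mapping} together with the type preservation of Theorem \ref{th_pres}. The only steps that require a word of care are the inclusion $\cl C_{\rm st}^{\rm bi}\subseteq \cl C_{\rm st}$ (so that the bicorrelation homomorphism may legitimately be fed into Theorem \ref{th_pres} as an SNS correlation) and the observation that the homomorphism game refines the quasi-homomorphism game $E_1\leadsto E_2$ (so that Proposition \ref{p_mapping} is applicable); the fullness of the hypergraphs involved is routine bookkeeping.
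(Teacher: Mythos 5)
Your argument is correct and is essentially the paper's proof: the paper disposes of the corollary by citing Theorem \ref{th_pres} and Proposition \ref{p_mapping}, which is exactly the support-transport plus type-preservation combination you carry out for $\Gamma[\cl E]$. The extra points you make explicit — the inclusion $\cl C_{\rm st}^{\rm bi}\subseteq\cl C_{\rm st}$, the fact that the homomorphism game refines the quasi-homomorphism game $E_1\leadsto E_2$, and the fullness bookkeeping needed to invoke (the forward direction of) Proposition \ref{p_mapping} — are precisely the details the paper leaves implicit.
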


\begin{proof}
The statement follows after an application of Theorem \ref{th_pres} and Proposition \ref{p_mapping}.
\end{proof}

\begin{theorem}\label{c_ordeqre3}
For ${\rm t}\in \{{\rm loc},{\rm q}, {\rm qa}, {\rm qc}, {\rm ns}\}$, the relations
$\to_{\rm st}$ and $\leadsto_{\rm st}$ 
(resp. the relation $\simeq_{\rm st}$) are quasi-orders (resp. is an equivalence relation) on the set of non-local games. 
\end{theorem}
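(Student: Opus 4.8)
The plan is to mirror the structure of Theorem \ref{th_ordeqre2}, reducing everything to the composition operation $\ast$ and the strategy-transport results already established in this section. The key observation is that for non-local games $E_i \subseteq X_iY_i \times A_iB_i$, the relations $\leadsto_{\rm st}$, $\to_{\rm st}$ and $\simeq_{\rm st}$ are defined exactly as their hypergraph counterparts $\leadsto_{\rm t}$, $\to_{\rm t}$, $\simeq_{\rm t}$ from Sections \ref{s_hyper}--\ref{s_hypis}, except that the strategies are required to lie in the strongly no-signalling classes $\cl C_{\rm st}$ (resp.\ $\cl C_{\rm st}^{\rm bi}$) rather than $\cl C_{\rm t}$ (resp.\ $\cl C_{\rm t}^{\rm bi}$). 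So the arguments of Theorems \ref{th_ordeqre} and \ref{th_ordeqre2} carry over verbatim at the level of supports; what remains is to check that the composition $\Gamma_2 \ast \Gamma_1$ of two SNS correlations (resp.\ bicorrelations) of type ${\rm t}$ is again an SNS correlation (resp.\ bicorrelation) of type ${\rm t}$.

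First I would address reflexivity: the identity channel on $\cl D_{X_1Y_1 \times A_1B_1}$, which factors as a tensor product of identities on each coordinate and hence is an SNS bicorrelation of local type, fits $E_1 \hspace{-0.1cm}\leadsto\hspace{-0.1cm} E_1$, $E_1 \hspace{-0.1cm}\to\hspace{-0.1cm} E_1$ and $E_1 \hspace{-0.1cm}\leftrightarrow\hspace{-0.1cm} E_1$; symmetry of $\simeq_{\rm st}$ follows from Remark \ref{r_dualasw}, since if $\Gamma \in \cl C_{\rm st}^{\rm bi}$ fits $E_1 \hspace{-0.1cm}\leftrightarrow\hspace{-0.1cm} E_2$ then $\Gamma^*$ fits $E_2 \hspace{-0.1cm}\leftrightarrow\hspace{-0.1cm} E_1$ and lies in $\cl C_{\rm st}^{\rm bi}$. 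For transitivity, suppose $\Gamma_1$ fits one of the three games between $E_1$ and $E_2$ and $\Gamma_2$ the corresponding game between $E_2$ and $E_3$; the support computation that $\Gamma_2 \ast \Gamma_1$ fits the corresponding game between $E_1$ and $E_3$ is identical to the ones in the proofs of Theorems \ref{th_ordeqre} and \ref{th_ordeqre2} (including the identity $(\Gamma_2 \ast \Gamma_1)^* = \Gamma_1^* \ast \Gamma_2^*$ for the isomorphism case), so I would simply cite those.

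The substantive point is closure of the strong types under $\ast$. I would first verify that $\Gamma_2 \ast \Gamma_1$ is an SNS correlation whenever $\Gamma_1, \Gamma_2$ are: each of the four SNS marginal conditions for $\Gamma_2 \ast \Gamma_1$ reduces, upon summing out the appropriate variable in the defining formula for $\ast$, to the corresponding marginal condition for $\Gamma_1$ combined with one for $\Gamma_2$ — a computation of the same flavour as the one already displayed in Theorem \ref{th_comsimu} for the $V$- and $W$-marginals. For the type preservation, in the $\cl C_{\rm sqc}$ case I would invoke Lemma \ref{l_comlift}: write each $\Gamma_i$ via mutually commuting PVM's $(P^{(i)}_{\cdot,\cdot})$, $(P^{(i)\,\cdot,\cdot})$, $(Q^{(i)}_{\cdot,\cdot})$, $(Q^{(i)\,\cdot,\cdot})$ on Hilbert spaces $H_i$ with unit vectors $\eta_i$, then form the tensor product $H_1 \otimes H_2$ and define the composite PVM's by summing over the intermediate index (exactly as in the proof of Theorem \ref{th_comsimu}(i), and as in the definition \eqref{eq_tildeExa}--\eqref{eq_tildeFyb}), checking that these are again mutually commuting PVM's and reproduce $\Gamma_2 \ast \Gamma_1$ on $\eta_1 \otimes \eta_2$ via \eqref{eq_qcsns2}. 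The cases ${\rm t} = {\rm q}$ (using quantum dilatable NS operator matrices on finite-dimensional spaces, keeping the bipartite tensor structure) and ${\rm t} = {\rm loc}$ (using Remark \ref{r_desloc}, writing each $\Gamma_i$ as a convex combination of product SNS correlations and noting that $\ast$ is bilinear and sends a product against a product to a product) are analogous, and ${\rm t} = {\rm qa}$ follows from the ${\rm q}$ case by continuity of $(\Gamma_1,\Gamma_2) \mapsto \Gamma_2 \ast \Gamma_1$. For the bicorrelation statement one additionally observes that the composite magic-square-type objects are again quantum magic squares (their rows and columns being sums over an intermediate PVM index of products of magic-square entries), and that $\Gamma_2 \ast \Gamma_1$ is unital with $(\Gamma_2 \ast \Gamma_1)^* = \Gamma_1^* \ast \Gamma_2^* \in \cl C_{\rm st}$ by Remark \ref{r_dualasw}, so $\Gamma_2 \ast \Gamma_1 \in \cl C_{\rm st}^{\rm bi}$.

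The main obstacle I anticipate is bookkeeping rather than conceptual: in the quantum commuting bicorrelation case one must carefully track that the \emph{four} commuting PVM families ($X$-, $Y$-, $A$-, $B$-type) combine correctly under composition so that the resulting $P_{x_3y_3,x_1y_1} := P^{\rm comp}_{x_3,x_1}P^{{\rm comp}\,y_3,y_1}$ and $Q_{a_1b_1,a_3b_3}$ are genuinely dilatable magic bisquares with mutually commuting entries — this is where Lemma \ref{l_comlift} earns its keep, since it lets one work with PVM's from the outset and the Arveson commutant-lifting machinery it encapsulates guarantees the commutation of the $P$- and $Q$-sides survives the tensoring. Everything else is a routine transcription of the proofs of Theorems \ref{th_comsimu}, \ref{th_ordeqre} and \ref{th_ordeqre2}.
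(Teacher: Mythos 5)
Your proposal follows the paper's own strategy: everything is reduced to showing that $\cl C_{\rm st}$ and $\cl C_{\rm st}^{\rm bi}$ are closed under the composition $\ast$, with the support computations taken from Theorems \ref{th_ordeqre} and \ref{th_ordeqre2}, the four SNS marginal conditions for $\Gamma_2\ast\Gamma_1$ verified by the same kind of summation as in Theorem \ref{th_comsimu}, the case ${\rm t}={\rm ns}$ handled via Remark \ref{r_dualasw} and the identity $(\Gamma_2\ast\Gamma_1)^*=\Gamma_1^*\ast\Gamma_2^*$, and the quantum commuting case handled by tensoring the two representations and summing over the intermediate indices. The one point where you deviate is the $\cl C_{\rm sqc}$ case for correlations: the paper composes the dilatable NS operator matrices from Definition \ref{d_snsc} (i) directly and checks that the composites are again dilatable with commuting entries, whereas you pass through Lemma \ref{l_comlift} and compose the four PVM families. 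Your route is equally valid, and in fact spares you the dilatability check, since the converse direction of Lemma \ref{l_comlift} certifies membership in $\cl C_{\rm sqc}$ from any representation of the form (\ref{eq_qcsns2}).

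One caution, however, for the bicorrelation case: Lemma \ref{l_comlift} is not the right starting point there, because the PVM's it produces are projection valued in one index only and need not be quantum magic squares, so composites built from them will not automatically be magic bisquares; the sentence in which you say that this is "where Lemma \ref{l_comlift} earns its keep" would therefore not close the argument as written. For $\Gamma_1,\Gamma_2\in\cl C_{\rm sqc}^{\rm bi}$ you should instead start from the defining representation via dilatable magic bisquares with mutually commuting entries and form the same tensor composites; your own observation that the rows and columns of the composites sum to the identity (being sums, over the intermediate index, of tensor products of magic square entries) is exactly what is needed, and is how the paper argues. With that adjustment — together with unitality and $(\Gamma_2\ast\Gamma_1)^*=\Gamma_1^*\ast\Gamma_2^*\in\cl C_{\rm st}$, which you already note — the proof is complete.
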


\begin{proof}
%In view of Theorems \ref{th_pres} and \ref{c_pres}, Remark \ref{r_dualasw}
%it suffices to show that the operation 
%\marginpar{\tiny IT. Check! The proof has been augmented. More details?}
Let $\Gamma_1$ and $\Gamma_2$ be elements of $\cl C_{\rm sns}$. 
The range of the variables in the summations below being understood from the context, we have 
\begin{eqnarray*}
& &
\sum\limits_{b_{3}}(\Gamma_{2}\ast\Gamma_{1})(x_{1}y_{1}, a_{3}b_{3}|x_{3}y_{3}, a_{1}b_{1})\\
& = &
\sum\limits_{b_{3}}\sum\limits_{x_{2},y_{2}}\sum\limits_{a_{2},b_{2}}\Gamma_{1}(x_{1}y_{1}, a_{2}b_{2}|x_{2}y_{2}, a_{1}b_{1})\Gamma_{2}(x_{2}y_{2}, a_{3}b_{3}|x_{3}y_{3}, a_{2}b_{2})\\
& = & 
\sum\limits_{x_{2},y_{2}}\sum\limits_{a_{2},b_{2}}\Gamma_{1}(x_{1}y_{1}, a_{2}b_{2}|x_{2}y_{2}, a_{1}b_{1})\Gamma_{2}(x_{2}y_{2}, a_{3}|x_{3}y_{3}, a_{2})\\
& = &
\sum\limits_{x_{2},y_{2}}\sum\limits_{a_{2}}\Gamma_{1}(x_{1}y_{1}, a_{2}|x_{2}y_{2}, a_{1})\Gamma_{2}(x_{2}y_{2}, a_{3}|x_{3}y_{3}, a_{2})\\
& = &
\sum\limits_{b_{3}}(\Gamma_{2}\ast \Gamma_{1})(x_{1}y_{1}, a_{3}b_{3}|x_{3}y_{3}, a_{1}b_{1}')
\end{eqnarray*}
\noindent for all $b_{1}, b_{1}'$. 
One verifies similarly the remaining three relations required in the definition of an SNS correlations; 
thus, $\Gamma_{2}\ast\Gamma_{1}\in \cl C_{\rm sns}$. 
Using Remark \ref{r_dualasw}, we now also see that, 
if $\Gamma_1$ and $\Gamma_2$ be elements of $\cl C_{\rm sns}^{\rm bi}$ then 
$\Gamma_{2}\ast\Gamma_{1}\in \cl C_{\rm sns}^{\rm bi}$. 
The cases where ${\rm t} = {\rm ns}$ now follows taking into account Theorem \ref{th_ordeqre}. 

Among the remaining correlation types, we only consider the case ${\rm t} = {\rm qc}$. 
%The proof is similar to that of Theorem \ref{th_comsimu}
Suppose that $\Gamma_1,\Gamma_2 \in \cl C_{\rm sqc}$.
Suppose that  
$P = (P_{x_2y_2,x_1y_1})_{x_2y_2,x_1y_1}$
and $Q = (Q_{a_1b_1,a_2b_2})_{a_1b_1,a_2b_2}$ are dilatable NS operator matrices 
on a Hilbert space $H$ with mutually commuting entries, 
and $\xi\in H$ is a unit vector, such that 
%\begin{equation}\label{eq_qcsns}
$$\Gamma_1(x_1y_1,a_2b_2 | x_2y_2,a_1b_1) = 
\left\langle 
P_{x_2y_2,x_1y_1} Q_{a_1b_1,a_2b_2}\xi,\xi\right\rangle$$
%\end{equation}
for all $x_i\in X_i$, $y_i\in Y_i$, $a_i\in A_i$ and $b_i\in B_i$, $i = 1,2$.
Similarly, write
$$\Gamma_2(x_2y_2,a_3b_3 | x_3y_3,a_2b_2) = 
\left\langle 
P_{x_3y_3,x_2y_2}' Q_{a_2b_2,a_3b_3}'\xi',\xi'\right\rangle,$$
%\end{equation}
for all $x_i\in X_i$, $y_i\in Y_i$, $a_i\in A_i$ and $b_i\in B_i$, $i = 2,3$,
in a Hilbert space $H'$. 
Similarly to the proof of Theorem \ref{th_comsimu}, set
$$P''_{{x_3y_3,x_1y_1}} = \sum\limits_{x_2\in X_2} \sum\limits_{y_2\in Y_2}
P_{x_2y_2,x_1y_1}\otimes P_{x_3y_3,x_2y_2}',$$
$$Q''_{a_1b_1,a_3b_3} = \sum\limits_{a_2\in A_2}  \sum\limits_{b_2\in B_2}
Q_{a_1b_1,a_2b_2} \otimes Q_{a_2b_2,a_3b_3}',$$
and $\xi'' = \xi\otimes\xi'$, in the Hilbert space $H'' := H \otimes H'$. 
It is straighforward that $(P''_{{x_3y_3,x_1y_1}})$ and $(Q''_{a_1b_1,a_3b_3})$ 
are dilatable NS operator matrices with commuting entries, which give rise to the correlation $\Gamma_2\ast \Gamma_1$
as in (\ref{eq_qcsns}). 

The case where $\Gamma_1$ and $\Gamma_2$ are (quantum commuting)
SNS bicorrelations follows from the previous paragraph and the fact that 
the matrices $(P''_{{x_3y_3,x_1y_1}})$ and $(Q''_{a_1b_1,a_3b_3})$ are 
magic bisquares, provided $(P_{{x_2y_2,x_1y_1}})$, $(Q_{a_1b_1,a_2b_2})$,
$(P'_{{x_3y_3,x_2y_2}})$ and $(Q'_{a_2b_2,a_3b_3})$ are such. 
\end{proof}

\begin{remark}\label{r_E1toE2}
\rm 
The statements in Theorem \ref{th_pres} are not reversible: e.g. not every affine map 
$\cl C_{\rm ns}(E_1)\mapsto \cl C_{\rm ns}(E_2)$ has the form 
$\cl E\to \Gamma[\cl E]$ for some $\Gamma\in \cl C_{\rm sqc}$ that fits $E_1\leftrightarrow E_2$.  
Indeed, let
$E_i = [1]\times Y_i \times [1] \times B_i$, $i = 1,2$. 
In this case, $\cl C_{\rm ns}(E_i) = \cl C(Y_i\times B_i)$, $i = 1,2$, and the statement
follows from Remark \ref{r_dGamma}. 
\end{remark}

%\marginpar{\tiny IT. Remark \ref{r_localnonlo} is new, check.}

\begin{remark}\label{r_localnonlo}
\rm 
Let $E_i\subseteq X_iY_i\times A_iB_i$ be a non-local game, $i = 1,2$.
It follows from Remark \ref{r_desloc} and Proposition \ref{p_lochy2} (i) 
that $E_1\to_{\rm loc} E_2$ if and only if 
there exist maps $f_X : X_2\to X_1$, $f_Y : Y_2\to Y_1$, 
$g_A : A_1\to A_2$ and $g_B : B_1\to B_2$ such that 
\begin{equation}\label{eq_eqinc12}
(f_X\times f_Y)^{-1} (E_1((a_1,b_1))) = E_2((g_A(a_1),g_B(b_1))), \ \ \ 
(a_1,b_1)\in A_1\times B_1.
\end{equation}
Similarly, if $X_1 = X_2$, $Y_1 = Y_2$, $A_1 = A_2$ and $B_1 = B_2$ then 
$E_1\simeq_{\rm loc} E_2$ if and only if the maps
$f_X$, $f_Y$, $g_A$ and  $g_B$ can be chosen bijective, that is, there exist 
permutations on the corresponding question and answer sets of the four players that 
transform the rules functions of the two games into one another. 
Finally, 
$E_1\leadsto_{\rm loc} E_2$ amounts to having inclusions in (\ref{eq_eqinc12}) instead of 
equalities. 
\end{remark}

\begin{remark}\label{r_sike}
\rm 
By Proposition \ref{p_sep}, there exists hypergraphs $E_i \subseteq X_i\times A_i$, $i = 1,2$, 
such that $E_1\leadsto_{\rm q} E_2$ via a quantum correlation $\Gamma$, 
but $E_1\not\leadsto_{\rm loc} E_2$. 
Let
$$\tilde{E}_i = \{((1,x),(1,a)) : (x,a)\in E_i\},$$
considered as a non-local game over $([1],X_i,[1],A_i)$, $i = 1,2$. 
It is straightforward that the correlation $\tilde{\Gamma}$, given by
$$\tilde{\Gamma}(1x_1, 1a_2|1x_2, 1a_1) := \Gamma(x_1,a_2|x_2,a_1),$$
is a quantum SNS correlation that realises a homomorphism $\tilde{E}_1\leadsto_{\rm q} \tilde{E}_2$. 
On the other hand, $\tilde{E}_1\not\leadsto_{\rm loc} \tilde{E}_2$ as 
the relation $\tilde{E}_1\leadsto_{\rm loc} \tilde{E}_2$ would force
$E_1\leadsto_{\rm loc} E_2$.
\end{remark}

%%%%%%%%%%%%%%%%%%%%%%%%%%%%%%%%%%%%%%%%%%%%%%%%%
%%%%%%%%%%%%%%%%%%%%%%%%%%%%%%%%%%%%%%%%%%%%%%%%%

\subsection{Optimal winning probabilities}\label{ss_optimal}

The results in this subsection can be viewed as a strengthening of Corollary \ref{c_perfnlg}.
We first recall the notion of a \emph{product game} from non-local game theory (see e.g. \cite{mptw}). 
Given non-local games 
$\Lambda_k\subseteq X_2^{(k)}Y_1^{(k)}\times X_1^{(k)}Y_2^{(k)}$, $k = 1,2$, 
their \emph{product} $\Lambda_1\otimes \Lambda_2$ is 
the non-local game on 
$X_2^{(1)}X_2^{(2)}Y_1^{(1)}Y_1^{(2)}\times X_1^{(1)}X_1^{(2)}Y_2^{(1)}Y_2^{(2)}$
arising from the product set $\Lambda_1 \times \Lambda_2$ after the natural reshuffling 
(see \cite[Section 3]{mptw}). 
Here, $(X_2^{(1)}X_2^{(2)}, Y_1^{(1)}Y_1^{(2)})$ (resp. $(X_1^{(1)}X_1^{(2)},Y_2^{(1)}Y_2^{(2)})$)
is the corresponding question (resp. answer) set. 

\begin{lemma}\label{l_prodsns}
Let $X_i^{(k)}$, $Y_i^{(k)}$, $A_i^{(k)}$ and $B_i^{(k)}$ be finite sets, $i,k = 1,2$, and 
$$\Gamma_k : \cl D_{X_2^{(k)}Y_2^{(k)} \times A_1^{(k)}B_1^{(k)}}
\to \cl D_{X_1^{(k)}Y_1^{(k)} \times A_2^{(k)}B_2^{(k)}}$$
be an SNS correlation, $k = 1,2$. 
\begin{itemize}
\item[(i)]
If $\Gamma_k\in \cl C_{\rm st}$, $k = 1,2$, then $\Gamma_1\otimes \Gamma_2\in \cl C_{\rm st}$; 

\item[(ii)]
If $\Gamma_k\in \cl C_{\rm st}^{\rm bi}$, $k = 1,2$, then $\Gamma_1\otimes \Gamma_2\in \cl C_{\rm st}^{\rm bi}$.
\end{itemize}
\end{lemma}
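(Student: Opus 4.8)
The plan is to prove both items at once, first establishing the strong no-signalling structure of the product and then treating each correlation type in turn, reducing everything to the corresponding tensor-stability statement for the relevant operator-theoretic data.

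First I would verify that $\Gamma_1\otimes\Gamma_2$, viewed (after the canonical reshuffle of the four coordinates) as a channel on the product question/answer sets, lies in $\cl C_{\rm sns}$. Positivity and trace preservation are automatic for a tensor product of channels, so the only content is the four marginal identities in the definition of $\cl C_{\rm sns}$. For each of them, the relevant marginal sum of $\Gamma_1\otimes\Gamma_2$ factors as the product of the corresponding marginal sums of $\Gamma_1$ and of $\Gamma_2$; each factor is independent of the variable being summed against, by the SNS property of $\Gamma_k$, so the product is too. Since unitality is preserved by $\otimes$ and $(\Gamma_1\otimes\Gamma_2)^* = \Gamma_1^*\otimes\Gamma_2^*$ (again up to the reshuffle), applying the same computation to the adjoints shows that $\Gamma_1\otimes\Gamma_2\in\cl C_{\rm sns}^{\rm bi}$ whenever $\Gamma_1,\Gamma_2\in\cl C_{\rm sns}^{\rm bi}$. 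This settles the case ${\rm t}={\rm ns}$ in both (i) and (ii).

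For the local case I would use Remark \ref{r_desloc}: writing $\Gamma_k=\sum_i\lambda_i^{(k)}\Phi_i^{(k)}\otimes\Psi_i^{(k)}$ as a convex combination of products of local NS correlations, the product $\Gamma_1\otimes\Gamma_2$ is, after reshuffling, the convex combination of the correlations $(\Phi_i^{(1)}\otimes\Phi_j^{(2)})\otimes(\Psi_i^{(1)}\otimes\Psi_j^{(2)})$, and a tensor product of local NS correlations is local, so $\Gamma_1\otimes\Gamma_2\in\cl C_{\rm sloc}$. For the bicorrelation version one uses in addition that a tensor product of bistochastic matrices is bistochastic, so the factors remain local bicorrelations. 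For ${\rm t}\in\{{\rm qc},{\rm q}\}$ I would pass to the representations of Definition \ref{d_snsc}. Given dilatable NS operator matrices $P^{(k)},Q^{(k)}$ on Hilbert spaces $H^{(k)}$ with mutually commuting entries and unit vectors $\xi^{(k)}$ realising $\Gamma_k$ as in (\ref{eq_qcsns}), put $H''=H^{(1)}\otimes H^{(2)}$, $\xi''=\xi^{(1)}\otimes\xi^{(2)}$, and let $P'',Q''$ be the operator matrices on $H''$ whose entries are the tensor products of the corresponding entries of $P^{(1)},P^{(2)}$ (resp. $Q^{(1)},Q^{(2)}$), reshuffled to match the question/answer structure of the product game. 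A tensor product of dilatable NS operator matrices is dilatable: if $P^{(k)}_{x_2y_2,x_1y_1}=(V^{(k)})^*E^{(k)}_{x_2,x_1}F^{(k)}_{y_2,y_1}V^{(k)}$ with $E^{(k)},F^{(k)}$ commuting PVM's (available by Corollary \ref{c_dedil2}) satisfying (\ref{eq_ExaFyb}), then $V^{(1)}\otimes V^{(2)}$ together with the tensored PVM families furnishes a dilation of $P''$, and commutativity of all entries is preserved under $\otimes$; likewise for $Q''$. Since $\langle P''Q''\xi'',\xi''\rangle$ factors as the product of the two scalar expressions, it equals $\Gamma_1\otimes\Gamma_2$, so $\Gamma_1\otimes\Gamma_2\in\cl C_{\rm sqc}$. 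The quantum case is identical, keeping the finite-dimensional bipartite structure $M^{(k)}\otimes N^{(k)}$ of (\ref{eq_qsns}) and reshuffling the legs $H^{(1)},K^{(1)},H^{(2)},K^{(2)}$ into the bipartition $(H^{(1)}\otimes H^{(2)})\otimes(K^{(1)}\otimes K^{(2)})$, using that a tensor product of quantum dilatable NS operator matrices (in the sense of Remark \ref{r_qos}) is quantum dilatable; the approximately quantum case then follows from the quantum case and the continuity of $(\Gamma_1,\Gamma_2)\mapsto\Gamma_1\otimes\Gamma_2$. For the bicorrelation statements in all these types one notes in addition that a tensor product of (quantum, locally) dilatable magic bisquares is again a magic bisquare of the same kind — which reduces to the elementary fact that a tensor product of quantum magic squares is a quantum magic square — so the same constructions produce elements of $\cl C_{\rm st}^{\rm bi}$.

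The only mildly delicate point, and where I would be most careful, is the bookkeeping of the reshuffle: after permuting the tensor legs one must check that the resulting families are genuine POVM's (resp. quantum magic squares) in the \emph{grouped} variables $(x_1^{(1)},x_1^{(2)})$, $(a_2^{(1)},a_2^{(2)})$, and so on, and that all commutation relations survive. Both are immediate once the correct reshuffle is fixed, but the index tracking must be done consistently throughout; everything else in the proof is routine.
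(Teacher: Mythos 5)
Your proposal is correct and follows essentially the same route as the paper: one tensors the representing data (the dilatable NS operator matrices and vectors for qc, the quantum/locally dilatable matrices for q and loc, magic bisquares for the bicorrelation statements), checks that the SNS marginal conditions and the identity $(\Gamma_1\otimes\Gamma_2)^*=\Gamma_1^*\otimes\Gamma_2^*$ factor accordingly, and handles qa by continuity. The paper only writes out the qc case and declares the others analogous, so your extra detail for the ns, loc and q cases (and the reshuffling bookkeeping) is a faithful elaboration rather than a different argument.
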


\begin{proof}
We give details for the case ${\rm t} = {\rm qc}$ only; the arguments
for the rest of the correlation types follow along similar lines. 

(i) 
Let $P^{(k)} = (P_{x_2y_2,x_1y_1}^{(k)})_{x_2y_2,x_1y_1}$ 
and $Q^{(k)} = (Q_{a_1b_1,a_2b_2}^{(k)})_{a_1b_1,a_2b_2}$ on $H_k$ with mutually commuting entries, 
and a unit vector $\xi_k\in H$, such that 
%\begin{equation}\label{eq_qcsns}
$$\Gamma^{(k)}(x_1y_1,a_2b_2 | x_2y_2,a_1b_1) = 
\left\langle 
P_{x_2y_2,x_1y_1}^{(k)} Q_{a_1b_1,a_2b_2}^{(k)}\xi,\xi\right\rangle, \ \ \ k = 1,2,$$
%\end{equation}
for all $x_i\in X_i^{(k)}$, $y_i\in Y_i^{(k)}$, $a_i\in A_i^{(k)}$ and $b_i\in B_i^{(k)}$, $i = 1,2$.
Letting $H = H_1\otimes H_2$, $\xi = \xi_1\otimes\xi_2$, 
$$P_{x_2y_2,x_1y_1} = P_{x_2y_2,x_1y_1}^{(1)}\otimes P_{x_2y_2,x_1y_1}^{(2)} 
\ \mbox{ and } \ 
Q_{a_1b_1,a_2b_2} = Q_{a_1b_1,a_2b_2}^{(1)} \otimes Q_{a_1b_1,a_2b_2}^{(2)},$$
we see that the operator matrices
$P = (P_{x_2y_2,x_1y_1})$ and $Q = (Q_{a_1b_1,a_2b_2})$
are dilatable and have mutually commuting entries. 
In addition, 
$\Gamma_1\otimes \Gamma_2$ arises as in equation (\ref{eq_qcsns}) from Definition \ref{d_snsc} via 
the quadruple $(H,\xi,P,Q)$.

%\marginpar{\tiny IT. Explanation added in (ii). Check.}

(ii) is similar to (i); the case ${\rm t} = {\rm ns}$ 
uses the fact that $(\Gamma_1\otimes \Gamma_2)^* = \Gamma_1^*\otimes \Gamma_2^*$, the case 
${\rm t} = {\rm qc}$ -- the fact that the NS operator matrices $(P_{x_2y_2,x_1y_1})_{x_2y_2,x_1y_1}$ and 
$(Q_{a_1b_1,a_2b_2})_{a_1b_1,a_2b_2}$ defined in (i) are magic bisquares, and the rest of the cases 
are analogous. 
\end{proof}

Given a correlation type ${\rm t}$, the \emph{${\rm t}$-value} of a non-local game 
$E\subseteq V_2W_1\times V_1W_2$, equipped with a probability distribution $\pi$ on $V_2W_1$, 
is the parameter
$\omega_{\rm t}(E,\pi) = \omega_{\cl C_{\rm t}}(E,\pi)$ (see (\ref{eq_frakEpi}) in Subsection \ref{ss_values}). 
We set 
$$\bar{\omega}_{\rm t}(E,\pi) = \limsup_{n\in \bb{N}} \omega_{\cl C_{\rm t}}(E^{\otimes n},\pi^n)^{\frac{1}{n}};$$
the parameter $\bar{\omega}_{\rm t}(E,\pi)$ is the optimal ${\rm t}$-value of the game $E$ under
\emph{parallel repetition} \cite{raz}.

\begin{theorem}\label{th_equivval}
Let $E_i\subseteq X_iY_i\times A_iB_i$ be a non-local game, $\pi_2$ be a probability distribution on 
$X_2Y_2$, and $\pi_1 = \Gamma_{X_2Y_2\to X_1Y_1}(\pi_2)$.
\begin{itemize}
\item[(i)] 
If $E_1\to_{\rm st} E_2$ via $\Gamma\in \cl C_{\rm st}$ then 
$$\omega_{\rm t}(E_1,\pi_1)\leq \omega_{\rm t}(E_2,\pi_2) \ \ \mbox{ and } \ \ 
\bar{\omega}_{\rm t}(E_1,\pi_1)\leq \omega_{\rm t}(E_2,\pi_2);$$

\item[(ii)] 
Suppose that $X_1 = X_2 =: X$, $Y_1 = Y_2 =: Y$ and that
that $\pi$ is a $\Gamma$-stationary probability distribution on $XY$
If $E_1\simeq_{\rm st} E_2$ via $\Gamma\in \cl C_{\rm st}^{\rm bi}$
then 
$$\omega_{\rm t}(E_1,\pi) = \omega_{\rm t}(E_2,\pi) \ \ 
\mbox{ and } \ \ \bar{\omega}_{\rm t}(E_1,\pi) = \bar{\omega}_{\rm t}(E_2,\pi).$$
\end{itemize}
\end{theorem}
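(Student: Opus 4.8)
The plan is to deduce both parts from the value comparisons already established for hypergraphs, namely Proposition \ref{p_inval} and Corollary \ref{p_invalcor}, by choosing the families of channels appearing there to be the correlation classes $\cl C_{\rm t}$ over the relevant quadruples and feeding in the strategy transport result Theorem \ref{th_pres}. Throughout I view $\cl C_{\rm t}$, the set of NS correlations of type ${\rm t}$ over $(X_i,Y_i,A_i,B_i)$, as a subset of $\cl C(X_iY_i\times A_iB_i)$, so that $\omega_{\rm t}(E_i,\cdot)=\omega_{\cl C_{\rm t}}(E_i,\cdot)$, and likewise $\cl C_{\rm t}$ over the product quadruples after the canonical reshuffle defining the product game (cf. \cite[Section 3]{mptw}).

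For part (i), take $\frak{E}_1=\cl C_{\rm t}$ over $(X_1,Y_1,A_1,B_1)$ and $\frak{E}_2=\cl C_{\rm t}$ over $(X_2,Y_2,A_2,B_2)$. Since $E_1\to_{\rm st}E_2$ via $\Gamma$, the correlation $\Gamma$ is a perfect strategy of the homomorphism game $E_1\leftrightarrow E_2$, hence $E_1\to_{\rm ns}E_2$ via $\Gamma$, and Theorem \ref{th_pres} gives $\Gamma[\frak{E}_1]\subseteq\frak{E}_2$; Proposition \ref{p_inval}(i) then yields $\omega_{\rm t}(E_1,\pi_1)\le\omega_{\rm t}(E_2,\pi_2)$. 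For the asymptotic comparison I would repeat this at each level $n$: set $\frak{E}^{(i)}_n=\cl C_{\rm t}$ over $(X_i^n,Y_i^n,A_i^n,B_i^n)$, which satisfies (\ref{eq_incnm}) since correlation classes are closed under tensoring; by Lemma \ref{l_prodsns}(i) we have $\Gamma^{\otimes n}\in\cl C_{\rm st}$ over the product quadruple, and since $\supp(\Gamma^{\otimes n})$ is the $n$-fold product of $\supp(\Gamma)$ while a product of biconditionals is the biconditional of the conjunctions, $\Gamma^{\otimes n}$ fits $E_1^{\otimes n}\leftrightarrow E_2^{\otimes n}$; hence $E_1^{\otimes n}\to_{\rm st}E_2^{\otimes n}$ via $\Gamma^{\otimes n}$ and Theorem \ref{th_pres} gives $\Gamma^{\otimes n}[\frak{E}^{(1)}_n]\subseteq\frak{E}^{(2)}_n$. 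As marginals of tensor products are tensor products of marginals, $\pi_1^n=(\Gamma^{\otimes n})_{X_2^nY_2^n\to X_1^nY_1^n}(\pi_2^n)$, so Corollary \ref{p_invalcor}(i) gives $\bar\omega_{\rm t}(E_1,\pi_1)\le\bar\omega_{\rm t}(E_2,\pi_2)$.

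For part (ii), the bicorrelation $\Gamma\in\cl C_{\rm st}^{\rm bi}$ fits $E_1\leftrightarrow E_2$, so $E_1\to_{\rm ns}E_2$ via $\Gamma$ and $E_2\to_{\rm ns}E_1$ via $\Gamma^*$, where $\Gamma^*\in\cl C_{\rm st}^{\rm bi}\subseteq\cl C_{\rm st}$ by Remark \ref{r_dualasw}. Applying Theorem \ref{th_pres} to $\Gamma$ and to $\Gamma^*$ gives $\Gamma[\frak{E}_1]\subseteq\frak{E}_2$ and $\Gamma^*[\frak{E}_2]\subseteq\frak{E}_1$, and since $\pi$ is $\Gamma$-stationary, Proposition \ref{p_inval}(ii) yields $\omega_{\rm t}(E_1,\pi)=\omega_{\rm t}(E_2,\pi)$. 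At level $n$, Lemma \ref{l_prodsns} gives $\Gamma^{\otimes n}\in\cl C_{\rm st}^{\rm bi}$ with $(\Gamma^{\otimes n})^*=(\Gamma^*)^{\otimes n}\in\cl C_{\rm st}^{\rm bi}$; the product argument from (i), applied to $\Gamma$ and to $\Gamma^*$, shows these fit $E_1^{\otimes n}\leftrightarrow E_2^{\otimes n}$ and $E_2^{\otimes n}\leftrightarrow E_1^{\otimes n}$ respectively, so Theorem \ref{th_pres} supplies the two transport inclusions; moreover $\pi^n$ is $\Gamma^{\otimes n}$-stationary because the relations $\Gamma_{XY\to XY}(\pi)=\pi=\Gamma^*_{XY\to XY}(\pi)$ tensorise, and Corollary \ref{p_invalcor}(ii) concludes that $\bar\omega_{\rm t}(E_1,\pi)=\bar\omega_{\rm t}(E_2,\pi)$.

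The main point requiring care is organisational rather than conceptual: one must realise the product game $E_i^{\otimes n}$ precisely as a non-local game on $X_i^nY_i^n\times A_i^nB_i^n$ via the reshuffle of \cite[Section 3]{mptw}, confirm that the strong no-signalling conditions and the correlation type of $\Gamma^{\otimes n}$ (and of $(\Gamma^*)^{\otimes n}$) are preserved under this identification — which is exactly what Lemma \ref{l_prodsns} records — and check that $\Gamma^{\otimes n}$ really does fit the product homomorphism, resp. isomorphism, game so that Theorem \ref{th_pres} applies at every $n$. Once this bookkeeping is in place, both parts are immediate from Proposition \ref{p_inval} and Corollary \ref{p_invalcor}.
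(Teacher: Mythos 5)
Your proposal is correct and follows essentially the same route as the paper, whose proof consists precisely of the citations you assemble: Theorem \ref{th_pres} (together with Theorem \ref{c_pres} and Remark \ref{r_dualasw} for the reverse, bicorrelation direction) combined with Proposition \ref{p_inval} for the one-round values, and Lemma \ref{l_prodsns} combined with Corollary \ref{p_invalcor} for the asymptotic values, with your tensorisation bookkeeping (products of biconditional supports, marginals and stationarity of product distributions, closure of $\cl C_{\rm t}$ under tensoring) being exactly the implicit verification needed there. Note only that what this argument yields in part (i) is $\bar{\omega}_{\rm t}(E_1,\pi_1)\leq \bar{\omega}_{\rm t}(E_2,\pi_2)$ --- which is what you prove and what the paper's proof establishes via Corollary \ref{p_invalcor} --- so the missing bar on the right-hand side of the second displayed inequality in the statement should be read as a typographical slip rather than a stronger claim you were expected to prove.
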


%\marginpar{\tiny Check!} 

\begin{proof}
The statements in (i) and (ii) regarding the ${\rm t}$-values 
follow from Theorems \ref{th_pres} and \ref{c_pres}, and Proposition \ref{p_inval}. 
The statements regarding the parameters $\bar{\omega}_{\rm t}$ follow from 
Lemma \ref{l_prodsns} and Corollary \ref{p_invalcor}.
\end{proof}

%%%%%%%%%%%%%%%%%%%%%%%%%%%%%%%%%%%%%%%%%%%%%%%%%
%%%%%%%%%%%%%%%%%%%%%%%%%%%%%%%%%%%%%%%%%%%%%%%%%
%%%%%%%%%%%%%%%%%%%%%%%%%%%%%%%%%%%%%%%%%%%%%%%%%%

\section{Representations of SNS correlations}\label{s_repSNS}

Our goal in this section is to obtain representations of the quantum commuting and the 
approximately quantum correlation types in terms of operator system tensor products; 
this is achieved in Subsection \ref{ss_reptensor}. In the next subsection, we develop the required 
multivariate tensor product theory in the operator system category, which 
extends the bivariate theory developed in \cite{kptt} and may be of interest in its own right. 
We will emphasise the differences with the bivariate theory, and will omit those arguments
that can be easily adapted from \cite{kptt}.

%%%%%%%%%%%%%%%%%%%%%%%%%%%%%%%%%%%%%%%%%%%%%%%%%
%%%%%%%%%%%%%%%%%%%%%%%%%%%%%%%%%%%%%%%%%%%%%%%%%

\subsection{Multivariate operator system tensor products}\label{s_multivar}

We fix throughout this subsection operator systems $\cl S_1,\dots,\cl S_k$.
Following \cite{kptt}, we define a \emph{tensor product} of $\cl S_1,\dots,\cl S_k$ (in this order)
to be an operator system structure $\sigma = (\Sigma_n)_{n\in \bb{N}}$
on the algebraic tensor product $\cl S_1\otimes\cdots \otimes\cl S_k$ satisfying the following properties:
\begin{itemize}
\item[(T1)] 
$(\otimes_{j=1}^k \cl S_j,(\Sigma_n)_{n\in \bb{N}},\otimes_{j=1}^k 1_{\cl S_j})$ is an operator system; 

\item[(T2)] 
if $P_j\in M_{n_j}(\cl S_j)^+$, $j\in [k]$, then $\otimes_{j=1}^k P_j\in \Sigma_{n_1\cdots n_k}$, and 

\item[(T3)] 
if $f_j : \cl S_j\to M_{l_j}$ is a unital completely positive map, $j\in [k]$, then 
$\otimes_{j=1}^k f_j$ is completely positive.
\end{itemize}
A \emph{tensor $k$-product} in the operator system category is an assignment $\sigma$
of a tensor product $\sigma\mbox{-}\otimes_{j=1}^k \cl S_j$ to each ordered $k$-tuple
$(\cl S_1,\dots,\cl S_k)$ of operator systems. 
We call $\sigma$
\emph{functorial} if, whenever $\cl T_j$ are operator systems and $\psi_j : \cl S_j \to \cl T_j$ are unital completely positive
maps, $j\in [k]$, we have that the tensor product map $\otimes_{j=1}^k \psi_j : \sigma\mbox{-}\otimes_{j=1}^k \cl S_j\to 
\sigma\mbox{-}\otimes_{j=1}^k \cl T_j$ is (unital and) completely positive. 

We will see that, as in the bivariate case, the operator system category admits
natural minimal, maximal and commuting tensor $k$-products. 
Equip the algebraic tensor product $\cl S_1\otimes\cdots \otimes\cl S_k$ 
with the involution, given by 
$$(u_1\otimes\cdots\otimes u_k)^* := u_1^*\otimes\cdots\otimes u_k^*,$$
and extend it to an involution on $M_n(\cl S_1\otimes\cdots \otimes\cl S_k)$ by letting 
$(u_{i,j})_{i,j}^* := (u_{j,i}^*)_{i,j}$. Write $M_n(\cl S_1\otimes\cdots \otimes\cl S_k)_h$ for the real vector 
space of all hermitian elements of $M_n(\cl S_1\otimes\cdots \otimes\cl S_k)$.

%%%%%%%%%%%%%%%%%%%%%%%%%%%%%%%%%%%%%%%%%%%%%%%%%%

\subsubsection{The maximal tensor product}\label{sss_max}

By \cite[Theorem 5.5]{kptt}, the maximal tensor product between two operator systems is associative; 
we can thus unambiguously give a meaning to the multivariate maximal tensor product 

$$\max\hspace{-0.05cm}\mbox{-}\hspace{-0.09cm}\otimes_{j=1}^k \cl S_j 
:= \cl S_1\otimes_{\max}\cdots \otimes_{\max}\cl S_k.$$ 
We will need an explicit description of its positive cones, in the spirit of the one given in \cite[Section 5]{kptt}.
For notational simplicity, we restrict to the case $k = 3$.
For each $n \in \bb{N}$, let
\begin{gather*}
	D_{n}^{\rm max} 
	\hspace{-0.03cm}=\hspace{-0.03cm} 
	\{\alpha(P_{1}\hspace{-0.05cm}\otimes\hspace{-0.05cm} 
	P_{2}\hspace{-0.07cm}\otimes\hspace{-0.07cm} 
	P_{3})\alpha^{*}: \; P_{i} \hspace{-0.05cm}\in\hspace{-0.05cm} M_{n_{i}}(\cl{S}_{i})^{+}, 
	\; \alpha\hspace{-0.05cm} \in\hspace{-0.05cm} M_{n, n_{1}n_{2}n_{3}}, 
	n_{i} \hspace{-0.05cm} \in \hspace{-0.05cm} \bb{N}, 
	i \hspace{-0.05cm} \in \hspace{-0.05cm} [3]\}.
\end{gather*}

\begin{remark}\label{max_lem}
\rm 
Let $\cl{S}_{i}, i = 1, 2, 3$ be operator systems and 
$(D_{n})_{n=1}^{\infty}$ be a matrix ordering, 
with $D_{n} \subseteq M_{n}(\cl{S}_{1}\otimes \cl{S}_{2}\otimes \cl{S}_{3})$, 
satisfying property (T2) from the start of Subsection \ref{s_multivar}.
Then the compatibility condition implies that $D_{n}^{\rm max} \subseteq D_{n}$ for all $n \in \bb{N}$. 
\end{remark}

It is straightforward to verify that $1_{\cl{S}_{1}}\otimes 1_{\cl{S}_{2}}\otimes 1_{\cl{S}_{3}}$ is a matrix order unit
for the matrix ordering $(D_{n}^{\rm max})_{n=1}^{\infty}$; 
we let $(C_{n}^{\rm max})_{n=1}^{\infty}$ be its Archimedeanisation \cite[Section 3.1]{ptt}.

\begin{proposition}\label{p_coinc}
Let $\cl S_{i}, i = 1, 2, 3$, be operator systems. Then 
$$C_{n}^{\rm max} = M_n\left((\cl S_{1}\otimes_{\rm max} \cl S_{2}) \otimes_{\rm max} \cl{S}_{3}\right)^+, 
\ \ \ n \in \bb{N}.$$
\end{proposition}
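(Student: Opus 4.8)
The plan is to show the two matrix orderings $(C_n^{\rm max})_{n}$ and $\bigl(M_n((\cl S_1\otimes_{\rm max}\cl S_2)\otimes_{\rm max}\cl S_3)^+\bigr)_n$ coincide by a double-inclusion argument, leaning on the associativity of the bivariate maximal tensor product (\cite[Theorem 5.5]{kptt}) and on the universal (projectivity) property that characterises $\otimes_{\rm max}$. First I would observe that $\bigl((\cl S_1\otimes_{\rm max}\cl S_2)\otimes_{\rm max}\cl S_3,\otimes_{j=1}^3 1_{\cl S_j}\bigr)$ is an operator system whose matricial cones satisfy (T2): indeed, if $P_i\in M_{n_i}(\cl S_i)^+$, then $P_1\otimes P_2\in M_{n_1n_2}(\cl S_1\otimes_{\rm max}\cl S_2)^+$ by (T2) for the bivariate maximal product, and then $(P_1\otimes P_2)\otimes P_3\in M_{n_1n_2n_3}((\cl S_1\otimes_{\rm max}\cl S_2)\otimes_{\rm max}\cl S_3)^+$ by (T2) again. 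Since the bivariate maximal cones are closed under compression $X\mapsto \alpha X\alpha^*$ and the cone is a genuine (Archimedean-closed) operator-system cone, we get that every element of $D_n^{\rm max}$ lies in $M_n((\cl S_1\otimes_{\rm max}\cl S_2)\otimes_{\rm max}\cl S_3)^+$; because the latter cone is Archimedean closed, the whole Archimedeanisation $C_n^{\rm max}$ is contained in it. This gives one inclusion for free.

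For the reverse inclusion, the key step is to identify $((\cl S_1\otimes_{\rm max}\cl S_2)\otimes_{\rm max}\cl S_3)^+$ with the cone generated (in the operator-system sense: closed under compressions, sums, and Archimedeanisation) by elementary positive tensors. I would invoke the explicit description of the bivariate maximal cone from \cite[Section 5]{kptt}: $M_n(\cl T\otimes_{\rm max}\cl S_3)^+$ is the Archimedeanisation of the cone of elements $\beta(R\otimes P_3)\beta^*$ with $R\in M_m(\cl T)^+$, $P_3\in M_{n_3}(\cl S_3)^+$, $\beta\in M_{n,mn_3}$, applied with $\cl T=\cl S_1\otimes_{\rm max}\cl S_2$. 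Then I would expand each $R\in M_m(\cl S_1\otimes_{\rm max}\cl S_2)^+$ again by the bivariate description as (a limit of) $\gamma(P_1\otimes P_2)\gamma^*$, and substitute. Multiplying the compression matrices $\beta$ and $\gamma\otimes I_{n_3}$ together and absorbing them into a single $\alpha\in M_{n,n_1n_2n_3}$ shows every such generator lies in $D_n^{\rm max}$ up to the Archimedean closure; hence $M_n((\cl S_1\otimes_{\rm max}\cl S_2)\otimes_{\rm max}\cl S_3)^+\subseteq C_n^{\rm max}$. The only subtlety is handling the Archimedean closures at each stage — I would phrase the argument so that a positive element of the iterated product is, for every $\varepsilon>0$, within $\varepsilon\,(1\otimes 1\otimes 1)$ of an element that is literally a finite sum of compressed elementary tensors, and then note that $C_n^{\rm max}$ being Archimedean-closed swallows the limit.

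I expect the main obstacle to be the careful bookkeeping of the two nested Archimedeanisations: a priori the cone $D_n^{\rm max}$ of triple elementary tensors need not already be closed under the compressions $X\mapsto\alpha X\alpha^*$ absorbed from the *second* application of the bivariate description, so I must verify $D_n^{\rm max}$ is itself closed under arbitrary $\alpha(\cdot)\alpha^*$ (it is, since $\alpha'(\alpha(P_1\otimes P_2\otimes P_3)\alpha^*)\alpha'^* = (\alpha'\alpha)(P_1\otimes P_2\otimes P_3)(\alpha'\alpha)^*$) and closed under finite sums (using the block-diagonal trick $P\oplus Q$ together with the row matrix $[\,I\ I\,]$, exactly as in \cite[Section 5]{kptt}). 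Once $D_n^{\rm max}$ is seen to be a cone stable under compression and addition with matrix order unit $\otimes_j 1_{\cl S_j}$, both Archimedeanisations are taken with respect to the same order unit, and a standard $\varepsilon$-argument closes the gap. An alternative, cleaner route — which I would mention as a remark — is to verify directly that $(D_n^{\rm max})_n$ with its Archimedeanisation satisfies the universal property of the maximal tensor product: any family of unital completely positive maps out of the $\cl S_j$ with appropriate commuting/compatible ranges factors through it, so by uniqueness of the maximal operator system tensor product structure it must equal the iterated bivariate one; this avoids the double-limit bookkeeping altogether at the cost of re-deriving the relevant universal property in the $3$-variable setting.
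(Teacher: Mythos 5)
Your proposal is correct and takes essentially the same route as the paper's proof: the inclusion $C_n^{\rm max}\subseteq M_n\left((\cl S_1\otimes_{\rm max}\cl S_2)\otimes_{\rm max}\cl S_3\right)^+$ via (T2) for the bivariate product together with compatibility and Archimedean closedness, and the reverse inclusion by unwinding the iterated bivariate description of the maximal cone and absorbing the two compressions into a single matrix $\alpha$. If anything, your direct identity $\beta(P_1\otimes P_2)\beta^*\otimes P_3=(\beta\otimes I)(P_1\otimes P_2\otimes P_3)(\beta\otimes I)^*$ is cleaner than the paper's auxiliary construction of the amplified positives $P_1'$, $P_2'$, and your explicit flagging of the nested Archimedeanisation bookkeeping addresses a point the paper hides behind its two ``without loss of generality'' reductions.
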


\begin{proof}
Set $C_n = M_n\left((\cl S_{1}\otimes_{\rm max} \cl S_{2}) \otimes_{\rm max} \cl{S}_{3}\right)^+$, $n\in \bb{N}$. 
By Remark \ref{max_lem}, $C_{n}^{\rm max} \subseteq C_{n}$. 
Let $Q \in M_{n}((\cl{S}_{1}\otimes_{\rm max} \cl{S}_{2}) \otimes_{\rm max} \cl{S}_{3})^{+}$;
without loss of generality, we may assume that 
$Q = \alpha(Q_{1}\otimes R_{1})\alpha^{*}$ for $Q_{1} \in M_{k}(\cl{S}_{1}\otimes_{\rm max} \cl{S}_{2})^{+}$, 
$R_{1} \in M_{m}(\cl{S}_{3})^{+}$ and $\alpha\in M_{n,km}$. 
As $Q_{1} \in M_{k}(\cl{S}_{1}\otimes_{\rm max} \cl{S}_{2})^{+}$, again without loss of generality we may assume 
that 
$Q_{1} = \beta(P_{1}\otimes P_{2})\beta^{*}$, where $P_{1} \in M_{\ell}(\cl{S}_{1})^{+}, P_{2} \in M_{p}(\cl{S}_{2})^{+}$, 
and $\beta \in M_{k, \ell p}$. 
%\marginpar{\tiny IT. Indixing was confusing and incorrect. {\bf Check in detail.}}
Write 
$\beta = \sum\limits_{i=1}^{r}C_{i}\otimes D_{i}$ for some 
$C_{i} \in M_{n_{1}, \ell}$ and $D_{i} \in M_{n_{2}, p}$ with $n_{1}n_{2} = k$. We see that 
\begin{gather*}
	\beta(P_{1}\otimes P_{2})\beta^{*} = \sum\limits_{i, j=1}^{r}C_{i}P_{1}C_{j}^{*}\otimes D_{i}P_{2}D_{j}^{*}.
\end{gather*}
Let $P_{1}' = (C_{i}P_{1}C_{j}^{*})_{i, j}^{r}$ and $P_{2}' = (D_{i}P_{2}D_{j}^{*})_{i, j}^{r}$; 
thus, $P_{1}' \in M_{rn_{1}}(\cl{S}_{1})^{+}$ and $P_{2}' \in M_{rn_{2}}(\cl{S}_{2})^{+}$, and
we view $P_{1}'\otimes P_{2}'$ as a $r^{2}k\times r^{2}k$ matrix. 
%As in \cite[Theorem 6.7]{kptt}, 

Let $\beta' = (\beta_{1}, \hdots, \beta_{r^{2}})$, where $\beta_{j} \in M_{k}$ for $1 \leq j \leq r^{2}$ and
\begin{gather*}
	\beta_{1} = \beta_{r+2} = \beta_{2r+3} = \cdots = \beta_{r^{2}} = I_{k},
\end{gather*}
\noindent with $\beta_{j} = 0$ otherwise. 
We have that $\beta'\in M_{k,kr^2}$; thus, 
$\gamma := \beta'\otimes I_{m} \in M_{km, r^{2}km}$, and
\begin{gather*}
	\gamma(P_{1}'\otimes P_{2}' \otimes R_{1})\gamma^{*} 
	= (\beta'\otimes I_{m})((P_{1}'\otimes P_{2}') \otimes R_{1})(\beta' \otimes I_{m})^{*} 
\\ = \bigg(\sum\limits_{i, j=1}^{r}C_{i}P_{1}C_{j}^{*}\otimes D_{i}P_{2}D_{j}^{*}\bigg) \otimes R_{1} = \beta(P_{1}\otimes P_{2})\beta^{*}\otimes R_{1}.
\end{gather*}
\noindent Thus, 
\begin{eqnarray*}
	\alpha(\beta(P_{1}\otimes P_{2})\beta^{*}\otimes R_{1})\alpha^{*} 
	& = & 
	\alpha(\gamma(P_{1}'\otimes P_{2}'\otimes R_{1})\gamma^{*})\alpha^{*}\\ 
	& = & (\alpha \gamma)(P_{1}'\otimes P_{2}' \otimes R_{1})(\alpha \gamma)^{*},
\end{eqnarray*}
\noindent where $\alpha \gamma \in M_{n, r^{3}m}$. 
This shows $Q \in D_{n}^{\rm max}$, and so $C_{n} = C_{n}^{\rm max}$ for each $n \in \bb{N}$. 
\end{proof}

\begin{corollary}\label{c_assocmax}
We have that $M_n(\max\hspace{-0.05cm}\mbox{-}\hspace{-0.09cm}\otimes_{j=1}^k \cl S_j)^+ = C_n^{\max}$, 
$n\in \bb{N}$.
\end{corollary}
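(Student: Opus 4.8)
The plan is to reduce the statement to Proposition \ref{p_coinc} together with the associativity of the bivariate maximal operator system tensor product \cite[Theorem 5.5]{kptt}, by an induction on the number $k$ of factors; here the definitions of $D_n^{\rm max}$ and $C_n^{\rm max}$ are understood to be extended from three to $k$ operator systems in the obvious way, with $k$-fold elementary tensors $P_1\otimes\cdots\otimes P_k$ ($P_j\in M_{n_j}(\cl S_j)^+$) in place of the threefold ones.

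First I would dispatch the easy inclusion $C_n^{\rm max}\subseteq M_n(\max\mbox{-}\otimes_{j=1}^k\cl S_j)^+$. Iterating property (T2) of the bivariate maximal tensor product, one sees that $M_n(\cl S_1\otimes_{\max}\cdots\otimes_{\max}\cl S_k)^+$ is a matrix ordering on $\cl S_1\otimes\cdots\otimes\cl S_k$ with matrix order unit $1_{\cl S_1}\otimes\cdots\otimes 1_{\cl S_k}$ containing every $k$-fold elementary tensor $P_1\otimes\cdots\otimes P_k$ (since $P_1\otimes\cdots\otimes P_k=(P_1\otimes\cdots\otimes P_{k-1})\otimes P_k$, and the first factor is positive in $\max\mbox{-}\otimes_{j=1}^{k-1}\cl S_j$ by the inductive hypothesis on (T2)); thus, by the $k$-fold analogue of Remark \ref{max_lem}, $D_n^{\rm max}$ is contained in this cone, and since the latter is Archimedean-closed it also contains its Archimedeanisation $C_n^{\rm max}$.

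For the reverse inclusion I would induct on $k$: the cases $k=1,2$ are trivial (respectively \cite[Section 5]{kptt}), and $k=3$ is exactly Proposition \ref{p_coinc}. Assuming the result for $k-1$, use \cite[Theorem 5.5]{kptt} to identify $\max\mbox{-}\otimes_{j=1}^k\cl S_j$ with $(\max\mbox{-}\otimes_{j=1}^{k-1}\cl S_j)\otimes_{\max}\cl S_k$; given $Q$ in the positive cone, the bivariate description of the maximal cone produces, for each $\varepsilon>0$, a representation $Q+\varepsilon(1_{\cl S_1}\otimes\cdots\otimes 1_{\cl S_k})_n=\alpha(R\otimes P_k)\alpha^*$ with $R\in M_m(\max\mbox{-}\otimes_{j=1}^{k-1}\cl S_j)^+$ and $P_k\in M_{m_k}(\cl S_k)^+$. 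The inductive hypothesis puts $R$ (up to an arbitrarily small perturbation) in the form $\beta(P_1\otimes\cdots\otimes P_{k-1})\beta^*$, and then the identity $\beta(\,\cdot\,)\beta^*\otimes P_k=(\beta\otimes I_{m_k})(\,\cdot\,\otimes P_k)(\beta\otimes I_{m_k})^*$, followed by the same rearrangement of tensor legs as in the proof of Proposition \ref{p_coinc}, rewrites $R\otimes P_k$ in the canonical shape $\gamma(P_1'\otimes\cdots\otimes P_{k-1}'\otimes P_k)\gamma^*$; absorbing $\alpha$ into $\gamma$ places the perturbed $Q$ inside $D_n^{\rm max}$, and letting $\varepsilon\to0$ gives $Q\in C_n^{\rm max}$.

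The only step I expect to require care is this last combinatorial identification: bringing $\beta(P_1\otimes\cdots\otimes P_{k-1})\beta^*\otimes P_k$, viewed as a matrix over $\cl S_1\otimes\cdots\otimes\cl S_k$, into the form $\gamma(P_1'\otimes\cdots\otimes P_{k-1}'\otimes P_k)\gamma^*$, since the canonical identification $M_m(\cl S_1\otimes\cdots\otimes\cl S_{k-1})\cong M_{n_1}(\cl S_1)\otimes\cdots\otimes M_{n_{k-1}}(\cl S_{k-1})$ shuffles the matrix legs with the operator-system legs. But this is precisely the manipulation carried out in the proof of Proposition \ref{p_coinc} for $k=3$ (decomposing $\beta$ as a finite sum of simple tensors and relabelling), and it extends verbatim to $k$ factors. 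With this in place, the two cones coincide for every $n$, which is the assertion.
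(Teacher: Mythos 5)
Your proof is correct and follows the paper's intended route: the corollary is read off from Proposition \ref{p_coinc} together with the associativity of the bivariate maximal tensor product \cite[Theorem 5.5]{kptt}, the general-$k$ case being precisely the induction you spell out (the paper leaves it implicit, having restricted to $k=3$ only for notational simplicity), and the rearrangement of tensor legs indeed extends verbatim. The one small point to keep in mind is that your inner perturbation $\delta\,\alpha(1_m\otimes P_k)\alpha^*$ is not a multiple of the unit, but since $P_k\leq \lambda 1$ for some $\lambda>0$ and $D_n^{\rm max}$ is closed under sums, it is dominated by a scalar multiple of the unit and the Archimedeanisation bookkeeping goes through; the paper itself dismisses both such steps with a ``without loss of generality''.
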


\begin{proposition}\label{p_cstarsame}
If $\cl A_j$, $j\in [k]$, are unital C*-algebras then ${\max}$-$\otimes_{j=1}^k \cl A_j$ is completely order isomorphic to the image of $\otimes_{j=1}^{k} \cl A_{j}$ inside the maximal C*-algebra tensor product
${\rm C^*max}$-$\otimes_{j=1}^k \cl A_j$.
\end{proposition}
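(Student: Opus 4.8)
The statement is the multivariate analogue of the bivariate fact that, for unital C*-algebras $\cl A$ and $\cl B$, the operator system maximal tensor product $\cl A\otimes_{\max}\cl B$ coincides (completely order isomorphically) with the image of $\cl A\otimes\cl B$ inside $\cl A\otimes_{\rm C^*max}\cl B$; see \cite[Theorem 5.12]{kptt}. The plan is to reduce the $k$-variate case to the bivariate one by induction, using the associativity of the operator system maximal tensor product (\cite[Theorem 5.5]{kptt}, invoked via Corollary \ref{c_assocmax}) together with the associativity of the maximal C*-tensor product. First I would set $\cl D := {\max}\text{-}\otimes_{j=1}^{k-1}\cl A_j$ and $\cl C := {\rm C^*max}\text{-}\otimes_{j=1}^{k-1}\cl A_j$; by the inductive hypothesis the canonical unital map $\cl D\to \cl C$ is a complete order embedding onto the image of $\otimes_{j=1}^{k-1}\cl A_j$.

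The key step is then to pass from the operator system inclusion $\cl D\subseteq_{\rm c.o.i.}\cl C$ to an inclusion after tensoring on the right with $\cl A_k$. Here I would like to use the bivariate result: $\cl D\otimes_{\max}\cl A_k$ and $\cl C\otimes_{\max}\cl A_k$ are both ``max'' tensor products, and since $\cl D$ sits completely order isomorphically inside the C*-algebra $\cl C$, the map
$$\cl D\otimes_{\max}\cl A_k \longrightarrow \cl C\otimes_{\max}\cl A_k = \cl C\otimes_{\rm C^*max}\cl A_k$$
should be a complete order embedding. This requires a lemma of the type: if $\cl S\subseteq_{\rm c.o.i.}\cl A$ with $\cl A$ a C*-algebra, then $\cl S\otimes_{\max}\cl T\subseteq_{\rm c.o.i.}\cl A\otimes_{\max}\cl T$ for every operator system $\cl T$ — equivalently, the max operator system tensor product is injective on the left when the ambient object is a C*-algebra. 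This follows from functoriality of $\otimes_{\max}$ together with the fact that the inclusion $\cl S\hookrightarrow\cl A$ admits, on each matrix level, a completely positive (unital, after compression) left inverse supplied by an Arveson extension of the identity $\cl S\to\cl S\subseteq\cl B(H)$, combined with the characterisation of $M_n(\cl S\otimes_{\max}\cl T)^+$ via cones generated by $P\otimes Q$ that I recalled from Proposition \ref{p_coinc}; one checks directly that a positive element of $\cl S\otimes_{\max}\cl T$ which becomes positive in $\cl A\otimes_{\max}\cl T$ was already positive, by applying the conditional-expectation-type map. Finally I would invoke the bivariate identity \cite[Theorem 5.12]{kptt} once more: $\cl C\otimes_{\max}\cl A_k$ equals the image of $\cl C\otimes\cl A_k$ inside $\cl C\otimes_{\rm C^*max}\cl A_k$, which by associativity of the maximal C*-tensor norm is ${\rm C^*max}\text{-}\otimes_{j=1}^{k}\cl A_j$. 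Chaining the complete order isomorphisms
$${\max}\text{-}\otimes_{j=1}^{k}\cl A_j = \cl D\otimes_{\max}\cl A_k \subseteq_{\rm c.o.i.} \cl C\otimes_{\max}\cl A_k = {\rm C^*max}\text{-}\otimes_{j=1}^{k}\cl A_j$$
and tracking that the image is exactly $\otimes_{j=1}^k\cl A_j$ completes the argument.

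I expect the main obstacle to be the injectivity lemma for $\otimes_{\max}$ on the left over a C*-algebra. One has to be slightly careful: the naive Arveson-extension argument produces a unital completely positive map $\cl A\to\cl B(H)$ restricting to the identity on $\cl S$, but to conclude $\cl S\otimes_{\max}\cl T\subseteq_{\rm c.o.i.}\cl A\otimes_{\max}\cl T$ one must apply this map ``slicewise'' to elements of $M_n(\cl A\otimes_{\max}\cl T)^+$ and verify that the resulting element of $M_n(\cl B(H)\otimes_{\max}\cl T)$ lands back in (the copy of) $M_n(\cl S\otimes_{\max}\cl T)^+$; this is where functoriality of the max tensor product (property analogous to (T3) and the remarks following \cite[Theorem 5.5]{kptt}) and the explicit cone description from Proposition \ref{p_coinc} do the work. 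Since the paper explicitly signals that it will ``omit those arguments that can be easily adapted from \cite{kptt},'' I anticipate the intended proof is exactly this induction, with the bivariate injectivity/nuclearity facts cited rather than reproved. A secondary point worth stating explicitly is the base case $k=2$, which is precisely \cite[Theorem 5.12]{kptt}, and the compatibility of the two notions of associativity (operator-system-max and C*-max), both of which are classical.
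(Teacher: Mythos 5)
Your reduction to the bivariate case hinges on the lemma that if $\cl S\subseteq_{\rm c.o.i.}\cl A$ with $\cl A$ a unital C*-algebra, then $\cl S\otimes_{\max}\cl T\subseteq_{\rm c.o.i.}\cl A\otimes_{\max}\cl T$ for every operator system $\cl T$. This lemma is false, and that is a genuine gap. Applying it twice (once in each variable, using the symmetry of $\otimes_{\max}$) to the inclusions $\cl S_{X,A}\subseteq\cl A_{X,A}$ and $\cl S_{Y,B}\subseteq\cl A_{Y,B}$ would give $\cl S_{X,A}\otimes_{\max}\cl S_{Y,B}\subseteq_{\rm c.o.i.}\cl A_{X,A}\otimes_{\max}\cl A_{Y,B}$; since by \cite[Lemma 2.8]{pt} the operator system structure induced on $\cl S_{X,A}\otimes\cl S_{Y,B}$ from $\cl A_{X,A}\otimes_{\max}\cl A_{Y,B}$ is the commuting one, this would force $\cl S_{X,A}\otimes_{\max}\cl S_{Y,B}=\cl S_{X,A}\otimes_{\rm c}\cl S_{Y,B}$, contradicting $\cl C_{\rm ns}\neq\cl C_{\rm qc}$ — exactly the argument of Remark \ref{r_nondi}. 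The Arveson-extension device you propose cannot repair this: the extension is a u.c.p. map $\cl A\to\cl B(H)$, not a projection onto $\cl S$, so applying it slicewise to a positive element of $M_n(\cl A\otimes_{\max}\cl T)$ only lands you in $M_n(\cl B(H)\otimes_{\max}\cl T)^+$, and there is no route back into $M_n(\cl S\otimes_{\max}\cl T)^+$; this is precisely the discrepancy between $\otimes_{\max}$ and the left-injective (``el'') tensor product. Note also that the specific instance your induction requires, namely that $(\max\mbox{-}\otimes_{j=1}^{k-1}\cl A_j)\otimes_{\max}\cl A_k$ embeds completely order isomorphically into $\cl C\otimes_{\max}\cl A_k$ with $\cl C$ the maximal C*-tensor product of $\cl A_1,\dots,\cl A_{k-1}$, is (given the inductive hypothesis and \cite[Theorem 5.12]{kptt}) equivalent to the proposition itself, so as structured the induction is circular unless that embedding is proved by independent means.

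The paper proves the statement directly, without any injectivity claim: writing $D_n$ for the cones induced on $M_n(\otimes_{j=1}^k\cl A_j)$ by the inclusion into the maximal C*-algebraic tensor product, one has $C_n^{\rm max}\subseteq D_n$ by Remark \ref{max_lem}, and conversely $D_n$ is generated, as a closed (Archimedean) convex set, by elements $XX^*$ with $X$ a matrix over the algebraic tensor product; the scalar-matrix manipulation from the proof of Proposition \ref{p_coinc} exhibits each such $XX^*$ in the form $\beta(A^{(1)}\otimes A^{(2)}\otimes A^{(3)})\beta^*$ with $A^{(j)}$ positive, hence in $C_n^{\rm max}$. If you wish to retain your inductive framework, the dense-subsystem embedding above would have to be established by essentially this same cone computation, so the detour through the bivariate theorem buys nothing.
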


\begin{proof}
The proof relies on the ideas in the proof of \cite[Theorem 5.12]{kptt}; 
we only consider the case $k = 3$. 
Let $\cl{C} = \cl{A}_{1}\otimes_{\rm C^*max}\cl{A}_{2}\otimes_{\rm C^*max}\cl{A}_{3}$ denote the maximal ${\rm C^*}$-algebraic tensor product of $\cl{A}_{1}, \cl{A}_{2}$ and $\cl{A}_{3}$. The faithful inclusion of $\cl{A}_{1}\otimes \cl{A}_{2}\otimes \cl{A}_{3} \subseteq \cl{C}$ endows $\cl{A}_{1}\otimes \cl{A}_{2}\otimes \cl{A}_{3}$ with an operator system structure; 
let $\tau$-$\otimes_{j=1}^{3}\cl{A}_{j}$ denote this operator system, 
and let $C_{n}^{\rm max}$, $n\in \bb{N}$, 
be the matricial cones of ${\rm max}$-$\otimes_{j=1}^{3}\cl{A}_{j}$. 
For $n \in \bb{N}$, let $D_{n} = M_{n}(\tau$-$\otimes_{j=1}^{3}\cl{A}_{j})^{+}$. 
By maximality, $C_{n}^{\rm max} \subseteq D_{n}$.
%\marginpar{\tiny IT. Proof was modified here, avoiding the use of Krein's Theorem. It was shortened as well.}
%It suffices to show that the AOU spaces $(M_{n}(\otimes_{j=1}^{3}\cl{A}_{j}), C_{n}^{\rm max})$ and $(M_{n}(\otimes_{j=1}^{3}\cl{A}_{j}), D_{n})$ have the same state space.
%Let $f: \otimes_{j=1}^{3}\cl{A}_{j}\rightarrow \bb{C}$ be a linear map such that 
%$f(C_{n}^{\rm max}) \subseteq \bb{R}^{+}$. By Krein's Theorem, 
%it suffices to show that $f(D_{n}) \subseteq \bb{R}^{+}$. 
For the converse inclusion, note that, if 
$X = \sum\limits_{i=1}^{k}a_{i}^{(1)}\otimes a_{i}^{(2)}\otimes a_{i}^{(3)}$ where $a_{i}^{(1)} \in M_{n}(\cl{A}_{1}), a_{i}^{(2)} \in \cl{A}_{2}$ and $a_{i}^{(3)} \in \cl{A}_{3}$, then
\begin{gather*}
	XX^{*} = \sum\limits_{i, j=1}^{k}a_{i}^{(1)}a_{j}^{(1)*}\otimes a_{i}^{(2)}a_{j}^{(2)*}\otimes a_{i}^{(3)}a_{j}^{(3)*}.
\end{gather*}
\noindent Let $A^{(1)} = (a_{i}^{(1)}a_{j}^{(1)*})_{i, j}$,  $A^{(2)} = (a_{i}^{(2)}a_{j}^{(2)*})_{i, j}$ and $A^{(3)} = (a_{i}^{(3)}a_{j}^{(3)*})_{i, j}$; then $A^{(1)} \in M_{k}(M_{n}(\cl{A}_{1}))^{+}, A^{(2)} \in M_{k}(\cl{A}_{2})^{+}$ and $A^{(3)} \in M_{k}(\cl{A}_{3})^{+}$. 
Similarly to the proof of Proposition \ref{p_coinc}, there exists 
%$\beta = (I_{n}\otimes (e_{1}\otimes e_{1}), \hdots, I_{n}\otimes (e_{k}\otimes e_{k})) \in M_{n, nk^{3}}$ where $\{e_{i}\}_{i=1}^{k}$ is the standard orthonormal basis for $\bb{C}^{k}$; then
$\beta \in M_{n, nk^{3}}$ such that 
\begin{gather*}
	\beta(A^{(1)}\otimes A^{(2)}\otimes A^{(3)})\beta^{*} =  \sum\limits_{i, j=1}^{k}a_{i}^{(1)}a_{j}^{(1)*}\otimes a_{i}^{(2)}a_{j}^{(2)*}\otimes a_{i}^{(3)}a_{j}^{(3)*} = XX^{*}, 
\end{gather*}
\noindent and hence $XX^{*} \in C_{n}^{\rm max}$. Since $D_n$ is generated, as a closed convex set, by 
elements of the form $X^*X$, we have that $D_n\subseteq  C_{n}^{\rm max}$. 
%Note that $M_{n}(\cl{C}) = M_{n}(\cl{A}_{1}) \otimes_{\rm C^*max} \cl{A}_{2}\otimes_{\rm C^{*}max} \cl{A}_{3}$. 
%Since $D_n$ is generated by the operators of the form $XX^*$ as above, 
%we conclude that $f(D_{n}) \subseteq \bb{R}^{+}$. 
%If we let $u \in D_{n}$ with $f: M_{n}({\rm max}$-$\otimes_{j=1}^{3}\cl{A}_{j}) \rightarrow \bb{C}$ positive, then $f(C_{n}^{\rm max}) \subseteq \bb{R}^{+}$ (by definition). By our previous comments, $f(u) \geq 0$; then by \cite[Proposition 3.13]{pt}, $u \in C_{n}^{\rm max}$. As this establishes the case for $k = 3$, the general statement holds for any finite $k \in \bb{N}$. 
The proof is complete. 
\end{proof}

%%%%%%%%%%%%%%%%%%%%%%%%%%%%%%%%%%%%%%%%%%%%%%%%%

\subsubsection{The minimal tensor product}\label{sss_min}

According to \cite[Theorem 4.6]{kptt}, the bivariate minimal tensor product
is associative and hence one can unambiguously define 
the operator system 
$$\min\hspace{-0.05cm}\mbox{-}\hspace{-0.09cm}\otimes_{j=1}^k \cl S_j 
:= \cl S_1\otimes_{\min}\cdots \otimes_{\min}\cl S_k.$$

\begin{proposition}\label{p_minexp}
Let $n\in \bb{N}$. Then 
$$\hspace{-1cm} M_n(\min\hspace{-0.05cm}\mbox{-}\hspace{-0.09cm}\otimes_{j=1}^k \cl S_j)^+ 
= \{u\in M_n(\otimes_{j=1}^k \cl S_j) : (\otimes_{j=1}^k f_j)^{(n)}(u)\in M_{nn_1\dots n_k}^+$$
\vspace{-0.4cm}
$$\hspace{3.5cm} \mbox{for all u.c.p. maps } f_j : \cl S_j\to M_{n_j}, n_j\in \bb{N}, j\in [k]\}.$$
\end{proposition}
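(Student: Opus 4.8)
The plan is to reduce the multivariate statement to the bivariate one by induction on $k$, using the associativity result \cite[Theorem 4.6]{kptt} and the known characterisation of the positive cone of a bivariate minimal tensor product, namely that for operator systems $\cl S$ and $\cl T$ one has $M_n(\cl S\otimes_{\min}\cl T)^+ = \{u : (f\otimes g)^{(n)}(u)\geq 0 \text{ for all u.c.p. } f:\cl S\to M_p, g:\cl T\to M_q\}$ (equivalently, $\cl S\otimes_{\min}\cl T \subseteq_{\rm c.o.i.} \cl B(H)\otimes_{\min}\cl B(K)$ when $\cl S\subseteq_{\rm c.o.i.}\cl B(H)$, $\cl T\subseteq_{\rm c.o.i.}\cl B(K)$).

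First I would fix the "canonical" description of the right-hand side: denote by $(E_n)_{n\in\bb N}$ the family of cones on the right-hand side of the claimed identity, and check directly that $(E_n)_{n}$ is a matrix ordering on $\otimes_{j=1}^k\cl S_j$ with matrix order unit $\otimes_{j=1}^k 1_{\cl S_j}$, and that it is Archimedean — this is routine since each defining condition is a pullback of the Archimedean cone $M_{nn_1\cdots n_k}^+$ under a fixed family of linear maps, and unitality of the $f_j$ gives the order unit. One direction of the equality is then immediate: the identity map $\min\text{-}\otimes_{j=1}^k\cl S_j\to (\otimes_j\cl S_j,(E_n)_n)$ is unital, and by property (T3) applied with the u.c.p. maps $f_j:\cl S_j\to M_{n_j}$ the map $\otimes_j f_j$ is completely positive on $\min\text{-}\otimes_j\cl S_j$, so every element of $M_n(\min\text{-}\otimes_j\cl S_j)^+$ lies in $E_n$; hence $M_n(\min\text{-}\otimes_j\cl S_j)^+\subseteq E_n$.

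For the reverse inclusion I would argue by induction on $k$, the base case $k=2$ being \cite[Theorem 4.6]{kptt} (or the cited bivariate result). For the inductive step, write $\min\text{-}\otimes_{j=1}^k\cl S_j = (\min\text{-}\otimes_{j=1}^{k-1}\cl S_j)\otimes_{\min}\cl S_k$ using associativity. Given $u\in E_n$, I want to show $u$ is positive in this iterated bivariate minimal tensor product. By the bivariate characterisation, it suffices to show that $(F\otimes f_k)^{(n)}(u)\geq 0$ for all u.c.p. maps $F:\min\text{-}\otimes_{j=1}^{k-1}\cl S_j\to M_m$ and $f_k:\cl S_k\to M_{n_k}$. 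Here the key point is that a general u.c.p. map $F$ out of a minimal tensor product need not factor through a product of maps on the factors, so I cannot directly substitute $F=\otimes_{j<k}f_j$; instead I would use the inductive hypothesis in the form "$\cl T:=\min\text{-}\otimes_{j=1}^{k-1}\cl S_j$ has its positive cones detected by u.c.p. maps into matrix algebras" — equivalently $\cl T\subseteq_{\rm c.o.i.}\cl B(H)$ for a suitable $H$ realised as a direct sum/limit of matrix algebras via the family $\{\otimes_{j<k}f_j\}$ — so that $F$ can be approximated/extended compatibly. Concretely: realise $\cl T\subseteq_{\rm c.o.i.}\prod_{(n_1,\dots,n_{k-1})} M_{n_1\cdots n_{k-1}}$ (a product over all tuples of u.c.p. maps $\otimes_{j<k}f_j$), so $\cl T\otimes_{\min}\cl S_k\subseteq_{\rm c.o.i.}(\prod M_{n_1\cdots n_{k-1}})\otimes_{\min}\cl S_k$, and positivity in the latter is tested slicewise by the coordinate projections composed with arbitrary u.c.p. $f_k:\cl S_k\to M_{n_k}$, which is exactly membership in $E_n$.

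I expect the main obstacle to be the bookkeeping in this last step: making precise that the inductive hypothesis yields a c.o.i. embedding of $\min\text{-}\otimes_{j<k}\cl S_j$ into an (infinite) product of full matrix algebras indexed by u.c.p. maps, and that the minimal tensor product is compatible with such products in the sense needed to reduce $\cl T\otimes_{\min}\cl S_k$ positivity to the pullback conditions defining $E_n$. Once that is set up, the reverse inclusion falls out and, combined with the easy direction, gives $M_n(\min\text{-}\otimes_{j=1}^k\cl S_j)^+ = E_n$ for all $n$. (An alternative, perhaps cleaner, route that avoids infinite products: embed each $\cl S_j\subseteq_{\rm c.o.i.}\cl B(H_j)$, use associativity plus the bivariate fact $\cl B(H)\otimes_{\min}\cl B(K)\subseteq_{\rm c.o.i.}\cl B(H\otimes K)$ to get $\min\text{-}\otimes_j\cl S_j\subseteq_{\rm c.o.i.}\cl B(\otimes_j H_j)$, and then note that positivity in $\cl B(\otimes_j H_j)$ is detected by compressions to finite-dimensional subspaces, which after composing with conditional expectations onto the $\cl S_j$'s are exactly maps of the form $\otimes_j f_j$; this is likely the version I would actually write.)
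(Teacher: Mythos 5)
Your proposal is correct, and the route you say you would actually write (embed each $\cl S_j\subseteq_{\rm c.o.i.}\cl B(H_j)$, realise $\min$-$\otimes_{j=1}^k\cl S_j$ inside $\cl B(\otimes_j H_j)$ via associativity and injectivity, and detect positivity by finite-dimensional "product-type'' compressions) is essentially the paper's proof: there, positivity of $u$ satisfying the test condition is checked against vectors $\psi$ that are finite sums of elementary tensors, the quadratic form $\langle u\psi,\psi\rangle$ being rewritten through the completely positive maps $\Phi_j$ determined by the vector families (with \cite[Lemma 4.2]{kptt} bridging c.p. versus u.c.p.), while the converse inclusion is the routine functoriality/(T3) direction, just as in your first paragraph. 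One small imprecision to fix in your parenthetical version: compressions to \emph{arbitrary} finite-dimensional subspaces of $\otimes_j H_j$ do not restrict to maps of the form $\otimes_j f_j$ (and such subspaces need not sit inside finite-dimensional product subspaces, since vectors can have infinite Schmidt rank); you should instead compress to subspaces $K_1\otimes\cdots\otimes K_k$ with each $K_j\subseteq H_j$ finite dimensional, which suffices because vectors that are finite sums of elementary tensors are dense, and then the restriction to $\otimes_j\cl S_j$ is exactly $\otimes_j f_j$ with each $f_j$ u.c.p. into a matrix algebra (also, "conditional expectations onto the $\cl S_j$'s'' should just be "restrictions to $\cl S_j$''). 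Your primary inductive route is a genuinely different packaging and also works: the inductive hypothesis does give a complete order embedding of $\min$-$\otimes_{j<k}\cl S_j$ into a product of matrix algebras indexed by tuples of u.c.p. maps, injectivity of the bivariate minimal tensor product transports the problem there, and block-diagonality makes positivity detectable slicewise by the coordinate projections tensored with u.c.p. maps on $\cl S_k$; what the paper's (and your preferred) direct spatial argument buys is precisely avoiding this bookkeeping with infinite products and the factorisation issue for general u.c.p. maps out of a minimal tensor product, which you correctly identified as the delicate point.
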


\begin{proof}
We consider the case $k = 3$ only. For $n \in \bb{N}$, let
\begin{gather*}
	C_{n} = \{u \in M_{n}(\otimes_{j=1}^{3}\cl{S}_{j}): \; (\otimes_{j=1}^{3}f_{j})^{(n)}(u) \in M_{nn_{1}n_{2}n_{3}}^{+} \\\text{ for all u.c.p. maps } f_{j}: \cl{S}_{j} \rightarrow M_{n_{j}}, \; n_{j} \in \bb{N}, \; j = 1,2,3\}.
\end{gather*}
We will show that $C_{n} = M_{n}((\cl{S}_{1}\otimes_{\rm min} \cl{S}_{2}) \otimes_{\rm min} \cl{S}_{3})^{+}$. 
We assume, without loss of generality, that $\cl{S}_{j}\subseteq \cl{B}(\cl{H}_{j})$, $j = 1,2,3$. 
%\marginpar{\tiny IT. The use of the embeddings $\iota_{\cl S_j}$ is suppressed.}
%$\iota_{\cl{S}_{j}}: \cl{S}_{j} \rightarrow \cl{B}(\cl{H}_{j})$ are unital complete order embedding
%then the matrix order $(C_{n})_{n=1}^{\infty}$ is 
%the operator system structure on $\otimes_{j=1}^{3}\cl{S}_{j}$ arising from the embedding 
%$\otimes_{j=1}^{3}\iota_{\cl{S}_{j}}: \otimes_{j=1}^{3}\cl{S}_{j}\rightarrow \cl{B}(\otimes_{j=1}^{3}\cl{H}_{j})$. 
Let $P \in C_{n}$ and 
%\begin{gather*}
%	Q := (\otimes_{j=1}^{3}\iota_{\cl{S}_{j}})^{(n)}(P) \in \cl{B}((\otimes_{j=1}^{3}\cl{H}_{j})^{n})^{+}. 
%\end{gather*}
suppose that 
$P = \sum\limits_{i=1}^{\ell}X_{i}\otimes y_{i}\otimes z_{i}$, where $X_{i} \in M_{n}(\cl{S}_{1}), 
y_{i} \in \cl{S}_{2}$ and $z_{i} \in \cl{S}_{3}$ for $i \in [l]$. 
Furthermore, let $\psi = \sum\limits_{s=1}^{k}\xi_{s}\otimes \eta_{s}\otimes \beta_{s}$ where $\xi_{s} \in \cl{H}_{1}^{(n)}, \eta_{s} \in \cl{H}_{2}$ and $\beta_{s} \in \cl{H}_{3}$, $s \in [k]$. 
Let $\Phi_{1}: M_{n}(\cl{S}_{1}) \rightarrow M_{k}$, 
$\Phi_{2}: \cl{S}_{2} \rightarrow M_{k}$ and 
$\Phi_{3}: \cl{S}_{3} \rightarrow M_{k}$ be the maps given by
\begin{gather*}
	\Phi_{1}(X) = (\langle X\xi_{s}, \xi_{t}\rangle)_{s, t}, \;\;\;\; \Phi_{2}(y) = (\langle y\eta_{s}, \eta_{t}\rangle)_{s, t}, \;\;\;\; \Phi_{3}(z) = (\langle z\beta_{s}, \beta_{t}\rangle)_{s, t}.
\end{gather*}
\noindent It is clear that 
$\Phi_{j}$ is completely positive, $j = 1,2,3$. 
%As $Q \in C_{n}(\iota_{\cl{S}_{1}}(\cl{S}_{1}), \iota_{\cl{S}_{2}}(\cl{S}_{2}), \iota_{\cl{S}_{3}}(\cl{S}_{3}))$, 
Note that $(f_{1}\otimes f_{2}\otimes f_{3})^{(n)} = f_{1}^{(n)}\otimes f_{2}\otimes f_{3}$ for all 
linear maps $f_{j}: \cl{S}_{j} \rightarrow M_{m}$, $j = 1,2,3$. 
Thus, 
$(f_{1}^{(n)}\otimes f_{2}\otimes f_{3})(P) \in M_{nk^{3}}^{+}$ if $f_{j}: \cl{S}_{j}\rightarrow M_{k}$ 
is unital and completely positive, $i = 1, 2, 3$. 
By \cite[Lemma 4.2]{kptt}, $(\Phi_{1}\otimes \Phi_{2}\otimes \Phi_{3})(P) \in M_{nk^{3}}^{+}$. 
Let $e = (e_{1}\otimes e_{1}, \hdots, e_{k}\otimes e_{k}) \in \bb{C}^{k^{3}}$ where $\{e_{i}\}_{i=1}^{k}$ is the standard orthonormal basis of $\bb{C}^{k}$. We have 
\begin{eqnarray*}
\langle P\psi, \psi\rangle 
& = &
\sum\limits_{i=1}^{\ell}\sum\limits_{s, t=1}^{k}\langle X_{i}\xi_{s}, \xi_{t}\rangle\langle y_{i}\eta_{s}, \eta_{t}\rangle\langle z_{i}\beta_{s}, \beta_{t}\rangle\\
& = &
\sum\limits_{i=1}^{\ell}\langle (\Phi_{1}(X_{i})\otimes \Phi_{2}(y_{i})\otimes \Phi_{3}(z_{i}))e, e\rangle \\
& = &
\langle (\Phi_{1}\otimes \Phi_{2}\otimes \Phi_{3})(P)e, e\rangle.
\end{eqnarray*}
%\begin{eqnarray*}
%Q\psi 
%& = & 
%\bigg(\sum\limits_{i=1}^{\ell}X_{i}\otimes y_{i}\otimes z_{i}\bigg)\bigg(\sum\limits_{s=1}^{k}\xi_{s}\otimes \eta_{s}\otimes \beta_{s}\bigg)\\
%& = &
%\sum\limits_{i=1}^{\ell}\sum\limits_{s=1}^{k}(X_{i}\xi_{s}\otimes y_{i}\eta_{s}\otimes z_{i}\beta_{s}).
%\end{eqnarray*}
\noindent It follows that $P \in \cl{B}((\otimes_{j=1}^{3}\cl{H}_{j})^{n})^{+}$ and hence 
$C_{n} \subseteq D_{n}$. 
%\marginpar{\tiny IT. The detailed arguments for the reverse inclusion are left out.}
The reverse inclusion follows easily using the functoriality and injectivity of the minimal 
${\rm C^*}$-algebraic tensor product; the details are omitted. 
%\\\indent Next, we show that $D_{n} \subseteq C_{n}$. Let $\phi_{j}: \cl{S}_{j} \rightarrow M_{n_{j}}$ be 
%unital completely positive maps, for $n_{j} \in \bb{N}, j = 1,2,3$. 
%By Arveson's Extension Theorem there exists a unital completely positive map $\tilde{\phi_{j}}: \cl{B}(\cl{H}_{j}) \rightarrow M_{n_{j}}$ extending $\phi_{j}$, $j = 1,2,3$. 
%By the injectivity of the minimal ${\rm C^*}$-algebraic tensor product, 
%there exists a unital completely positive map 
%$\otimes_{j=1}^{3}\tilde{\phi}_{j}: {\rm C^*min}$-$\otimes_{j=1}^{3}\cl{B}(\cl{H}_{j}) \rightarrow M_{n_{1}n_{2}n_{3}}$. Applying Arveson's Extension Theorem again, 
%we obtain a unital completely positive 
%map $\gamma: \cl{B}(\otimes_{j=1}^{3}\cl{H}_{j})\rightarrow M_{n_{1}n_{2}n_{3}}$. 
%Thus, if $P = (p_{i,j}) \in D_{n}$
%then $(\gamma(p_{i,j})) \in M_{n_{1}n_{2}n_{3}}^{+}$. Noting that $\gamma(P) = (\otimes_{j=1}^{3}\phi_{j})(P)$, this shows $P \in C_{n}$; thus, $D_{n} \subseteq C_{n}$. 
%The proof is complete.
\end{proof}

\begin{remark}\label{r_funcmm}
\rm 
By their definition, and the fact that 
the bivariate minimal and maximal operator system tensor products are functorial 
\cite[Theorems 4.6 and 5.5]{kptt}, 
the minimal and the maximal multivariate operator system tensor products 
are functorial. 
Similarly, the minimal multivariate operator system tensor product is injective
(see \cite[Section 3]{kptt}). 
\end{remark}

%%%%%%%%%%%%%%%%%%%%%%%%%%%%%%%%%%%%%%%%%%%%%%%%%%

\subsubsection{The commuting tensor product}

Let $H$ be a Hilbert space and 
$\phi_j : \cl S_j\to \cl B(H)$ be a completely positive map, $j \in [k]$. 
We call the family $(\phi_j)_{j=1}^k$ \emph{commuting} if 
$\phi_i$ and $\phi_j$ have mutually commuting ranges whenever $i\neq j$. 
Let $\Pi_{j=1}^k \phi_i : \otimes_{j=1}^k \cl S_j\to \cl B(H)$ be the linear map, given by 
$$\left(\Pi_{j=1}^k \phi_j\right)(\otimes_{j=1}^k u_j) = \Pi_{j=1}^k \phi_j(u_j), \ \ \ u_j\in \cl S_j, j \in [k].$$
For $n\in \bb{N}$, let 
$$\hspace{-2cm} C_n^{\rm c} = \{u\in M_n(\otimes_{j=1}^k \cl S_j) : (\Pi_{j=1}^k \phi_j)^{(n)}(u) \in M_n(\cl B(H))^+,$$ 
\vspace{-0.4cm}
$$ \hspace{1cm} \mbox{ for all commuting families } (\phi_j)_{j=1}^k
\mbox{ and all Hilbert spaces } H\}.$$

\begin{lemma}\label{th_commult}
Let $\cl S_1,\dots,\cl S_k$ be operator systems and $\cl S = \otimes_{j=1}^k \cl S_i$.
Then $C_n^{\rm c}$ is a cone in $M_n(\cl S)$ and $\cl S$, equipped with the family $(C_n^{\rm c})_{n\in \bb{N}}$ and 
the element $1 := 1_{\cl S_1}\otimes\cdots \otimes 1_{\cl S_k}$ as an 
Archimedean matrix order unit, is an operator system. 
\end{lemma}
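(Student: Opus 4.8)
The plan is to mimic the proof of the bivariate commuting tensor product from \cite[Theorem 6.3]{kptt}, adapting each step to the multivariate setting. The first task is to verify that $(C_n^{\rm c})_{n\in\bb{N}}$ is a matrix ordering: each $C_n^{\rm c}$ is evidently a cone (closed under positive scalars and sums, since a sum of two completely positive commuting families acting on $H_1$ and $H_2$ yields a commuting family on $H_1\oplus H_2$ whose associated product map is the direct sum), it is compatible (for $\alpha\in M_{m,n}$, $u\in C_n^{\rm c}$ one has $(\Pi_j\phi_j)^{(m)}(\alpha u\alpha^*)=\alpha(\Pi_j\phi_j)^{(n)}(u)\alpha^*\ge 0$), and $C_n^{\rm c}\cap(-C_n^{\rm c})=\{0\}$ because every tensor product of faithful unital $*$-homomorphisms on the $\cl S_j\subseteq\cl B(H_j)$ into $\cl B(\otimes_j H_j)$ gives a commuting family, so $C_n^{\rm c}\subseteq M_n(\min\text{-}\otimes_j\cl S_j)^+$, which is proper. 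One should also record that $C_n^{\rm c}$ is non-trivial, e.g. it contains $1_n:=I_n\otimes 1$; this follows since for any commuting family $(\phi_j)$ of \emph{unital} completely positive maps — and it suffices, by a standard rescaling argument using $\phi_j(1)$ invertibility after compression, or more simply by noting the cone only needs to be tested against the unital normalisations — one gets $(\Pi_j\phi_j)^{(n)}(1_n)=I_n\otimes I_H\ge 0$. (Here one must be slightly careful: the family in the definition need not be unital; but $\Pi_j\phi_j$ applied to $1$ equals $\phi_1(1)\cdots\phi_k(1)$, a product of commuting positive operators, hence positive, so $1_n\in C_n^{\rm c}$ regardless.)

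Next I would show $1=1_{\cl S_1}\otimes\cdots\otimes1_{\cl S_k}$ is a matrix order unit for $(C_n^{\rm c})$: given a hermitian $u\in M_n(\cl S)$, I need $r>0$ with $r1_n\pm u\in C_n^{\rm c}$. Since $C_n^{\rm c}\supseteq M_n(\max\text{-}\otimes_j\cl S_j)^+$ is false in general in the wrong direction, the right comparison is $C_n^{\rm c}\subseteq M_n(\min\text{-}\otimes_j\cl S_j)^+$ and $C_n^{\rm c}\supseteq D_n^{\max}$-type elements; what one actually uses is that $1$ is already an order unit for the larger cone $M_n(\min\text{-}\otimes_j\cl S_j)^+$ (the minimal structure is an operator system by Proposition \ref{p_minexp} and Remark \ref{r_funcmm}), so there is $r>0$ with $r1_n\pm u\in M_n(\min)^+$; but I need membership in the \emph{smaller} cone $C_n^{\rm c}$. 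The clean way, exactly as in \cite{kptt}, is to observe that for any commuting family $(\phi_j)$, the map $\Pi_j\phi_j$ is completely bounded with $\|\Pi_j\phi_j\|_{\rm cb}\le\prod_j\|\phi_j\|_{\rm cb}$, and is completely positive on the \emph{maximal} structure; since $u$ is dominated by a multiple of $1$ in the max (equivalently min — for order-unit purposes the estimate $-r1_n\le u\le r1_n$ in any fixed faithful representation suffices), applying $(\Pi_j\phi_j)^{(n)}$, which is positive and unital up to the bounded factor $\phi_j(1)$, preserves the inequality, giving $r1_n\pm u\in C_n^{\rm c}$. I will phrase this via: pick a faithful unital representation $\cl S=\otimes_j\cl S_j\hookrightarrow\cl B(K)$ realising the min structure, get $r$ with $r1_n\pm u\ge0$ there, then for any commuting $(\phi_j)$ use that $\Pi_j\phi_j$ extends (Arveson) to a completely positive map on $C^*$-envelopes with commuting ranges and hence is completely positive for $\min$ on the relevant subspace — actually the cleanest route is \cite[Theorem 6.3]{kptt} applied iteratively, see the associativity remark below.

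Finally I would establish the Archimedean property: if $r1_n+u\in C_n^{\rm c}$ for all $r>0$ then $u\in C_n^{\rm c}$. This is immediate from the definition, since for each fixed commuting family $(\phi_j)$ and Hilbert space $H$, $(\Pi_j\phi_j)^{(n)}(r1_n+u)=r\,\phi_1(1)\cdots\phi_k(1)\otimes' + (\Pi_j\phi_j)^{(n)}(u)\ge0$ for all $r>0$, and $M_n(\cl B(H))^+$ is Archimedean, so letting $r\to0^+$ yields $(\Pi_j\phi_j)^{(n)}(u)\ge0$; as the family was arbitrary, $u\in C_n^{\rm c}$. Collecting these facts, $(\otimes_j\cl S_j,(C_n^{\rm c})_{n\in\bb{N}},1)$ satisfies the Choi--Effros axioms \cite[Theorem 13.1]{Pa} and is therefore an operator system. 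The main obstacle I anticipate is the order-unit step: one must ensure that the \emph{bound} obtained from one faithful representation transfers to \emph{all} commuting families simultaneously, which is exactly the point where the bivariate proof invokes the structure theory of the maximal tensor product and Arveson/Stinespring dilations; I expect to handle it either by an explicit $\|\cdot\|_{\rm cb}$ estimate or, more economically, by reducing to the bivariate result via the (independently verified, or here assumed) associativity $\cl S_1\otimes_{\rm c}\cdots\otimes_{\rm c}\cl S_k$, testing $\Pi_j\phi_j$ against the pairing $(\phi_1\cdot(\phi_2\cdots\phi_k))$. The remaining verifications — that this $C_n^{\rm c}$ agrees with the iterated bivariate commuting product, and functoriality — are routine and can be deferred to subsequent statements.
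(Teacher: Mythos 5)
Your verification that each $C_n^{\rm c}$ is a cone, the compatibility computation $(\Pi_{j}\phi_j)^{(n)}(\alpha u\alpha^*)=\alpha\big[(\Pi_j\phi_j)^{(m)}(u)\big]\alpha^*$, the properness via $C_n^{\rm c}\subseteq M_n(\min\mbox{-}\otimes_{j=1}^k\cl S_j)^+$, and the Archimedean argument (fix a commuting family, use closedness of the positive cone of $\cl B(H^n)$, let $r\to 0^+$) all agree with the paper. The genuine gap is the matrix order unit step, which you yourself flag as the ``main obstacle'' and do not close. The paper's resolution is elementary and you almost touch it: the family $(C_n^{\rm c})_n$ satisfies property (T2) -- if $P_j\in M_{n_j}(\cl S_j)^+$ and $(\phi_j)_{j=1}^k$ is a commuting family of completely positive maps, then $(\Pi_j\phi_j)$ applied to $\otimes_j P_j$ is a product of commuting positive operator matrices, hence positive -- so Remark \ref{max_lem} gives $D_n^{\rm max}\subseteq C_n^{\rm c}$ for every $n$; since $1$ is a matrix order unit for the matrix ordering $(D_n^{\rm max})_n$ (the straightforward fact recorded before Proposition \ref{p_coinc}), it is a fortiori a matrix order unit for the larger cones $C_n^{\rm c}$. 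Nothing more is needed. Your text instead asserts that the inclusion of the max cones into $C_n^{\rm c}$ ``is false in general in the wrong direction'' (it is true, and in the right direction), mentions the ``$D_n^{\max}$-type'' inclusion but then discards it, and argues from a bound $-r1_n\le u\le r1_n$ ``in any fixed faithful representation'', i.e.\ in the minimal structure. That is not sufficient: positivity of $r1_n\pm u$ in $M_n(\min\mbox{-}\otimes_j\cl S_j)$ does not place $r1_n\pm u$ in the smaller cone $C_n^{\rm c}$ (the inclusion $C_n^{\rm c}\subseteq M_n(\min)^+$ is proper in general), and no cb-norm estimate for $\Pi_j\phi_j$, nor Arveson/Stinespring dilation, nor ``positivity up to the bounded factor $\phi_j(1)$'' yields membership in $C_n^{\rm c}$; indeed $\phi_j(1)$ need not be invertible and the needed statement is an order-theoretic one about the max cones, not a norm estimate.

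Your proposed fallback -- reducing to the bivariate commuting product via associativity of $\otimes_{\rm c}$ -- is also not available: associativity of the commuting tensor product is not established in \cite{kptt} or in this paper, which is precisely why the multivariate commuting product is defined here directly by testing against commuting $k$-families (contrast with the min and max cases, where associativity of the bivariate products is quoted). So, as written, the order-unit step is a gap; it closes immediately once you replace the minimal-structure bound by the two facts $D_n^{\rm max}\subseteq C_n^{\rm c}$ and ``$1$ is a matrix order unit for $(D_n^{\rm max})_n$''.
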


\begin{proof}
It is straightforward to verify that $C_{n}^{\rm c}$ is a cone, $n\in \bb{N}$. 
Let $n, m \in \bb{N}$, $\alpha \in M_{n, m}$ and $u \in C_{m}^{\rm c}$. 
We have 
\begin{gather*}
	\bigg(\Pi_{j=1}^{k}\phi_{j}\bigg)^{(n)}(\alpha u\alpha^{*}) 
	= \alpha\left[\bigg(\Pi_{j=1}^{k}\phi_{j}\bigg)^{(m)}\hspace{-0.1cm}(u)\right]\alpha^{*}
	\in M_{n}(\cl B(H))^{+};
\end{gather*}
thus, the family $(C_{n}^{\rm c})_{n=1}^{\infty}$ is compatible. 

Let $\phi_{j} : \cl{S}_{j} \to M_{\ell_j}$ be a unital completely positive map, $j \in [k]$, 
and $\tilde{\phi_{j}}: \cl{S}_{j} \rightarrow M_{\ell_{1}\cdots\ell_{k}}$ be the map, given by 
\begin{gather*}
\tilde{\phi_{j}}(u)
= 1_{\cl{S}_{1}}\otimes \cdots \otimes \phi_{j}(u) \otimes \cdots \otimes 1_{\cl{S}_{k}}, \ \ \ u\in \cl S_j.
\end{gather*}
\noindent Then $\tilde{\phi_{j}}$ is completely positive for each $j \in [k]$, and $(\tilde{\phi_{j}})_{j=1}^{k}$ is a commuting family. Therefore
\begin{gather*}
	 (\phi_{1}\otimes \cdots \otimes \phi_{k})^{(n)}(u) = \bigg(\Pi_{j=1}^{k}\tilde{\phi_{j}}\bigg)^{(n)}\hspace{-0.1cm}(u) \geq 0, 
	\ \ \ u \in C_{n}^{\rm c}.
\end{gather*}
By Proposition \ref{p_minexp}, 
$C_{n}^{\rm c} \subseteq M_n(\min\hspace{-0.05cm}\mbox{-}\hspace{-0.09cm}\otimes_{j=1}^k \cl S_j)^+$, $n \in \bb{N}$. 
In particular, $C_{n}^{\rm c} \cap (-C_{n}^{\rm c}) = \{0\}$ for each $n \in \bb{N}$. 
On the other hand, by Remark \ref{max_lem}, 
$M_n(\max\hspace{-0.05cm}\mbox{-}\hspace{-0.09cm}\otimes_{j=1}^k \cl S_j)^+ \subseteq  C_n^{\rm c}$, $n \in \bb{N}$.
In particular, 
$1_{\cl{S}_{1}}\otimes \cdots \otimes 1_{\cl{S}_{k}}$ a matrix order unit; 
it is straightforward to verify that it is Archimedean. 
\end{proof}

We denote the operator system $\left(\cl S, (C_n^{\rm c})_{n\in \bb{N}}, 1\right)$ from Theorem \ref{th_commult} by 
${\rm c}$-$\otimes_{j=1}^k \cl S_j$.

\begin{theorem}\label{th_comte}
The map $(\cl S_1\dots,\cl S_k)\to 
{\rm c}$-$\otimes_{j=1}^k \cl S_j$ is a functorial operator system tensor product. 
\end{theorem}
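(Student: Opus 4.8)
The plan is to verify the three axioms (T1)--(T3) plus functoriality for the assignment $(\cl S_1,\dots,\cl S_k)\mapsto {\rm c}\mbox{-}\otimes_{j=1}^k\cl S_j$, reusing as much of the bivariate work in \cite{kptt} as possible. Property (T1) is already done: Theorem~\ref{th_commult} asserts precisely that $\left(\cl S,(C_n^{\rm c})_{n\in\bb{N}},1\right)$ is an operator system, so nothing remains there. For (T2) I would invoke the last line of the proof of Theorem~\ref{th_commult}, where it was observed via Remark~\ref{max_lem} that $M_n(\max\mbox{-}\otimes_{j=1}^k\cl S_j)^+\subseteq C_n^{\rm c}$; since the maximal multivariate tensor product satisfies (T2) (it contains all tensors $\otimes_{j=1}^k P_j$ of positives by Proposition~\ref{p_coinc}/Corollary~\ref{c_assocmax}), the same tensors land in $C_n^{\rm c}$, giving (T2) directly.

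For (T3), suppose $f_j:\cl S_j\to M_{\ell_j}$ is unital completely positive for $j\in[k]$. This is exactly the situation handled inside the proof of Theorem~\ref{th_commult}: one forms the amplified maps $\tilde f_j(u)=1_{\cl S_1}\otimes\cdots\otimes f_j(u)\otimes\cdots\otimes 1_{\cl S_k}$ into $M_{\ell_1\cdots\ell_k}$, notes these have mutually commuting ranges (the $j$-th slot being the only non-scalar one), so $(\tilde f_j)_{j=1}^k$ is a commuting family, and then $(\otimes_{j=1}^k f_j)^{(n)}(u)=(\Pi_{j=1}^k\tilde f_j)^{(n)}(u)\geq 0$ for all $u\in C_n^{\rm c}$ by the very definition of $C_n^{\rm c}$. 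Hence $\otimes_{j=1}^k f_j$ is completely positive, which is (T3). I would simply cite that portion of the earlier proof rather than rewrite it.

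The remaining, and only genuinely new, point is functoriality: given unital completely positive $\psi_j:\cl S_j\to\cl T_j$, I must show $\otimes_{j=1}^k\psi_j:{\rm c}\mbox{-}\otimes_{j=1}^k\cl S_j\to{\rm c}\mbox{-}\otimes_{j=1}^k\cl T_j$ is (unital and) completely positive. Unitality is immediate. For complete positivity, fix $u\in (C_n^{\rm c})_{\cl S}$ and a commuting family $\phi_j:\cl T_j\to\cl B(H)$; I need $\left(\Pi_{j=1}^k\phi_j\right)^{(n)}\bigl((\otimes_j\psi_j)^{(n)}(u)\bigr)\in M_n(\cl B(H))^+$. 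The key observation is that the composites $\phi_j\circ\psi_j:\cl S_j\to\cl B(H)$ form a commuting family (ranges of $\phi_j\circ\psi_j$ lie inside ranges of $\phi_j$), and $\Pi_{j=1}^k(\phi_j\circ\psi_j)=\bigl(\Pi_{j=1}^k\phi_j\bigr)\circ\bigl(\otimes_{j=1}^k\psi_j\bigr)$ as linear maps on $\otimes_j\cl S_j$; applying the $n$-th amplification and using $u\in (C_n^{\rm c})_{\cl S}$ with the commuting family $(\phi_j\circ\psi_j)_j$ gives positivity. This is a routine diagram-chase once the commuting-ranges point is isolated, so I anticipate the main (mild) obstacle is purely bookkeeping: being careful that amplifications, slice-wise tensoring, and composition interact correctly, i.e. that $(\Pi_{j=1}^k\phi_j)^{(n)}\circ(\otimes_{j=1}^k\psi_j)^{(n)}=(\Pi_{j=1}^k(\phi_j\circ\psi_j))^{(n)}$ on $M_n(\otimes_j\cl S_j)$, which follows by evaluating on elementary tensors $E\otimes(u_1\otimes\cdots\otimes u_k)$ with $E\in M_n$ and extending linearly. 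Assembling (T1)--(T3) and functoriality completes the proof.
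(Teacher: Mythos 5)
Your proposal is correct and follows essentially the same route as the paper: (T1) and (T3) are read off from Lemma \ref{th_commult} and its proof (via the amplified maps $\tilde f_j$), (T2) comes from the inclusion of the maximal cones into $C_n^{\rm c}$ guaranteed by Remark \ref{max_lem}, and functoriality is proved exactly as in the paper by observing that $(\phi_j\circ\psi_j)_{j=1}^k$ is again a commuting family and that $\Pi_{j=1}^k(\phi_j\circ\psi_j)=(\Pi_{j=1}^k\phi_j)\circ(\otimes_{j=1}^k\psi_j)$. No gaps; your extra remark about checking this identity on elementary tensors after amplification is just the bookkeeping the paper leaves implicit.
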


\begin{proof}
Lemma \ref{th_commult} and its proof establish properties (T1) and (T3), while Remark \ref{max_lem} implies 
property (T2). 
To show functoriality, let $\cl S_j$ and $\cl T_j$ be operator systems, $\phi_j : \cl S_j\to \cl T_j$ be
completely positive maps, $j\in [k]$, and $u\in M_n({\rm c}$-$\otimes_{j=1}^k \cl S_j)^+$. 
If $\psi_j : \cl T_j\to \cl B(H)$, $j\in [k]$, are completely positive maps such that the family $(\psi_j)_{j=1}^k$ is 
commuting then the family $(\psi_j\circ\phi_j)_{j=1}^k$ is commuting, and hence 
$$(\Pi_{j=1}^{k} \psi_j)((\otimes_{j=1}^k \phi_j)(u)) = (\Pi_{j=1}^{k} (\psi_j \circ \phi_j))(u) \in M_n(\cl B(H))^+.$$
It follows that 
$\otimes_{j=1}^k \phi_j : {\rm c}$-$\otimes_{j=1}^k \cl S_j\to {\rm c}$-$\otimes_{j=1}^k \cl T_j$ is 
completely positive.
\end{proof}

\begin{proposition}\label{c_star_equiv}
If $\cl A_{1}, \hdots, \cl A_{k}$ are unital $C^{*}$-algebras, then $\cl A_{1} \otimes_{\rm max} \cdots \otimes_{\rm max} \cl A_{k} = \cl{A}_{1} \otimes_{\rm c} \cdots \otimes_{\rm c} \cl{A}_{k}$. 
\end{proposition}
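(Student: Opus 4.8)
The plan is to reduce the multivariate statement to the bivariate identity $\cl A \otimes_{\rm max} \cl B = \cl A \otimes_{\rm c} \cl B$ for unital C*-algebras, which is \cite[Theorem 6.7]{kptt} (or can be cited from the literature on operator system tensor products), combined with the associativity results already established in this section. The key point is that on C*-algebras the commuting tensor product also behaves associatively, so one can peel off one factor at a time.

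First I would observe that the maximal multivariate tensor product has already been shown to coincide with the iterated bivariate maximal tensor product (Proposition \ref{p_coinc}, Corollary \ref{c_assocmax}), and that by Proposition \ref{p_cstarsame} it agrees with the image of $\otimes_{j=1}^k \cl A_j$ inside the maximal C*-algebra tensor product. So the left-hand side is understood. For the right-hand side, I would first record that ${\rm c}$-$\otimes_{j=1}^k \cl A_j$ always sits between the minimal and maximal multivariate tensor products (this is in the proof of Lemma \ref{th_commult}), so in particular ${\max}$-$\otimes_{j=1}^k \cl A_j \supseteq$ (has smaller cones than) nothing weaker than itself — more precisely, by property (T2) and Remark \ref{max_lem}, $M_n({\max}\mbox{-}\otimes_{j=1}^k\cl A_j)^+ \subseteq C_n^{\rm c}$, giving the inclusion ${\rm c}$-$\otimes \subseteq {\max}$-$\otimes$ at the level of cones. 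Thus one direction is automatic and the content is the reverse inclusion $C_n^{\rm c} \subseteq M_n({\max}\mbox{-}\otimes_{j=1}^k \cl A_j)^+$.

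For the reverse inclusion I would argue by induction on $k$, the base case $k=2$ being \cite[Theorem 6.7]{kptt}. For the inductive step, suppose $u \in C_n^{\rm c}$ for the $k$-tuple $(\cl A_1,\dots,\cl A_k)$. Set $\cl A = \cl A_1 \otimes_{\rm C^*max} \cdots \otimes_{\rm C^*max} \cl A_{k-1}$, viewed (via Proposition \ref{p_cstarsame} and the inductive hypothesis) so that its operator system structure is simultaneously ${\max}$-$\otimes_{j=1}^{k-1}\cl A_j$ and ${\rm c}$-$\otimes_{j=1}^{k-1}\cl A_j$. The plan is to show $u$, regarded as an element of $M_n(\cl A \otimes \cl A_k)$, lies in the bivariate commuting cone $M_n(\cl A \otimes_{\rm c} \cl A_k)^+$: given commuting completely positive maps $\phi : \cl A \to \cl B(H)$ and $\phi_k : \cl A_k \to \cl B(H)$, restrict $\phi$ along the canonical unital completely positive embeddings $\cl A_j \hookrightarrow \cl A$ to obtain maps $\phi_j : \cl A_j \to \cl B(H)$, $j \in [k-1]$; since the $\cl A_j$ generate $\cl A$ as a C*-algebra and their images commute with that of $\phi_k$ (because the ranges of $\phi$ and $\phi_k$ commute — here one uses that $\phi$ is a $*$-homomorphism up to Stinespring dilation, or more directly that commuting ranges of c.p. maps into $\cl B(H)$ remain commuting after composing with u.c.p. maps into the domain), the family $(\phi_1,\dots,\phi_{k-1},\phi_k)$ is a commuting $k$-family, and also $\Pi_{j=1}^{k-1}\phi_j$ agrees with $\phi$ on elementary tensors. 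Hence $(\phi \cdot \phi_k)^{(n)}(u) = (\Pi_{j=1}^k \phi_j)^{(n)}(u) \geq 0$, so $u \in M_n(\cl A \otimes_{\rm c} \cl A_k)^+$. By the base case $k=2$ applied to the unital C*-algebras $\cl A$ and $\cl A_k$, this cone equals $M_n(\cl A \otimes_{\rm max} \cl A_k)^+ = M_n({\max}\mbox{-}\otimes_{j=1}^k \cl A_j)^+$ by associativity of the maximal tensor product. This completes the induction.

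The main obstacle I anticipate is the careful bookkeeping in the inductive step: one must verify that an arbitrary commuting pair $(\phi, \phi_k)$ on the bivariate tensor product really does restrict to a genuine commuting $k$-family, and conversely that a commuting $k$-family assembles (via the universal property of the maximal C*-algebra tensor product, or via the identification $\cl A_1 \otimes_{\rm c} \cdots \otimes_{\rm c}\cl A_{k-1} \subseteq_{\rm c.o.i.} \cl A$) into a single commuting map $\phi$ on $\cl A$, so that the two cones $C_n^{\rm c}$ (the $k$-variate one) and $M_n(\cl A \otimes_{\rm c}\cl A_k)^+$ genuinely coincide and not merely one contains the other. One has to be slightly careful that $\cl A$ should be taken with its C*-algebraic (equivalently maximal operator system) structure, invoking the inductive hypothesis and Proposition \ref{p_cstarsame} at exactly the right place; but since all the requisite associativity and C*-identification results are already in hand, I expect this to go through without further difficulty. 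Alternatively, if one prefers to avoid induction, one can cite directly the known fact (e.g.\ from \cite{kavruk} on tensor products of C*-algebras as operator systems) that the commuting operator system tensor product restricted to C*-algebras is associative and agrees with the maximal one, but the inductive argument above keeps the paper self-contained.
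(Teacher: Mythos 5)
Your easy direction (the cones of ${\rm max}$-$\otimes_{j=1}^k\cl A_j$ sit inside $C_n^{\rm c}$, via Remark \ref{max_lem}) and your overall plan (reduce to the bivariate identity $\cl A\otimes_{\rm c}\cl B=\cl A\otimes_{\rm max}\cl B$ for C*-algebras plus associativity of $\otimes_{\rm max}$) are reasonable, but the inductive step contains a genuine gap. To show $u\in M_n(\cl A\otimes_{\rm c}\cl A_k)^+$ with $\cl A=\cl A_1\otimes_{\rm C^*max}\cdots\otimes_{\rm C^*max}\cl A_{k-1}$, you must handle an \emph{arbitrary} commuting pair of completely positive maps $\phi:\cl A\to\cl B(H)$, $\phi_k:\cl A_k\to\cl B(H)$. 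Your argument restricts $\phi$ to the factors to get $\phi_1,\dots,\phi_{k-1}$ and asserts (i) that $(\phi_1,\dots,\phi_{k-1},\phi_k)$ is a commuting family and (ii) that $\Pi_{j=1}^{k-1}\phi_j$ agrees with $\phi$ on elementary tensors. Both assertions fail for a general c.p.\ $\phi$: a compression $\phi(\cdot)=V^*(\cdot)V$ of, say, $M_2\otimes M_2$ to $\cl B(\bb{C}^2)$ has restrictions $\phi_1(a)=V^*(a\otimes 1)V$ and $\phi_2(b)=V^*(1\otimes b)V$ whose ranges need not commute, and $\phi_1(a)\phi_2(b)=V^*(a\otimes1)VV^*(1\otimes b)V\neq V^*(a\otimes b)V=\phi(a\otimes b)$ in general. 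Your parenthetical ``$\phi$ is a $*$-homomorphism up to Stinespring dilation'' does not close this: dilating $\phi$ alone does not preserve the commutation with $\phi_k$, so one needs Arveson's commutant lifting (exactly as in Lemma \ref{l_comlift}) to carry $\phi_k$ along to the dilation space before factoring the resulting $*$-homomorphism of $\cl A$ into commuting $*$-homomorphisms of the $\cl A_j$. With that extra dilation step your induction can be repaired, but as written the key step is false. Also, your proposed shortcut of ``citing associativity of $\otimes_{\rm c}$ on C*-algebras from the literature'' is not available: the multivariate $\otimes_{\rm c}$ is being defined in this paper, and its associativity is not established (nor needed) here.

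For comparison, the paper avoids the induction entirely: to see $C_n^{\rm c}\subseteq M_n({\rm max}$-$\otimes_{j=1}^k\cl A_j)^+$ it tests positivity of $u$ against an arbitrary u.c.p.\ map $\phi$ on ${\rm max}$-$\otimes_{j=1}^k\cl A_j$, uses Proposition \ref{p_cstarsame} (identification with the image in the maximal C*-tensor product) together with Arveson extension and Stinespring to reduce to the case where $\phi$ is a $*$-homomorphism, and then factors $\phi=\Pi_{j=1}^k\phi_j$ into $*$-homomorphisms with commuting ranges by the universal property of $\otimes_{\rm C^*max}$; membership of $u$ in $C_n^{\rm c}$ then gives $\phi^{(n)}(u)\geq 0$ at once. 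This sidesteps precisely the point where your argument breaks, because after Stinespring one only ever works with $*$-homomorphisms, for which restriction to the factors genuinely yields a commuting family multiplying back to $\phi$.
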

\begin{proof}
By Remark \ref{max_lem}, 
$M_{n}({\rm max}$-$\otimes_{j=1}^{k}\cl A_{j})^{+} \subseteq M_{n}({\rm c}$-$\otimes_{j=1}^{k}\cl A_{j})^{+}$, $n \in \bb{N}$.
For the reverse inclusion, fix $u \in M_{n}({\rm c}$-$\otimes_{j=1}^{k} \cl A_{j})^{+}$. 
It suffices to show that $\phi^{(n)}(u) \geq 0$ for all unital completely positive maps
$\phi: \cl{A}_{1} \otimes_{\rm max}\cdots \otimes_{\rm max} \cl A_{k}\rightarrow \cl B(H)$. 
By Proposition \ref{p_cstarsame}, and an application of 
Stinespring's Theorem, we may further assume $\phi$ is a $*$-homomorphism.
By associativity of the maximal tensor product of operator systems
(see \cite[Theorem 5.5]{kptt}) and the universal property of the maximal tensor product of 
C*-algebras, we write 
%$\phi = \psi\cdot \phi_{k}$ where 
%$\phi_{k}: \cl{A}_{k}\rightarrow \cl B(\cl{H})$ and $\psi: \cl{A}_{1}\otimes_{\rm max} \cdots \otimes_{\rm max} \cl{A}_{k-1} \rightarrow \cl{B}(\cl{H})$ are unital $*$-homomorphisms with mutually commuting ranges. 
%Continuing inductively, 
$\phi = \Pi_{j=1}^{k}\phi_{j}$, where $\phi_{j}: \cl A_{j}\rightarrow \cl B(H)$ are  $*$-homomorphisms with commuting ranges. By assumption, we have
\begin{gather*}
	\phi^{(n)}(u) = \bigg(\Pi_{j=1}^{k}\phi_{j}\bigg)^{(n)}(u) \geq 0,
\end{gather*}
\noindent and so $u \in M_{n}({\rm max}$-$\otimes_{j=1}^{k}\cl A_{j})^{+}$. 
The proof is complete. 
\end{proof}

For the next theorem, recall that the \emph{coproduct} $\cl S_1\oplus_1\cl S_2$ of two operator systems
is the (unique, up to a unital complete order isomorphism) operator system, containing $\cl S_1$ and $\cl S_2$ 
as operator subsystems, satisfying the following universal property: for every 
operator system $\cl R$ and unital completely positive maps $\phi_i : \cl S_i\to \cl R$, $i = 1,2$, there exists 
a unique unital completely positive map $\phi : \cl S_1\oplus_1\cl S_2\to \cl R$ such that 
$\phi|_{\cl S_i} = \phi_i$, $i = 1,2$ (see \cite{fkpt_NYJ, kavruk}). 
We denote by $\cl A\ast_1\cl B$ the C*-free product of the unital C*-algebras $\cl A$ and $\cl B$, 
amalgamated over their units. 
The next statement extends \cite[Lemma 2.8]{pt} to the multivariate case. 

\begin{theorem}\label{l_kas}
Let $\cl A_i^{(j)}$ be a unital C*-algebra, $i \in [n_j]$, $j \in [k]$, and set 
$\cl S_j = \cl A_1^{(j)}\oplus_1\cdots\oplus_1\cl A_{n_j}^{(j)}$ and 
$\cl A_j = \cl A_1^{(j)}\ast_1\cdots\ast_1\cl A_{n_j}^{(j)}$, $j\in [k]$. 
Then 
$${\rm c}\mbox{-}\hspace{-0.07cm}\otimes_{j=1}^k \hspace{-0.07cm}\cl S_j 
\subseteq_{\rm c.o.i.} \cl A_1 \otimes_{\max} \cdots \otimes_{\max}\cl A_k.$$
\end{theorem}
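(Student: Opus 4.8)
The plan is to mimic the bivariate argument of \cite[Lemma 2.8]{pt}, using the multivariate commuting tensor product machinery just developed. First I would recall that, for each $j\in[k]$, the coproduct $\cl S_j = \cl A_1^{(j)}\oplus_1\cdots\oplus_1\cl A_{n_j}^{(j)}$ sits completely order isomorphically inside the amalgamated free product $\cl A_j = \cl A_1^{(j)}\ast_1\cdots\ast_1\cl A_{n_j}^{(j)}$; this is the universal property of the coproduct together with the canonical unital embeddings $\cl A_i^{(j)}\hookrightarrow\cl A_j$, exactly as in the bivariate case. Consequently $\otimes_{j=1}^k\cl S_j$ is, as a vector space, a subspace of $\otimes_{j=1}^k\cl A_j$, and by Proposition~\ref{c_star_equiv} the target $\cl A_1\otimes_{\max}\cdots\otimes_{\max}\cl A_k$ coincides with ${\rm c}$-$\otimes_{j=1}^k\cl A_j$. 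So the statement to prove becomes: ${\rm c}$-$\otimes_{j=1}^k\cl S_j\subseteq_{\rm c.o.i.}{\rm c}$-$\otimes_{j=1}^k\cl A_j$.

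The nontrivial inclusion is that a matrix $u\in M_n(\otimes_{j=1}^k\cl S_j)$ which is positive in ${\rm c}$-$\otimes_{j=1}^k\cl S_j$ is already positive in ${\rm c}$-$\otimes_{j=1}^k\cl A_j$ (the reverse inclusion of cones is immediate from functoriality of the commuting tensor product, Theorem~\ref{th_comte}, applied to the inclusions $\cl S_j\hookrightarrow\cl A_j$). To see this, take a commuting family $(\psi_j)_{j=1}^k$ of completely positive maps $\psi_j:\cl A_j\to\cl B(H)$; I must show $(\Pi_{j=1}^k\psi_j)^{(n)}(u)\ge 0$. The key is that the restriction $\psi_j|_{\cl S_j}$ is completely positive, and — crucially — I may upgrade each $\psi_j$ on $\cl A_j$ to a $*$-homomorphism at the cost of enlarging $H$: by Stinespring's theorem $\psi_j(\cdot)=W_j^*\pi_j(\cdot)W_j$ for a $*$-representation $\pi_j$ of $\cl A_j$. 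The obstacle is that these dilations need not preserve the mutual commutation of the ranges. This is precisely where the argument in \cite[Lemma 2.8]{pt} uses a commutant-lifting / simultaneous-dilation step: one dilates the commuting family $(\psi_j)_{j}$ to a commuting family of $*$-homomorphisms $(\pi_j)_j$ on a common Hilbert space $K$ with a single isometry $V:H\to K$ so that $\psi_j=V^*\pi_j(\cdot)V$ for all $j$ and $\Pi_j\psi_j=V^*(\Pi_j\pi_j)V$ (this uses an iterated application of Arveson's commutant lifting theorem \cite[Theorem 12.7]{Pa}, exactly in the spirit of the proof of Lemma~\ref{l_comlift}). Then $\Pi_j\pi_j$ factors through $\cl A_1\otimes_{\rm C^*max}\cdots\otimes_{\rm C^*max}\cl A_k$ by the universal property of the maximal C*-tensor product, and its restriction to $\otimes_j\cl S_j$ is, by construction, a completing positive map that agrees with the commuting family $(\pi_j|_{\cl S_j})_j$; hence $(\Pi_j\pi_j)^{(n)}(u)\ge 0$ since $u\in M_n({\rm c}\text{-}\otimes_j\cl S_j)^+$ and $(\pi_j|_{\cl S_j})_j$ is a commuting family of completely positive maps into $\cl B(K)$. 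Compressing by $V^{(n)}$ gives $(\Pi_j\psi_j)^{(n)}(u)\ge 0$, as required.

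I expect the main obstacle to be the simultaneous commuting dilation step, i.e.\ making precise that a commuting $k$-tuple of completely positive maps on C*-algebras dilates to a commuting $k$-tuple of $*$-representations on one Hilbert space. For $k=2$ this is the standard application of the commutant lifting theorem used in \cite[Lemma 2.8]{pt}; for general $k$ one proceeds inductively: having dilated $\psi_1,\dots,\psi_{j}$ to commuting $*$-homomorphisms $\pi_1,\dots,\pi_j$ with a common isometry, one regards $\psi_{j+1}$ as a completely positive map into the commutant of $\bigcup_{i\le j}\pi_i(\cl A_i)$ (this is legitimate because the original ranges commute, and the intertwining isometry carries this commutation up), and applies Arveson's theorem once more to obtain $\pi_{j+1}$ with range in $(\bigcup_{i\le j}\pi_i(\cl A_i))'$ intertwined with the previous isometry. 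The bookkeeping here parallels, and is in fact simpler than, the bookkeeping in the proof of Lemma~\ref{l_comlift}, so I would present it by reducing to $k=2$ and then citing that lemma's technique with an inductive remark, rather than writing the full iteration out. Functoriality (Theorem~\ref{th_comte}) and injectivity of the minimal tensor product (Remark~\ref{r_funcmm}) handle the routine cone inclusions and the fact that the embedding is onto its range, completing the proof that the inclusion is a complete order isomorphism onto its image.
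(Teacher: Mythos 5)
You have the two directions of the complete order embedding interchanged, and the direction you dismiss as immediate is in fact the entire content of the theorem. Functoriality of the commuting tensor product (Theorem \ref{th_comte}), applied to the u.c.p.\ inclusions $\cl S_j\hookrightarrow\cl A_j$ together with Proposition \ref{c_star_equiv}, shows that the tensor map ${\rm c}\mbox{-}\otimes_{j=1}^k\cl S_j\to{\rm c}\mbox{-}\otimes_{j=1}^k\cl A_j$ is completely positive, i.e.\ it pushes the cones \emph{forward}; this is precisely the inclusion you label ``nontrivial'', and your Stinespring/commutant-lifting machinery is not needed for it: if $u\in M_n({\rm c}\mbox{-}\otimes_{j=1}^k\cl S_j)^+$ and $(\psi_j)_{j=1}^k$ is a commuting family of completely positive maps on the $\cl A_j$, then the restrictions $\psi_j|_{\cl S_j}$ already form a commuting family of completely positive maps on the $\cl S_j$, and $\bigl(\Pi_{j=1}^k\psi_j\bigr)^{(n)}(u)=\bigl(\Pi_{j=1}^k\psi_j|_{\cl S_j}\bigr)^{(n)}(u)\ge 0$ by the very definition of the commuting cone --- no dilation at all. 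What functoriality does \emph{not} give, and what must actually be proved, is the converse: if $u\in M_n(\otimes_{j=1}^k\cl S_j)$ is positive in $\cl A_1\otimes_{\max}\cdots\otimes_{\max}\cl A_k$, then $\bigl(\Pi_{j=1}^k\phi_j\bigr)^{(n)}(u)\ge 0$ for every commuting family of completely positive maps $\phi_j$ defined only on the operator systems $\cl S_j$. A complete order embedding never follows from functoriality (which only yields complete positivity of the inclusion), and this step is absent from your proposal.

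The missing idea is how to pass from a commuting family on the coproducts $\cl S_j$ to a commuting family on the free products $\cl A_j$. Arveson extension produces extensions with no control on commutation, and Arveson's commutant lifting theorem concerns maps already defined on C*-algebras (it is the right tool in Lemma \ref{l_comlift}, but not here), so your inductive dilation scheme does not address the problem. The paper's proof uses Boca's theorem \cite{boca}: writing each $\phi_j$ in terms of the u.c.p.\ maps $\phi^j_\ell$ on the blocks $\cl A^{(j)}_\ell$, Boca's free-product extension yields a u.c.p.\ map $\tilde{\phi_j}$ on $\cl A_j$ whose values are products of the operators $\phi^j_\ell(a)$, hence lie in the C*-algebra generated by $\phi_j(\cl S_j)$; this is what guarantees that the extended ranges still mutually commute, so that $\Pi_{j=1}^k\tilde{\phi_j}$ is completely positive on the maximal tensor product (Proposition \ref{c_star_equiv}) and can be applied to $u$. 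In addition, since the commuting cone is defined using arbitrary (not necessarily unital) commuting completely positive families, one must reduce to the unital case; the paper does this by renormalising with $\phi_j(1)^{-1/2}$ (using that these operators commute with the other ranges) and an $\varepsilon$-perturbation to remove the invertibility assumption. None of these ingredients appears in your outline, so the argument as proposed does not prove the theorem.
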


\begin{proof}
Set $\cl S = {\rm c}\mbox{-}\hspace{-0.07cm}\otimes_{j=1}^k \hspace{-0.07cm}\cl S_j$ for brevity. 
By Theorem \ref{th_comte} and Proposition \ref{c_star_equiv}, the inclusion map 
$\iota : {\rm c}\mbox{-}\hspace{-0.07cm}\otimes_{j=1}^k \hspace{-0.07cm}\cl S_i
\to {\max}\mbox{-}\hspace{-0.07cm}\otimes_{j=1}^k \hspace{-0.07cm}\cl A_i$
is completely positive. 
Let $u \in M_{n}(\cl{S}) \cap M_{n}({\rm max}$-$\otimes_{j=1}^{k}\cl{A}_{j})^{+}$; 
we will show that $u \in M_{n}(\cl{S})^{+}$. 
For $j\in [k]$, let $\phi_{j}: \cl{S}_{j} \rightarrow \cl B(H)$ 
be a completely positive map, $j\in [k]$, such that the family $(\phi_j)_{j=1}^k$ is commuting. 
Assume first that $\phi_{j}$ is unital for each $j\in [k]$.  
Using \cite[Theorem 5.2]{fkpt_NYJ}, 
identify $\cl{S}_{j}$ with the linear span of $\cl{A}_{1}^{(j)}, \hdots, \cl{A}_{n_{j}}^{(j)}$ 
inside $\cl{A}_{j}$, $j \in [k]$. Each map $\phi_{j}$ is determined by a family $(\phi_{\ell}^{j})_{\ell=1}^{n_{j}}$ of maps 
$\phi_{\ell}^{j}: \cl{A}_{\ell}^{(j)}\rightarrow \cl B(H)$, which are unital and completely positive, in that
\begin{gather*}
	\phi_{j}(a_{1}^{j}+\cdots +a_{n_{j}}^{j}) = \phi_{1}^{j}(a_{1}^{j})+\cdots+\phi_{n_{j}}^{j}(a_{n_{j}}^{j}),
\end{gather*}
\noindent $a_{\ell}^{j} \in \cl{A}_{\ell}^{(j)}$, $\ell = 1, \hdots, n_{j}$. 
As shown in \cite{boca}, if $j\in [k]$, there exists a unital completely positive map 
$\tilde{\phi_{j}}: \cl{A}_{j} \rightarrow \cl B(H)$, given by
\begin{gather*}
	\tilde{\phi_{j}}(a_{\ell_{1}}^{j}\cdots a_{\ell_{m}}^{j}) = \phi_{\ell_{1}}^{j}(a_{\ell_{1}}^{j})\cdots\phi_{\ell_{m}}^{j}(a_{\ell_{m}}^{j}),
\end{gather*}
\noindent where $a_{\ell_{p}}^{j} \in \cl{A}_{\ell_{p}}^{j}$ for each $p$, and $\ell_{1} \neq \cdots \neq \ell_{m}$.
Since ${\rm ran}\phi_{\ell}^{j} \subseteq {\rm ran}\phi_{j}$, $j\in [k]$, 
we have that $\phi_{\ell}^{i}(\cl{A}_{\ell}^{(i)})$ and $\phi_{m}^{j}(\cl{A}_{m}^{(j)})$ mutually commute
whenever $i\neq j$. 
As $\cl{A}_{j}$ is generated, as a C*-algebra, by the operator subsystems $\cl A_i^{(j)}$, $i\in [n_j]$, 
we have that $\tilde{\phi_{i}}(\cl{A}_{i})$ and $\tilde{\phi_{j}}(\cl{A}_{j})$ commute whenever $i \neq j$. 
By Proposition \ref{c_star_equiv}, the map $\Pi_{j=1}^k \tilde{\phi_{j}}$ is completely positive
on $\cl A_1\otimes_{\max}\cdots\otimes_{\max}\cl A_k$, and thus
\begin{gather*}
	\bigg(\Pi_{j=1}^{k}\phi_{j}\bigg)^{(n)}(u) = (\tilde{\phi_{1}}\cdots\tilde{\phi_{k}})^{(n)}(u) \geq 0,
\end{gather*}
\noindent showing $u \in M_{n}(\cl {S})^{+}$. 

Now relax the assumption on the unitality of the maps $\phi_{j}$. 
Without loss of generality, assume that $\|\phi_{j}\|\leq 1$, so that the operator 
$T_{j} := \phi_{j}(1_{\cl{S}_{j}})$ is a positive contraction. 
Assume first that $T_{j}$ is invertible for each $j\in [k]$. 
Let $\cl M_{j}$ be the von Neumann algebra generated by $\phi_{j}(\cl{S}_{j})$; 
as $T_{i} \in {\rm ran}(\phi_{i})$, we have that $T_{i} \in \cl M_{j}'$ if $i\neq j$. 
We conclude that 
$T_{i}^{-1}, T_{i}^{1/2}$ and $T_{i}^{-1/2}$ are in $\cl M_{j}'$, whenever $i\neq j$. 
Define the maps $\hat{\phi_{j}}: \cl{S}_{j}\rightarrow \cl B(H)$ via
\begin{gather*}
	\hat{\phi_{j}}(u) = T_{j}^{-1/2}\phi_{j}(u)T_{j}^{-1/2}, \ \ \ u\in \cl S_j, \, j\in [k].
\end{gather*}
As $T_{j}^{-1/2}$ is positive, 
$\hat{\phi_{j}}$ is a (unital) completely positive map, $j\in [k]$. 
We have that $T_{i} \in \cl M_{j}'$ if $i \neq j$; in particular, $T_{i}T_{j} = T_{j}T_{i}$ when $i \neq j$, and hence 
\begin{gather*}
	T_{i}^{-1/2}T_{j}^{-1/2} = T_{j}^{-1/2}T_{i}^{-1/2} \ \ \mbox{ and } \ \ T_{i}^{1/2}T_{j}^{1/2} = T_{j}^{1/2}T_{i}^{1/2}, \ \ \ \ i\neq j.
\end{gather*}
If $u \in \cl{S}_{i}$, $v \in \cl{S}_{j}$ and $i\neq j$, then 
\begin{eqnarray*}
	\hat{\phi_{i}}(u)\hat{\phi_{j}}(v) 
	& = &  
	T_{i}^{-1/2}\phi_{i}(u)T_{i}^{-1/2}T_{j}^{-1/2}\phi_{j}(v)T_{j}^{-1/2}\\ 
	& = &  
	T_{j}^{-1/2}T_{i}^{-1/2}\phi_{i}(u)T_{i}^{-1/2}\phi_{j}(v)T_{j}^{-1/2}\\
	& = &  
	T_{j}^{-1/2}\phi_{j}(v)T_{i}^{-1/2}\phi_{i}(u)T_{i}^{-1/2}T_{j}^{-1/2}\\
	& = &  
	T_{j}^{-1/2}\phi_{j}(v)T_{j}^{-1/2}T_{i}^{-1/2}\phi_{i}(u)T_{i}^{-1/2}
	= 
        \hat{\phi_{j}}(v)\hat{\phi_{i}}(u).
\end{eqnarray*}
Thus, the family $(\hat{\phi_{j}})_{j=1}^k$ is commuting. 
%We also note that
%\begin{gather*}
%	\hat{\phi_{j}}(1_{\cl{S}_{j}}) = T_{j}^{-1/2}T_{j}T_{j}^{-1/2} = I,
%\end{gather*}
%\noindent and so $\hat{\phi_{j}}$ is unital, $j\in [k]$. 
%\marginpar{\tiny IT. Proof shortened here, some changes. Check.}
Using the previous paragraph, 
%we obtain unital completely positive extensions 
%$\psi_{j}: \cl{A}_{j} \rightarrow \cl B(H)$ of $\hat{\phi_{j}}$ that form a commuting family. 
%The map $\psi = \Pi_{j=1}^k \psi_{j}$ extends 
we see that $\Pi_{j=1}^k \hat{\phi}_{j}(u) \geq 0$.
%and $\psi^{(n)}(u) \geq 0$. 
Let $T = T_{1}^{1/2}\cdots T_{k}^{1/2}$; then $T$ is a positive operator and thus
\begin{gather*}
\bigg(\Pi_{j=1}^{k}\phi_{j}\bigg)^{(n)}(u) = (T\otimes I_n)((\Pi_{j=1}^k \hat{\phi}_{j})(u))(T\otimes I_n)   \geq 0,
\end{gather*}
\noindent implying $u \in M_{n}(\cl S)^{+}$. 

Finally, relax the assumption on the invertibility of the operators $T_{j}$, $j\in [k]$. 
Let $\epsilon > 0$; then $T_{j}+\epsilon I$ (positive and) invertible, $j\in [k]$. 
Let $f_{j}: \cl{S}_{j} \rightarrow \bb{C}$ be a state, and define the map $\hat{\phi_{j}}: \cl{S}_{j} \rightarrow \cl B(H)$ by letting
\begin{gather*}
	\hat{\phi_{j}}(u) =\phi_{j}(u)+\epsilon f_{j}(u)I, \ \ \ u\in \cl{S}_{j}, \, j\in [k]. 
\end{gather*}
We have that $(\hat{\phi_{j}})_{j=1}^{k}$ is a commuting family
of completely positive maps. 
%As $\epsilon f_{j}(x)I$ is a scalar multiple of the identity for each $x \in \cl{S}_{j}$, it is clear that $\epsilon f_{j}(x)I$ commutes with any operator in $\cl B(H)$; 
Furthermore, 
$\hat{\phi_{j}}(1_{\cl{S}_{j}}) = T_{j}+\epsilon I$, 
and so $\hat{\phi_{j}}(1_{\cl{S}_{j}})$ is invertible, $j \in [k]$. 
By the previous paragraph, $(\Pi_{j=1}^k \hat{\phi}_{j})^{(n)}(u)\in M_n(\cl B(H))^+$. 
Letting $\epsilon\to 0$, we obtain 
$(\Pi_{j=1}^k \phi_{j})^{(n)}(u)\in M_n(\cl B(H))^+$; 
thus $u \in M_{n}(\cl{S})^{+}$ and the proof is complete.
\end{proof}

%%%%%%%%%%%%%%%%%%%%%%%%%%%%%%%%%%%%%%%%%%%%%%%%%
%%%%%%%%%%%%%%%%%%%%%%%%%%%%%%%%%%%%%%%%%%%%%%%%%

\subsection{Representations of correlations via operator systems}\label{ss_reptensor}

In this subsection, we describe the correlations from the classes 
$\cl C_{\rm sns}$, $\cl C_{\rm sqc}$ and $\cl C_{\rm sqa}$ in terms of states on 
operator system tensor products. 
Recall that $\tilde{e}_{x,y}$ are the canonical generators of the operator system $\cl S_{X,Y}$ and, 
for a linear functional 
$$s : \cl S_{X_2,X_1}\otimes \cl S_{Y_2,Y_1}\otimes \cl S_{A_1,A_2}\otimes \cl S_{B_1,B_2} \to \bb{C},$$
let 
$\Gamma : \cl D_{X_2Y_2}\otimes \cl D_{A_1 B_1} \to \cl D_{X_1Y_1}\otimes \cl D_{A_2 B_2}$
be the linear map, given by 
$$\Gamma_s\left(x_1y_1, a_2b_2 | x_2y_2, a_1b_1\right) := 
s\left(\tilde{e}_{x_2,x_1}\otimes \tilde{e}_{y_2,y_1}\otimes \tilde{e}_{a_1,b_1} \otimes \tilde{e}_{a_2,b_2}\right).$$

\begin{theorem}\label{th_snsqc}
The map $s\to \Gamma_s$ is an affine isomorphism from 
\begin{itemize}
\item[(i)]
the state space of 
$\cl S_{X_2,X_1}\otimes_{\max} \cl S_{Y_2,Y_1}\otimes_{\max} \cl S_{A_1,A_2}\otimes_{\max} \cl S_{B_1,B_2}$
onto $\cl C_{\rm sns}$;

\item[(ii)]
the state space of 
$\cl S_{X_2,X_1}\otimes_{\rm c} \cl S_{Y_2,Y_1}\otimes_{\rm c} \cl S_{A_1,A_2}\otimes_{\rm c} \cl S_{B_1,B_2}$
onto $\cl C_{\rm sqc}$;

\item[(iii)]
the state space of 
$\cl S_{X_2,X_1}\otimes_{\min} \cl S_{Y_2,Y_1}\otimes_{\min} \cl S_{A_1,A_2}\otimes_{\min} \cl S_{B_1,B_2}$
onto $\cl C_{\rm sqa}$. 
\end{itemize}
\end{theorem}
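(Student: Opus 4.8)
The plan is to establish each of the three correspondences by combining the correlation-type characterizations for NS correlations over the quadruples $(X_2,X_1)$, $(Y_2,Y_1)$, $(A_1,A_2)$, $(B_1,B_2)$ with the multivariate operator system tensor product theory developed in Subsection \ref{s_multivar}. First I would record the general principle: for any of the tensor product types $\tau\in\{\min,{\rm c},\max\}$, a linear functional $s$ on $\cl S_{X_2,X_1}\otimes_\tau\cl S_{Y_2,Y_1}\otimes_\tau\cl S_{A_1,A_2}\otimes_\tau\cl S_{B_1,B_2}$ is positive and unital if and only if $s(\tilde e_{x_2,x_1}\otimes\tilde e_{y_2,y_1}\otimes\tilde e_{a_1,b_1}\otimes\tilde e_{a_2,b_2})\geq 0$ for the generators in the appropriate sense and the values sum correctly; unitality gives trace preservation of $\Gamma_s$, and positivity on the generators (together with the universal property of $\cl S_{X,A}$ recalled before Remark \ref{r_aff}) gives that $\Gamma_s$ is a well-defined positive map. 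The strong no-signalling marginal conditions are automatic because they merely reflect the relation $\sum_{a}\tilde e_{x,a}=1$ in each factor $\cl S_{X,A}$ and linearity of $s$. So in all three cases the map $s\to\Gamma_s$ lands in $\cl C_{\rm sns}$ and is affine and injective; the substance is identifying the image.

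Second, I would treat part (ii), the quantum commuting case, as the template. Given a state $s$ on ${\rm c}\mbox{-}\otimes$ of the four operator systems, I would invoke Theorem \ref{l_kas} with $n_j=|X_j|$ etc.\ to embed ${\rm c}\mbox{-}\otimes_{j=1}^4\cl S_j$ completely order isomorphically into $\cl A_{X_2,X_1}\otimes_{\max}\cl A_{Y_2,Y_1}\otimes_{\max}\cl A_{A_1,A_2}\otimes_{\max}\cl A_{B_1,B_2}$, extend $s$ to a state of that C*-algebra by Arveson's extension theorem, apply the GNS construction to get a representation $\pi$ and cyclic vector $\xi$, and set $E_{x_2,x_1}=\pi(\tilde e_{x_2,x_1}\otimes 1\otimes 1\otimes 1)$, and similarly $P^{y_2,y_1}$, $Q_{a_1,a_2}$, $Q^{b_1,b_2}$ for the other three factors. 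These are PVMs with pairwise commuting entries across distinct factors (the maximal C*-tensor product forces commutation), so $P_{x_2y_2,x_1y_1}:=E_{x_2,x_1}P^{y_2,y_1}$ and $Q_{a_1b_1,a_2b_2}:=Q_{a_1,a_2}Q^{b_1,b_2}$ are commuting dilatable NS operator matrices, and $\Gamma_s$ acquires exactly the form \eqref{eq_qcsns}; this shows surjectivity onto $\cl C_{\rm sqc}$. Conversely, given $\Gamma\in\cl C_{\rm sqc}$, Lemma \ref{l_comlift} produces mutually commuting PVMs $(P_{x_2,x_1})$, $(P^{y_2,y_1})$, $(Q_{a_1,a_2})$, $(Q^{b_1,b_2})$ on a Hilbert space $K$ with unit vector $\eta$ realizing \eqref{eq_qcsns2}; the four associated unital completely positive maps $\cl S_{X_2,X_1}\to\cl B(K)$, etc., have ranges that pairwise commute across distinct indices, so by the definition of the commuting tensor product their product map $s(w)=\langle(\Pi_{j=1}^4\phi_j)(w)\eta,\eta\rangle$ is a state on ${\rm c}\mbox{-}\otimes_{j=1}^4\cl S_j$ with $\Gamma_s=\Gamma$.

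Third, parts (i) and (iii) follow the same template with the appropriate substitutions. For (i): a state $s$ on $\max\mbox{-}\otimes_{j=1}^4\cl S_j$ restricts the matrix multiplication data so that $\Gamma_s$ is any NS correlation over each pair and, via associativity (Corollary \ref{c_assocmax}) and \cite[Theorem 3.1]{lmprsstw} applied iteratively — or more efficiently via the observation that the maximal tensor product is the universal object for which the four universal POVM families need no compatibility — $\Gamma_s\in\cl C_{\rm sns}$; conversely every SNS correlation $\Gamma$, being NS in all four marginal slots, defines by duality a completely positive map and hence a state on the maximal tensor product, using that the maximal tensor product dominates all matrix orderings (Remark \ref{max_lem}). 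For (iii): use the functoriality and injectivity of $\min\mbox{-}\otimes$ (Remark \ref{r_funcmm}) together with the fact that $\cl C_{\rm sqa}$ is by Definition \ref{d_snsc}(iii) the closure of $\cl C_{\rm sq}$, and that quantum SNS correlations correspond to finite-dimensional representations, i.e.\ to states factoring through finite-dimensional quotients, whose closure is exactly the states on the minimal tensor product by the explicit cone description in Proposition \ref{p_minexp}.

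The main obstacle I anticipate is part (iii): showing that the closure of the quantum SNS correlations is captured exactly by states on the \emph{minimal} four-fold tensor product, rather than something larger or smaller. This requires care because the quantum SNS correlations in Definition \ref{d_snsc}(ii) are built from \emph{two} factors $M$ on $H$ and $N$ on $K$ with $\xi\in H\otimes K$, each of which is itself a quantum dilatable NS operator matrix — so one must first argue that a quantum dilatable NS operator matrix on $H$ corresponds to a state on $\cl S_{X_2,X_1}\otimes_{\min}\cl S_{A_1,A_2}$ (this is essentially Remark \ref{r_qos} combined with the finite-dimensional case of \cite[Theorem 3.1]{lmprsstw} and the fact that for finite-dimensional operator systems the minimal tensor product detects finite-dimensional joint positivity), then combine the two halves using $\xi\in H\otimes K$, which injects $(\cl S_{X_2,X_1}\otimes_{\min}\cl S_{A_1,A_2})\otimes_{\min}(\cl S_{Y_2,Y_1}\otimes_{\min}\cl S_{B_1,B_2})$ back into the four-fold minimal tensor product by associativity of $\min\mbox{-}\otimes$. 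Handling the limit (density) and verifying that no correlation outside this image survives will be the delicate point; I expect to lean on the characterization of $\cl C_{\rm qa}$ from \cite{lmprsstw} applied to each marginal pair and a compactness argument on the state space.
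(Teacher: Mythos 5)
Your part (ii) is essentially the paper's own argument: Theorem~\ref{l_kas} plus GNS for the direction ``state $\Rightarrow$ $\cl C_{\rm sqc}$'', and Lemma~\ref{l_comlift} for the converse; your shortcut of deducing positivity of $s(w)=\langle(\Pi_{j=1}^4\phi_j)(w)\eta,\eta\rangle$ directly from the definition of the commuting cones, instead of passing back through the maximal C*-tensor product as the paper does, is legitimate and slightly cleaner. The gaps are in (i) and (iii).

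In (i), the surjectivity onto $\cl C_{\rm sns}$ is the real content, and your argument for it does not work as written. Remark~\ref{max_lem} only says the maximal cones are the smallest ones; it does not produce a state from an SNS correlation. What is needed is the multivariate duality
$\left(\cl S_{X_2,X_1}\otimes_{\max}\cl S_{Y_2,Y_1}\otimes_{\max}\cl S_{A_1,A_2}\otimes_{\max}\cl S_{B_1,B_2}\right)^{\rm d}\cong_{\rm c.o.i.}\cl R_{X_2,X_1}\otimes_{\min}\cl R_{Y_2,Y_1}\otimes_{\min}\cl R_{A_1,A_2}\otimes_{\min}\cl R_{B_1,B_2}$
(via (\ref{eq_SdR}) and \cite[Proposition 1.9]{fp}), \emph{together with} a proof that the SNS conditions force $\Gamma$, viewed inside $\cl D_{X_2X_1}\otimes\cl D_{Y_2Y_1}\otimes\cl D_{A_1A_2}\otimes\cl D_{B_1B_2}$, to lie in the subspace $\cl R_{X_2,X_1}\otimes\cl R_{Y_2,Y_1}\otimes\cl R_{A_1,A_2}\otimes\cl R_{B_1,B_2}$; the paper does this by slicing with elementary tensors and applying \cite[Theorem 3.1]{lmprsstw} to the resulting bipartite NS correlations. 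Your proposal uses the SNS conditions only in the easy forward direction and never engages with this membership step, and entrywise nonnegativity of $\Gamma$ alone does not yield positivity of $s_\Gamma$ on elements $P_1\otimes P_2\otimes P_3\otimes P_4$ with $P_j$ positive (relatedly, the ``if and only if'' in your opening paragraph, equating positivity of $s$ with nonnegativity on elementary tensors of generators, is false for the minimal and commuting tensor products, and for the maximal one it is precisely the duality statement you would have to prove). In (iii) you correctly identify the delicate direction (state on the minimal tensor product $\Rightarrow$ approximately quantum SNS correlation) but leave it as a plan; your bipolar-type route through Proposition~\ref{p_minexp}, purification of the finite-dimensional state and regrouping of the four factors into the bipartite form required by Definition~\ref{d_snsc}(ii) could plausibly be completed, but as it stands it is not carried out. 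For comparison, the paper handles this by extending the state, via injectivity of the minimal tensor product, to the four-fold minimal C*-algebraic tensor product, then invoking \cite[Corollary 4.3.10]{kadison-ringrose} and the argument of \cite[Theorem 2.10]{pt} to exhibit $\Gamma_s$ as a limit of quantum SNS correlations.
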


\begin{proof}
(i) Let $\Gamma\in \cl C_{\rm sns}$, and consider it as an element of 
$\cl D_{X_2X_1}\otimes_{\min}\cl D_{Y_2Y_1}\otimes_{\min}\cl D_{A_1A_2}\otimes_{\min}\cl D_{B_1B_2}$. 
For an element $\omega\in \cl D_{Y_2Y_1}\otimes_{\min}\cl D_{B_1B_2}$, let 
$$L_{\omega} : 
\cl D_{X_2X_1}\otimes_{\min}\cl D_{Y_2Y_1}\otimes_{\min}\cl D_{A_1A_2}\otimes_{\min}\cl D_{B_1B_2}
\to \cl D_{X_2X_1}\otimes_{\min}\cl D_{A_1A_2}$$ 
be the corresponding slice map. 
Based on the remarks preceeding Definition \ref{d_snsc}, the strongly no-signalling conditions imply that, 
if $\omega = \epsilon_{y_2,y_2}\otimes \epsilon_{y_1,y_1}\otimes \epsilon_{b_1,b_1}\otimes \epsilon_{b_2,b_2}$ then 
$L_{\omega}(\Gamma)\in \cl C_{\rm ns}$. 
By \cite[Theorem 3.1]{lmprsstw}, 
$$L_{\omega}(\Gamma)\in \cl R_{X_2X_1}\otimes_{\min}\cl R_{A_1A_2}, 
\ \ \ \omega\in \cl D_{Y_2Y_1}\otimes_{\min}\cl D_{B_1B_2}.$$
By symmetry, 
$$L_{\omega'}(\Gamma)\in \cl R_{Y_2Y_1}\otimes_{\min}\cl R_{B_1B_2}, 
\ \ \ \omega'\in \cl D_{X_2X_1}\otimes_{\min}\cl D_{A_1A_2}.$$
It follows that 
$$\Gamma\in \cl R_{X_2,X_1}\otimes_{\min}\cl R_{Y_2,Y_1}\otimes_{\min}
\cl R_{A_1,A_2}\otimes_{\min}\cl R_{B_1,B_2}.$$
On the other hand, relation (\ref{eq_SdR}) implies that 
$$\hspace{-2.5cm} \left(\cl S_{X_2,X_1}\otimes_{\max} \cl S_{Y_2,Y_1}\otimes_{\max} 
\cl S_{A_1,A_2}\otimes_{\max} \cl S_{B_1,B_2}\right)^{\rm d} 
\cong_{\rm c.o.i.}$$ 
$$\hspace{3.5cm} \cl R_{X_2,X_1}\otimes_{\min}\cl R_{Y_2,Y_1}\otimes_{\min}
\cl R_{A_1,A_2}\otimes_{\min}\cl R_{B_1,B_2}.$$
The claim is proved. 

\smallskip

(ii) 
Suppose that 
$$s : \cl S_{X_2,X_1}\otimes_{\rm c} \cl S_{Y_2,Y_1}\otimes_{\rm c} \cl S_{A_1,A_2}\otimes_{\rm c} \cl S_{B_1,B_2}
\to \bb{C}$$ 
is a state. By Theorem \ref{l_kas}, we may assume that $s$ is the restriction of  a state 
$$\tilde{s} : \cl A_{X_2,X_1}\otimes_{\max} \cl A_{Y_2,Y_1}\otimes_{\max} \cl A_{A_1,A_2}\otimes_{\max} \cl A_{B_1,B_2}
\to \bb{C}.$$
The GNS construction applied to $s$ 
produces a Hilbert space $H$, a unit vector $\xi\in H$ and mutually commuting projection-valued measures
$(E_{x_2,x_1})_{x_1\in X_1}$, $(E^{y_2,y_1})_{y_1\in Y_1}$, 
$(F_{a_1,a_2})_{a_2\in A_2}$ and $(F^{b_1,b_2})_{b_2\in B_2}$ on $H$
such that 
$$\Gamma_s\left(x_1y_1, a_2b_2 | x_2y_2, a_1b_1\right)
= \left\langle E_{x_2,x_1}E^{y_2,y_1}F_{a_1,a_2}F^{b_1,b_2}\xi,\xi\right\rangle,$$
for all $x_i,y_i,a_i,b_i$, $i = 1,2$. 
Setting $P_{x_2y_2,x_1y_1} = E_{x_2,x_1}E^{y_2,y_1}$ and 
$Q_{a_1b_1,a_2b_2} = F_{a_1,a_2}F^{b_1,b_2}$, we have that
the NS operator matrices $P = (P_{x_2y_2,x_1y_1})_{x_2,x_1,y_2,y_1}$ 
and $Q = (Q_{a_1a_2,b_1b_2})_{a_1,a_2,b_1,b_2}$ are dilatable and have mutually commuting entries; thus, 
$\Gamma_s\in \cl C_{\rm sqc}$. 

Conversely, suppose that $\Gamma\in \cl C_{\rm sqc}$ and use Lemma \ref{l_comlift} to 
write $\Gamma$ in the form
(\ref{eq_qcsns2}) for some mutually commuting PVM's 
$P_X = (P_{x_2,x_1})_{x_1\in X_1}$, $P_Y = (P^{y_2,y_1})_{y_1\in Y_1}$, 
$P_A = (P_{a_1,a_2})_{a_2\in A_2}$, 
$P_B = (P^{b_1,a_2})_{a_2\in A_2}$, $x_2\in X_2$, $y_2\in Y_2$, $a_1\in A_1$, $b_1\in B_1$. 
Let $\pi_X$, $\pi_Y$, $\pi_A$ and $\pi_B$ be the unital *-representations of  
$\cl A_{X_2,X_1}$, $\cl A_{Y_2,Y_1}$, $\cl A_{A_1,A_2}$ and $\cl A_{B_1,B_2}$, arising 
canonically from 
$P_X$, $P_Y$, $P_A$ and $P_B$, respectively.
Then $\pi := \pi_X\otimes \pi_Y\otimes \pi_A\otimes \pi_B$ is a unital *-representation 
of the C*-algebra 
$\cl A := \cl A_{X_2,X_1}\otimes_{\max} \cl A_{Y_2,Y_1}\otimes_{\max} \cl A_{A_1,A_2}\otimes_{\max} \cl A_{B_1,B_2}$. 
Using Theorem \ref{l_kas}, let $s$ be the restriction to 
$\cl S_{X_2,X_1}\otimes_{\rm c} \cl S_{Y_2,Y_1}\otimes_{\rm c} \cl S_{A_1,A_2}\otimes_{\rm c} \cl S_{B_1,B_2}$
of the state on $\cl A$, given by $w\to \langle \pi(w)\eta,\eta\rangle$.
We have that $\Gamma = \Gamma_s$.

\smallskip

(iii) 
Let $\Gamma$ be a quantum SNS correlation; without loss of generality, we may thus assume 
that 
%\begin{equation}\label{eq_qsns}
$$\Gamma(x_1y_1,a_2b_2 | x_2y_2,a_1b_1) = 
\left\langle \left(M_{x_2,x_1}\otimes M^{a_1,a_2}\otimes N_{y_2,y_1}\otimes N^{b_1,b_2}\right)\eta,\eta\right\rangle,$$
%\end{equation}
for all $x_i\in X_i$, $y_i\in Y_i$, $a_i\in A_i$ and $b_i\in B_i$, $i = 1,2$, 
where the families 
$M_X = (M_{x_2,x_1})_{x_1\in X_1}$, $M_A = (M^{a_1,a_2})_{a_2\in A_2}$, 
$N_Y = (N_{y_2,y_1})_{y_1\in Y_1}$ and 
$N_B = (N^{b_1,b_2})_{b_2\in B_2}$ are POVM's acting on finite dimensional Hilbert spaces 
$H_X$, $H_A$, $H_Y$ and $H_B$, respectively, and $\eta\in H_X\otimes H_A\otimes H_Y\otimes H_B$ is a unit vector.
After an application of Naimark's Theorem, we can further assume that 
$M_X$, $M_A$, $N_Y$ and $N_B$ are PVM's. 
Let $\pi_X : \cl A_{X_2,X_1}\to \cl B(H_X)$, 
$\pi_Y : \cl A_{Y_2,Y_1}\to \cl B(H_Y)$, 
$\pi_A : \cl A_{A_1,A_2}\to \cl B(H_A)$
and 
$\pi_B : \cl A_{B_1,B_2}\to \cl B(H_B)$ 
be the unital *-representations, canonically arising from $M_X$, $M_A$, $N_Y$ and $N_B$, respectively. 
Letting $\pi = \pi_X\otimes\pi_Y\otimes \pi_A\otimes \pi_B$, $\tilde{\eta}\in H_X\otimes H_Y\otimes H_A\otimes H_B$
be the vector obtained from $\eta$ after applying the canonical shuffle, 
$\tilde{s}$ be the state on 
$\cl A_{X_2,X_1}\otimes_{\min}\cl A_{Y_2,Y_1}\otimes_{\min}\cl A_{A_1,A_2}\otimes_{\min}\cl A_{B_1,B_2}$
given by 
$\tilde{s}(w) = \langle \pi(w)\tilde{\eta},\tilde{\eta}\rangle$, 
and $s$ be its restriction to 
$\cl S_{X_2,X_1}\otimes_{\min}\cl S_{Y_2,Y_1}\otimes_{\min}\cl S_{A_1,A_2}\otimes_{\min}\cl S_{B_1,B_2}$, 
we obtain that $\Gamma = \Gamma_s$. 

Suppose that $\Gamma\in \cl C_{\rm sqa}$, and let $(\Gamma_n)_{n\in \bb{N}}$ be a sequence of 
quantum SNS correlations with $\Gamma_n\to_{n\to \infty} \Gamma$. Using the previous paragraph, 
choose a state $s_n$ of $\cl S_{X_2,X_1}\otimes_{\min}\cl S_{Y_2,Y_1}\otimes_{\min}\cl S_{A_1,A_2}\otimes_{\min}\cl S_{B_1,B_2}$ such that $\Gamma_n = \Gamma_{s_n}$, $n\in \bb{N}$. 
Letting $s$ be a cluster point of the sequence $(s_n)_{n\in \bb{N}}$ in the weak* topology, we have that 
$\Gamma = \Gamma_s$. 

Conversely, let 
$s : \cl S_{X_2,X_1}\otimes_{\min}\cl S_{Y_2,Y_1}\otimes_{\min}\cl S_{A_1,A_2}\otimes_{\min}\cl S_{B_1,B_2}\to \bb{C}$
be a state, and, using the injectivity of the minimal operator system tensor product
(pointed out in Remark \ref{r_funcmm}), let 
$\tilde{s} : \cl A_{X_2,X_1}\otimes_{\min}\cl A_{Y_2,Y_1}\otimes_{\min}\cl A_{A_1,A_2}\otimes_{\min}\cl A_{B_1,B_2}\to \bb{C}$
be an extension of $s$. 
By \cite[Corollary 4.3.10]{kadison-ringrose}, $s$ is in the weak* closure of the convex hull of 
vector functionals on 
$\pi_X(\cl A_{X_2,X_1})\otimes_{\min} \pi_Y(\cl A_{Y_2,Y_1})\otimes_{\min}
\pi_A(\cl A_{A_1,A_2})\otimes_{\min}\pi_B(\cl A_{B_1,B_2})$ for some unital *-representations 
$\pi_X$, $\pi_Y$, $\pi_A$ and $\pi_B$. The argument given in the proof of \cite[Theorem 2.10]{pt}
can now be used to show that $\Gamma$ is a limit of quantum SNS correlations. 
\end{proof}

%%%%%%%%%%%%%%%%%%%%%%%%%%%%%%%%%%%%%%%%%%%%%%%%%
%%%%%%%%%%%%%%%%%%%%%%%%%%%%%%%%%%%%%%%%%%%%%%%%%
%%%%%%%%%%%%%%%%%%%%%%%%%%%%%%%%%%%%%%%%%%%%%%%%%

\section{Synchronous games}\label{s_special}

A \emph{synchronous game} \cite{psstw} over a quadruple $(X,X,A,A)$
is a non-local game $E\subseteq XX\times AA$ such that 
$$(x,x,a,b)\in E \ \ \Longrightarrow \ \ a = b.$$
A perfect strategy for a synchronous game $E\subseteq XX\times AA$
is called a \emph{synchronous correlation} over $(X,X,A,A)$. 

Assume that $Y_i = X_i$ and $B_i = A_i$, $i = 1,2$.
In this section, we restrict our attention to the case where the games 
$E_1\subseteq X_1X_1\times A_1A_1$ and 
$E_2\subseteq X_2X_2\times A_2A_2$ participating in 
a quasi-homorphism game $E_1\leadsto E_2$ are synchronous.
We achieve tracial representations of SNS quantum commuting and approximately quantum correlations, 
which lead to necessary conditions for the existence of an 
isomorphism between two synchronous games in terms of the corresponding game algebras. 
As in the paragraph before Definition \ref{d_snsc}, we have that, if 
$\Gamma$ is an SNS correlation over the quadruple $(X_2 X_2, A_1A_1, X_1 X_1, A_2A_2)$, 
then the linear maps $\Gamma_{X_2X_2\to X_1X_1} : \cl D_{X_2X_2}\to \cl D_{X_1X_1}$ and 
$\Gamma^{A_2A_2\to A_1A_1} : \cl D_{A_2A_2}\to \cl D_{A_1A_1}$, given by 
\begin{equation}\label{eq_X2X2X1X1}
\Gamma_{X_2X_2\to X_1X_1}(x_1,y_1|x_2,y_2) = \sum_{a_2,b_2\in A_2} \Gamma(x_1y_1, a_2b_2 | x_2 y_2, a_1b_1)
\end{equation}
and 
\begin{equation}\label{eq_A2A2A1A1}
\Gamma^{A_1A_1\to A_2A_2}(a_2,b_2|a_1,b_1) = \sum_{x_1,y_1\in X_1} \Gamma(x_1y_1, a_2b_2 | x_2 y_2, a_1b_1),
\end{equation}
are NS correlations over $(X_2,X_2,X_1,X_1)$ and $(A_1,A_1,A_2,A_2)$, respectively.

%\marginpar{\tiny IT. I changed the term to \lq\lq jointly synchronous''.}

\begin{definition}\label{d_stsyn}
An SNS correlation $\Gamma$ over $(X_2 X_2, A_1A_1, X_1 X_1, A_2A_2)$ is called 
\emph{jointly synchronous} if $\Gamma_{X_2X_2\to X_1X_1}$ and 
$\Gamma^{A_1A_1\to A_2A_2}$ are synchronous correlations. 
%over $(X_2,X_2,X_1,X_1)$ and $(A_1,A_1,A_2,A_2)$, respectively.
\end{definition}

\begin{remark}\label{r_stsyn}
\rm 
Since the terms in (\ref{eq_X2X2X1X1}) and (\ref{eq_A2A2A1A1}) are non-negative, 
an element $\Gamma\in \cl C_{\rm sns}$ is jointly synchronous precisely when 
$$\Gamma(x_1y_1, a_2b_2 | x_2 x_2, a_1b_1) \neq 0  \ \ \Longrightarrow \ \ x_1 = y_1$$
for all $x_2\in X_2$, $x_1,y_1\in X_1$, $a_1,b_1\in A_1$ and $a_2,b_2\in A_2$, and
$$\Gamma(x_1y_1, a_2b_2 | x_2 y_2, a_1a_1) \neq 0  \ \ \Longrightarrow \ \ a_2 = b_2.$$
for all $x_2,y_2\in X_2$, $x_1,y_1\in X_1$, $a_1\in A_1$ and $a_2,b_2\in A_2$.
\end{remark}

For a linear functional $\mathsf{T}$ on $\cl A_{X_2,X_1}\otimes \cl A_{A_1,A_2}$, set
$$\Gamma_{\mathsf{T}} (x_1y_1, a_2b_2 | x_2 y_2, a_1b_1) = 
\mathsf{T}(e_{x_2,x_1}e_{y_2,y_1}\otimes e_{a_1,a_2}e_{b_1,b_2}),$$
where $x_i,y_i\in X_i, a_i, b_i \in A_i, i = 1,2$.
In the sequel, we will use the terms \lq\lq trace'' and \lq\lq tracial state'' interchangeangly.

\begin{theorem}\label{th_snsssyn}
The following hold:
\begin{itemize}
\item[(i)]
If $\mathsf{T}$ is a trace on $\cl A_{X_2,X_1}\otimes_{\max}\cl A_{A_1,A_2}$ then 
$\Gamma_{\mathsf{T}}$ is a jointly synchronous and quantum commuting correlation.

\item[(ii)]
If $\Gamma\in \cl C_{\rm sqc}$ is jointly synchronous then there exists a trace $\mathsf{T}$ on 
$\cl A_{X_2,X_1}\otimes_{\max}\cl A_{A_1,A_2}$ such that $\Gamma = \Gamma_{\mathsf{T}}$.

\item[(iii)]
A jointly synchronous SNS correlation $\Gamma$ is approximately quantum if and only if 
there exists an amenable trace $\mathsf{T}$ on $\cl A_{X_2,X_1}\otimes_{\min}\cl A_{A_1,A_2}$ such that $\Gamma = \Gamma_{\mathsf{T}}$ such that $\Gamma = \Gamma_{\mathsf{T}}$.
\end{itemize}
\end{theorem}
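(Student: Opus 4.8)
The plan is to mimic the structure of the tracial characterisation of synchronous correlations from \cite{psstw} and \cite{kps}, combined with the operator-system representation established in Theorem \ref{th_snsqc}, and to handle all three parts in a unified way. The key structural input is that a jointly synchronous SNS correlation is governed by \emph{PVM's} (not merely POVM's), so the map $\Gamma_{\mathsf{T}}\mapsto \mathsf{T}$ should be built from a $*$-representation together with a tracial vector state, and the tracial condition will fall out of the synchronicity relations recorded in Remark \ref{r_stsyn}.

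\textbf{Part (i).} Given a trace $\mathsf{T}$ on $\cl A_{X_2,X_1}\otimes_{\max}\cl A_{A_1,A_2}$, apply the GNS construction to obtain a Hilbert space $H$, a cyclic unit vector $\xi$, and a $*$-representation $\pi$, with $\mathsf{T}(\cdot)=\langle\pi(\cdot)\xi,\xi\rangle$ and $\xi$ a tracial vector (i.e. $\langle\pi(ab)\xi,\xi\rangle=\langle\pi(ba)\xi,\xi\rangle$). Write $E_{x_2,x_1}=\pi(\tilde e_{x_2,x_1}\otimes 1)$, $F_{a_1,a_2}=\pi(1\otimes\tilde e_{a_1,a_2})$; by Proposition \ref{c_star_equiv} and Proposition \ref{p_dedil} these define dilatable NS operator matrices with commuting entries, so $\Gamma_{\mathsf T}\in\cl C_{\rm sqc}$. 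For joint synchronicity, use the trace property on $\pi(\tilde e_{x_2,x_1})$: since $\tilde e_{x_2,x_1}$ are projections, $\langle\pi(\tilde e_{x_2,x_1}\tilde e_{x_2,y_1})\xi,\xi\rangle=\langle\pi(\tilde e_{x_2,y_1}\tilde e_{x_2,x_1})\xi,\xi\rangle$ forces, after summing against the identity on the $A$-side, that $\Gamma_{\mathsf T}(x_1y_1,a_2b_2|x_2x_2,a_1b_1)$ is supported on $x_1=y_1$ (this is the standard argument that a tracial state makes a pair of a priori distinct projections behave synchronously). The symmetric computation on the $A$-variables gives the second implication in Remark \ref{r_stsyn}; hence $\Gamma_{\mathsf T}$ is jointly synchronous.

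\textbf{Part (ii).} Conversely, starting from a jointly synchronous $\Gamma\in\cl C_{\rm sqc}$, invoke Lemma \ref{l_comlift} to write $\Gamma$ via mutually commuting PVM's $(P_{x_2,x_1})$, $(P^{y_2,y_1})$, $(Q_{a_1,a_2})$, $(Q^{b_1,b_2})$ and a unit vector $\eta$. The synchronicity relations from Remark \ref{r_stsyn} translate, via the PVM property, into the statement that $\eta$ is a tracing vector for the relevant operators; concretely, one shows $P_{x_2,x_1}P^{x_2,y_1}\eta=\delta_{x_1,y_1}P_{x_2,x_1}\eta$ and $Q_{a_1,a_2}Q^{a_1,b_2}\eta=\delta_{a_2,b_2}Q_{a_1,a_2}\eta$, exactly as in the proof of \cite[Theorem 5.5]{psstw}. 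Passing to the von Neumann algebra generated by these operators with the vector state $\langle\cdot\,\eta,\eta\rangle$ and restricting to the cyclic subspace, one obtains a finite (tracial) von Neumann algebra; composing with the canonical $*$-homomorphism from $\cl A_{X_2,X_1}\otimes_{\max}\cl A_{A_1,A_2}$ (using Proposition \ref{c_star_equiv} to pass through the $\otimes_{\rm c}$ picture of Theorem \ref{th_snsqc}(ii)) yields the desired trace $\mathsf T$ with $\Gamma=\Gamma_{\mathsf T}$.

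\textbf{Part (iii) and the main obstacle.} The forward direction: if $\Gamma$ is jointly synchronous and approximately quantum, take $\Gamma_n\to\Gamma$ with $\Gamma_n$ quantum; each $\Gamma_n$ can be perturbed (via a standard argument — averaging/symmetrising, as in \cite{kps}) into a jointly synchronous quantum correlation, which by finite-dimensionality gives a trace on $\cl A_{X_2,X_1}\otimes_{\min}\cl A_{A_1,A_2}$ that factors through a matrix algebra, hence is amenable; a weak\,* cluster point of these amenable traces is an amenable trace $\mathsf T$ with $\Gamma=\Gamma_{\mathsf T}$ — here I would cite the description of amenable traces (finite-dimensional approximation property) and Theorem \ref{th_snsqc}(iii). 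The converse uses that an amenable trace on $\cl A_{X_2,X_1}\otimes_{\min}\cl A_{A_1,A_2}$ admits finite-dimensional u.c.p. approximations that are themselves approximately tracial, which one converts into a sequence of genuine quantum (jointly synchronous) SNS correlations converging to $\Gamma$, again in the spirit of \cite[Theorem 2.10]{pt} and the argument of Theorem \ref{th_snsqc}(iii). \emph{The hard part} will be the subtle perturbation step for quantum correlations in (iii): showing that an arbitrary quantum approximant of a jointly synchronous $\Gamma$ may be replaced by a jointly \emph{synchronous} quantum one without losing closeness — i.e., that joint synchronicity, which is a two-sided (both $X$- and $A$-side) condition on the SNS correlation, is "approximately enforceable" simultaneously on both sides. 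I would address this by working with the two marginal synchronous correlations $\Gamma_{X_2X_2\to X_1X_1}$ and $\Gamma^{A_1A_1\to A_2A_2}$ separately, applying the known synchronous-approximation machinery to each, and then reassembling; verifying that the reassembly stays inside $\cl C_{\rm sq}$ and controls the SNS structure is the delicate point.
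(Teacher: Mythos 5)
Your part (ii) is essentially sound and takes the Hilbert-space route of \cite[Theorem 5.5]{psstw}: starting from Lemma \ref{l_comlift}, joint synchronicity plus the PVM property does give $P_{x_2,x_1}\eta=P^{x_2,x_1}\eta$ and $Q_{a_1,a_2}\eta=Q^{a_1,a_2}\eta$, from which the vector state is tracial on the C*-algebra generated by the unprimed families and pulls back to a trace on $\cl A_{X_2,X_1}\otimes_{\max}\cl A_{A_1,A_2}$ with $\Gamma=\Gamma_{\mathsf T}$. The paper instead works with the state $\tilde s$ on the four-fold maximal tensor product coming from Theorem \ref{th_snsqc} and runs the Cauchy--Schwarz/word-length argument of \cite{lmprsstw}; the two routes are interchangeable here. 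In part (i), however, there is a gap: with only the left-multiplication operators $E_{x_2,x_1}=\pi(\tilde e_{x_2,x_1}\otimes 1)$ and $F_{a_1,a_2}=\pi(1\otimes\tilde e_{a_1,a_2})$ you do not have the structure Definition \ref{d_snsc}(i) (equivalently Lemma \ref{l_comlift}) requires: you need \emph{four} mutually commuting families (or doubly indexed dilatable NS operator matrices), and $E_{x_2,x_1}$ does not commute with $E_{y_2,y_1}$, so $\pi(\tilde e_{x_2,x_1}\tilde e_{y_2,y_1}\otimes 1)$ is not even self-adjoint. The trace property is what rescues this -- and it is exactly what your sketch never uses structurally: on the GNS space of a trace the vector $\xi$ is tracial, so the second copy of each variable can be represented by \emph{right} multiplication (the opposite representation), which commutes with the left one and reproduces $\mathsf T(e_{x_2,x_1}e_{y_2,y_1}\otimes e_{a_1,a_2}e_{b_1,b_2})$; without this, even the nonnegativity of $\Gamma_{\mathsf T}$ is unjustified. (Conversely, your appeal to the trace for joint synchronicity is unnecessary: $e_{x_2,x_1}e_{x_2,y_1}=\delta_{x_1,y_1}e_{x_2,x_1}$ already in $\cl A_{X_2,X_1}$, since both lie in the same copy of $\cl D_{X_1}$.)

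The serious gap is in (iii). Your forward direction rests on replacing each quantum approximant $\Gamma_n$ of a jointly synchronous qa correlation by a jointly synchronous \emph{quantum} correlation nearby, and you yourself flag this as the hard step; but no such perturbation argument is available, and the proposed repair (enforce synchronicity on the two marginals $\Gamma_{X_2X_2\to X_1X_1}$ and $\Gamma^{A_1A_1\to A_2A_2}$ separately and ``reassemble'') does not produce an SNS correlation close to $\Gamma$: the marginals do not determine the four-variable object, and even in the classical synchronous setting the analogous approximation statements are obtained \emph{through} the amenable-trace characterisation rather than by direct symmetrisation. The paper avoids this entirely: given $\Gamma\in\cl C_{\rm sqa}$ jointly synchronous, it takes the state $s$ on the minimal four-fold tensor product provided by Theorem \ref{th_snsqc}(iii), the trace $\mathsf T$ from part (ii), and the isomorphism $\partial:\cl A_{X_2,X_1}\to\cl A_{X_2,X_1}^{\rm op}$ of \cite[Lemma III.3]{kps}, and shows directly that the functional $u\otimes v^{\rm op}\mapsto\mathsf T(uv)$ is continuous on $(\cl A_{X_2,X_1}\otimes_{\max}\cl A_{A_1,A_2})\otimes_{\min}(\cl A_{X_2,X_1}\otimes_{\max}\cl A_{A_1,A_2})^{\rm op}$, whence $\mathsf T$ is amenable by \cite[Theorem 6.2.7]{bo}; the converse is obtained by reversing this construction to produce a state on the minimal tensor product and invoking Theorem \ref{th_snsqc}(iii) again. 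If you want to keep your outline, you should replace the perturbation step by this tensor-product/opposite-algebra argument, since that is where the equivalence with amenability actually comes from.
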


\begin{proof}
(i) 
That $\Gamma_{\mathsf{T}}$ is strongly no-signalling is straightforward. 
The GNS construction, applied to $\mathsf{T}$, yields a Hilbert space $H$, a unit vector $\xi\in H$ and 
a representation $\pi : \cl A_{X_2,X_1}\otimes_{\max}\cl A_{A_1,A_2}\to \cl B(H)$ such that 
$\mathsf{T}(w) = \langle \pi(w)\xi,\xi\rangle$, $w\in \cl A_{X_2,X_1}\otimes_{\max}\cl A_{A_1,A_2}$. 
Set 
$$P_{x_2y_2,x_1y_1} = \pi(e_{x_2,x_1}e_{y_2,y_1}\otimes 1) \ \mbox{ and } 
\ Q_{a_1b_1,a_2b_2} = \pi(1 \otimes e_{a_1,a_2}e_{b_1,b_2}),$$
for all $x_i,y_i\in X_i$ and $a_i,b_i\in A_i$, $i = 1,2$. 
Proposition \ref{p_dedil}, applied to the map 
$\phi : \cl S_{X_2,X_1}\otimes_{\rm c} \cl S_{X_2,X_1}\to \cl B(H)$, given by 
$$\phi(e_{x_2,x_1}\otimes e_{y_2,y_1}) = P_{x_2y_2,x_1y_1}, \ \ \ x_i,y_i\in X_i, i = 1,2,$$
shows that $(P_{x_2y_2,x_1y_1})$ is a dilatable operator matrix; by symmetry, so is $(Q_{a_1b_1,a_2b_2})$. 
Thus, $\Gamma_{\mathsf{T}}$ is a quantum commuting SNS correlation. 
Finally, the strong synchronicity is immediate from Remark \ref{r_stsyn} and the fact that 
$(e_{x_2,x_1})_{x_1\in X_1}$ and $(e_{a_1,a_2})_{a_2\in A_2}$ are PVM's. 

\smallskip

(ii) 
For brevity, write 
$$\frak{B} = \cl A_{X_2,X_1}\otimes_{\max} \cl A_{X_2,X_1}\otimes_{\max} \cl A_{A_1,A_2}\otimes_{\max} \cl A_{A_1,A_2}$$
and 
$$\frak{A} = \cl A_{X_2,X_1}\otimes_{\max} \cl A_{A_1,A_2}.$$
Note that, up to a flip of the tensor terms, $\frak{B} = \frak{A}\otimes_{\max} \frak{A}$; 
in the rest of the proof, the latter identification is used without explicit mention. 

By the (proof of) Theorem \ref{th_snsqc}, there exists a state
$\tilde{s} : \frak{B}\to \bb{C}$ such that $\Gamma = \Gamma_{\tilde{s}}$. 
If $w_1,w_2\in \frak{A}\otimes_{\max}\frak{A}$, write 
$w_1\sim w_2$ if $\tilde{s}(w_1 - w_2) = 0$. 
For $x_i\in X_i$ and $a_i,b_i\in A_i$, $i = 1,2$, we have 
\begin{eqnarray*}
\tilde{s}(e_{x_2,x_1}\otimes 1\otimes e_{a_1,a_2}\otimes e_{b_1,b_2}) 
& = & 
\sum_{y_1\in X_1}
\tilde{s}(e_{x_2,x_1}\otimes e_{x_2,y_1}\otimes e_{a_1,a_2}\otimes e_{b_1,b_2})\\
& = & 
\tilde{s}(e_{x_2,x_1}\otimes e_{x_2,x_1}\otimes e_{a_1,a_2}\otimes e_{b_1,b_2})\\
& = & 
\Gamma(x_1x_1, a_2b_2 | x_2 x_2, a_1b_1)\\
& = & 
\sum_{y_1\in X_1}
\tilde{s}(e_{x_2,y_1}\otimes e_{x_2,x_1}\otimes e_{a_1,a_2}\otimes e_{b_1,b_2})\\
& = & 
\tilde{s}(1 \otimes e_{x_2,x_1}\otimes e_{a_1,a_2}\otimes e_{b_1,b_2});
\end{eqnarray*}
thus, 
\begin{eqnarray*}
e_{x_2,x_1}\otimes 1\otimes e_{a_1,a_2}\otimes e_{b_1,b_2}
& \sim & e_{x_2,x_1}\otimes e_{x_2,x_1}\otimes e_{a_1,a_2}\otimes e_{b_1,b_2}\\
& \sim & 1 \otimes e_{x_2,x_1}\otimes e_{a_1,a_2}\otimes e_{b_1,b_2}.
\end{eqnarray*}
It follows that 
$$e_{x_2,x_1}\otimes 1\otimes 1\otimes 1
\sim e_{x_2,x_1}\otimes e_{x_2,x_1}\otimes 1 \otimes 1 
\sim 1 \otimes e_{x_2,x_1}\otimes 1 \otimes 1,$$
for $x_i\in X_i, i = 1,2$. 
Write 
$$h = e_{x_2,x_1}\otimes 1\otimes 1\otimes 1 - 1 \otimes e_{x_2,x_1}\otimes 1 \otimes 1,$$
and note that 
\begin{eqnarray*}
h^2 & = & 
e_{x_2,x_1}\otimes 1\otimes 1\otimes 1 - e_{x_2,x_1} \otimes e_{x_2,x_1}\otimes 1 \otimes 1\\ 
& & 
\hspace{2cm} - e_{x_2,x_1} \otimes e_{x_2,x_1}\otimes 1 \otimes 1 + 1 \otimes e_{x_2,x_1}\otimes 1 \otimes 1,
\end{eqnarray*}
implying $h^2 \sim 0$. 
An application of the Cauchy-Schwarz inequality now shows that 
$w h \sim 0$ and $h w \sim 0$ whenever  $w\in \frak{B}$.
In particular, for all $x_i\in X_i$, $i = 1,2$, we have 
\begin{equation}\label{eq_zeez}
ze_{x_2,x_1}\otimes 1 \otimes v \sim z\otimes e_{x_2,x_1} \otimes v 
\sim ze_{x_2,x_1}\otimes 1 \otimes v, \ \ z\in \cl A_{X_2,X_1}, v\in \frak{A}.
\end{equation}
Similarly, 
\begin{equation}\label{eq_zeez2}
u \otimes z'e_{a_1,a_2} \otimes 1 \sim u\otimes z'\otimes e_{a_1,a_2}
\sim u \otimes e_{a_1,a_2} z' \otimes 1, \ \ z'\in \cl A_{A_1,A_2}, u\in \frak{A}.
\end{equation}
Equations (\ref{eq_zeez}) and (\ref{eq_zeez2}) imply 
\begin{equation}\label{eq_zeez3}
ze_{x_2,x_1}\otimes 1 \otimes z'e_{a_1,a_2}\otimes 1
\sim 
e_{x_2,x_1}\otimes z \otimes e_{a_1,a_2}\otimes z'
\sim e_{x_2,x_1}z\otimes 1 \otimes e_{a_1,a_2}z' \otimes 1
\end{equation}
for all $z\in \cl A_{X_2,X_1}$, all $z'\in \cl A_{A_1,A_2}$ and all 
$x_i\in X_i$, $a_i\in A_i$, $i = 1,2$. 
An induction by the length of the words 
$w$ on $\{e_{x_2,x_1} : x_i\in X_i, i = 1,2\}$ and 
$w'$ on $\{e_{a_1,a_2} : a_i\in A_i, i = 1,2\}$, whose base step is provided by (\ref{eq_zeez3})
shows that 
$$zw\otimes 1 \otimes z'w'\otimes 1 \sim wz\otimes 1 \otimes w' z' \otimes 1, \ \ \ 
z,z'\in \cl A_{X_2,X_1}, w,w'\in \cl A_{A_1,A_2}$$
(see the proof of \cite[Theorem 6.1]{lmprsstw}).
We conclude that the functional 
$\mathsf{T}$ on $\cl A_{X_2,X_1}\otimes_{\max}\cl A_{A_1,A_2}$, given by 
$$\mathsf{T}(u\otimes v) = \tilde{s}(u\otimes 1\otimes v\otimes 1), \ \ \ u\in \cl A_{X_2,X_1}, v\in \cl A_{A_1,A_2},$$
is a tracial state. 
The fact that $\Gamma = \Gamma_{\mathsf{T}}$ follows from (\ref{eq_zeez3}). 

\smallskip

(iii) 
Suppose that $\Gamma$ is an approximately quantum jointly synchronous SNS correlation. 
By Theorem \ref{th_snsqc}, there exists a state $s$ on 
$\cl A_{X_2,X_1}\otimes_{\min}\cl A_{X_2,X_1}\otimes_{\min}\cl A_{A_1,A_2}\otimes_{\min}\cl A_{A_1,A_2}$
such that $\Gamma = \Gamma_s$. 
Using (i), let $\mathsf{T} : \cl A_{X_2,X_1}\otimes_{\max}\cl A_{A_1,A_2}\to \bb{C}$ be a tracial state 
such that $\Gamma = \Gamma_{\mathsf{T}}$. 
By \cite[Lemma III.3]{kps}, there exists a *-isomorphism $\partial: \cl A_{X_2,X_1}\to \cl A_{X_2,X_1}^{\rm op}$ 
such that $\partial(e_{x_2,x_1}) = e_{x_2,x_1}^{\rm op}$. 
%\marginpar{\tiny IT. Details added for the amenable case. {\bf Check!}}
Let $q : \cl A_{X_2,X_1}\otimes_{\max}\cl A_{A_1,A_2}\to \cl A_{X_2,X_1}\otimes_{\min}\cl A_{A_1,A_2}$ be the 
quotient map, 
$$\hspace{-3cm}\cl F : 
\cl A_{X_2,X_1}\otimes_{\min}\cl A_{A_1,A_2}\otimes_{\min}\cl A_{X_2,X_1}\otimes_{\min}\cl A_{A_1,A_2}$$
$$\hspace{4cm} \longrightarrow 
\cl A_{X_2,X_1}\otimes_{\min}\cl A_{X_2,X_1}\otimes_{\min}\cl A_{A_1,A_2}\otimes_{\min}\cl A_{A_1,A_2}$$
be the flip operation, 
and 
$$\mu : (\cl A_{X_2,X_1}\otimes_{\max}\cl A_{A_1,A_2})\otimes_{\min} 
(\cl A_{X_2,X_1}\otimes_{\max}\cl A_{A_1,A_2})^{\rm op} \to \bb{C}$$
be the linear functional, defined by letting
$$\mu = s\circ \cl F\circ \left(q\otimes \left(q\circ (\partial^{-1}\otimes\partial^{-1})\right)\right).$$
%\marginpar{\tiny IT. {\bf Check} in particular this point. See how it is done in \cite{bhtt}.}
It is then straightforward to check that 
$$\mu(u\otimes v^{\rm op}) = \mathsf{T}(uv), \ \ \ u,v \in \cl A_{X_2,X_1}\otimes_{\max}\cl A_{A_1,A_2}.$$
By \cite[Theorem 6.2.7]{bo}, $\mathsf{T}$ is amenable.
%$$\mu_{\mathsf{T}} : (\cl A_{X_2,X_1}\otimes_{\max}\cl A_{A_1,A_2})\otimes_{\min} 
%(\cl A_{X_2,X_1}\otimes_{\max}\cl A_{A_1,A_2})\to \bb{C},$$ given by 
%$$\mu_{\mathsf{T}}(u\otimes v^{\rm op}) = \mathsf{T}(uv), \ \ \ u,v \in \cl A_{X_2,X_1}\otimes_{\max}\cl A_{A_1,A_2},$$ 
%Combined with standard arguments 
%(which use the fact that $\mathsf{T}$ is amenable if and only if 
%as can be found, 
%for example, in the proof of \cite[Theorem 6.4]{lmprsstw}, and \cite[Theorem 6.2.7]{bo}), we may conclude that 
%$\mathsf{T}$ is amenable. 
The converse direction follows by reversing these steps. 
\end{proof}

\begin{proposition}\label{l_tracetotrace}
If $\Gamma$ is a jointly synchronous SNS correlation then 
$\Gamma[\cl E]$ is synchronous whenever $\cl E$ is such. 
\end{proposition}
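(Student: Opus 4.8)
The plan is to expand the definition of $\Gamma[\cl E]$ and then apply, in turn, the two support conditions characterising joint synchronicity of $\Gamma$ recorded in Remark \ref{r_stsyn}, interleaved with the synchronicity of $\cl E$. To begin, observe that by Theorem \ref{th_pres}(i) (specialised to the case $Y_i = X_i$, $B_i = A_i$), the map $\Gamma[\cl E]$ is a no-signalling correlation over the quadruple $(X_2,X_2,A_2,A_2)$; hence it makes sense to ask whether it is synchronous, and for this it suffices to verify that $\Gamma[\cl E](a_2,b_2|x_2,x_2) = 0$ for every $x_2\in X_2$ and all $a_2,b_2\in A_2$ with $a_2\neq b_2$.

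Fix such $x_2$, $a_2$ and $b_2$. By the defining formula (\ref{eq_G[E]}), with the identifications $Y_i = X_i$ and $B_i = A_i$,
$$\Gamma[\cl E](a_2b_2|x_2x_2) = \sum_{x_1,y_1\in X_1}\ \sum_{a_1,b_1\in A_1} \Gamma(x_1y_1,a_2b_2|x_2x_2,a_1b_1)\,\cl E(a_1b_1|x_1y_1).$$
By the first implication of Remark \ref{r_stsyn}, the factor $\Gamma(x_1y_1,a_2b_2|x_2x_2,a_1b_1)$ vanishes unless $x_1 = y_1$, so the sum collapses to the terms with $x_1 = y_1$. On each of those terms the synchronicity of $\cl E$ forces $\cl E(a_1b_1|x_1x_1) = 0$ unless $a_1 = b_1$, which leaves
$$\Gamma[\cl E](a_2b_2|x_2x_2) = \sum_{x_1\in X_1}\ \sum_{a_1\in A_1} \Gamma(x_1x_1,a_2b_2|x_2x_2,a_1a_1)\,\cl E(a_1a_1|x_1x_1).$$
Now the second implication of Remark \ref{r_stsyn}, applied with input answer $a_1a_1$, gives $\Gamma(x_1x_1,a_2b_2|x_2x_2,a_1a_1) = 0$, since $a_2\neq b_2$. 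Therefore every summand is zero and $\Gamma[\cl E](a_2b_2|x_2x_2) = 0$, as required.

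I do not anticipate any genuine obstacle here; the only thing that needs care is the order in which the three support conditions are applied --- first discard the off-diagonal $(x_1,y_1)$ contributions using the first joint-synchronicity condition for $\Gamma$, then discard the off-diagonal $(a_1,b_1)$ contributions using synchronicity of $\cl E$, and only afterwards invoke the second joint-synchronicity condition for $\Gamma$. It is also worth making explicit at the outset that $\Gamma[\cl E]$ is a no-signalling correlation over $(X_2,X_2,A_2,A_2)$, which is precisely where Theorem \ref{th_pres}(i) is used, so that the synchronicity condition is meaningful for $\Gamma[\cl E]$.
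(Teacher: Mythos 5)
Your proposal is correct and follows essentially the same route as the paper's proof: expand $\Gamma[\cl E](a_2,b_2|x_2,x_2)$ via (\ref{eq_G[E]}), collapse to $x_1=y_1$ using the first support condition of Remark \ref{r_stsyn}, collapse to $a_1=b_1$ using synchronicity of $\cl E$, and then kill the remaining terms with the second support condition. The only (harmless) addition is your explicit appeal to Theorem \ref{th_pres}(i) to note that $\Gamma[\cl E]$ is a no-signalling correlation, which the paper leaves implicit.
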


\begin{proof}
Let $\cl E$ be a synchronous no-signalling correlation over $(X_1,X_1,A_1,A_1)$. 
Suppose that $x_2\in X_2$ and $a_2,b_2\in A_2$. We have 
\begin{eqnarray*}
\Gamma[\cl E] (a_2,b_2|x_2,x_2)
& = & 
\hspace{-0.5cm}
\sum_{x_1,y_1\in X_1} \sum_{a_1,b_1\in A_1}
\Gamma(x_1y_1, a_2b_2 | x_2x_2, a_1b_1)\cl E(a_1,b_1|x_1,y_1)\\
& = & 
\hspace{-0.5cm}
\sum_{x_1\in X_1} \sum_{a_1,b_1\in A_1}
\Gamma(x_1x_1, a_2b_2 | x_2x_2, a_1b_1)\cl E(a_1,b_1|x_1,x_1)\\
& = & 
\hspace{-0.5cm}
\sum_{x_1\in X_1} \sum_{a_1\in A_1}
\Gamma(x_1x_1, a_2b_2 | x_2x_2, a_1a_1)\cl E(a_1,a_1|x_1,x_1).
\end{eqnarray*}
Since $\Gamma$ is jointly synchronous, 
$\Gamma(x_1x_1, a_2b_2 | x_2x_2, a_1a_1) = 0$ whenever $a_2\neq b_2$. 
Thus, 
$\Gamma[\cl E] (a_2,b_2|x_2,x_2) = 0$ whenever $a_2\neq b_2$, that is, 
$\Gamma[\cl E]$ is synchronous. 
\end{proof}

\begin{remark}\label{r_Ttau}
\rm 
Let $\mathsf{T}$ be a tracial state on $\cl A_{X_2,X_1}\otimes_{\max}\cl A_{A_1,A_2}$ and 
$\tau$ be a tracial state on $\cl A_{X_1,A_1}$.
Let $\cl E_{\tau} : \cl D_{X_1X_1}\to \cl D_{A_1A_1}$ be the quantum commuting 
no-signalling correlation \cite{psstw}, given by 
\begin{equation}\label{eq_Etau}
\cl E_{\tau}(a,b | x,y) = \tau(e_{x,a}e_{y,b}), \ \ \ x,y\in X_1, a,b\in A_1.
\end{equation}
By Theorem \ref{th_pres}, $\Gamma_{\mathsf{T}}[\cl E_{\tau}]$ is a quantum commuting NS correlation while, by 
Proposition \ref{l_tracetotrace}, it is synchronous. By \cite[Theorem 5.5]{psstw}, there exists a tracial state
$\mathsf{T}[\tau] : \cl A_{X_2,A_2} \to \bb{C}$ such that 
\begin{equation}\label{eq_Ttau}
\Gamma_{\mathsf{T}}[\cl E_{\tau}] = \cl E_{\mathsf{T}[\tau]}.
\end{equation}
Let, on the other hand, 
$$\mathsf{T}\odot \tau : \cl A_{X_2,X_1}\otimes_{\max}\cl A_{X_1,A_1} \otimes_{\max} \cl A_{A_1,A_2}\to \bb{C}$$
be the tracial state, given by 
$$(\mathsf{T}\odot \tau)(u\otimes v \otimes w) = \mathsf{T}(u \otimes w) \tau(v), \ \ 
u\in \cl A_{X_2,X_1}, v\in \cl A_{X_1,A_1}, w\in \cl A_{A_1,A_2}.$$
Equation (\ref{eq_Ttau}) implies 
$$\mathsf{T}[\tau](e_{x_2,a_2}e_{y_2,b_2})
= \hspace{-0.2cm} \sum_{x_1,y_1}\sum_{a_1,b_1}
%\hspace{-0.05cm}
(\mathsf{T}\odot \tau)(e_{x_2,x_1}e_{y_2,y_1}
\otimes e_{x_1,a_1} e_{y_1,b_1}\otimes 
e_{a_1,a_2} e_{b_1,b_2}).$$
\end{remark}

We recall synchronous game $E\subseteq XX\times AA$ gives rise to the C*-algebra \cite{hmps}
$\cl A(E) = \cl A_{X,A}/\cl J_E$, where $\cl J_E$ is the closed ideal of $\cl A_{X,A}$, generated by the set 
$\{e_{x,a}e_{y,b} : (x,y,a,b)\not\in E\}$
(the C*-algebra $\cl A(E)$ is known as the \emph{game C*-algebra} of $E$).
Write $q_E$ for the quotient map from $\cl A_{X,A}$ onto $\cl A(E)$. 
We denote by ${\rm T}_2(\cl A(E))$ the convex set of all restrictions of 
tracial states on a C*-algebra $\cl A(E)$ to the subspace 
$$\cl A_{(2)}(E) := {\rm span}\{q(e_{x,a}e_{y,b}) : x,y\in X, a,b\in A\}.$$
By \cite[Theorem 3.2]{hmps}, 
the perfect quantum commuting strategies for $E$ are in correspondence with the elements of 
${\rm T}(\cl A(E))$, by associating with every $\tau\in {\rm T}(\cl A(E))$ the correlation 
defined in (\ref{eq_Etau}).

\begin{corollary}\label{c_syngame}
Let $E_i\subseteq X_iX_i\times A_iA_i$ be a synchronous game, $i = 1,2$. 
If $E_1\leadsto_{\rm qc} E_2$ then $\tau\to \Gamma[\tau]$ is an affine map from 
${\rm T}_2(\cl A(E_1))$ into ${\rm T}_2(\cl A(E_2))$.
\end{corollary}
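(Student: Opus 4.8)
The plan is to chase definitions, reducing the statement to a combination of results already established in Section~\ref{s_special}. Suppose $E_1 \leadsto_{\rm qc} E_2$ via a correlation $\Gamma \in \cl C_{\rm sqc}$ that fits $E_1 \leadsto E_2$, and let $\tau \in {\rm T}_2(\cl A(E_1))$; that is, $\tau$ extends to a tracial state on $\cl A(E_1)$, which we denote by the same symbol. Pulling back along the quotient map $q_{E_1} : \cl A_{X_1,A_1} \to \cl A(E_1)$, we obtain a tracial state $\tau \circ q_{E_1}$ on $\cl A_{X_1,A_1}$, and its restriction to $\cl A_{(2)}$ determines a quantum commuting synchronous correlation $\cl E_\tau$ over $(X_1,X_1,A_1,A_1)$ via \eqref{eq_Etau}. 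Since $\tau$ annihilates $\cl J_{E_1}$, we have $\cl E_\tau(a,b\mid x,y) = \tau(e_{x,a}e_{y,b}) = 0$ whenever $(x,y,a,b)\not\in E_1$, so $\cl E_\tau$ fits $E_1$, i.e.\ $\cl E_\tau \in \cl C_{\rm qc}(E_1)$.

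First I would apply Theorem~\ref{th_pres}(ii) to conclude $\Gamma[\cl E_\tau] \in \cl C_{\rm qc}$, and Proposition~\ref{p_mapping} (in conjunction with Definition~\ref{d_gameh}, noting that $E_1\leadsto E_2$ is the hypergraph quasi-homomorphism game) to conclude that $\Gamma[\cl E_\tau]$ fits $E_2$; so $\Gamma[\cl E_\tau] \in \cl C_{\rm qc}(E_2)$. Next, by Proposition~\ref{l_tracetotrace}, since $\Gamma$ is jointly synchronous and $\cl E_\tau$ is synchronous, $\Gamma[\cl E_\tau]$ is a synchronous correlation. A small issue here: Corollary~\ref{c_syngame} as stated hypothesizes only $E_1 \leadsto_{\rm qc} E_2$, not that $\Gamma$ is jointly synchronous — I would add a remark that, the games $E_1, E_2$ being synchronous, the relevant quasi-homomorphism correlation can be taken jointly synchronous (this should follow by composing $\Gamma$ with the canonical synchronisation, or is perhaps implicit in the running assumption of Section~\ref{s_special}); alternatively one restricts attention to the subclass of jointly synchronous $\Gamma$, as Remark~\ref{r_Ttau} and the definition of $\Gamma[\tau]$ already presuppose. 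With this in hand, $\Gamma[\cl E_\tau]$ is a perfect quantum commuting synchronous strategy for $E_2$, and by \cite[Theorem~3.2]{hmps} it corresponds to an element of ${\rm T}(\cl A(E_2))$, whose restriction to $\cl A_{(2)}(E_2)$ is precisely $\Gamma[\tau]$ as defined in Remark~\ref{r_Ttau} via \eqref{eq_Ttau}. This shows $\Gamma[\tau] \in {\rm T}_2(\cl A(E_2))$.

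It remains to check affineness of $\tau \to \Gamma[\tau]$. This is routine: the assignment $\tau \mapsto \cl E_\tau$ is affine by the linearity of \eqref{eq_Etau} in $\tau$; the map $\cl E \mapsto \Gamma[\cl E]$ is affine by the discussion following \eqref{eq_G[E]} in Section~\ref{ss_gensetup}; and the correspondence between perfect quantum commuting synchronous strategies and ${\rm T}_2$ established in \cite[Theorem~3.2]{hmps} is affine. Composing three affine maps yields an affine map, and the tracial state $\mathsf{T}[\tau]$ with $\Gamma_{\mathsf{T}}[\cl E_\tau] = \cl E_{\mathsf{T}[\tau]}$ (Remark~\ref{r_Ttau}, using Theorem~\ref{th_snsssyn}(ii) to realise $\Gamma$ as $\Gamma_{\mathsf{T}}$ for a trace $\mathsf{T}$) furnishes the explicit formula at the end of Remark~\ref{r_Ttau}.

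The main obstacle I anticipate is not any deep step but rather the bookkeeping around joint synchronicity: one must be careful that the correlation $\Gamma$ witnessing $E_1 \leadsto_{\rm qc} E_2$ can legitimately be taken jointly synchronous so that Proposition~\ref{l_tracetotrace} applies, since otherwise $\Gamma[\cl E_\tau]$ need not be synchronous and the target ${\rm T}_2(\cl A(E_2))$ is not reached. I would resolve this either by strengthening the hypothesis (replacing $\leadsto_{\rm qc}$ by quasi-homomorphism witnessed by a jointly synchronous $\Gamma$, consistent with how $\Gamma[\tau]$ was defined in Remark~\ref{r_Ttau}) or by observing that, for synchronous $E_1$ and $E_2$, every quantum commuting quasi-homomorphism can be symmetrised to a jointly synchronous one without changing its action on synchronous inputs. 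Everything else — positivity, trace preservation, the no-signalling marginals — is handled verbatim by Theorem~\ref{th_pres}, Proposition~\ref{p_mapping}, and the tracial representation theorems already proved.
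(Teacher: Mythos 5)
Your argument is correct and is essentially the paper's own proof: pull $\tau$ back to the synchronous correlation $\cl E_\tau\in\cl C_{\rm qc}(E_1)$, transport it by $\Gamma$ using Theorem \ref{th_pres}\,(ii) and Proposition \ref{p_mapping}, identify $\Gamma[\cl E_\tau]$ with a trace on $\cl A(E_2)$ via \cite[Theorem 3.2]{hmps} as in Remark \ref{r_Ttau}, and get affineness as a composition of affine maps. On the joint-synchronicity worry: the paper implicitly reads the hypothesis exactly as your first fix does (the witnessing correlation is the jointly synchronous quantum commuting SNS correlation $\Gamma_{\mathsf T}$ of Remark \ref{r_Ttau}), so do not lean on the unproved ``symmetrisation'' claim; in fact it is dispensable here, since once $\Gamma[\cl E_\tau]$ fits the synchronous game $E_2$ (Proposition \ref{p_mapping}) it is automatically synchronous, so even Proposition \ref{l_tracetotrace} is not strictly needed at that step.
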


\begin{proof}
For a tracial state $\tau$ on $\cl A(E)$, let $\tau_{(2)}$ be its restriction to the subspace $\cl A^{(2)}(E)$ of 
$\cl A(E)$. 
By Proposition \ref{l_tracetotrace} and Remark \ref{r_Ttau}, it suffices to show that the map 
$${\rm T}_2(\cl A(E_1)) \to {\rm T}_2(\cl A(E_2)); \ \ \ \tau_{(2)}\to \mathsf{T}[\tau]_{(2)},$$
is well-defined. 
This is immediate from Theorem \ref{th_pres} (ii) and the discussion in Remark \ref{r_Ttau}. 
\end{proof}

\medskip

\noindent 
{\bf Acknowledgements. }
The authors are grateful to Martti Karvonen for useful discussions on the topic of this paper and for 
bringing up to their attention the work \cite{bkm}. 
IT acknowledges the support of the NSF through grant DMS-2154459.

%%%%%%%%%%%%%%%%%%%%%%%%%%%%%%%%%%%%%%%%%%%%%%%%%
%%%%%%%%%%%%%%%%%%%%%%%%%%%%%%%%%%%%%%%%%%%%%%%%%
%%%%%%%%%%%%%%%%%%%%%%%%%%%%%%%%%%%%%%%%%%%%%%%%%

\end{document}